\def\LTC{{\sf LTC}}
\def\LTCi{{\sf LTC1_i}}
\def\LTCii{{\sf LTC2_i}}
\def\LTCiii{{\sf LTC3_i}}
\def\GSC{{\sf GSC}}
\def\SDGSC{{\sf SDGSC}}
\def\BL{\mathsf{BL}}
\newcounter{mycomment}
\definecolor{dark-red}{rgb}{0.5,0.15,0.15}
\definecolor{dark-blue}{rgb}{0.15,0.15,0.6}
\definecolor{dark-green}{rgb}{0.15,0.6,0.15}
\renewcommand*{\backref}[1]{}
\renewcommand*{\backrefalt}[4]{%
  \ifcase #1 %
No citations.
  \or
(cit. on p. #2).%
  \else
(cit on pp. #2).%
  \fi%
}
\newcommand{\htimes}{\overline{\wedge}}
\newcommand{\colim@}[2]{%
  \vtop{\m@th\ialign{##\cr
    \hfil$#1\operator@font colim$\hfil\cr
    \noalign{\nointerlineskip\kern1.5\ex@}#2\cr
    \noalign{\nointerlineskip\kern-\ex@}\cr}}%
}
\newcommand{\colim}{%
  \mathop{\mathpalette\colim@{\rightarrowfill@\scriptscriptstyle}}\nmlimits@
}
\renewcommand{\varprojlim}{%
  \mathop{\mathpalette\varlim@{\leftarrowfill@\scriptscriptstyle}}\nmlimits@
}
\renewcommand{\varinjlim}{%
  \mathop{\mathpalette\varlim@{\rightarrowfill@\scriptscriptstyle}}\nmlimits@
}
\newtheorem{thm}{Theorem}[section]
\newtheorem{cor}[thm]{Corollary}
\newtheorem{prop}[thm]{Proposition}
\newtheorem{lem}[thm]{Lemma}
\newtheorem{conj}[thm]{Conjecture}
\newtheorem{quest}[thm]{Question}
\theoremstyle{definition}
\newtheorem{defn}[thm]{Definition}
\newtheorem{ex}[thm]{Example}
\theoremstyle{remark}
\newtheorem{rem}[thm]{Remark}
\newtheorem{nota}[thm]{Notation}
\numberwithin{equation}{section}
\newcommand{\lr}[1]{\langle{#1}\rangle}
\renewcommand{\frak}{\mathfrak}
\NewDocumentCommand{\createbunch}{ m O{} m }
 {
  \clist_map_inline:nn { #3 } { \cs_new_protected:cpn { #2 ##1 } { #1 { ##1 } } }
 }
\createbunch{\operatorname}{Sp,supp,Loc,im,Hom,Mod,Ind,Thick,CAlg,End,Fun,Spc,Supp,Spec,Sub,res,triv,Pro,FinGrp,FinGpd,Span,Mack,Fin,infl,SH,cosupp,Coloc,Tot,Ext,Pic,dual,ideal,Tel,thick}
\newcommand{\thickt}[1]{\thick_\otimes\langle #1 \rangle}
\newcommand{\unit}{\mathbf{1}}
\newcommand{\cal}{\mathcal}
\createbunch{\mathcal}[c]{C,H,E,A,D,T,F,G,M,P,K,Q,Z,V,N,S}
\createbunch{\mathbb}[b]{Q,D,Z,G}
\newcommand{\pics}{\mathfrak{pic}}
\title{The $\Sp_{k,n}$-local stable homotopy category}
\author{Drew Heard}
\DeclareMathOperator{\Ho}{Ho}
\Crefname{figure}{Figure}{Figures}
\Crefname{assu}{Assumption}{Assumptions}
\Crefname{lem}{Lemma}{Lemmas}
\Crefname{thm}{Theorem}{Theorems}
\Crefname{prop}{Proposition}{Propositions}
\begin{document}
\begin{abstract}Following a suggestion of Hovey and Strickland, we study the category of $K(k) \vee K(k+1) \vee \cdots \vee K(n)$-local spectra. When $k = 0$, this is equivalent to the category of $E(n)$-local spectra, while for $k = n$, this is the category of $K(n)$-local spectra, both of which have been studied in detail by Hovey and Strickland. Based on their ideas, we classify the localizing and colocalizing subcategories, and give characterizations of compact and dualizable objects. We construct an Adams type spectral sequence and show that when $p \gg n$ it collapses with a horizontal vanishing line above filtration degree $n^2+n-k$ at the $E_2$-page for the sphere spectrum. We then study the Picard group of $K(k) \vee K(k+1) \vee \cdots \vee K(n)$-local spectra, showing that this group is algebraic, in a suitable sense, when $p \gg n$. We also consider a version of Gross--Hopkins duality in this category. A key concept throughout is the use of descent. 
\end{abstract}
\maketitle

\tableofcontents
\section{Introduction}
In their memoir \cite{hovey_strickland99} Hovey and Strickland studied the categories of $K(n)$-local and $E_n$-local spectra in great detail. Here $K(n)$ is the $n$-th Morava $K$-theory; the spectrum whose homotopy groups are the graded field 
\[
K(n)_* \cong \mathbb{F}_{p}[v_n^{\pm 1}], \quad |v_n| = 2(p^n-1)
\]
and $E_n$ is the $n$-th Lubin--Tate spectrum, or Morava $E$-theory, with 
\[
(E_n)_* \cong W(\mathbb{F}_{p^n})[\![u_1,\ldots,u_{n-1}]\!][u^{\pm 1}], \quad |u_i| =0, |u|=2. 
\] 
As explained in the introduction of \cite{hovey_strickland99}, the Morava $K$-theories are the prime field objects in the stable homotopy category  - for a way to make that precise, see \cite{HopkinsSmith1998Nilpotence} or more specifically \cite[Corollary 9.5]{Balmer10b} - and are one of the fundamental objects in the chromatic approach to stable homotopy theory. 

 A deep result of Hopkins and Ravenel \cite{Ravenel1992Nilpotence} is that Bousfield localization with respect to $E_n$ is smashing, which simplifies the study of the category of $E_n$-local spectra considerably. On the other hand, localization with respect to $K(n)$ is not smashing \cite[Lemma 8.1]{hovey_strickland99}, and the monoidal unit $L_{K(n)}S^0 \in \Sp_{K(n)}$ is dualizable, but not compact. In the language of tensor-triangulated geometry, $\Sp_{K(n)}$ is a non-rigidly compactly generated category. Because of this, much of the work in \cite{hovey_strickland99} is therefore dedicated to understanding the more complicated category of $K(n)$-local spectra.

 By a Bousfield class argument, the category of $E_n$-local spectra is equivalent to the category of $K(0) \vee \ldots \vee K(n)$-local spectra. In this paper we study the categories of $K(k) \vee \ldots \vee K(n)$-local spectra for $0 \le k \le n$, which were suggested as `interesting to investigate' by Hovey and Strickland, see the remark after Corollary B.9 \cite{hovey_strickland99}. We write $L_{k,n}$ for the associated Bousfield localization functor. As we shall see, when $k \ne 0$, the category $\Sp_{k,n}$ of $K(k) \vee \ldots \vee K(n)$-local spectra behaves much like the category $\Sp_{K(n)} = \Sp_{n,n}$ of $K(n)$-local spectra. For example, it is an example of a non-rigidly compactly generated category; as soon as $k \ne 0$, the monoidal unit $L_{k,n}S^0 \in \Sp_{k,n}$ is dualizable, but not compact. However, the categories $\Sp_{k,n}$ for $k \ne n$ are in some sense more complicated than the case $k = n$; for example, $\Sp_{K(n)}$ has no non-trivial (co)localizing subcategories, while this is not true for $\Sp_{k,n}$ as long as $k \ne n$. 
\subsection{Contents of the paper}
We now describe the contents of the paper in more detail. We begin with a study of Bousfield classes, constructing some other spectra which are Bousfield equivalent to $K(k) \vee \cdots \vee K(n)$. In particular, we show that there is a Bousfield equivalence between a localized quotient of $BP$, denoted $E(n,J_k)$ and $K(k) \vee \cdots \vee K(n)$. For this reason, as well as brevity, we often say that $X$ is $E(n,J_k)$-local, instead of $K(k) \vee \cdots \vee K(n)$-local.

As was already noted by Hovey and Strickland \cite[Corollary B.9]{hovey_strickland99} $\Sp_{k,n}$ is an algebraic stable homotopy theory in the sense of \cite{HoveyPalmieriStrickl1997Axiomatic} with compact generator $L_{k,n}F(k)$, the localization of a finite spectrum of type $k$. We investigate some consequences of this; for example, analogous to Hovey and Strickland's formulas for $L_{K(n)}X$, in \Cref{prop:7.10}, we prove some formulas for $L_{k,n}X$ in terms of towers of finite type $k$ Moore spectra. Some of these results had previously been obtained by the author and Barthel and Valenzuela \cite{bhv1}. 

In \Cref{sec:tt-geometry} we investigate the tensor-triangulated geometry of $\Sp_{k,n}$. We begin by characterizing the compact objects in $\Sp_{k,n}$, culminating in \Cref{thm:compact_objects} which is a natural extension of Hovey and Strickland's results in the cases $k = 0,n$.  A classification of the thick ideals of $\Sp_{k,n}^{\omega}$ is an almost immediate consequence of this classification, see \Cref{thm:thick_subcategory} for the precise result. Of course, here we rely on the deep thick subcategory theorem in stable homotopy \cite{HopkinsSmith1998Nilpotence} and its consequences. Finally, we classify the localizing and colocalizing subcategories of $\Sp_{k,n}$ in \Cref{thm:loc_coloc_classifying}. We obtain the following. 
\begin{thm}
There is an order preserving bijection between (co)localizing subcategories of $\Sp_{k,n}$ and subsets of $\{k,\ldots,n\}$.
Moreover, the map that sends a localizing subcategory $\cC$ of $\Sp_{k,n}$ to its left orthogonal $\cC^{\perp}$ induces a bijection between the set of localizing and colocalizing subcategories of $\Sp_{k,n}$. The inverse map sends a colocalizing subcategory $\cal{U}$ to its right orthogonal ${}^{\perp}\cal{U}$. 
\end{thm}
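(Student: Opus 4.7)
My plan is to classify (co)localizing subcategories of $\Sp_{k,n}$ by their chromatic support in $\{k,\ldots,n\}$, using that each Morava $K$-theory $K(i)$ acts as a residue field on $\Sp_{k,n}$. For each $i \in \{k,\ldots,n\}$, introduce the monochromatic localizing subcategory $\cM_i := \Loc(L_{K(i)}F(k)) \subseteq \Sp_{k,n}$, and define the candidate bijection
\[
S \longmapsto \cC_S := \Loc(L_{K(i)}F(k) : i \in S), \qquad \cC \longmapsto \sigma(\cC) := \{i : L_{K(i)}X \ne 0 \text{ for some } X \in \cC\}.
\]
Both maps are manifestly order-preserving, so it suffices to prove they are mutually inverse. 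The key external input is the Hovey--Strickland theorem that $\Sp_{K(i)}$ has no non-trivial localizing subcategories, transported along the identification $\cM_i \simeq \Sp_{K(i)}$.

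The identity $\sigma(\cC_S) = S$ follows from non-vanishing of $K(i)_\ast L_{K(i)}F(k)$ for $i \ge k$. The opposite identity $\cC_{\sigma(\cC)} = \cC$ is the main content. The inclusion $\cC_{\sigma(\cC)} \subseteq \cC$ reduces, via the Hovey--Strickland minimality theorem applied inside each $\cM_i$, to exhibiting a single non-zero $K(i)$-local object of $\cC$ whenever $i \in \sigma(\cC)$, which holds tautologically. For the inclusion $\cC \subseteq \cC_{\sigma(\cC)}$, I would iterate the chromatic fracture square
\[
\begin{tikzcd}[row sep=small, column sep=small]
X \arrow[r] \arrow[d] & L_{k,n-1}X \arrow[d] \\
L_{K(n)}X \arrow[r] & L_{k,n-1}L_{K(n)}X
\end{tikzcd}
\]
inside $\Sp_{k,n}$ to express any $X \in \Sp_{k,n}$ as an iterated cofibre-sequence-extension of the monochromatic layers $\{L_{K(i)}X\}_{i=k}^{n}$. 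Each layer belongs to $\cM_i \subseteq \cC_{\sigma(\cC)}$ precisely when it is non-zero, i.e.\ when $i \in \sigma(\cC)$ for $X \in \cC$, so $X$ lies in $\cC_{\sigma(\cC)}$.

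The colocalizing classification and the orthogonal correspondence I would treat together. A dual argument, using Brown--Comenetz type cogenerators of $\Sp_{k,n}$ and the (co)localizing no-nontrivial-subcategory result in $\Sp_{K(i)}$, yields the analogous bijection $S \mapsto \cU_S$ between subsets and colocalizing subcategories. To identify the orthogonal with this bijection, the key computation is that $\cC_S^{\perp}$ equals the colocalizing subcategory indexed by the complement $\{k,\ldots,n\} \setminus S$; this reduces to the mutual $K$-theoretic orthogonality $[L_{K(i)}F(k), L_{K(j)}Y]_\ast = 0$ for $i \ne j$, which is classical. Combined with the two classifications, this shows $\cC \mapsto \cC^{\perp}$ and $\cal{U} \mapsto {}^{\perp}\cal{U}$ are mutually inverse bijections.

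The principal obstacle will be the chromatic reconstruction step for arbitrary (non-compact) objects: confirming that the iterated fracture tower actually recovers $X$ for all $X \in \Sp_{k,n}$, despite the non-compactness of the unit $L_{k,n}S^0$. I expect this to follow from the relative smashing property of the localizations $L_{K(i)}$ with respect to $L_{k,n-1}$ inside $\Sp_{k,n}$, or equivalently by reducing via the algebraic stable homotopy formalism to the compact generator $L_{k,n}F(k)$, where Hovey--Strickland's analysis in \cite{hovey_strickland99} applies directly.
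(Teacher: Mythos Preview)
Your overall strategy---define support via the $K(i)$, set up inverse maps between subsets and (co)localizing subcategories, and verify they are mutually inverse---matches the paper's. The difference lies in how the hard direction is established. The paper does not re-derive a reconstruction statement via fracture squares; it simply imports Hovey--Strickland's local--global principle for $\Sp_n$ (\cite[Proposition~6.18]{hovey_strickland99}), which already says $X \in \Loc_{\Sp}(K(i) \mid i \in \supp_{HS}(X))$, and then pushes this forward along the colimit-preserving functor $L_{k,n}$ to obtain $X \in \Loc_{\Sp_{k,n}}(K(i) \mid i \in \supp(X))$. From there both inclusions $\cC = \supp^{-1}(\supp(\cC))$ are immediate. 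The orthogonal correspondence is handled by a direct computation $\cC^{\perp} = \cosupp^{-1}(\cQ \setminus \supp(\cC))$ using the formula $\cosupp(F(X,Y)) = \supp(X) \cap \cosupp(Y)$, rather than via an explicit orthogonality of monochromatic generators.

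Your fracture-square mechanism has two gaps worth flagging. First, the claim that exhibiting a nonzero $K(i)$-local object of $\cC$ ``holds tautologically'' when $i \in \sigma(\cC)$ is not quite right: $L_{K(i)}X$ need not lie in $\cC$, since $L_{K(i)}$ is not smashing. The fix is to use $K(i) \htimes X$, which is in $\cC$ because every localizing subcategory of $\Sp_{k,n}$ is a tensor ideal; being a free $K(i)$-module, it has $K(i)$ as a retract. Second, and more seriously, the obstacle you flag is mis-identified. The fracture tower always recovers $X$; the problem is the cross-term $L_{k,n-1}L_{K(n)}X$. Its support in $\{k,\ldots,n-1\}$ is governed by $K(i)_\ast L_{K(n)}X$, which can be nonzero for $i \notin \sigma(\cC)$ even when $K(i)_\ast X = 0$, so it is not a priori in $\cC_{\sigma(\cC)}$. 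Your proposed remedy via smashing does work---since $l_{n-1}^f$ is smashing and localizing subcategories are tensor ideals, $L_{k,n-1}L_{K(n)}X \simeq (l_{n-1}^f\unit) \htimes L_{K(n)}X$ lies in any tensor ideal containing $L_{K(n)}X$---but at that point you have essentially reduced to the paper's argument. Your alternative suggestion, reducing to the compact generator and invoking Hovey--Strickland directly, \emph{is} the paper's argument.
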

We also compute the Bousfield lattice of $\Sp_{k,n}$ (\Cref{prop:bousfield_lattice}) and show that a form of the telescope conjecture holds (\Cref{thm:telescope}).

In \Cref{sec:descent_anss} we show that, as a consequence of the Hopkins--Ravenel smash product theorem, the commutative algebra object $E_n \in \Sp_{k,n}$ is descendable, in the sense of \cite{Mathew2016Galois}. This has a number of immediate consequences. For example, it implies the existence of a strongly convergent Adams type spectral sequence, which we call the $E(n,J_k)$-local $E_n$-Adams spectral sequence, computing $\pi_*(L_{k,n}X)$ for any spectrum $X$. Moreover, descendability implies this collapses with a horizontal vanishing line at a finite stage (independent of $X$). In the case of $X = S^0$ it is known that, in the cases $k = 0$ and $k= n$, this vanishing line already occurs on the $E_2$-page so long as $p \gg n$. In order to generalize this result, we first show that when $X = S^0$, the $E_2$-term  of the $E(n,J_k)$-local $E_n$-Adams spectral sequence spectral sequence can be given as the inverse limit of certain $\Ext$ groups computed in the category of $(E_n)_*E_n$-comodules, see \Cref{prop:limit_ext} for the precise result. We are then able to utilize a chromatic spectral sequence and Morava's change of rings theorem to show the following (\Cref{thm:vanishing}):
\begin{thm}\label{thm:vanishing_intro}
  Suppose $p-1$ does not divide $k+s$ for $0 \le s \le n-k$ (for example, if $p > n+1$), then in the $E_2$-term of the $E(n,J_k)$-local $E_n$-Adams spectral sequence converging to $L_{k,n}S^0$ we have $E_2^{s,t} = 0$ for $s > n^2+n-k$. 
\end{thm}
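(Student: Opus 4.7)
The plan is to combine \Cref{prop:limit_ext}, which identifies the $E_2$-page of the $E(n, J_k)$-local $E_n$-Adams spectral sequence with an inverse limit of $\Ext$-groups in the category of $(E_n)_*E_n$-comodules, with a chromatic-type spectral sequence and Morava's change of rings theorem. The approach interpolates between the classical arguments of Ravenel (for $k = 0$) and Morava/Devinatz--Hopkins (for $k = n$).

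At each fixed stage of the inverse limit, I would filter the $\Ext$-group by chromatic height. Only the layers at heights $k, k+1, \ldots, n$ appear in the $E(n, J_k)$-local setting; index the chromatic filtration by $s = 0, 1, \ldots, n - k$ so that layer $s$ corresponds to height $i := k + s$. Morava's change of rings then identifies the cohomology of the $s$-th layer with continuous cohomology $H^*(\mathbb{S}_i; N_i)$ of the Morava stabilizer group at height $i$, with coefficients in an appropriate profinite $(E_i)_*$-module $N_i$.

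The hypothesis $p - 1 \nmid k + s$ for all $0 \le s \le n - k$ is precisely the condition that each of $\mathbb{S}_k, \ldots, \mathbb{S}_n$ is $p$-torsion free. By a classical result on compact $p$-adic analytic groups, each such $\mathbb{S}_i$ then has cohomological dimension exactly $i^2$, so the $s$-th chromatic layer contributes nothing in internal cohomological degree greater than $(k+s)^2$. Since a class in the $s$-th layer with internal degree $r$ contributes to total Adams filtration $s + r$, any surviving class in Adams filtration $\sigma$ requires $\sigma \le s + (k+s)^2$ for some $0 \le s \le n - k$. The quadratic $s + (k+s)^2 = s^2 + (2k+1)s + k^2$ is strictly increasing in $s$, so its maximum on $[0, n-k]$ is attained at $s = n - k$, where it equals $n^2 + n - k$; this gives the required vanishing at each stage.

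The final step is to pass this uniform vanishing through the inverse limit of \Cref{prop:limit_ext}. The main obstacle I expect is controlling the associated $\lim^1$-term on the outside of the limit. Since the vanishing line at each stage is uniform (independent of the stage) and the maps in the inverse system should be compatible with the chromatic filtration, this should reduce to a Mittag--Leffler argument, but some care is needed to verify compatibility of the chromatic spectral sequences across the tower and to check that all the change-of-rings isomorphisms assemble into the required profinite statement.
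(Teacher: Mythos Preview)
Your approach is essentially the same as the paper's: reduce via \Cref{prop:limit_ext} to a tower of Moore spectra, then run a chromatic spectral sequence together with Morava's change of rings and vanishing theorem to obtain the bound $n^2+n-k$ at each stage. The only unnecessary worry is your final $\varprojlim^1$ concern: \Cref{prop:limit_ext} already states an isomorphism (the $\varprojlim^1$ term is killed there by finiteness of the individual $\Ext$ groups, cf.~\Cref{cor:finiteness}), so once each stage vanishes above $n^2+n-k$ the inverse limit does too, with no further argument required.
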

In the case $k = 0$, this recovers a result of Hovey and Sadofsky \cite[Theorem 5.1]{HoveySadofsky1999Invertible}.

As noted previously, so long as $k \ne 0$, the categories of dualizable and compact spectra do not coincide in $\Sp_{k,n}$; every compact spectrum is dualizable, but the converse does not hold, with the unit $L_{k,n}S^0$ being an example. In \Cref{sec:dualizable} we study the category of dualizable objects in $\Sp_{k,n}$. As a consequence of descendability, we show that $X \in \Sp_{k,n}$ is dualizable if and only if $L_{k,n}(E_n \wedge X)$ is dualizable in the category of $E(n,J_k)$-local $E_n$-modules. In turn, we show that this holds if and only if $L_{k,n}(E_n \wedge X)$ is dualizable (equivalently, compact) in the category of $E_n$-modules. We deduce that $X\in \Sp_{k,n}$ is dualizable if and only if its \emph{Morava module} $(E_{k,n})^{\vee}_*(X) \coloneqq \pi_*L_{k,n}(E_n \wedge X)$ is finitely-generated as an $(E_n)_*$-module, see \Cref{thm:dualspkn}. This generalizes a result of Hovey and Strickland, but even in this case our proof differs from theirs. 

It is an observation of Hopkins that the Picard group of invertible $K(n)$-local spectra is an interesting object to study \cite{hms_pic}. Likewise, Hovey and Sadofsky have studied the Picard group of $E(n)$-local spectra \cite{HoveySadofsky1999Invertible}. In \Cref{sec:picard} we study the Picard group $\Pic_{k,n}$ of $E(n,J_k)$-local spectra. Our first result, which is a consequence of descent, is that $X \in \Sp_{k,n}$ is invertible if and only if its Morava module $(E_{k,n})^{\vee}_*(X)$ is free of rank 1. We then study the Picard spectrum \cite{mathew_pic} of the category $\Sp_{k,n}$. Using descent again, we construct a spectral sequence whose abutment for $\pi_0$ is exactly $\Pic_{k,n}$. The existence of this spectral sequence in the case $k = n$ is folklore. We say that this spectral sequence is algebraic if the only non-zero terms in the spectral sequence occur in filtration degree 0 and 1. Using \Cref{thm:vanishing_intro} we deduce the following result (\Cref{thm:picskn}). In the case $k = n$, this is a theorem of Pstr\k{a}gowski \cite{1811.05415}.
\begin{thm}
    If $2p-2 \ge n^2+n-k$ and $p-1$ does not divide $k+s$ for $0 \le s \le n-k$, then $\Pic_{k,n}$ is algebraic. For example, this holds if $2p-2 > n^2+n$. 
\end{thm}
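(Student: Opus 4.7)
The plan is to exploit the Picard spectral sequence constructed via $E_n$-descent in \Cref{sec:picard}, which has signature
\[
E_2^{s,t} \;\Longrightarrow\; \pi_{t-s}\,\pics(\Sp_{k,n}),
\]
with the abutment at $t - s = 0$ equal to $\Pic_{k,n}$. Following the Mathew--Stojanoska framework, the canonical map $\Sigma \mathfrak{gl}_1(R) \to \pics(R)$ is an equivalence on homotopy in degrees $\geq 2$, and $\pi_i \mathfrak{gl}_1(R) \cong \pi_i R$ for $i \geq 1$. Applying this identification levelwise to the cosimplicial $E_n$-resolution of $L_{k,n}S^0$ yields a comparison
\[
E_2^{s,t}(\mathrm{pic}) \;\cong\; E_2^{s,t-1}(\mathrm{Adams}) \quad \text{whenever } t \geq 2,
\]
where the right-hand side is the $E_2$-page of the $E(n,J_k)$-local $E_n$-Adams spectral sequence, identified in \Cref{prop:limit_ext} with an inverse limit of $\Ext$ groups in $(E_n)_*E_n$-comodules.

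For algebraicity it suffices to show that $E_2^{s,s}(\mathrm{pic}) = 0$ for every $s \geq 2$, since this rules out any contribution to $\pi_0\pics = \Pic_{k,n}$ from filtrations $\geq 2$. By the comparison above, this reduces to $E_2^{s,s-1}(\mathrm{Adams}) = 0$ for all $s \geq 2$. I would split by the parity of $s$. When $s$ is even, $s-1$ is odd and the even concentration of $(E_n)_*$ immediately forces vanishing. When $s > n^2+n-k$, \Cref{thm:vanishing_intro} applies under the stated congruence hypothesis on $p-1$ and $k+s$. There remains only the case of $s$ odd with $3 \leq s \leq n^2+n-k$; here I would invoke the classical sparsity of continuous Morava-stabilizer cohomology at large primes, namely that at $p \gg n$ the group $H^s_c(\mathbb{G}_n, (E_n)_t)$ vanishes for $0 < s < 2p-2$ unless $2(p-1) \mid t$. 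Applied with $t = s-1$ this forces $s \equiv 1 \pmod{2(p-1)}$, whose smallest odd solution exceeding $1$ is $s = 2p-1$; the hypothesis $2p - 2 \geq n^2+n-k$ gives $2p - 1 > n^2+n-k$, so no such $s$ lies in the relevant range.

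The main obstacle I anticipate is transporting the cited sparsity statement from the $K(n)$-local setting, where it is classical, to the inverse-limit description of the $E(n,J_k)$-local Adams $E_2$-page in \Cref{prop:limit_ext}. I expect it to follow from the same chromatic spectral sequence and change-of-rings techniques that underpin the proof of \Cref{thm:vanishing_intro}: at each finite stage of the defining tower the sparsity holds by comodule-theoretic arguments, and one must check compatibility under the inverse limit. Finally, the claim that $2p - 2 > n^2 + n$ suffices is immediate, since it is strictly stronger than $2p - 2 \geq n^2 + n - k$ and forces $p - 1 > n \geq k + s$ for every $0 \leq s \leq n - k$, whence the congruence hypothesis $p - 1 \nmid k + s$ is automatic.
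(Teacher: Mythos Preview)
Your approach is essentially the paper's: use the descent Picard spectral sequence, identify the $t\ge 2$ part with a shift of the $E(n,J_k)$-local Adams $E_2$-page, then combine sparseness with the horizontal vanishing line of \Cref{thm:vanishing} to kill everything on the diagonal above filtration $1$.

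Two points are worth tightening. First, your justification for the even-$s$ case is not correct as stated: $(E_n)_*$ is $2$-periodic, not concentrated in even degrees, so ``even concentration of $(E_n)_*$'' does not by itself force vanishing in odd internal degree. What actually does the work is the sparseness result the paper proves as \Cref{thm:sparseness}: $\widehat\Ext^{s,t}=0$ unless $t\equiv 0\pmod{2(p-1)}$, valid for all $s$ and all primes. This single statement makes your even/odd case split unnecessary and removes the artificial restriction $0<s<2p-2$ you imposed in the odd case. Second, the obstacle you anticipate---transporting sparsity through the inverse-limit description of \Cref{prop:limit_ext}---is exactly what \Cref{thm:sparseness} already handles, via change of rings to $BP$ and \cite[Proposition~4.4.2]{ravenel_green}; you do not need to redo that argument.

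With \Cref{thm:sparseness} in hand, the paper's proof is a two-line combination: on the diagonal, $E_2^{s,s}=0$ unless $s\equiv 1\pmod{2(p-1)}$, so the first possible non-algebraic contribution is at $s=2p-1$; the vanishing line then finishes. Your final paragraph on the ``for example'' clause is fine.
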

There is an interesting element in the $K(n)$-local Picard group, namely the Brown--Comenetz dual of the monochromatic sphere \cite[Theorem 10.2]{hovey_strickland99}. In \Cref{sec:bc_dual} we extend Brown--Comentz duality to the $E(n,J_k)$-local category. We do not know when the Brown--Comenetz dual of the monochromatic sphere defines an element of $\Pic_{k,n}$; this is not true when $k = 0$, and we provide a series of equivalent conditions for the general case in \Cref{prop:bc_equivalent}. 
\subsection*{The case $n = 2,k=1$}
The first example that has essentially not been studied in the literature is when $n = 2$ and $k = 1$, i.e., the category of $K(1) \vee K(2)$-local spectra. In \Cref{sec:balmer_dual} we give a computation of the Balmer spectrum of $K(1) \vee K(2)$-locall dualizable spectra. For this, we recall that Hovey and Strickland have conjectured a description of the Balmer spectrum $\Spc(\Sp_{K(n)}^{\dual})$ of dualizable objects in $K(n)$-local spectra \cite[p.~61]{hovey_strickland99}. This was investigated by the author, along with Barthel and Naumann, in \cite{bhn}. This admits a natural generalization to $\Sp_{k,n}^{\dual}$.  For $i \le n$, let $\cal D_{i}$ denote the category of $X \in \Sp_{k,n}^{\dual}$ such that $X$ is a retract of $Y \wedge Z$ for some $Y \in \Sp_{k,n}^{\dual}$ and some finite spectrum $Z$ of type at least $i$. We also set $\cD_{n+1} = (0)$. The conjecture is that these exhaust all the thick tensor-ideals of $\Sp_{k,n}^{\dual}$. We show in \Cref{thm:hovey_strickland_conjecture} that if this holds $K(n)$-locally (i.e., in $\Sp^{\dual}_{n,n}$), then it holds for all $\Sp_{k,n}^{\dual}$. In particular, since it is known to hold $K(2)$-locally by \cite[Theorem 4.15]{bhn} we obtain the following, see \Cref{cor:height_2_hovey_strickland}.
\begin{thm}
The Balmer spectrum of $K(1) \vee K(2)$-locally dualizable spectra $\Spc(\Sp_{1,2}^{\dual}) = \{ \cal{D}_1,\cal{D}_2,\cal{D}_3 \} $ with topology determined by $\overline{\{\cal{D}_j\}} = \{ \cal{D}_i \mid i \ge j \}$. In particular, if $\cC$ is a thick tensor-ideal of $\Sp_{1,2}^{\dual}$, then $\cC = \cal{D}_k$ for $0 \le k \le 3$. 
\end{thm}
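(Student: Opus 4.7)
My plan is to derive this theorem as a direct combination of two results already in hand. First, \cite[Theorem 4.15]{bhn} verifies the Hovey--Strickland conjecture in $\Sp_{K(2)}^{\dual}$: its thick tensor-ideals form the chain $\cal{D}_0^{K(2)} \supseteq \cal{D}_1^{K(2)} \supseteq \cal{D}_2^{K(2)} \supseteq \cal{D}_3^{K(2)} = (0)$, where the $\cal{D}_i^{K(2)}$ are defined analogously inside $\Sp_{K(2)}^{\dual}$. Second, \Cref{thm:hovey_strickland_conjecture} promotes this to every $\Sp_{k,n}^{\dual}$: once the conjecture is known $K(n)$-locally, the thick tensor-ideals of $\Sp_{k,n}^{\dual}$ are forced to be exactly $\cal{D}_k \supseteq \cal{D}_{k+1} \supseteq \cdots \supseteq \cal{D}_{n+1} = (0)$. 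Specialising to $(n,k)=(2,1)$ therefore yields the thick tensor-ideal classification: the only proper thick tensor-ideals of $\Sp_{1,2}^{\dual}$ are $\cal{D}_1 \supsetneq \cal{D}_2 \supsetneq \cal{D}_3 = (0)$.

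To read off the Balmer spectrum from this classification, I would use Balmer's standard correspondence between points of $\Spc$ and prime thick tensor-ideals. Because the proper thick tensor-ideals form a totally ordered chain, each of $\cal{D}_1,\cal{D}_2,\cal{D}_3$ is automatically prime: if $X \otimes Y \in \cal{D}_i$ but $X \notin \cal{D}_i$, then the thick tensor-ideal generated by $\cal{D}_i$ and $X$ must strictly exceed $\cal{D}_i$ and hence (being in the chain) contain $Y$. Thus $\Spc(\Sp_{1,2}^{\dual}) = \{\cal{D}_1,\cal{D}_2,\cal{D}_3\}$ as a set. For the topology, Balmer's convention gives $\overline{\{\cal{P}\}} = \{\cal{Q} \mid \cal{Q} \subseteq \cal{P}\}$, so reading containments directly off the chain produces $\overline{\{\cal{D}_j\}} = \{\cal{D}_i \mid i \geq j\}$, as claimed.

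The main obstacle is already absorbed into \Cref{thm:hovey_strickland_conjecture}, which does the genuine work of descending the thick tensor-ideal classification from $\Sp_{K(n)}^{\dual}$ to $\Sp_{k,n}^{\dual}$. At this final step the only care required is to match the definitions of $\cal{D}_i$ across the $K(2)$-local and $K(1)\vee K(2)$-local settings and to verify that the chain of containments translates into Balmer's Zariski-type topology; both are straightforward.
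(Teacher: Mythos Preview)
Your approach is exactly the paper's: invoke \cite[Theorem 4.15]{bhn} for the $K(2)$-local case and then apply \Cref{thm:hovey_strickland_conjecture}. The paper says nothing more than this, relying on the equivalence already recorded in \Cref{conj:gen_hovey_strickland} (which in turn points to \cite[Proposition 3.5]{bhn}) for the passage between the list of thick tensor-ideals and the description of $\Spc$.

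There is, however, a slip in your primality argument. From $X\otimes Y\in\cal{D}_i$ and $X\notin\cal{D}_i$ you conclude that the thick tensor-ideal generated by $\cal{D}_i$ and $X$ is some $\cal{D}_j$ with $j<i$, and then assert that it ``contains $Y$''. But nothing forces $Y$ into this larger ideal; what you actually need is $Y\in\cal{D}_i$, and your sentence does not supply that. A totally ordered lattice of ideals is not enough by itself (think of $(0)\subset(p)\subset\bZ/p^2$ in ordinary algebra). The clean fix is to use that $\Sp_{1,2}^{\dual}$ is rigid, so every thick tensor-ideal is radical \cite[Proposition 2.4]{balmer_prime}; then each $\cal{D}_i$ is an intersection of primes containing it, and since those primes already lie in the finite chain, that intersection is a single $\cal{D}_j$ with $j\le i$, forcing $j=i$. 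Alternatively, just cite \cite[Proposition 3.5]{bhn} for the equivalence, as the paper does implicitly.
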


 \subsection*{Conventions and notation}
 We let $\langle X \rangle$ denote the Bousfield class of a spectrum $X$.  The smallest thick tensor-ideal containing an object $A$ will be denoted by $\thickt{A}$ (it will always be clear in which category this thick subcategory should be taken in). Likewise, the smallest thick (respectively localizing) subcategory containing an object $A$ will be written as $\Thick(A)$ (respectively $\Loc(A)$). 
 \subsection*{Acknowledgments}
 It goes without saying that this paper owes a tremendous intellectual debt to Hovey and Strickland, in particular for the wonderful manuscript \cite{hovey_strickland99}. We also thank Neil Strickland for a helpful conversation, as well as his comments on a draft version of this document.  
\section{The category of \texorpdfstring{$\Sp_{k,n}$}{Sp{k,n}}-local spectra}\label{sec:bousfield}
\subsection{Chromatic spectra} We begin by introducing some of the main spectra that we will be interested in. 
\begin{defn}
  Let $BP$ denote the Brown--Peterson homotopy ring spectrum with coefficient ring 
  \[
BP_* \cong \bZ_{(p)}[v_1,v_1,\ldots]
  \]
  with $|v_i| = 2(p^i-1)$.
\end{defn}
\begin{rem}
  The classes $v_i$ are not intrinsically defined, and so the definition of $BP$ depends on a choice of sequence of generators; for example, they could be the Hazewinkel generators or the Araki generators. However, the ideals $I_n = (p,v_1,\ldots,v_{n-1})$ for $0 \le n \le \infty$ do not depend on this choice. 
\end{rem}

  By taking quotients and localizations of $BP$ (for example, using the theory of structured ring spectra \cite[Chapter V]{ekmm}), we can form new homotopy ring spectra. In particular, let $J_k$ denote a fixed invariant regular sequence $p^{i_0},v_1^{i_1},\ldots,v_{k-1}^{i_{k-1}}$ of length $k$. Then we can form the homotopy associative  ring spectrum $BPJ_k$ with 
  \[
(BPJ_k)_* \cong BP_*/J_k. 
  \]
  These were first studied by Johnson and Yosimura \cite{JohnsonlYosimura1980Torsion}. A detailed study on the product structure one obtains via this method can be found in \cite{Strickl1999Products}. 

\begin{defn}We let $E(n,J_k)$ for $n \ge k$ denote the Landweber exact spectrum with
\[
E(n,J_k)_* \cong v_n^{-1}((BPJ_k)_*/(v_{n+1},v_{n+2},\ldots)).
\]
Here Landweber exact means over $BPJ_k$ (as studied by Yosimura \cite{Yosimura1983operations}), that is, there is an isomorphism 
\[
E(n,J_k)_*(X) \cong (BPJ_k)_*(X) \otimes_{BP_*/J_k} E(n,J_k)_*.
\]
\end{defn}
\begin{ex}
  If $k=0$ (so that $J_k$ is the trivial sequence), then $E(n,J_0) \simeq E(n)$, Johnson--Wilson theory. For the other extreme, if $J_n = p,v_1,\ldots,v_{n-1}$, then $BPJ_n$ is the spectrum known as $P(n)$, and $E(n,J_n) \simeq K(n)$ is Morava $K$-theory \cite{hovey_strickland99}.
\end{ex}
\begin{defn}
  For $k \le n < \infty$, we let $\Sp_{k,n} \subseteq \Sp$ denote the full subcategory of $K(k) \vee K(k+1) \vee \cdots\vee K(n)$-local spectra.
\end{defn}
\begin{lem}
The inclusion $\Sp_{k,n} \hookrightarrow \Sp$ has a left adjoint adjoint $L_{k,n}$, and $\Sp_{k,n}$ is a presentable, stable $\infty$-category. 
\end{lem}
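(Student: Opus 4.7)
The plan is to identify $\Sp_{k,n}$ as a Bousfield localization of $\Sp$ and then invoke general results about such localizations. Set $E = K(k) \vee K(k+1) \vee \cdots \vee K(n)$. By definition $\Sp_{k,n}$ is the full subcategory of $E$-local spectra, i.e., those $X \in \Sp$ such that $\Hom_{\Sp}(A, X) = 0$ for every $A$ with $E \wedge A \simeq 0$.

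First, I would establish the existence of the left adjoint $L_{k,n}$. The classical route is Bousfield's original theorem on homological localization of spectra, which provides a localization functor $L_E \colon \Sp \to \Sp$ whose essential image is exactly the $E$-local spectra, and which is left adjoint to the inclusion. In the $\infty$-categorical setting this is a consequence of the general machinery of accessible reflective subcategories: the class of $E$-acyclic spectra is closed under colimits and forms an accessible localizing subcategory of the presentable stable $\infty$-category $\Sp$, so the right orthogonal (the $E$-local objects) is an accessible reflective subcategory (cf.\ Lurie, \emph{Higher Topos Theory}, \S 5.5.4). This directly gives both the left adjoint $L_{k,n}$ and presentability of $\Sp_{k,n}$, since an accessible localization of a presentable $\infty$-category is again presentable.

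For stability, the key point is that the $E$-acyclics form a \emph{stable} subcategory of $\Sp$: it is closed under suspension, desuspension, and cofibers, because $E \wedge (-)$ is an exact functor between stable $\infty$-categories. Consequently the left adjoint $L_{k,n}$ is an exact functor between stable $\infty$-categories, and its essential image $\Sp_{k,n}$ inherits a stable structure in which fiber/cofiber sequences are computed by applying $L_{k,n}$ to the corresponding sequences in $\Sp$. Equivalently, one can verify directly that $\Sp_{k,n}$ is closed under suspension and desuspension in $\Sp$ (obvious from the definition of $E$-locality), and that it admits finite limits and colimits that are preserved by the inclusion (limits) and by $L_{k,n}$ (colimits), with the fiber and cofiber sequences agreeing.

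There is no real obstacle here; the lemma is essentially a citation of Bousfield's localization theorem combined with the standard fact that localization at a homology theory on a presentable stable $\infty$-category produces a presentable stable $\infty$-category. The only mild subtlety worth being explicit about is the smallness/accessibility input needed to apply the reflective subcategory theorem; this is handled, as in Bousfield's original argument, by bounding the cardinality of a witness object to $E$-acyclicity, so that the localizing subcategory of acyclics is generated under colimits by a small set.
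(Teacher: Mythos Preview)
Your proposal is correct and takes essentially the same approach as the paper: the paper's proof is a one-line citation to \cite[Proposition 5.5.4.15]{Lurie2009Higher}, which is precisely the accessible reflective subcategory machinery from HTT \S 5.5.4 that you invoke. You have simply unpacked the argument in more detail, including the stability claim, which the paper leaves implicit.
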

\begin{proof}
  This is a consequence of \cite[Proposition 5.5.4.15]{Lurie2009Higher}.
\end{proof}
\begin{rem}
  The category $\Sp_{k,n}$ and localization functor $L_{k,n}$ only depend on the Bousfield class $\langle K(k) \vee \cdots \vee K(n) \rangle$. 
\end{rem}
\begin{nota}
  We will follow standard conventions and write $\Sp_{0,n}$ as $\Sp_n$ and $\Sp_{n,n}$ as $\Sp_{K(n)}$. Similarly, the corresponding Bousfield localization functors will be denoted by $L_{0,n} = L_n$ and $L_{n,n} = L_{K(n)}$, respectively. 
\end{nota}
\begin{rem}\label{rem:lubin_tate}
  By \cite[Theorem 2.1]{Ravenel1984Localization} we have $\langle E(n) \rangle = \langle K(0) \vee \ldots \vee K(n) \rangle$. In fact, let $E$ be a $BP$-module spectrum that is Landweber exact over $BP$, and is $v_n$-periodic, in the sense that $v_n \in BP_*$ maps to a unit in $E_*/(p,v_1,\ldots,v_{n-1})$. Then Hovey has shown that $\langle E \rangle = \langle K(0) \vee \ldots \vee K(n) \rangle$ \cite[Corollary 1.12]{Hovey1995Bousfield}. In particular, this applies to the Lubin--Tate $E$-theory spectrum $E_n$ (see \cite{Rezk1998Notes}) with 
  \[
(E_n)_* \cong W(\mathbb{F}_{p^n})[\![u_1,\ldots,u_{n-1}]\!][u^{\pm 1}]
  \]
  or the completed version of $E$-theory used in \cite{hovey_strickland99} with
  \[
E_* \cong (E(n)_*)^{\wedge}_{I_n} \cong \bZ_{p}[v_1,\ldots,v_{n-1},v_n^{\pm 1}]^{\wedge}_{I_n}. 
  \]
\end{rem}
\subsection{Bousfield decomposition}
In the previous section we introduced the spectra $E(n,J_k)$ for $n \ge k$ and an invariant regular sequence $p^{i_0},\ldots,v_{k-1}^{i_{k-1}}$ of length $k$. We now give Bousfield decompositions for $E(n,J_k)$-local spectra. 
\begin{prop}\label{prop:bousfield_classes}
  There are equivalences of Bousfield classes
  \begin{enumerate}
    \item  (Johnson--Yosimura) $ \langle v_n^{-1}BPJ_k \rangle = \langle E(n,J_k) \rangle$,
    \item (Yosimura) $\langle E(n,J_k) \rangle = \bigvee_{i=k}^n \langle K(i) \rangle$.
    \item $\langle E_n/I_k \rangle = \bigvee_{i=k}^n \langle K(i) \rangle$.
  \end{enumerate}

\end{prop}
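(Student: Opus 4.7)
The first two parts are theorems of Johnson--Yosimura \cite{JohnsonlYosimura1980Torsion} and Yosimura \cite{Yosimura1983operations} respectively, which I would invoke by citation. The new content is part (3); my plan is to deduce it from (1) and (2) together with the Bousfield equivalence $\langle E_n \rangle = \langle E(n) \rangle$ recalled in \Cref{rem:lubin_tate}.

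First I would fix an invariant regular sequence $J_k = (p^{i_0}, v_1^{i_1}, \ldots, v_{k-1}^{i_{k-1}})$ with exponents $i_j$ large enough that a finite type $k$ spectrum $F$ with $BP_* F \cong BP_*/J_k$ exists, e.g.\ by Hopkins--Smith \cite{HopkinsSmith1998Nilpotence}. For any test spectrum $X$, associativity of the smash product together with $\langle E_n \rangle = \langle E(n) \rangle$ gives
\[
X \wedge E_n \wedge F = 0 \iff X \wedge F \wedge E_n = 0 \iff X \wedge F \wedge E(n) = 0,
\]
so $\langle E_n \wedge F \rangle = \langle E(n) \wedge F \rangle$. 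By Landweber exactness of $E(n)$ over $BP$, one has $\pi_*(E(n) \wedge F) \cong v_n^{-1}(BP_*/J_k)/(v_{n+1}, v_{n+2}, \ldots)$, identifying $E(n) \wedge F$ with the Landweber exact spectrum $E(n, J_k)$. Combining (1) and (2) then gives $\langle E_n \wedge F \rangle = \bigvee_{i=k}^n \langle K(i) \rangle$.

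It remains to identify $\langle E_n \wedge F \rangle$ with $\langle E_n / I_k \rangle$. Using the $BP$-module structure on $E_n$, the spectrum $E_n \wedge F$ is equivalent to $E_n/J_k$, built as an iterated cofiber along the self-maps $p^{i_0}, v_1^{i_1}, \ldots, v_{k-1}^{i_{k-1}}$. For each $j$, the octahedral axiom applied to the factorization $v_j^{i_j} = v_j \cdot v_j^{i_j-1}$ exhibits $E_n / v_j^{i_j}$ and $E_n / v_j$ as lying in each other's thick subcategory, so their Bousfield classes agree; iterating yields $\langle E_n/J_k \rangle = \langle E_n / I_k \rangle$. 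This last bookkeeping step with the powers $i_j$ is the main subtlety of the argument; the rest is a formal consequence of (1), (2), and \Cref{rem:lubin_tate}.
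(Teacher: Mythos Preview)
Your argument for (3) works, but it takes an unnecessarily circuitous route and has one soft spot. The paper proves (3) directly and independently of (1) and (2): it observes that $\langle E_n/I_k \rangle = \langle E_n \wedge F(k) \rangle$ for any finite type~$k$ spectrum $F(k)$ (by a thick subcategory/d\'evissage argument, essentially your octahedral step), and then uses the Bousfield decomposition $\langle E_n \rangle = \bigvee_{i=0}^n \langle K(i) \rangle$ together with the definition of type~$k$ to read off $\langle E_n \wedge F(k) \rangle = \bigvee_{i=k}^n \langle K(i) \rangle$ immediately. There is no need to pass through $E(n)$, nor through $E(n,J_k)$ and parts (1)--(2).

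The soft spot in your version is the sentence ``identifying $E(n) \wedge F$ with the Landweber exact spectrum $E(n,J_k)$.'' Computing that $\pi_*(E(n)\wedge F)$ agrees with $E(n,J_k)_*$ does not by itself give an equivalence of spectra, and you use this identification to import the Bousfield class computation from (1) and (2). One can justify it (both are iterated cofibers of $E(n)$, or of $BPJ_k$, along the same regular sequence, hence equivalent as $BP$-modules), but as written it is a gap. More to the point, the step is avoidable: once you have $\langle E_n \wedge F \rangle = \langle E(n) \wedge F \rangle$, you can compute $\langle E(n) \wedge F \rangle = \bigvee_{i=k}^n \langle K(i) \rangle$ directly from $\langle E(n) \rangle = \bigvee_{i=0}^n \langle K(i) \rangle$ and the fact that $K(i)\wedge F$ is a nontrivial wedge of $K(i)$'s for $i\ge k$ and zero otherwise. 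That collapses your argument to the paper's. Your final d\'evissage step reducing $E_n/J_k$ to $E_n/I_k$ is correct and is exactly what the paper packages as ``by the thick subcategory theorem.''
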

\begin{proof}
  Part (1) is \cite[Corollary 4.11]{JohnsonlYosimura1980Torsion}. (2) can be deduced from \cite{Yosimura1985Acyclicity} as we now explain. First, by \cite[Corollary 1.3 and Proposition 1.4]{Yosimura1985Acyclicity} along with (1), we have 
  \[
\langle E(n,J_k) \rangle = \langle v_n^{-1}BPJ_k \rangle   = \langle L_n BPJ_k \rangle  = \bigvee_{i=k}^n \langle v_i^{-1}P(i) \rangle
  \]
  By \cite[Corollary 1.8]{Yosimura1985Acyclicity} we have $\langle v_i^{-1}P(i) \rangle = \langle K(i) \rangle$, and hence (2) follows. For (3), we first note that by the thick subcategory theorem $\langle E_n/I_k \rangle = \langle E_n \wedge F(k) \rangle$ for some finite type $k$ spectrum. Since $E_n = \bigvee_{i=0}^n K(i)$, (3) then follows from the definition of a type $k$ spectrum. 
\end{proof}
\begin{rem}
  In other words, the category of $E(n,J_k)$-local spectra is equivalent to the category of $K(k)\vee \cdots K(n)$-local spectra. Note that this implies this category only depends on the length of the sequence, and not the integers $i_0,\ldots,i_{n-1}$. We will therefore sometimes say that a spectrum $X$ is $E(n,J_k)$-local if $X \in \Sp_{k,n}$. 
\end{rem}

\subsection{Algebraic stable homotopy categories}
We now begin by recalling the basics on algebraic stable homotopy theories, see \cite{HoveyPalmieriStrickl1997Axiomatic} in the triangulated setting. 
\begin{defn}
  A stable homotopy theory is a presentable, symmetric monoidal stable $\infty$-
category $(\cC, \otimes, \unit)$ where the tensor product commutes with all colimits.  It is algebraic if  there is a set $\cal{G}$ of compact objects such that the smallest localizing subcategory of $\cC$ containing all $G \in \cal{G}$ is $\cC$ itself. 
\end{defn}
\begin{rem}
  The assumptions on $\cC$ imply that it the functor $- \wedge Y$ has a right adjoint $F(Y,-)$, i.e., the symmetric monoidal structure on $\cC$ is closed. 
\end{rem}
\begin{rem}
  The associated homotopy category $\Ho(\cC)$ is then an algebraic stable homotopy theory in the sense of \cite{HoveyPalmieriStrickl1997Axiomatic}. We note that compactness can be checked at the level of the homotopy category, see \cite[Remark 1.4.4.3]{lurie-higher-algebra}. 
\end{rem}
  Applying \cite[Corollary B.9]{hovey_strickland99} and \cite[Theorem 3.5.1]{HoveyPalmieriStrickl1997Axiomatic} we have the following. 

\begin{prop}[Hovey--Strickland, Hovey--Palmieri--Strickland]\label{prop:hs_b.9}
  $\Sp_{k,n}$ is an algebraic stable homotopy category with compact generator $L_{k,n}F(k)$. The symmetric monoidal structure in $\Sp_{k,n}$ is given by
  \[
X \htimes Y \coloneqq L_{k,n}(X \wedge Y).
  \]
  Colimits are computed by taking the colimit in spectra and then applying $L_{k,n}$, while function objects and limimts are computed in the category of spectra. 
\end{prop}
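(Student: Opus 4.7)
The plan is to deduce this from the two cited results, \cite[Corollary B.9]{hovey_strickland99} and \cite[Theorem 3.5.1]{HoveyPalmieriStrickl1997Axiomatic}, supplying the small amount of extra $\infty$-categorical setup. The argument naturally breaks into three parts: the symmetric monoidal structure, the formulas for (co)limits, and the compact generator.

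For the monoidal structure, since $\Sp$ is presentably symmetric monoidal and $\Sp_{k,n} \subseteq \Sp$ is a reflective localization, by \cite[Proposition 2.2.1.9]{lurie-higher-algebra} it suffices to check that the class of $L_{k,n}$-equivalences is stable under the smash product. Equivalently, the class of $E(n,J_k)$-acyclics is closed under $\wedge$; this holds because $E(n,J_k)$ is Bousfield equivalent to the homotopy ring spectrum $v_n^{-1}BPJ_k$ by \Cref{prop:bousfield_classes}. The induced monoidal product is then $X \htimes Y = L_{k,n}(X \wedge Y)$, and $L_{k,n}$ is symmetric monoidal.

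The formulas for limits and colimits are formal. Because $\Sp_{k,n} \hookrightarrow \Sp$ is a right adjoint, it preserves limits and the parametrized right adjoints $F(Y,-)$, so limits and function objects in $\Sp_{k,n}$ are computed in $\Sp$. Dually, the left adjoint $L_{k,n}$ preserves colimits, so a colimit in $\Sp_{k,n}$ is computed by taking the colimit in $\Sp$ and applying $L_{k,n}$.

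For the compact generator and the algebraic structure I would invoke \cite[Corollary B.9]{hovey_strickland99}: it establishes, at the level of the triangulated homotopy category, that $L_{k,n}F(k)$ is compact in $\Sp_{k,n}$ and generates it as a localizing subcategory. By the preceding remark, both properties lift to the $\infty$-categorical setting, and algebraicity then follows from the definition, or equivalently from \cite[Theorem 3.5.1]{HoveyPalmieriStrickl1997Axiomatic}. The main subtlety, already handled in \cite{hovey_strickland99}, is compactness of $L_{k,n}F(k)$, since Bousfield localization does not in general preserve compactness (as $L_{k,n}S^0$ itself shows). The essential input is that $F(k)$ is a type $k$ finite, hence $K(i)$-acyclic for $i < k$, so $L_{k,n}F(k) \simeq L_n F(k)$; because $L_n$ is smashing, smashing with $F(k)$ commutes with $L_{k,n}$, and compactness in $\Sp_{k,n}$ follows by adjunction from compactness of $F(k)$ in $\Sp$.
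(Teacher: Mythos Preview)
Your proposal is correct and follows the same approach as the paper, which simply cites \cite[Corollary B.9]{hovey_strickland99} and \cite[Theorem 3.5.1]{HoveyPalmieriStrickl1997Axiomatic}; you additionally supply the $\infty$-categorical details for the monoidal structure and (co)limits, which the paper leaves implicit. Your compactness sketch identifies the right ingredients (type $k$ plus the smash product theorem), though the phrase ``smashing with $F(k)$ commutes with $L_{k,n}$'' is doing some work: the precise statement you need is that $F(k)\wedge X$ is already $E(n,J_k)$-local whenever $X$ is $E(n)$-local, since any $E(n,J_k)$-acyclic $Z$ has $DF(k)\wedge Z$ $E(n)$-acyclic (split into $i<k$ and $i\ge k$), from which compactness follows by adjunction.
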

\begin{rem}
  The most difficult part of the above proposition is that $L_{k,n}F(k)$ is a compact generator of $\Sp_{k,n}$. Indeed, one must show that the conditions of \cite[Proposition B.7]{hovey_strickland99} are satisfied and to do this, one at some point needs to invoke the thick subcategory theorem \cite{HopkinsSmith1998Nilpotence}, or one its consequences (such as the Hopkins--Ravenel smash product theorem \cite{Ravenel1992Nilpotence}). 
\end{rem}
\begin{rem}
  The localization $L_n = L_{0,n}$ is smashing (that is $L_nX \simeq L_nS^0 \wedge X$) by the Hopkins--Ravenel smash product theorem \cite{Ravenel1992Nilpotence} and in this case $X \htimes Y \simeq X \wedge Y$. However, if $k \ne 0$, then localization $L_{k,n}$ is not smashing as the following lemma shows, and so $X \htimes Y \not \simeq X \wedge Y$ in general. 
\end{rem}
\begin{lem}\label{lem:non_compact_unit}
  If $k \ne 0$, then $L_{k,n}$ is not smashing, and $L_{k,n}S^0$ is not compact in $\Sp_{k,n}$.
\end{lem}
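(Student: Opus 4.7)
Both statements follow from a single observation: for $k \geq 1$, the spectrum $S^0[1/p]$ is $L_{k,n}$-acyclic, but the unit $1 \in \pi_0 L_{k,n}S^0$ is not $p$-power torsion. Indeed, by \Cref{prop:bousfield_classes}(3), $\Sp_{k,n}$ is the category of $E_n/I_k$-local spectra, and since $p \in I_k$ when $k \geq 1$, the spectrum $E_n/I_k$ is $p$-torsion, so $(E_n/I_k) \wedge S^0[1/p] = 0$; hence $L_{k,n}(S^0[1/p]) = 0$. On the other hand, by \Cref{rem:lubin_tate} we have $E_n \in \Sp_{k,n}$, so the unit $S^0 \to E_n$ factors through a (unique) ring map $L_{k,n}S^0 \to E_n$, and on $\pi_0$ this sends $1$ to $1 \in W(\mathbb{F}_{p^n}) \subseteq \pi_0 E_n$, which is not annihilated by any power of $p$. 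In particular, $(\pi_0 L_{k,n}S^0)[1/p] \neq 0$, and therefore $(L_{k,n}S^0)[1/p] \neq 0$.

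If $L_{k,n}$ were smashing, then $L_{k,n}X \simeq L_{k,n}S^0 \wedge X$ for all $X$. Taking $X = S^0[1/p]$ would give $(L_{k,n}S^0)[1/p] = L_{k,n}S^0 \wedge S^0[1/p] \simeq L_{k,n}(S^0[1/p]) = 0$, contradicting the previous paragraph. For non-compactness, consider the sequential diagram $L_{k,n}S^0 \xrightarrow{p} L_{k,n}S^0 \xrightarrow{p} \cdots$ in $\Sp_{k,n}$. Its colimit there is $L_{k,n}$ applied to the colimit in $\Sp$, which is $L_{k,n}(S^0[1/p]) = 0$. Compactness of $L_{k,n}S^0$ would force $\pi_0 = [L_{k,n}S^0,-]_{\Sp_{k,n}}$ to commute with this sequential colimit, yielding $(\pi_0 L_{k,n}S^0)[1/p] = \colim_N \pi_0 L_{k,n}S^0 = \pi_0(0) = 0$, again contradicting the first paragraph.

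The main obstacle is essentially non-existent once these pieces are in place: the only non-formal input is that $1 \in \pi_0 E_n$ is not $p$-power torsion, which is immediate from the explicit form of $\pi_0 E_n \cong W(\mathbb{F}_{p^n})[\![u_1, \ldots, u_{n-1}]\!]$ recalled in \Cref{rem:lubin_tate}. Note that one could equivalently prove only non-compactness and deduce non-smashing from the general fact that a smashing localization of $\Sp$ necessarily produces a compact unit (since the inclusion of local objects preserves filtered colimits), but the arguments above are parallel enough that it is cleaner to give both directly.
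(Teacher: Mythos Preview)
Your proof is correct and takes a genuinely different route from the paper's. The paper argues via Bousfield classes: it shows $\langle L_{k,n}S^0 \rangle = \langle E(n) \rangle$ by sandwiching between $\langle L_nS^0 \rangle$ and $\langle L_{K(n)}S^0 \rangle$ (both equal to $\langle E(n) \rangle$ by \cite[Corollary 5.3]{hovey_strickland99}), then observes that if $L_{k,n}$ were smashing one would have $\langle L_{k,n}S^0 \rangle = \bigvee_{i=k}^n \langle K(i) \rangle \lneq \langle E(n) \rangle$; non-compactness is then read off from the general result \cite[Theorem 3.5.2]{HoveyPalmieriStrickl1997Axiomatic}. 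Your approach instead exhibits a single explicit witness, the tower inverting $p$, and uses it for both claims. This is more elementary and self-contained---you avoid the nontrivial input $\langle L_{K(n)}S^0 \rangle = \langle E(n) \rangle$ and the appeal to HPS---at the cost of being tailored to this particular situation (the paper's Bousfield-class argument would transport to other comparisons of this shape).

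Two small remarks. First, your citation of \Cref{rem:lubin_tate} for $E_n \in \Sp_{k,n}$ is slightly off: that remark identifies the Bousfield class $\langle E_n \rangle$, not the locality of $E_n$. What you actually need is that $E_n$ is $K(n)$-local (hence lies in $\Sp_{k,n}$ for every $k \le n$), which is standard and is used later in the paper (e.g.\ $L_{k,n}E_n \simeq E_n$ in the proof of \Cref{thm:descendability_en}). Second, in the non-compactness paragraph the colimit in $\Sp$ of your tower is literally $(L_{k,n}S^0)[1/p]$, not $S^0[1/p]$; but since $S^0 \to L_{k,n}S^0$ is an $E(n,J_k)$-equivalence, so is the map obtained by smashing with $S^0[1/p]$, and the conclusion $L_{k,n}((L_{k,n}S^0)[1/p]) = 0$ follows exactly as you say.
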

\begin{proof}
  We first claim that $\langle L_{k,n}S^0 \rangle = \langle E(n) \rangle$. To see this, note that we have ring maps $L_nS^0 \to L_{k,n}S^0 \to L_{K(n)}S^0$, so that $\langle L_{K(n)}S^0 \rangle \le \langle L_{k,n}S^0 \rangle \le \langle L_nS^0 \rangle $. However, $\langle L_{K(n)}S^0 \rangle = \langle L_nS^0 \rangle = \langle E(n) \rangle$ \cite[Corollary 5.3]{hovey_strickland99}, so that these inequalities are actually equalities, and all three are Bousfield equivalent to $E(n)$.

  Suppose now that $L_{k,n}$ were smashing, so that $\langle L_{k,n}S^0 \rangle = \bigvee_{i=k}^n\langle  K(i) \rangle$ \cite[Proposition 1.27]{Ravenel1984Localization}.  Then, since $\langle E(n) \rangle \gneq \bigvee_{i=k}^n\langle  K(i) \rangle$ as soon as $k \ne 0$, we have obtained a contradiction.

  The second part is then a consequence of \cite[Theorem 3.5.2]{HoveyPalmieriStrickl1997Axiomatic}.
\end{proof}
\begin{rem}\label{rem:moore_spectra}
  Using the periodicity theorem of Hopkins and Smith \cite{HopkinsSmith1998Nilpotence}, Hovey and Strickland \cite[Section 4]{hovey_strickland99} constructed a sequence of ideals $\{I_j\}_{j} \subseteq \frak{m} \subseteq E_0$ and type $k$ spectra  $\{M_k(j)\}_{j}$ with the following properties (see also \cite[Remark 2.1]{barthel2018constructing}):
  \begin{enumerate}
    \item $I_{j+1} \subseteq I_j$ and $\cap_{j} I_j = 0$;
    \item $E_0/I_{j}$ is finite; and,
    \item $E_*(M_k(j)) \cong E_*/I_j$ and there are spectrum maps $q \colon M_k(j+1) \to M_k(j)$ realizing the quotient $E_*/M_k(j+1) \to E_*/M_k(j)$. 
  \end{enumerate}
  We call such a tower $\{M_k(j)\}_{j}$ a tower of generalized Moore spectrum of type $k$. 
\end{rem}
\begin{rem}\label{rem:limits}
  The tower as above is constructed in the homotopy category of spectra. However, as explained in \cite[Page 9]{HopkinsSmith1998Nilpotence} (see equation (15)), such sequential diagrams can always be lifted to a sequence of cofibrations between cofibrant objects, and in particular to a diagram in the $\infty$-category of spectra (the point is that such diagrams have no non-trivial homotopy coherence data). Then, the (co)limit in the $\infty$-categorical sense, agrees with the homotopy (co)limit used in Definition 2.2.3 and Definition 2.2.10 of \cite{HoveyPalmieriStrickl1997Axiomatic}. 
\end{rem}
\begin{nota}
  We write $M_{k,n}X$ for the fiber of the localization map $L_nX \to L_{k-1}X$. By definition, we set $M_{0,n} = L_n$. 
\end{nota}
\begin{lem}\label{lem:bousfield_mkn}
  We have an equality of Bousfield classes 
  \[
\langle M_{k,n}S^0 \rangle = \bigvee_{i=k}^n \langle K(i) \rangle. 
  \]
\end{lem}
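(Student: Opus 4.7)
The plan is to handle $k \ge 1$ (the case $k = 0$ reduces to the equality $\langle L_n S^0 \rangle = \langle E(n) \rangle$ of \Cref{rem:lubin_tate}) by sandwiching $\langle M_{k,n} S^0 \rangle$ between $\bigvee_{i=k}^{n} \langle K(i) \rangle$ on both sides, using the defining cofiber sequence $M_{k,n} S^0 \to L_n S^0 \to L_{k-1} S^0$. Because $L_n$ and $L_{k-1}$ are smashing by the Hopkins--Ravenel smash product theorem, this cofiber sequence is preserved by smashing with any spectrum, which is the key flexibility driving the argument.

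For the inclusion $\bigvee_{i=k}^{n} \langle K(i) \rangle \le \langle M_{k,n} S^0 \rangle$, I would smash the cofiber sequence with $K(i)$ for each $k \le i \le n$. The inclusion $\langle K(i) \rangle \le \langle E(n) \rangle$ of \Cref{rem:lubin_tate} makes $K(i)$ into an $E(n)$-local spectrum, so $K(i) \wedge L_n S^0 \simeq K(i)$; and since $K(i) \wedge K(j) \simeq 0$ for $j \ne i$, the identification $\langle L_{k-1} S^0 \rangle = \bigvee_{j=0}^{k-1} \langle K(j) \rangle$ forces $K(i) \wedge L_{k-1} S^0 \simeq 0$ when $i \ge k$. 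The smashed cofiber sequence then collapses to an equivalence $K(i) \wedge M_{k,n} S^0 \simeq K(i)$, whence $\langle K(i) \rangle \le \langle M_{k,n} S^0 \rangle$.

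For the reverse inclusion I would induct on $n - k$. The octahedral axiom applied to the factorisation $L_n S^0 \to L_{n-1} S^0 \to L_{k-1} S^0$ produces a cofiber sequence $M_n S^0 \to M_{k,n} S^0 \to M_{k,n-1} S^0$, so $\langle M_{k,n} S^0 \rangle \le \langle M_n S^0 \rangle \vee \langle M_{k,n-1} S^0 \rangle$. Iterating reduces the problem to the monochromatic identification $\langle M_j S^0 \rangle = \langle K(j) \rangle$: one direction is the preceding paragraph with $k = n = j$, and the other follows from the chromatic fracture square, which identifies $M_j X$ with the fiber of $L_{K(j)} X \to L_{j-1} L_{K(j)} X$, so that any $K(j)$-acyclic $X$ satisfies $M_j X \simeq 0$ and hence $\langle M_j S^0 \rangle \le \langle K(j) \rangle$. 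The only mildly delicate ingredient is this monochromatic identification; everything else is routine octahedral and Bousfield class bookkeeping.
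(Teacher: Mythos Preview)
Your argument is correct, and the route is genuinely different from the paper's. The paper simply identifies $M_{k,n}S^0 \simeq C_{k-1}L_nS^0$ (using that both $L_n$ and $C_{k-1}$ are smashing) and then invokes \cite[Proposition 5.3]{hovey_strickland99}, which records exactly this Bousfield class computation. Your approach instead builds the result from first principles: smashing the defining cofiber sequence with $K(i)$ handles one inequality, and an octahedral induction on $n-k$ reduces the other to the monochromatic case $\langle M_jS^0\rangle = \langle K(j)\rangle$, which you extract from the chromatic fracture square. The paper's proof is shorter because it outsources the work; yours is more self-contained and makes the mechanism visible, at the cost of reproving a special case of what Hovey--Strickland already packaged. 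One small imprecision: the inequality $\langle K(i)\rangle \le \langle E(n)\rangle$ does not by itself imply that $K(i)$ is $E(n)$-local; you need the additional observation that $K(i)_*$ is a graded field (so $K(i)$-cohomology of an $E(n)$-acyclic vanishes), or simply that $K(i)$ is a module spectrum over an $E(n)$-local ring. The conclusion $K(i)\wedge L_nS^0 \simeq K(i)$ is of course correct and standard.
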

\begin{proof}
Recall that, by definition, there is a cofiber sequence
\[
C_{k-1} S^0 \to S^0 \to L_{k-1}S^0. 
\]  
Applying $L_n$ to this and using $L_iL_j \simeq L_{\min(i,j)}$ we see that $M_{k,n}S^0 \simeq L_nC_{k-1}S^0 \simeq C_{k-1}L_nS^0$, where the last equivalence follows as both functors are smashing.  It follows from \cite[Proposition 5.3]{hovey_strickland99} that $\langle M_{k,n}S^0 \rangle = \bigvee_{i=k}^n \langle K(i) \rangle$ as claimed. 
\end{proof}
\begin{rem}
  In \cite[Proposition 7.10(e)]{hovey_strickland99} Hovey and Strickland give a formula for $L_{K(n)}X$ in terms of towers of generalized Moore spectra. We show now that their proof extends to $L_{k,n}X$. 
\end{rem}
\begin{prop}\label{prop:7.10}
  There are equivalences 
  \[
L_{k,n}X \simeq L_{F(k)}L_nX \simeq \varprojlim_{j} (L_nX \wedge M_k(j)) \simeq F(M_{k,n}S^0,L_nX). 
  \]
  where the limit is taken over a tower $\{M_k(j)\}$ of generalized Moore spectra of type $k$. 
\end{prop}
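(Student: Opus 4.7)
The plan is to establish the chain of three equivalences in turn, adapting the strategy of \cite[Proposition 7.10]{hovey_strickland99} from the $K(n)$-local case to the present setting.

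For the first equivalence $L_{k,n}X \simeq L_{F(k)}L_nX$, I first observe that every $K(k) \vee \cdots \vee K(n)$-local spectrum is $E(n)$-local, so $L_{k,n} \simeq L_{k,n} \circ L_n$, and it suffices to identify the restriction of $L_{k,n}$ to $\Sp_n$ with $L_{F(k)}$. This I do via a Bousfield class argument: the thick subcategory theorem gives $\langle F(k) \rangle = \bigvee_{i \ge k}\langle K(i) \rangle$, and because any $Y \in \Sp_n$ is automatically $K(i)$-acyclic for $i > n$, the $F(k)$-acyclics and the $(K(k) \vee \cdots \vee K(n))$-acyclics inside $\Sp_n$ coincide.

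For the second equivalence, I first check that $\varprojlim_j (L_nX \wedge M_k(j))$ is $F(k)$-local. Each $M_k(j)$ is a finite type-$k$ spectrum, so lies in $\thickt{F(k)}$; hence $L_nX \wedge M_k(j)$ is $F(k)$-local, and a limit of $F(k)$-local objects is again $F(k)$-local, since by \Cref{prop:hs_b.9} limits in $\Sp_{k,n}$ are computed in $\Sp$. I then show the natural map $L_nX \to \varprojlim_j (L_nX \wedge M_k(j))$ is an $F(k)$-equivalence by checking that its fiber is $F(k)$-acyclic, following the argument of \cite[Proposition 7.10(c,d)]{hovey_strickland99}: the tower of \Cref{rem:moore_spectra} is built so that the pro-object $\{L_nX \wedge M_k(j)\}$ computes the $I$-adic completion, which on $\Sp_n$ is precisely $L_{F(k)}$.

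For the third equivalence I use Spanier--Whitehead duality. Each $M_k(j)$ is finite, so $L_nX \wedge M_k(j) \simeq F(DM_k(j), L_nX)$, giving
\[
\varprojlim_j (L_nX \wedge M_k(j)) \simeq F\bigl(\hocolim_j DM_k(j),\; L_nX\bigr).
\]
It then remains to identify $\hocolim_j DM_k(j)$, after $L_n$-localization (which can be absorbed into the hom since $L_nX$ is $E(n)$-local), with $M_{k,n}S^0$. For this I combine \Cref{lem:bousfield_mkn} with the description of $C_{k-1}S^0$ as a filtered colimit of duals of finite type-$k$ spectra and the Koszul self-duality $DM_k(j) \simeq \Sigma^{-N_j} M_k(j)$, noting that the resulting shifts can be absorbed into the tower by reindexing.

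I expect the main obstacle to be the second equivalence, specifically verifying that the $\infty$-categorical inverse limit truly computes $L_{F(k)}$-localization rather than merely producing an $F(k)$-local object. Here \Cref{rem:limits} is essential: it guarantees that the $\infty$-categorical limit agrees with the homotopy limit used in the triangulated framework of \cite{HoveyPalmieriStrickl1997Axiomatic, hovey_strickland99}, so the arguments there transport to $\Sp_{k,n}$ with no substantive change.
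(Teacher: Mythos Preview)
Your argument is correct, but the organization differs from the paper's and one step takes an unnecessary detour.

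The paper proceeds in essentially the reverse order. It first identifies $M_{k,n}X \simeq L_nC_{k-1}^fX$ using \cite[Corollary 6.10]{hovey_strickland99} (that $L_{k-1}$ and $L_{k-1}^f$ agree on $\Sp_n$), then invokes \cite[Proposition 7.10(a)]{hovey_strickland99} to write $M_{k,n}S^0 \simeq \varinjlim_j DM_k(j) \wedge L_nS^0$, from which the last two equivalences follow immediately. Only then does it prove $L_{k,n}X \simeq L_{F(k)}L_nX$: the map $X \to L_{F(k)}L_nX$ is an $L_nF(k)$-equivalence, and locality is checked \emph{via} the already-established description $L_{F(k)}L_nX \simeq F(M_{k,n}S^0,L_nX)$ together with \Cref{lem:bousfield_mkn}. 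Your approach to the first equivalence is more self-contained: you match acyclics directly inside $\Sp_n$, using that $K(i)\wedge Y = 0$ for $i>n$ and $Y \in \Sp_n$ (a consequence of the smash product theorem, since $Y \simeq L_nS^0 \wedge Y$ and $\langle L_nS^0\rangle = \langle E(n)\rangle$). This avoids the circularity of needing the function-spectrum description to verify locality, at the cost of invoking that Bousfield class identity.

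In your third step, the appeal to Koszul self-duality $DM_k(j) \simeq \Sigma^{-N_j}M_k(j)$ and ``absorbing shifts by reindexing'' is a red herring and should be dropped. No suspensions enter: dualizing the tower $\{M_k(j+1) \to M_k(j)\}$ gives a direct system $\{DM_k(j) \to DM_k(j+1)\}$, and \cite[Proposition 7.10(a)]{hovey_strickland99} identifies its colimit with $C_{k-1}^fS^0$, not $C_{k-1}S^0$. You then need the extra step $L_nC_{k-1}^fS^0 \simeq M_{k,n}S^0$, which is exactly \cite[Corollary 6.10]{hovey_strickland99}; this is the ingredient your sketch glosses over by conflating $C_{k-1}$ with $C_{k-1}^f$.
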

\begin{proof}
  We first note that $L_{k-1}X \simeq L_nL_{k-1}X \simeq L_nL_{k-1}^fX$, the latter by \cite[Corollary 6.10]{hovey_strickland99}. It follows that $M_{k,n}X \simeq L_nC_{k-1}^fX$, where $C_{k-1}^f$ is the acycliczation functor associated to $L_{k-1}^f$. By \cite[Proposition 7.10(a)]{hovey_strickland99} (and \Cref{rem:limits})  $\displaystyle C_{k-1}^fX \simeq \varinjlim_j D(M_k(j)) \wedge X$, so that $\displaystyle M_{k,n}X \simeq \varinjlim_{j} D(M_k(j)) \wedge L_nX$. It follows that 
  \[
\varprojlim_{j} (L_nX \wedge M_k(j)) \simeq F(M_{k,n}S^0,L_nX). 
  \]
  Moreover, by \cite[Proposition 7.10(a)]{hovey_strickland99} this is equivalent to $L_{F(k)}L_nX$.

  To finish the proof, we will show that $L_{k,n}X \simeq L_{F(k)}L_nX$. First, note that $X \to L_nX$ is an $L_nS^0$-equivalence, and $L_nX \to L_{F(k)}L_nX$ is an $F(k)$-equivalence, so that $X \to L_{F(k)}L_nX$ is an $L_nS^0 \wedge F(k)$-equivalence. But $L_nS^0 \wedge F(k) \simeq L_nF(k)$ and $\langle L_nF(k) \rangle = \bigvee_{i=k}^n \langle K(i) \rangle$ \cite[Proposition 5.3]{hovey_strickland99}. Therefore $X \to L_{F(k)}L_nX$ is a $K(k) \vee \ldots \vee K(n)$-equivalence, and we only need show that $L_{F(k)}L_nX$ is $K(k) \vee \ldots \vee K(n)$-local. But $L_{F(k)}L_nX \simeq F(M_{k,n}S^0,L_nX)$ and so it follows from \Cref{lem:bousfield_mkn} that $L_{F(k)}L_nX$ is $K(k) \vee \ldots \vee K(n)$-local. We conclude that $L_{k,n}X \simeq L_{F(k)}L_nX$, as required. 
\end{proof}
\begin{rem}
  The equivalence 
  \[
L_{k,n}X \simeq \varprojlim_{j} (L_nX \wedge M_k(j))
  \]
  has also been obtained in \cite[Proposition 6.21]{bhv1} using the theory and complete and torsion objects in a stable $\infty$-category. The next result is also contained in \cite[Corollary 6.17]{bhv1}. 
\end{rem}
\begin{prop}\label{prop:pullback}
  For any spectrum $X$ there is a pullback square
  \[
\begin{tikzcd}
L_nX \arrow[d] \arrow[r] & {L_{k,n}X} \arrow[d] \\
L_{k-1}X \arrow[r]       & {L_{k-1}L_{k,n}X.}          
\end{tikzcd}
  \]
\end{prop}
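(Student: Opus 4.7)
The plan is to verify that the square is Cartesian by showing that the induced map on vertical fibers is an equivalence, which suffices in a stable $\infty$-category.

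First I identify both vertical fibers. By the notation introduced immediately before \Cref{lem:bousfield_mkn}, the fiber of the left vertical $L_nX \to L_{k-1}X$ is by definition $M_{k,n}X$. For the right vertical, combining \Cref{prop:bousfield_classes} with \Cref{rem:lubin_tate} yields $\langle K(k)\vee\cdots\vee K(n)\rangle \le \langle E(n)\rangle$, so every $E(n,J_k)$-local spectrum is in particular $E(n)$-local. Thus $L_n L_{k,n}X \simeq L_{k,n}X$, and the fiber of $L_{k,n}X \to L_{k-1}L_{k,n}X$ is naturally identified with $M_{k,n}(L_{k,n}X)$. The induced map on vertical fibers is then $M_{k,n}(\eta) \colon M_{k,n}X \to M_{k,n}(L_{k,n}X)$, where $\eta \colon X \to L_{k,n}X$ is the localization unit.

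The crux is to show that $M_{k,n}(\eta)$ is an equivalence. Since $L_n$ (by the Hopkins--Ravenel smash product theorem) and $L_{k-1}$ are both smashing, so is $M_{k,n}$, i.e., $M_{k,n}Y \simeq M_{k,n}S^0 \wedge Y$ for every $Y$. By \Cref{lem:bousfield_mkn}, $\langle M_{k,n}S^0\rangle = \bigvee_{i=k}^n \langle K(i)\rangle$, so smashing with $M_{k,n}S^0$ inverts every $K(k)\vee\cdots\vee K(n)$-equivalence. As $\eta$ is such an equivalence by construction of $L_{k,n}$, the map $M_{k,n}(\eta) = M_{k,n}S^0 \wedge \eta$ is an equivalence, and we conclude that the square is Cartesian.

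I do not anticipate a serious obstacle here; the argument is a direct adaptation of the standard chromatic fracture square, and the bookkeeping reduces to confirming $L_n L_{k,n} \simeq L_{k,n}$ and that $M_{k,n}$ is smashing. An alternative route would use the explicit formula $L_{k,n}X \simeq F(M_{k,n}S^0, L_nX)$ from \Cref{prop:7.10} to construct the pullback directly, but the fiber-comparison approach is cleaner and more symmetric with the classical $k=n$ chromatic fracture proof.
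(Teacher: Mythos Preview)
Your argument is correct. The paper does not spell out a proof but simply invokes the Bousfield decomposition $\langle E(n)\rangle = \langle E(k-1)\rangle \vee \langle E(n,J_k)\rangle$ and cites the general fracture-square machinery (Bauer, Antol\'in-Camarena--Barthel) that produces a pullback square from any such splitting. Your fiber-comparison argument is precisely the content of that machinery in this instance: the key inputs---that $M_{k,n}$ is smashing and that $\langle M_{k,n}S^0\rangle = \bigvee_{i=k}^n\langle K(i)\rangle$---are exactly what makes the cited references apply. So while the presentation differs (you give a self-contained verification rather than appealing to a black box), the underlying mathematics is the same, and your version has the virtue of making the dependence on \Cref{lem:bousfield_mkn} and the smash product theorem explicit.
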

\begin{proof}
  This is a standard consequence of the Bousfield decomposition $\langle E(n) \rangle = \langle E(k-1) \rangle \vee \langle E(n,J_k)\rangle$ using, for example, \cite[Proposition 2.2]{bauer_bousfield_2014} (or in the $\infty$-categorical setting, see \cite{Antolin-CamarenaBarthel2022Chromatic}).  
\end{proof}
\begin{rem}
  Use \cite[Proposition 2.2]{bauer_bousfield_2014} one can deduce various other chromatic fracture squares. For example, we have a pullback square
    \[
\begin{tikzcd}
L_{k,n}X \arrow[d] \arrow[r] & {L_{k,h}X} \arrow[d] \\
L_{h+1,n}X \arrow[r]       & {L_{k,h}L_{h+1,n}X,}          
\end{tikzcd}
  \]
  for $k \le h \le n-1$. 
\end{rem}
\begin{rem}
  These types of iterated chromatic localizations have been investigated by Bellumat and Strickland \cite{strickland2019iterated}. Results such as the chromatic fracture square can be recovered from their work, however we do not investigate this in detail. 
\end{rem}
\begin{cor}\label{cor:moore_at_least_k}
  Suppose $M_j$ is a generalized Moore spectrum of type at least $k$, then $L_nM_j \simeq L_{k,n}M_j$. 
\end{cor}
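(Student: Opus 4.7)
The plan is to exploit the pullback (chromatic fracture) square from \Cref{prop:pullback} applied to $X = M_j$. That square reads
\[
\begin{tikzcd}
L_nM_j \arrow[d] \arrow[r] & {L_{k,n}M_j} \arrow[d] \\
L_{k-1}M_j \arrow[r]       & {L_{k-1}L_{k,n}M_j,}
\end{tikzcd}
\]
so to prove $L_nM_j \simeq L_{k,n}M_j$ it suffices to show that both vertices of the bottom row vanish, i.e. that $L_{k-1}M_j \simeq 0$ (which immediately forces $L_{k-1}L_{k,n}M_j \simeq 0$ as well, since $L_{k-1}$ factors through itself).

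To show $L_{k-1}M_j \simeq 0$, first recall from \Cref{rem:lubin_tate} that $\langle E(k-1)\rangle = \langle K(0) \vee \cdots \vee K(k-1)\rangle$. A generalized Moore spectrum $M_j$ of type at least $k$ satisfies $K(i)_*M_j = 0$ for all $0 \le i \le k-1$ by definition of type (this is built into the construction recalled in \Cref{rem:moore_spectra}, where $E_*(M_k(j)) \cong E_*/I_j$ with $I_j \subseteq \frak{m}$ vanishes mod $(p,v_1,\dots,v_{k-1})$ applied to $K(i)$ for $i<k$). Hence $M_j$ is $E(k-1)$-acyclic, so $L_{k-1}M_j \simeq 0$.

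Feeding this into the fracture square yields the desired equivalence $L_nM_j \simeq L_{k,n}M_j$. There is essentially no obstacle here: the only thing to verify carefully is the acyclicity statement for Moore spectra of type at least $k$, which is a direct unpacking of the definition combined with Ravenel's Bousfield class identification for $E(k-1)$.
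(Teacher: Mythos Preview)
Your overall strategy matches the paper's: use the fracture square of \Cref{prop:pullback} and kill the bottom row. You correctly argue that $L_{k-1}M_j \simeq \ast$. However, your parenthetical justification that this ``immediately forces $L_{k-1}L_{k,n}M_j \simeq 0$ as well, since $L_{k-1}$ factors through itself'' is not a valid argument. The implication $L_{k-1}X \simeq \ast \Rightarrow L_{k-1}L_{k,n}X \simeq \ast$ fails for general $X$: for instance, take $k=n=1$ and $X = M(\bZ/p^{\infty})$. This spectrum is rationally acyclic, so $L_0X \simeq \ast$; but the cofiber sequence $S^0 \to S^0[1/p] \to \Sigma X$ shows $L_{K(1)}X \simeq \Sigma^{-1}L_{K(1)}S^0$, which is certainly not rationally acyclic. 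So something specific to $M_j$ is needed.

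The missing ingredient, which the paper supplies, is that $M_j$ is a \emph{finite} complex. Then $L_{k,n}M_j \simeq (L_{k,n}S^0) \wedge M_j$, and since $L_{k-1}$ is smashing one gets
\[
L_{k-1}L_{k,n}M_j \simeq L_{k-1}S^0 \wedge L_{k,n}S^0 \wedge M_j \simeq L_{k,n}S^0 \wedge L_{k-1}M_j \simeq \ast.
\]
With this correction in place your proof is essentially identical to the paper's.
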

\begin{proof}
By definition, $L_{k-1}M_j \simeq \ast$, and so by the pullback square of \Cref{prop:pullback} we must show that $L_{k-1}L_{k,n}M_j$ is contractible. Because $M_j$ is a finite complex, this is equivalent to $L_{k-1}((L_{k,n}S^0) \wedge M_j) \simeq \ast$, and the result follows.   
\end{proof}

\begin{defn}
  Let $\cal{M}_{k,n}$ denote the essential image of the functor $M_{k,n} \colon \Sp \to \Sp$. 
\end{defn}
\begin{thm}\label{thm:category_equivalence}
For any spectrum $X$ we have natural equivalences $M_{k,n}L_{k,n}X \simeq M_{k,n}X$ and $L_{k,n}X \simeq L_{k,n}M_{k,n}X$. It follows that there is an equivalence of categories $\cal{M}_{k,n} \simeq \Sp_{k,n}$ given by $L_{k,n}$, with inverse given by $M_{k,n}$. 
\end{thm}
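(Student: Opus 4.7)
The plan is to exploit the chromatic pullback square \Cref{prop:pullback} together with the formula $L_{k,n}X \simeq L_{F(k)}L_nX$ from \Cref{prop:7.10}. As a preliminary, I would first verify that $L_{k,n}X$ is always $E(n)$-local, so that $L_nL_{k,n}X \simeq L_{k,n}X$. This is immediate from \Cref{prop:7.10}: $L_{k,n}X \simeq \varprojlim_j(L_nX \wedge M_k(j))$ is a homotopy limit of $E(n)$-local spectra, since each $L_nX \wedge M_k(j) \simeq F(DM_k(j),L_nX)$ is $E(n)$-local.

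For the first equivalence $M_{k,n}L_{k,n}X \simeq M_{k,n}X$, I would take the vertical fibers in the pullback square
\[
\begin{tikzcd}
L_nX \arrow[d] \arrow[r] & L_{k,n}X \arrow[d] \\
L_{k-1}X \arrow[r]       & L_{k-1}L_{k,n}X
\end{tikzcd}
\]
of \Cref{prop:pullback}. Pullback squares in a stable $\infty$-category induce equivalences on vertical fibers; the left fiber is $M_{k,n}X$ by definition, while using the preliminary observation the right fiber is $\operatorname{fib}(L_nL_{k,n}X \to L_{k-1}L_{k,n}X) = M_{k,n}L_{k,n}X$, which yields the claim.

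For the second equivalence, I would apply $L_{k,n}$ to the fiber sequence $M_{k,n}X \to L_nX \to L_{k-1}X$. The map $X \to L_nX$ is an $E(n)$-equivalence, hence a $K(k)\vee\cdots\vee K(n)$-equivalence, so $L_{k,n}L_nX \simeq L_{k,n}X$. Since $L_{k-1}$ is smashing and $F(k)$ has type $k$, we have $L_{k-1}X \wedge F(k) \simeq X \wedge L_{k-1}F(k) \simeq \ast$; combined with $L_nL_{k-1}X \simeq L_{k-1}X$ (as $k-1\le n$) and $L_{k,n}\simeq L_{F(k)}L_n$, this yields $L_{k,n}L_{k-1}X \simeq L_{F(k)}L_{k-1}X \simeq \ast$. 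The fiber sequence therefore collapses to $L_{k,n}M_{k,n}X \simeq L_{k,n}X$.

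The category equivalence is then formal. For $X\in \Sp_{k,n}$ one has $X \simeq L_{k,n}X$, whence $L_{k,n}M_{k,n}X \simeq L_{k,n}X \simeq X$; for $X \simeq M_{k,n}Y \in \cal{M}_{k,n}$ one computes $M_{k,n}L_{k,n}X \simeq M_{k,n}L_{k,n}M_{k,n}Y \simeq M_{k,n}L_{k,n}Y \simeq M_{k,n}Y \simeq X$. I do not expect any serious obstacle here; the heart of the argument is the chromatic pullback square, with the rest being formal manipulation of the smashing localization $L_{k-1}$ and the identification $L_{k,n}\simeq L_{F(k)}L_n$.
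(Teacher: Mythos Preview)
Your proof is correct. For the equivalence $L_{k,n}M_{k,n}X \simeq L_{k,n}X$ you argue essentially as the paper does: apply $L_{k,n}$ to the defining fiber sequence and kill $L_{k,n}L_{k-1}X$. (The paper dispatches this term via the Bousfield decomposition $\langle E(k-1)\rangle = \bigvee_{i<k}\langle K(i)\rangle$, while you use $L_{k,n}\simeq L_{F(k)}L_n$ and $L_{k-1}F(k)\simeq\ast$; these are equivalent.)

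For $M_{k,n}L_{k,n}X \simeq M_{k,n}X$, however, your route differs from the paper's. The paper uses the identification $L_{k,n}X \simeq F(M_{k,n}S^0,L_nX)$ from \Cref{prop:7.10}: applying $F(-,L_nX)$ to the cofiber sequence $M_{k,n}S^0 \to L_nS^0 \to L_{k-1}S^0$ yields a fiber sequence
\[
F(L_{k-1}S^0,L_nX) \to L_nX \to L_{k,n}X,
\]
and one then observes that $F(L_{k-1}S^0,L_nX)$ is $E(k-1)$-local, hence killed by $M_{k,n}$. Your argument instead reads off the equivalence directly as the map on vertical fibers in the chromatic pullback square of \Cref{prop:pullback}, after noting $L_nL_{k,n}X\simeq L_{k,n}X$. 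This is a cleaner and more conceptual argument; the paper's version has the minor advantage of identifying the fiber of $L_nX \to L_{k,n}X$ explicitly as the colocalization $F(L_{k-1}S^0,L_nX)$, but that is not needed for the theorem as stated.
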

\begin{proof}
  The proof of Hovey and Strickland in the case $k = n$ generalizes essentially without change. 

  By definition $M_{k,n}X$ fits into a cofiber sequence
  \[
M_{k,n}X \to L_nX \to L_{k-1}X,
  \]
  so applying $L_{k,n}$ gives a cofiber sequence
  \[
L_{k,n}M_{k,n}X \to L_{k,n}L_nX \to L_{k,n}L_{k-1}X. 
  \]
  But $\langle E(k-1) \rangle = \bigvee_{i=0}^{k-1} \langle K(k) \rangle$, so $L_{k,n}L_{k-1}X \simeq 0$, while clearly $L_{k,n}L_nX \simeq L_{k,n}X$. It follows that $L_{k,n}M_{k,n}X \simeq L_{k,n}X$. 

  Using \Cref{prop:7.10} we have $L_{k,n}X \simeq F(M_{k,n}S^0,L_nX)$, and so applying $F(-,L_nX)$ to the defining cofiber sequence for $M_{k,n}S^0$ we obtain a cofiber sequence
  \[
F(L_{k-1}S^0,L_nX) \to F(L_nS^0,L_nX) \to F(M_{k,n}S^0,L_nX)
  \] 
  or equivalently
  \[
F(L_{k-1}S^0,L_nX) \to L_nX  \to L_{k,n}X.
  \]
 It is easy to check that $F(L_{k-1}S^0,L_nX)$ is $E(k-1)$-local, and so by \Cref{lem:bousfield_mkn} we have $M_{k,n}F(L_{k-1}S^0,L_nX) \simeq \ast$. It follows that $M_{k,n}L_nX \simeq M_{k,n}X \simeq M_{k,n}L_{k,n}X$ as claimed. 
\end{proof}
\begin{rem}
  Once again, this result was obtained (by different methods) in \cite[Proposition 6.21]{bhv1}. 
\end{rem}
\section{Thick subcategories and (co)localizing subcategories}\label{sec:tt-geometry}
In this section we compute the thick subcategories of compact objects in $\Sp_{k,n}$ and (co)localizing subcategories of $\Sp_{k,n}$. When $k = 0$ or $k = n$ both results have been obtained by Hovey and Strickland. Along the way we give a classification of the compact objects in $\Sp_{k,n}$.

\subsection{Compact objects in $\Sp_{k,n}$}\label{sec:compact_objects}
In this section we characterize the compact objects in $\Sp_{k,n}$. We will use this in the next section to compute the thick subcategories of $\Sp_{k,n}^{\omega}$. 

We begin by recalling the notions of thick and (co)localizing subcategories. 
\begin{defn}
  Let $(\cC,\wedge,\unit)$ be an algebraic stable homotopy category, and let $\cD$ be a full, stable, subcategory.  
  \begin{enumerate}
    \item $\cD$ is called thick if it is closed under extensions and retracts. 
    \item $\cD$ is called localizing if it is thick and closed under arbitrary colimits. 
    \item $\cD$ is called colocalizing if it is thick and closed under arbitrary limits. 
    \item $\cD$ is a tensor-ideal if $X \in \cC$ and $Y \in \cD$ implies $X \wedge Y \in \cD$. 
    \item $\cD$ is a coideal if $X \in \cC$ and $Y \in \cD$ implies $F(X,Y) \in \cD$. 
  \end{enumerate}
  We will also speak of localizing (or thick) tensor-ideals and colocalizing coideals. 
\end{defn}
\begin{rem}
  In $\Sp_n$ the dualizable and compact objects coincide, and are precisely those that lie in the thick subcategory generated by the tensor unit $L_nS^0$. In categories whose tensor unit is not compact, such as $\Sp_{k,n}$ for $k \ne 0$, the dualizable and compact objects do not coincide - for example, the tensor unit is always dualizable, but is not compact (\Cref{lem:non_compact_unit}). In \cite{hovey_strickland99} Hovey and Strickland gave numerous characterizations of compact objects in $\Sp_{K(n)}$. In this section we extend some of these characterizations to $\Sp_{k,n}$. We first recall the concept of a nilpotent object in a symmetric monoidal category. 
\end{rem}
\begin{defn}\label{defn:nilpotence}
  We say that $X$ is $R$-nilpotent if $X$ lies in the thick $\otimes$-ideal generated by $R$, i.e., $X \in \Thick_{\otimes}\langle R \rangle$. 
\end{defn}
\begin{lem}
  The category of $E_n/I_k$-nilpotent spectra is the same in $\Sp_{k,n}, \Sp_n$ and $\Sp$. 
\end{lem}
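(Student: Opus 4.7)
The plan is to verify all three thick $\otimes$-ideals coincide by mutual inclusion, with the crux being a single locality statement: for every $Y \in \Sp$, the spectrum $Y \wedge E_n/I_k$ already lies in $\Sp_{k,n}$.

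To prove this, I would realize $E_n/I_k \simeq E_n \wedge M_k$ for a finite type-$k$ generalized Moore spectrum $M_k$ built from the invariant regular sequence $J_k$ via Landweber exactness (cf.\ \Cref{rem:moore_spectra}). Let $Z \in \Sp$ be $K(k) \vee \cdots \vee K(n)$-acyclic. Using dualizability of $M_k$,
\[
[Z,\, Y \wedge E_n \wedge M_k] \;\cong\; [Z \wedge DM_k,\, Y \wedge E_n].
\]
Since $L_n$ is smashing and $E_n$ is $E(n)$-local, the target is $E(n)$-local. The source is $E(n)$-acyclic: $DM_k$ again has type $k$, so $K(i) \wedge DM_k = 0$ for $i < k$, and $K(i) \wedge Z = 0$ for $k \le i \le n$ by assumption, giving $K(i) \wedge Z \wedge DM_k = 0$ for all $0 \le i \le n$, which is equivalent to $L_n(Z \wedge DM_k) = 0$ by \Cref{rem:lubin_tate}. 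The Hom-group therefore vanishes, and $Y \wedge E_n/I_k$ is $K(k) \vee \cdots \vee K(n)$-local.

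With this locality statement in hand, I would introduce the auxiliary class
\[
\cM := \{X \in \Sp : X \wedge Y \in \Sp_{k,n}\ \text{for all}\ Y \in \Sp\}.
\]
It is visibly a thick $\otimes$-ideal of $\Sp$, and the previous step shows $E_n/I_k \in \cM$, so $\Thick_\otimes^{\Sp}\langle E_n/I_k\rangle \subseteq \cM \subseteq \Sp_{k,n}$. The three thick $\otimes$-ideals are then compared by chasing inclusions: $\Sp_n$ is a thick $\otimes$-ideal of $\Sp$ because $L_n$ is smashing, and combined with the identity $X \wedge Y \simeq X \wedge L_n Y$ for $X \in \Sp_n$ (its fibre is both $L_n$-local and $L_n$-acyclic, hence zero), this implies $\Thick_\otimes^{\Sp}\langle E_n/I_k\rangle$ and $\Thick_\otimes^{\Sp_n}\langle E_n/I_k\rangle$ coincide. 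For $\Sp_{k,n}$, I replace the $L_n$-argument with the identity $X \wedge Y \simeq X \htimes L_{k,n} Y$, which is valid once the locality statement forces $X \wedge Y \in \Sp_{k,n}$; this lets one upgrade $\Thick_\otimes^{\Sp_{k,n}}\langle E_n/I_k \rangle$ to a thick $\otimes$-ideal of $\Sp$ and conversely realize $\Thick_\otimes^{\Sp}\langle E_n/I_k \rangle$ as a thick $\otimes$-ideal of $(\Sp_{k,n}, \htimes)$, closing the circle.

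The main obstacle is the locality step in the first paragraph: the Bousfield-class bound $\langle E_n/I_k\rangle = \bigvee_{i=k}^n\langle K(i)\rangle$ from \Cref{prop:bousfield_classes} is strictly weaker than locality, so one cannot conclude $Y \wedge E_n/I_k \in \Sp_{k,n}$ from pure Bousfield reasoning; the argument genuinely uses the factorization through the finite Moore spectrum $M_k$ together with the smashing property of $L_n$ to separate the $E(n)$-acyclic input from the $E(n)$-local target.
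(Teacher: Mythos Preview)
Your proof is correct and follows the same overall strategy as the paper: establish that $E_n/I_k \wedge Y$ is $E(n,J_k)$-local for every spectrum $Y$, and then conclude that the three thick $\otimes$-ideals coincide.

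The difference is in how the locality step is handled. The paper dispatches it in one line: since $E_n/I_k$ is a (homotopy) ring spectrum, any $E_n/I_k$-module spectrum --- in particular $E_n/I_k \wedge Y$ --- is $E_n/I_k$-local, and by the Bousfield class equality $\langle E_n/I_k\rangle = \langle E(n,J_k)\rangle$ this is the same as being $E(n,J_k)$-local. Your remark that the Bousfield-class identification ``is strictly weaker than locality'' is true in isolation, but it overlooks the ring structure on $E_n/I_k$, which is exactly what closes the gap. Your explicit argument via the factorization $E_n/I_k \simeq E_n \wedge M_k$, dualizing $M_k$, and separating the $E(n)$-acyclic source from the $E(n)$-local target is a perfectly valid alternative; it has the virtue of not invoking any multiplicative structure on the quotient, at the cost of being longer. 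Once locality is in hand, your thick-ideal chase and the paper's ``from which the result easily follows'' amount to the same thing.
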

\begin{proof}
  Using that $\langle E_n/I_k \rangle = \langle E(n,J_k) \rangle$ (\Cref{prop:bousfield_classes}) we see that $E_n/I_k \wedge X$ is always $E(n,J_k)$-local, from which the result easily follows. 
\end{proof}
\begin{rem}
In other words, we can talk unambiguously about the category of $E_n/I_k$-nilpotent spectra. 
\end{rem}
We will also need the following generalization of \cite[Lemma 6.15]{hovey_strickland99}. 
\begin{lem}\label{lem:hs_6.15}
  If $X$ is a finite spectrum of type at least $k$, then $L_{n}X \simeq L_{k,n}X$ is $E_n/I_k$-nilpotent. 
\end{lem}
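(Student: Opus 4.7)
The statement has two parts: the equivalence $L_n X \simeq L_{k,n} X$, and the $E_n/I_k$-nilpotence of the resulting object. For the equivalence, I would rerun the argument of \Cref{cor:moore_at_least_k} with an arbitrary finite spectrum of type at least $k$ in place of a generalized Moore spectrum. For the nilpotence, the plan is to reduce to a single spectrum $L_n F(k)$ via the thick subcategory theorem and then exploit the descendability of $E_n$ in $\Sp_n$, a consequence of the Hopkins--Ravenel smash product theorem developed carefully in \Cref{sec:descent_anss}.

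For the equivalence, I would insert $X$ into the chromatic fracture square of \Cref{prop:pullback} and check that both lower corners vanish. Since $X$ has type at least $k$, we have $K(i) \wedge X \simeq 0$ for all $i < k$; because $L_{k-1}$ is smashing with Bousfield class $\bigvee_{i<k}\langle K(i)\rangle$, this forces $L_{k-1} X \simeq 0$. For the other corner, finiteness of $X$ allows the identification $L_{k,n} X \simeq (L_{k,n} S^0) \wedge X$, and then smashness of $L_{k-1}$ gives $L_{k-1} L_{k,n} X \simeq (L_{k-1} L_{k,n} S^0) \wedge X \simeq 0$, the last step because the first factor is $E(k-1)$-local while $X$ is $E(k-1)$-acyclic. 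The pullback square then identifies $L_n X$ with $L_{k,n} X$, exactly as in the proof of \Cref{cor:moore_at_least_k}.

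For the nilpotence, the thick subcategory theorem gives $X \in \Thick\langle F(k)\rangle$ for any fixed generalized Moore spectrum $F(k)$ of type $k$, hence $L_n X \in \Thick\langle L_n F(k)\rangle$, so it suffices to treat $L_n F(k)$. Writing $L_n F(k) \simeq L_n S^0 \wedge F(k)$ using smashness of $L_n$, the decisive input is that $E_n$ is descendable in $\Sp_n$; this yields $L_n S^0 \in \Thick_\otimes\langle E_n\rangle$. Smashing this membership with $F(k)$, an exact functor, gives $L_n F(k) \in \Thick_\otimes\langle E_n \wedge F(k)\rangle$. Finally, $E_n \wedge F(k) \simeq E_n/J_k$ for the regular sequence $J_k = (p^{i_0}, v_1^{i_1}, \ldots, v_{k-1}^{i_{k-1}})$ defining $F(k)$, and a standard induction using cofiber sequences of the form $E_n/v_j^{i} \xrightarrow{v_j^{i}} E_n/v_j^{2i} \to E_n/v_j^{i}$ shows $\Thick_\otimes\langle E_n/J_k\rangle = \Thick_\otimes\langle E_n/I_k\rangle$. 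The main obstacle in this strategy is the forward reference to descendability before its formal development in \Cref{sec:descent_anss}; an alternative, more elementary route is to bootstrap from the $k = n$ case \cite[Lemma 6.15]{hovey_strickland99}, but the descent formulation extends most transparently to all $k$.
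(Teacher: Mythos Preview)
Your proposal is correct and follows essentially the same route as the paper: reduce via the thick subcategory theorem to a single type $k$ spectrum, invoke $L_nS^0 \in \thickt{E_n}$ (the Hopkins--Ravenel smash product theorem), smash with that spectrum, and identify $\thickt{E_n \wedge F(k)}$ with $\thickt{E_n/I_k}$. Your worry about a forward reference to descendability is unfounded: the paper likewise cites the smash product theorem directly at this point and only remarks afterward that this is the same as descendability, so there is no circularity.
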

\begin{proof}
  The argument is only a slightly adaptation of that given by Hovey and Strickland. By a thick subcategory argument, we can assume that $X = M_k$ is a generalized Moore spectrum of type $k$. By \cite{Ravenel1992Nilpotence} we have $L_nS^0 \in \thickt{E_n}$, and it follows that $L_nS^0 \wedge M_k \simeq L_nM_k \in \thickt{E_n \wedge M_k}$. But it is easy to see that $\thickt{E_n \wedge M_k} \simeq \thickt{E_n/I_k}$ and we are done.
\end{proof}
\begin{rem}
  The fact that $L_nS^0 \in \thickt{E_n}$ is equivalent to the claim that $E_n \in \Sp_n$ is \emph{descendable}, a condition we investigate further in \Cref{sec:descendability}. 
\end{rem}
  The compact objects in $\Sp_{k,n}$ can be characterized in the following ways, partially generalizing \cite[Theorem 8.5]{hovey_strickland99}. We note that every compact object in $\Sp_{k,n}$ is automatically dualizable by  \cite[Theorem 2.1.3]{HoveyPalmieriStrickl1997Axiomatic}; we investigate the dualizable objects in $\Sp_{k,n}$ in more detail in \Cref{sec:dualizable}. 

\begin{thm}\label{thm:compact_objects}
  The following are equivalent for $X \in \Sp_{k,n}\colon$
  \begin{enumerate}
    \item $X$ is compact.
    \item $X \in \thick\langle L_nF(k) \rangle.$
    \item $X$ is a retract of $L_nX' \simeq L_{k,n}X'$ for a finite spectrum $X'$ of type at least $k$.
    \item $X$ is a retract of $Y \wedge X'$ where $Y$ is dualizable and $X'$ is a finite spectrum of type at least $k$.
    \item $X$ is dualizable and $E_n/I_k$-nilpotent.
  \end{enumerate}
  The category $\Sp_{k,n}^{\omega} \subseteq \Sp_{k,n}^{\dual}$ is thick. Moreover, if $X \in \Sp_{k,n}^{\omega}$ and $Y \in \Sp_{k,n}^{\dual}$, then $X \wedge Y, F(X,Y)$ and $F(Y,X)$ lie in $\Sp_{k,n}^{\omega}$. In particular, $F(X,L_{k,n}S^0) \in \Sp_n^{\omega}$. 
\end{thm}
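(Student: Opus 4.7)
The plan is to prove the cycle $(2) \Rightarrow (3) \Rightarrow (4) \Rightarrow (2)$, the equivalence $(1) \Leftrightarrow (2)$, and the equivalence $(2) \Leftrightarrow (5)$, which together give the five-way equivalence. The equivalence $(1) \Leftrightarrow (2)$ is formal: $L_{k,n}F(k) \simeq L_n F(k)$ is a compact generator of $\Sp_{k,n}$ by \Cref{prop:hs_b.9} and \Cref{cor:moore_at_least_k}, so compact objects are precisely the thick subcategory it generates. For $(2) \Rightarrow (3)$, every object of $\Sp_{k,n}$ is a filtered colimit of $L_{k,n}F_\alpha$ for finite $F_\alpha$, so compactness forces any $X \in \Sp_{k,n}^\omega$ to be a retract through some $L_{k,n}F_\alpha$; to promote $F_\alpha$ to type at least $k$, I would smash with a tower of generalized Moore spectra and use \Cref{prop:7.10}. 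Then $(3) \Rightarrow (4)$ is immediate with $Y = L_{k,n}S^0$. For $(4) \Rightarrow (2)$: by the Hopkins--Smith thick subcategory theorem $X' \in \Thick_{\Sp}(F(k))$, so $Y \wedge X' \in \thick\langle Y \wedge L_n F(k)\rangle$, and the smash of the dualizable $Y$ with the compact $L_n F(k)$ is compact via the standard identity $[Y \wedge L_n F(k), -] = [L_n F(k), DY \wedge -]$.

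The implication $(2) \Rightarrow (5)$ is easy: $L_n F(k)$ is dualizable (being the smash of the dualizable unit with the finite spectrum $F(k)$) and $E_n/I_k$-nilpotent by \Cref{lem:hs_6.15}, and both properties pass through the thick closure. The hard direction, $(5) \Rightarrow (2)$, is the main obstacle of the proof. The starting observation is that $E_n/I_k$ itself is compact in $\Sp_{k,n}$, since $E_n/I_k \simeq E_n \htimes L_n F(k)$ is the smash of the dualizable $E_n$ with the compact generator $L_n F(k)$. Given this, the content reduces to showing that any dualizable $X \in \thickt{E_n/I_k}$ already lies in $\thick\langle L_n F(k)\rangle$. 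The key point is that $\thick\langle L_n F(k)\rangle$ is closed under $\wedge$ with dualizable objects (by the compact-times-dualizable principle above), so it contains the thick tensor-ideal inside $\Sp_{k,n}^\dual$ generated by $E_n/I_k$; one then argues via a rigidification of the $E_n/I_k$-Adams resolution, analogous to \cite[Theorem 8.5]{hovey_strickland99}, that this dualizable tensor-ideal already exhausts all dualizable $E_n/I_k$-nilpotent objects. The rigidification step uses that the $E_n/I_k$-cobar tower applied to a dualizable input terminates in finitely many stages with dualizable, hence compact, layers.

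The closure statements are formal consequences of the equivalences. $\Sp_{k,n}^\omega$ is thick in $\Sp_{k,n}^\dual$ because compact objects always form a thick subcategory. For $X \in \Sp_{k,n}^\omega$ and $Y \in \Sp_{k,n}^\dual$, the smash $X \wedge Y$ is compact by the compact-times-dualizable principle; the internal function spectra $F(X,Y) \simeq DX \wedge Y$ and $F(Y,X) \simeq DY \wedge X$ are then compact because the dual of a compact object is again compact (by criterion (3), $DX \simeq L_n DX'$ with $DX'$ still finite of type at least $k$), and one is again smashing a compact with a dualizable. The special case $F(X, L_{k,n}S^0) \simeq DX$ drops out immediately.
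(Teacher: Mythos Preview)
Your overall architecture is sound, and for $(1)\Leftrightarrow(2)$, $(3)\Rightarrow(4)$, $(4)\Rightarrow(2)$, and $(2)\Rightarrow(5)$ you match the paper. Two points deserve comment.

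\textbf{The step $(2)\Rightarrow(3)$.} Your first move---writing $X\in\Sp_{k,n}$ as a filtered colimit of $L_{k,n}F_\alpha$ with $F_\alpha$ finite and using compactness to split off a retract---is fine, but the ``promote $F_\alpha$ to type at least $k$'' step is where the content lies, and citing \Cref{prop:7.10} does not do it: that proposition gives a \emph{limit} formula, which does not interact with compactness. What you actually need is the colimit formula $X\simeq\colim_j^{\Sp_{k,n}}(X\wedge DM_k(j))$ (coming from $M_{k,n}L_{k,n}\simeq M_{k,n}$ and the description of $M_{k,n}$ in the proof of \Cref{prop:7.10}), so that compactness makes $X$ a retract of some $X\wedge DM_k(j_0)$; composing with your retract through $L_{k,n}F_\alpha$ then exhibits $X$ as a retract of $L_n(F_\alpha\wedge DM_k(j_0))$. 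The paper takes a different route: it shows directly that $X\wedge DM_k(j_0)$ is compact in $\Sp_n$ (not just $\Sp_{k,n}$), via a chain of adjunctions, and then invokes \cite[Corollary 6.11]{hovey_strickland99} to get the finite model. Your approach is arguably more elementary once the colimit formula is in hand, but as written the sketch has a gap.

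\textbf{The step $(5)\Rightarrow(2)$.} Here you diverge from the paper in an interesting way. The paper proves $(5)\Rightarrow(4)$ using Hovey--Strickland's observation that the class of modules over generalized Moore spectra of type $i$ forms a thick $\otimes$-ideal containing the $K(i)$-nilpotent (hence $E_n/I_k$-nilpotent) objects; this immediately exhibits $X$ as a retract of $M\wedge X$ for a type-$k$ Moore spectrum $M$. Your cobar argument is a legitimate alternative: with $\overline{A}=\mathrm{fib}(L_{k,n}S^0\to E_n/I_k)$, nilpotence says $\overline{A}^{\htimes N}\wedge X\to X$ is null, so $X$ is built in finitely many steps from the layers $(E_n/I_k)\wedge\overline{A}^{\htimes s}\wedge X$. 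These layers are compact because $E_n/I_k$ is compact and $\overline{A}^{\htimes s}\wedge X$ is dualizable---but note that your phrase ``dualizable, hence compact, layers'' is not right as stated; dualizable does not imply compact in $\Sp_{k,n}$, and it is the compact factor $E_n/I_k$ that does the work. Once that is fixed, your argument goes through and is a clean alternative to the paper's.
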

\begin{proof}
  The equivalence of (1) and (2) is \cite[Theorem 2.1.3]{HoveyPalmieriStrickl1997Axiomatic} along with \Cref{prop:hs_b.9}. (3) implies (2) because every finite spectrum of type at least $k$ lies in the thick subcategory generated by $F(k)$. The implication (1) implies (3) is the same as given by Hovey and Strickland \cite[Theorem 8.5]{hovey_strickland99}. Namely, suppose that $X \in \Sp_{k,n}$ is compact. By \Cref{prop:7.10} we have $X \simeq \varinjlim_{j} (X \wedge DM_k(j))$, so $[X,X] \simeq \varinjlim_j[X,X \wedge DM_k(j)]$. In particular, $X$ is a retract of $Y \coloneqq X \wedge DM_k(j)$. We claim that such a $Y$ is compact in $\Sp_n$. Indeed, let $\{ Z_i \}$ be a filtered diagram of of $E_n$-local spectra. Then, we have equivalences 
  \[
  \begin{split}
[Y,\varinjlim_i Z_i]_* &\simeq [X, L_nM_k(j) \wedge \varinjlim_i Z_i]_* \\
&\simeq [X, L_{k,n}\varinjlim_i(M_k(j) \wedge Z_i)]_* \\
&\simeq \varprojlim_i [X,M_k(j) \wedge Z_i]_* \\
&\simeq \varprojlim_i [Y,Z_i]_*.
\end{split}
  \]
  The first and last equivalence follow by adjunction, the second because $\langle L_nM_k(j) \rangle = \bigvee_{i=k}^n \langle K(i) \rangle$ (\cite[Proposition 5.3]{hovey_strickland99}) so that 
  \[L_nM_k(j) \wedge  \varinjlim_i Z_i \simeq L_nM_k(j) \wedge L_{k,n} \varinjlim_i Z_i \simeq L_{k,n}\varinjlim_{i}(M_k(j) \wedge Z_i) ,\]
while the third equivalence follows because $X \in \Sp_{k,n}^{\omega}$ by assumption and because $L_{k,n}\varinjlim_i$ is the colmit in $\Sp_{k,n}$. We have $K(i)_*Y = 0$ for $i<k$ and so \cite[Corollary 6.11]{hovey_strickland99} implies that $Y$, and hence $X$, is a retract of $L_{n}Z \simeq L_{k,n}Z$ for a finite spectrum $Z$ of type at least $k$. This shows that (1),(2) and (3) are equivalent.

Assume now that (4) holds. Note that $Y \wedge X'$ is $E(n,J_k)$-local, and moreover $Y \wedge X' \simeq Y \wedge L_{k,n}X'$, where $L_{k,n}X' \in \Sp_{k,n}^{\omega}$. By \cite[Theorem 2.1.3]{HoveyPalmieriStrickl1997Axiomatic} the smash product of a dualizable and compact object is compact, and so $X$ is a retract of a compact $E(n,J_k)$-local spectrum, and so is also compact, i.e., (1) holds. 

To see that (3) implies (5), we use a thick subcategory argument to reduce to the case that $X = L_nM_k \simeq L_{k,n}M_k$ is a localized generalized Moore spectrum of type $k$. Such an $X$ is clearly dualizable and is additionally $E_n/I_k$-nilpotent by \Cref{lem:hs_6.15}. 

  Now suppose that $X$ satisfies (5). Following Hovey and Strickland \cite[Proof of Corollary 12.16]{hovey_strickland99} let $\cal{J}$ be the collection of those spectra $Z \in \Sp_{k,n}$ such that $Z$ is a module over a generalized Moore spectrum of type $i$ (for a fixed $i$, $k \le i \le n$). By \cite[Proposition 4.17]{hovey_strickland99} $\cal{J}$ forms an ideal. Because $K(i) \wedge Z$ is non-zero and a wedge of suspensions of $K(i)$, $\cal{J}$ contains the ideal of $K(i)$-nilpotent spectra. Moreover, it follows from the Bousfield decomposition $\langle E_n/I_k \rangle = \bigvee_{i=k}^n \langle K(i) \rangle$ that $K(i) \wedge E_n/I_k \neq 0$, and so $\thick_{\otimes}\langle K(i) \rangle \subseteq \thick_{\otimes} \langle E_n/I_k \rangle$, i.e., every $K(i)$-nilpotent spectrum (for $k \le i \le n$) is also $E_n/I_k$-nilpotent.   In particular, $X \in \cal{J}$, so that $X$ is retract of a spectrum of the form $Y \wedge X$ where $Y$ is a generalized Moore spectrum of type $i$, and so (4) holds.  

  Finally, we prove the subsidiary claims. It is immediate from (2) that $\Sp_{k,n}^{\omega} \subseteq \Sp_{k,n}^{\dual}$ is thick, and it is an ideal by \cite[Theorem 2.1.3(a)]{HoveyPalmieriStrickl1997Axiomatic}. Because generalized Moore spectra are self-dual (see \cite[Proposition 4.18]{hovey_strickland99}), (c) implies that $\Sp_{k,n}^{\omega}$ is closed under Spanier--Whitehead duality. Therefore, $F(X,Y) \simeq F(X,L_{k,n}S^0) \htimes Y$ and $F(Y,X) \simeq X \htimes F(Y,L_{k,n}S^0)$ lie in $\Sp_{k,n}^{\omega}$.  
\end{proof}
\begin{rem}
  When $k = 0$, then $X$ is compact if and only if $X$ is dualizable \cite[Theorem 6.2]{hovey_strickland99}. To reconcile this with (5) of the previous theorem, we note that every spectrum $X \in \Sp_n$ is $E_n/I_0 \simeq E_n$-nilpotent \cite[Theorem 5.3]{Ravenel1992Nilpotence}. 
  \end{rem}
\subsection{The thick subcategory theorem}
We now give a thick subcategory theorem for $\Sp_{k,n}^{\omega}$. As we shall see, given \Cref{thm:compact_objects} this is an immediate consequence of the classification of thick subcategories of $\Sp_{n}^{\omega}$, which ultimately relies on the Devinatz--Hopkins--Smith nilpotence theorem. 
\begin{defn}
   For $0 \le j \le n+1$ let $\cC_j$ denote the thick subcategory of $\Sp_{n}$ consisting of all compact spectra $X$ such that $K(i)_*X = 0$ for all $i < j$, i.e., 
   \[
\cC_j = \{ X \in \Sp_n^{\omega} \mid K(i)_*X = 0 \text{ for all } i < j\}.
   \]
 \end{defn} 
 \begin{rem}
   By \cite[Proposition 6.8]{hovey_strickland99} we equivalently have 
   \[
\cC_j = \{X \in \Sp_n^{\omega} \mid K(j-1)_*X = 0\}.
   \]
 \end{rem}
 \begin{rem}
    We have 
 \[
\cC_0 \supsetneq \cC_1 \supsetneq \cdots \supsetneq \cC_{n+1} = (0),
 \]
 and moreover $L_nF(j)$ is in $\cC_j$, but not $\cC_{j+1}$. 

 We now present the result of Hovey and Strickland \cite[Theorem 6.9]{hovey_strickland99}. 
 \end{rem}
\begin{thm}[Hovey--Strickland]\label{thm:hs_thick}
 If $\cC$ is a thick subcategory of $\Sp_n^{\omega}$, then $\cC = \cC_j$ for some $j$ such that $0 \le j \le n+1$. 
\end{thm}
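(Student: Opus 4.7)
The plan is to reduce this statement to the classical Hopkins--Smith thick subcategory theorem for $\Sp^{\omega}$. The bridge is provided by \Cref{thm:compact_objects}, which expresses every compact $E(n)$-local spectrum as a retract of $L_n$ applied to a finite spectrum.

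First I would check that the $\cC_j$ are thick and that the chain is strictly descending. Thickness follows because $K(i)_*(-)$ is a homological functor; strictness holds because $L_nF(j) \in \cC_j \setminus \cC_{j+1}$. The terminal case $\cC_{n+1} = 0$ uses $\langle E(n) \rangle = \bigvee_{i=0}^n \langle K(i)\rangle$, which forces any $E(n)$-local spectrum with vanishing $K(i)$-homology for $0 \le i \le n$ to be contractible.

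Now let $\cC \subseteq \Sp_n^{\omega}$ be a nonzero thick subcategory, and set
\[
j := \min\{ i : K(i)_*X \ne 0 \text{ for some } X \in \cC\}.
\]
Then $\cC \subseteq \cC_j$ by minimality of $j$. For the reverse inclusion I would fix $Y \in \cC$ with $K(j)_*Y \ne 0$ and, given an arbitrary $X \in \cC_j$, aim to prove $X \in \Thick(Y)$. By \Cref{thm:compact_objects} applied with $k = 0$, write $X$ as a retract of $L_nX'$ and $Y$ as a retract of $L_nY'$ for some finite spectra $X', Y'$. Using the nilpotence and periodicity theorems, one refines these finite witnesses: since $K(j)_*Y'$ contains $K(j)_*Y \ne 0$, the type of $Y'$ is at most $j$, and smashing with an appropriate finite type-$j$ spectrum lets us upgrade $Y'$ to have type exactly $j$; a parallel argument produces a finite $X'$ of type at least $j$ with $X$ still a retract of its $L_n$-localization.

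With these refined witnesses in place, the Hopkins--Smith thick subcategory theorem applied to $\Sp^{\omega}$ gives $X' \in \Thick_{\Sp^{\omega}}(Y')$. Exactness of $L_n$ transports this to $L_nX' \in \Thick_{\Sp_n^{\omega}}(L_nY')$, and closure under retracts yields $X \in \Thick_{\Sp_n^{\omega}}(Y) \subseteq \cC$, completing the proof. The delicate step is the type-refinement of the finite witnesses; it relies on the full strength of the nilpotence theorem, which is the same input that drives Hopkins--Smith, so the proof is ultimately a bootstrap of their result along $L_n$.
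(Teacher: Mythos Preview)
The paper does not prove this theorem; it simply records it as \cite[Theorem 6.9]{hovey_strickland99}. So the comparison is really between your attempt and Hovey--Strickland's original argument, whose skeleton you have correctly identified: pull thick subcategories back along $L_n$ and invoke Hopkins--Smith.

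However, your execution has a genuine gap in the final step. You arrange that $X$ is a retract of $L_nX'$, $Y$ is a retract of $L_nY'$, and (after refinement) $X' \in \Thick_{\Sp^\omega}(Y')$, hence $L_nX' \in \Thick_{\Sp_n^\omega}(L_nY')$. This gives $X \in \Thick(L_nY')$. But you then assert $X \in \Thick(Y)$, and that inference is backwards: from $Y$ being a retract of $L_nY'$ you only get $\Thick(Y) \subseteq \Thick(L_nY')$, not the reverse. Equivalently, you have no reason to believe $L_nY' \in \cC$; you only know its retract $Y$ lies in $\cC$. The ``type refinement'' by smashing with a type-$j$ spectrum does not repair this, since it still leaves you with $Y$ (or $Y \wedge F(j)$) merely a retract of the relevant $L_n(\text{finite})$.

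What actually closes this gap in Hovey--Strickland is their Corollary 6.11 (used repeatedly in this paper, e.g.\ in \Cref{lem:compact_thick}): if $W \in \Sp_n^\omega$ has $K(i)_*W = 0$ for $i<j$, then $W$ is a retract of $L_nW'$ for some finite $W'$ of type at least $j$. The proof of that uses compactness against the tower $\{DM_j(\ell)\}$ to exhibit $W$ as a retract of $W \wedge DM_j(\ell)$ for some $\ell$; the point is that one refines the \emph{witness} while staying inside $\Thick(W)$, rather than enlarging to $\Thick(L_nY')$. Once you have this, the argument runs cleanly via $\cD = \{X' \in \Sp^\omega : L_nX' \in \cC\}$ and Hopkins--Smith. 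You should also be careful about circularity: the general-$k$ form of \Cref{thm:compact_objects} already invokes \cite[Corollary 6.11]{hovey_strickland99}, which sits downstream of the very theorem you are proving; for $k=0$ the input you need is the more elementary \cite[Theorem 6.2]{hovey_strickland99}.
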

\begin{rem}
This result can be restated in terms of the Balmer spectrum of $\Sp_n^{\omega}$ \cite{balmer_prime}. In particular, we have
\[
\Spc(\Sp_n^{\omega}) \cong \{\cC_1,\ldots,\cC_{n+1} \}
\]
with topology determined by the closure operator $\overline {\{ \cC_j \}} = \{ \cC_i | i \ge j\}$. This is in fact equivalent to \Cref{thm:hs_thick}, essentially by the same argument as in \cite[Proposition 3.5]{bhn}. 

The Balmer support of $X \in \Sp_n^{\omega}$, defined in \cite[Definition 2.1]{balmer_prime}, is given by 
\[
\supp(X) = \{ \cP \in \Spc(\Sp_n^{\omega}) \mid X \not \in \cP\}.
\]
Note that $X \not \in \cC_j \iff K(j-1)_*X \ne 0$. Therefore, by \Cref{thm:hs_thick} we have
\[
\begin{split}
\supp(X) &= \{ i \in \{ 0,\ldots, n\} \mid K(i)_*X \ne 0 \} \\
&= \{ i \in \{ 0,\ldots, n\} \mid K(i) \wedge X \ne 0 \}.
\end{split}
\]
For a thick subcategory $\mathcal{J}$, we define $\supp(\mathcal J) = \bigcup_{X \in \mathcal J}\supp(X)$. Then, Balmer's classification result \cite[Theorem 4.10]{balmer_prime} shows that there is a bijection
 \[
\{ \text{thick subcategories of $\Sp_n^{\omega}$}\} \xrightarrow[\supp]{\sim} \{\text{specialization closed subsets of \{$0,\ldots,n$\}}\}.
 \]
with the topology on \{$0,\ldots,n$\} determined by $\overline{\{ k \}} = \{ k,k+1,\ldots,n \}$, with inverse given by sending a specialization closed subset $Y$ to $\{ X \in \Sp_n^{\omega}\mid \supp(X) \subseteq  Y\} $.  Note that there are exactly $n+2$ such specialization closed subsets, namely $ \emptyset $ and the subsets $\{ k,\ldots, n\}$ for $k=0,\ldots,n$. The thick subcategory $\cal{C}_{n+1}$ corresponds to $\emptyset$ under this bijection, while $\cal{C}_k$ corresponds to $\{k,\ldots, n\}$ for $0 \le k \le n$. 
\end{rem}

Given the classification of compact $E(n,J_k)$-local spectra in \Cref{thm:compact_objects}, we deduce the following.
\begin{lem}\label{lem:compact_thick}
The category of compact $E(n,J_k)$-local spectra, $\Sp_{k,n}^{\omega}$, is equivalent to the thick subcategory $\cC_k \subseteq \Sp_n^{\omega}$.
\end{lem}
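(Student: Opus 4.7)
The plan is to identify both $\Sp_{k,n}^{\omega}$ and $\cC_k$ as ``the same'' thick subcategory of $\Sp$ — namely the one generated by $L_nF(k) \simeq L_{k,n}F(k)$ — and then note that the equivalence of the two categories follows because thick-closure is an intrinsic operation on a full stable subcategory. Concretely, I would prove the two inclusions $\cC_k \subseteq \Sp_{k,n}^{\omega}$ and $\Sp_{k,n}^{\omega} \subseteq \cC_k$ as subcategories of $\Sp$.

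First, I would check that $\cC_k$ sits inside $\Sp_{k,n}$ at all. Given $X \in \cC_k$, by definition $K(i)_*X = 0$ for all $i < k$, so $X$ is $E(k-1)$-acyclic, i.e.\ $L_{k-1}X \simeq \ast$. Applying the chromatic fracture square of \Cref{prop:pullback} and using that $X$ is already $E(n)$-local, one reads off $X \simeq L_n X \simeq L_{k,n}X$, so $X \in \Sp_{k,n}$. Moreover $L_nF(k) \simeq L_{k,n}F(k)$ is a compact object of $\Sp_{k,n}$ by \Cref{thm:compact_objects}, and the Hovey--Strickland thick subcategory theorem \Cref{thm:hs_thick} identifies $\cC_k$ with the thick subcategory of $\Sp_n^{\omega}$ generated by $L_nF(k)$. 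Since $\Sp_{k,n}^{\omega}$ is a thick subcategory of $\Sp_{k,n}$ (again by \Cref{thm:compact_objects}) containing $L_{k,n}F(k)$, and since thick closure inside a full stable subcategory is computed by the same extensions and retracts as in any larger ambient category, the inclusion $\cC_k \subseteq \Sp_{k,n}^{\omega}$ follows.

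For the reverse inclusion, I would use characterization (3) of \Cref{thm:compact_objects}: any $X \in \Sp_{k,n}^{\omega}$ is a retract of $L_nX'$ for some finite spectrum $X'$ of type at least $k$. Such an $L_nX'$ lies in $\Sp_n^{\omega}$, and since $X'$ has type $\ge k$ we have $K(i)_*(L_nX') = K(i)_*X' = 0$ for $i < k$, so $L_nX' \in \cC_k$. As $\cC_k$ is thick (hence closed under retracts), we conclude $X \in \cC_k$, giving $\Sp_{k,n}^{\omega} \subseteq \cC_k$.

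There is really no hard step here; the only thing one must watch is that the various thick subcategory generations take place in potentially different ambient categories ($\Sp_n^{\omega}$ versus $\Sp_{k,n}^{\omega}$), so the principal bookkeeping task is verifying that the common generator $L_nF(k) \simeq L_{k,n}F(k)$ belongs to both ambient categories once Step 1 (via the fracture square) shows $\cC_k \subseteq \Sp_{k,n}$. Once that is in place, the equivalence $\Sp_{k,n}^{\omega} \simeq \cC_k$ is automatic.
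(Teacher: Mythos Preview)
Your proof is correct and essentially follows the paper's approach: both directions rely on characterization (3) of \Cref{thm:compact_objects}, and the thick subcategory theorem is the underlying engine. The only minor difference is that for the inclusion $\cC_k \subseteq \Sp_{k,n}^{\omega}$ the paper cites \cite[Corollary 6.11]{hovey_strickland99} directly (which says any $X \in \cC_k$ is a retract of $L_nY$ for some finite $Y$ of type at least $k$, so characterization (3) applies immediately), whereas you first establish $\cC_k \subseteq \Sp_{k,n}$ via the fracture square and then invoke the thick subcategory theorem together with characterization (2); this is a harmless repackaging of the same content.
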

\begin{proof}
  By \cite[Corollary 6.11]{hovey_strickland99} if $X \in \cal{C}_k$, then $X$ is a retract of $L_nY \simeq L_{k,n}Y$ for some finite spectrum $Y$ of type of least $k$. Then $X$ is a compact $E(n,J_k)$-local spectrum by \Cref{thm:compact_objects}. Conversely, if $X$ is a compact $E(n,J_k)$-local spectrum, then $X$ is a retract of $L_nY \simeq L_{k,n}Y$ for $Y$ a finite spectrum $Y$ of type of least $k$, again by \Cref{thm:compact_objects}. Therefore $K(i)_*X = 0$ for $i < k$ and $X \in \mathcal{C}_k$. 
\end{proof}
\begin{thm}[Thick subcategory theorem]\label{thm:thick_subcategory}
There is a bijection
\[
\{\text{thick subcategories of $\Sp_{k,n}^{\omega}$}\} \xrightarrow[\supp]{\sim} \{\text{specialization closed subsets of \{$k,\ldots,n$\}}\}.
 \]
 with inverse given by sending a specialization closed subset $Y$ to $\{ X \in \Sp_{k,n}^{\omega}\mid \supp(X) \subseteq  Y\} $. In particular, if $\cC$ is a thick subcategory of $\Sp_{k,n}^{\omega}$, then $\cC = \cC_j$ for some $j$ such that $k \le j \le n+1$.  
\end{thm}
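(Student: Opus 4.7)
The plan is to reduce the statement directly to the already-established thick subcategory theorem for $\Sp_n^{\omega}$ (\Cref{thm:hs_thick}) by means of the identification provided by \Cref{lem:compact_thick}.

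First I would use \Cref{lem:compact_thick} to identify $\Sp_{k,n}^{\omega}$ with $\cC_k \subseteq \Sp_n^{\omega}$. Under this identification, a thick subcategory of $\Sp_{k,n}^{\omega}$ is precisely a thick subcategory of $\Sp_n^{\omega}$ that is contained in $\cC_k$; indeed, thickness only involves cofibers and retracts, and both categories have the same ambient triangulated structure. Applying \Cref{thm:hs_thick}, any such subcategory must equal $\cC_j$ for some $0 \le j \le n+1$, and the inclusion $\cC_j \subseteq \cC_k$ forces $j \ge k$ (using the strict chain $\cC_0 \supsetneq \cdots \supsetneq \cC_{n+1} = (0)$ recorded above the theorem). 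This already yields the final sentence of the statement, that every thick subcategory of $\Sp_{k,n}^{\omega}$ is of the form $\cC_j$ with $k \le j \le n+1$.

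Next I would translate this into the language of Balmer supports. Since $X \in \Sp_{k,n}^{\omega}$ is by \Cref{thm:compact_objects} a retract of $L_{k,n}Y$ for $Y$ a finite spectrum of type at least $k$, one has $K(i)_*X = 0$ for $i < k$, so the support
\[
\supp(X) = \{ i \in \{0,\ldots,n\} \mid K(i) \wedge X \ne 0\}
\]
is automatically a subset of $\{k,\ldots,n\}$. The computation in the remark preceding the theorem then identifies the support of $\cC_j$ (viewed as a thick subcategory of $\cC_k$) with $\{j,j+1,\ldots,n\}$ for $k \le j \le n$, while $\cC_{n+1}$ corresponds to $\emptyset$. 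These are exactly the $n-k+2$ specialization-closed subsets of $\{k,\ldots,n\}$ with the topology generated by $\overline{\{j\}} = \{j,j+1,\ldots,n\}$.

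Finally I would combine the two steps: the assignment $\cC_j \mapsto \supp(\cC_j) = \bigcup_{X \in \cC_j}\supp(X)$ is clearly order-preserving and, by the previous paragraph, is a bijection with inverse $Y \mapsto \{X \in \Sp_{k,n}^{\omega} \mid \supp(X) \subseteq Y\}$, exactly as required. There is no serious obstacle to overcome here, since all the hard content is concentrated in the Hopkins--Smith/Hovey--Strickland classification \Cref{thm:hs_thick} and the characterization of compact objects in \Cref{thm:compact_objects}; the only point requiring verification is the bookkeeping that a thick subcategory of the ambient $\Sp_n^{\omega}$ lies inside $\cC_k$ iff its support, computed in $\Sp_n^{\omega}$, avoids the primes $\cC_1,\ldots,\cC_k$, which is immediate from the definitions.
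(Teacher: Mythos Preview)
Your proposal is correct and follows essentially the same approach as the paper: the paper's proof is simply ``This follows by combining \Cref{thm:hs_thick,lem:compact_thick},'' and your argument is a careful unpacking of exactly that combination.
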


\begin{proof}
  This follows by combining \Cref{thm:hs_thick,lem:compact_thick}. 
\end{proof}
\begin{rem}
  Note that $\Sp_{k,n}^{\omega}$ is not a tensor-triangulated category when $k \ne 0$, as it does not have a tensor unit. Therefore, we cannot speak of the Balmer spectrum of $\Sp_{k,n}^{\omega}$. 
\end{rem}

We also have a nilpotence theorem.
\begin{prop}
  Let $X \in \Sp_{k,n}^{\omega}$, and $u \colon \Sigma^dX \to X$ a self-map such that $K(i)_*u$ is nilpotent for $k \le i \le n$. Then, $u$ is nilpotent, i.e., the $j$-fold composite $u \circ \cdots \circ u \colon \Sigma^{jd}X \to X$ is trivial for large enough $j$.  
\end{prop}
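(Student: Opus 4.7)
The plan is to reduce to the nilpotence theorem in its standard $L_n$-local form, where it is a well-known consequence of Devinatz--Hopkins--Smith.

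First, I would invoke \Cref{thm:compact_objects}(3) to write $X$ as a retract of $L_n X' \simeq L_{k,n} X'$ for some finite spectrum $X'$ of type at least $k$; in particular $X$ is a compact object of $\Sp_n$. Because $X'$ has type $\ge k$, we have $K(i)_*X' = 0$ for $i < k$, and since the unit map $X' \to L_nX'$ is a $K(i)$-equivalence for every $0 \le i \le n$, also $K(i)_*L_nX' = 0$ for $i < k$. For $i > n$ the vanishing $K(i)_*L_nX' = 0$ is immediate from $L_n$-locality. Passing to the retract, $K(i)_*X = 0$ whenever $i \notin \{k,\ldots,n\}$, and in particular $K(i)_*u = 0$ (hence is trivially nilpotent) outside this range.

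Combined with the hypothesis, $K(i)_*u$ is nilpotent for every $i \ge 0$. Applying the nilpotence theorem for compact $L_n$-local spectra --- which follows from the Devinatz--Hopkins--Smith nilpotence theorem by passing to the endomorphism ring spectrum $F(X,X)$, noting that it is $L_n$-local since $X$ is and that an element of $\pi_d F(X,X)$ is nilpotent precisely when the corresponding self-map is --- we conclude that $u$ is nilpotent, as desired.

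The only real work is the reduction to the $L_n$-local setting and the verification that the Morava $K$-homologies of $X$ vanish outside $\{k,\ldots,n\}$; after that, no further obstacle arises, since the $L_n$-local nilpotence theorem is exactly tailored to detect self-maps by $K(0) \vee \cdots \vee K(n)$.
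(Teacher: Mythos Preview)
Your proof is correct and follows essentially the same route as the paper. The paper's one-line argument cites \Cref{lem:compact_thick} (which packages the fact that $\Sp_{k,n}^{\omega} \simeq \cC_k \subseteq \Sp_n^{\omega}$, exactly what you verify by hand via \Cref{thm:compact_objects}(3)) and then invokes \cite[Corollary 6.6]{hovey_strickland99}, which is the $L_n$-local nilpotence theorem you appeal to at the end.
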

\begin{proof}
  In light of \Cref{lem:compact_thick}, this follows from \cite[Corollary 6.6]{hovey_strickland99}. 
\end{proof}

\subsection{Localizing and colocalizing subcategories}
In this section we calculate the (co)localizing (co)ideals  of $\Sp_{k,n}$. We first observe that every (co)localizing subcategory is automatically a (co)ideal, so it suffice in fact to concentrated on (co)localizing subcategories.
\begin{lem}\label{lem:auto_ideal}
  Every (co)localizing subcategory of $\Sp_{k,n}$ is a (co)ideal.
\end{lem}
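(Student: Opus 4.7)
The plan is to handle the localizing and colocalizing cases in parallel, exploiting the fact (\Cref{prop:hs_b.9}) that $\Sp_{k,n}$ is compactly generated by the single object $L_{k,n}F(k)$, which is additionally dualizable (\Cref{thm:compact_objects}).

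For the localizing case, fix $Y$ in a localizing subcategory $\cD \subseteq \Sp_{k,n}$ and consider
\[
\cE = \{X \in \Sp_{k,n} \mid X \htimes Y \in \cD\}.
\]
Because $\htimes$ preserves colimits in each variable (\Cref{prop:hs_b.9}) and is exact, $\cE$ is itself a localizing subcategory of $\Sp_{k,n}$. So it suffices to prove $L_{k,n}F(k) \in \cE$. The key observation is that for any finite spectrum $F$ and any $E(n,J_k)$-local $Y$, the spectrum $F \wedge Y$ is again $E(n,J_k)$-local, since $[Z, F \wedge Y] \simeq [DF \wedge Z, Y]$ vanishes whenever $Z$ is $E(n,J_k)$-acyclic. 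Combined with the fact that $F(k) \to L_{k,n}F(k)$ is an $E(n,J_k)$-equivalence, this gives
\[
L_{k,n}F(k) \htimes Y = L_{k,n}(L_{k,n}F(k) \wedge Y) \simeq L_{k,n}(F(k) \wedge Y) \simeq F(k) \wedge Y.
\]
Since $F(k)$ is finite, $F(k) \wedge Y \in \Thick(Y) \subseteq \cD$.

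For the colocalizing case, fix $Y$ in a colocalizing subcategory $\cU \subseteq \Sp_{k,n}$ and consider
\[
\cE = \{X \in \Sp_{k,n} \mid F(X,Y) \in \cU\}.
\]
The internal function spectrum $F(-,Y)$ converts colimits into limits, and $\cU$ is thick and closed under limits, so $\cE$ is itself a localizing subcategory of $\Sp_{k,n}$. Again it suffices to check $L_{k,n}F(k) \in \cE$. Since $L_{k,n}F(k)$ is dualizable (\Cref{thm:compact_objects}), we have $F(L_{k,n}F(k), Y) \simeq D(L_{k,n}F(k)) \htimes Y$, and running the argument of the previous paragraph with the finite spectrum $DF(k)$ in place of $F(k)$ identifies this with $DF(k) \wedge Y \in \Thick(Y) \subseteq \cU$.

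I do not expect a substantive obstacle: the proof is essentially formal once \Cref{prop:hs_b.9} and \Cref{thm:compact_objects} are in hand. The only subtle point is the identification $L_{k,n}F(k) \htimes Y \simeq F(k) \wedge Y$, reflecting the fact that smashing a finite spectrum with an already $E(n,J_k)$-local spectrum needs no further localization.
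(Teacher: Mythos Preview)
Your proof is correct. Both your argument and the paper's reduce to showing that the ``annihilator'' subcategory $\{X \mid X \htimes (-) \text{ preserves the subcategory}\}$ is everything, by checking a single generator. The difference is in the choice of ambient category and generator: you work inside $\Sp_{k,n}$ and verify the compact generator $L_{k,n}F(k)$ lies in $\cE$, which requires the identification $L_{k,n}F(k) \htimes Y \simeq F(k) \wedge Y$ (and its dual version); the paper instead takes $\cD = \{X \in \Sp \mid X \htimes \cC \subseteq \cC\}$ as a localizing subcategory of all of $\Sp$, where the generator $\mathbb{S}$ is trivially in $\cD$ since $\mathbb{S} \htimes Y = L_{k,n}Y = Y$. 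The paper's route is shorter and avoids invoking \Cref{thm:compact_objects} or the finiteness of $F(k)$, but your approach has the virtue of staying entirely within $\Sp_{k,n}$ and making explicit why the generator behaves well. For the colocalizing case you could also bypass the dualizability of $L_{k,n}F(k)$ in $\Sp_{k,n}$ by noting directly that $F(L_{k,n}F(k),Y) \simeq F(F(k),Y) \simeq DF(k) \wedge Y$, using only that $Y$ is local and $F(k)$ is finite.
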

\begin{proof}
We prove the case of localizing subcategories - the case of colocalizing subcategories is similar.\footnote{We thank Neil Strickland for providing this argument, which simplifies a previous argument.} To that end, let $\cal C \subseteq \Sp_{k,n}$ be a localizing subcategory, and consider the collection $\cal D = \{ X \in \Sp \mid X \htimes \cal C \subseteq \cal C \}$. This is a localizing subcategory of $\Sp$ containing $\mathbb{S}$, and hence $\cal D = \Sp$ itself. It follows that $\cal{C}$ is a localizing ideal. 
\end{proof}
\begin{rem}
  We remind the reader that $\unit$ is not compact in $\Sp_{k,n}$ unless $k = 0$ (see \Cref{lem:non_compact_unit}). Therefore, in all other cases, $\unit$ is a non-compact generator of $\Sp_{k,n}$. 
\end{rem}
\begin{nota}
  Throughout this section we let $\cal{Q} = \{ k,\ldots,n \}$. 
\end{nota}
We begin by defining a notion of support and cosupport in $\Sp_{k,n}$, extending the notion of support defined previously for $\Sp_n^{\omega}$. 
\begin{defn}
    For a spectrum $X \in \Sp_{k,n}$ we define the support and cosupport of $X$ by
  \[
\begin{split}
  \supp(X) &= \{ i \in \cal{Q} \mid K(i) \wedge X \ne 0 \} \\
\cosupp(X) &= \{ i \in \cal{Q} \mid F(K(i),X) \ne 0  \} . 
\end{split}
  \]
\end{defn}
\begin{ex}\label{ex:supp_kn}
Because $K(i) \wedge K(j) = 0$ if $i \ne j$, and $K(i) \wedge K(i) \ne 0$ \cite[Theorem 2.1]{Ravenel1984Localization}, we have
\[
\supp(K(i)) = i
\]
for $i \in \cal{Q}$. On the other hand, $K(i)^*K(j) = \Hom_{K(i)_*}(K(i)_*K(j),K(i)_*)$, and so
\[
\cosupp(K(i)) = i
\]
as well. 
\end{ex}
  \begin{rem}
   The notion of support is slightly ambiguous, as objects can live in multiple categories. For example $L_{K(n)}S^0 \in \Sp_{i,n}$ for all $0 \le i \le n$, and in fact has different support in each category. However, it should also be clear in which category we are considering the support. 
  \end{rem}

\begin{rem}\label{rem:support_on_p}
  Because $K(i) \wedge X$ is always $K(i)$-local, we equivalently have 
  \[
 \supp(X) = \{ i \in \cal{Q} \mid K(i) \htimes X \ne 0 \}.
  \]
\end{rem}

\begin{rem}\label{rem:hovey_strickland_support}
  In \cite[Definition 6.7]{hovey_strickland99} Hovey and Strickland define the support of an object by
  \[
\supp_{HS}(X) = \{ i \mid K(i) \wedge X \ne 0\}.
  \]
  By definition then, $\supp(X) = \supp_{HS}(X) \cap \cal{Q}$. 
\end{rem}
Support and cosupport are well behaved with respect to products and function objects in $\Sp_{k,n}$. 
\begin{lem}\label{lem:supp_cosupp_tensor_hom}
  For any $X,Y \in \Sp_{k,n}$ there are equalities 
  \[
\supp(X \htimes Y) = \supp(X) \cap \supp(Y). 
  \]
  \[
\cosupp(F(X,Y)) = \supp(X) \cap \cosupp(Y). 
  \]
\end{lem}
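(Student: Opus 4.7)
The plan is to reduce both identities to the interaction of $K(i)$ (for $i \in \cal{Q}$) with the localization $L_{k,n}$ and with the internal hom $F(-,-)$, and then invoke the Künneth isomorphism for $K(i)$ together with the fact that any $K(i)$-module splits as a wedge of suspensions of $K(i)$.

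The first reduction I would make is the observation that, for $i \in \cal{Q}$ and any spectrum $Z$, smashing with $K(i)$ does not see $L_{k,n}$: the fibre of $Z \to L_{k,n}Z$ is $K(k) \vee \cdots \vee K(n)$-acyclic, hence in particular $K(i)$-acyclic, so $K(i) \wedge Z \simeq K(i) \wedge L_{k,n}Z$. Applied to the smash product in $\Sp_{k,n}$, this gives
\[
K(i) \wedge (X \htimes Y) \;=\; K(i) \wedge L_{k,n}(X \wedge Y) \;\simeq\; K(i) \wedge X \wedge Y.
\]
Since $K(i)$ is a (graded) field object, the Künneth isomorphism yields
\[
K(i)_*(X \wedge Y) \;\cong\; K(i)_*X \otimes_{K(i)_*} K(i)_*Y,
\]
and the right-hand side is nonzero exactly when both tensor factors are nonzero. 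This shows $i \in \supp(X \htimes Y)$ iff $i \in \supp(X) \cap \supp(Y)$, giving the first identity.

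For the second identity, I would use that by \Cref{prop:hs_b.9} the internal hom $F(-,-)$ in $\Sp_{k,n}$ is computed in $\Sp$, and $F(X,Y)$ is automatically $E(n,J_k)$-local. Thus $F$ is right adjoint to $\htimes$, and
\[
F\bigl(K(i), F(X,Y)\bigr) \;\simeq\; F\bigl(K(i) \htimes X, Y\bigr) \;\simeq\; F\bigl(K(i) \wedge X, Y\bigr),
\]
where in the last step I use the reduction above (applied to $K(i) \wedge X$ in place of the smash product $K(i)\htimes X$, noting $K(i)\htimes X = L_{k,n}(K(i)\wedge X)$ and $K(i)\wedge K(i)\wedge X \neq 0$ iff $K(i)\wedge X \neq 0$, so the two are equally detected). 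Now $K(i) \wedge X$ is a $K(i)$-module and therefore splits as a wedge $\bigvee_{\alpha} \Sigma^{d_\alpha} K(i)$; the indexing set is empty precisely when $i \notin \supp(X)$. Hence
\[
F(K(i) \wedge X, Y) \;\simeq\; \prod_{\alpha} F\bigl(\Sigma^{d_\alpha} K(i), Y\bigr),
\]
which is nonzero iff the product is over a nonempty set \emph{and} $F(K(i),Y) \neq 0$, i.e.\ iff $i \in \supp(X) \cap \cosupp(Y)$.

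The only mild obstacle is ensuring the reductions genuinely yield \emph{equalities} of subsets of $\cal{Q}$ rather than inclusions; both directions are covered because the Künneth isomorphism and the wedge-splitting of $K(i)$-modules are bijective criteria for nonvanishing. No additional deep input is needed beyond the Bousfield class identity $\langle E(n,J_k)\rangle = \bigvee_{i=k}^n \langle K(i)\rangle$ (\Cref{prop:bousfield_classes}) and the standard structure of $K(i)$-local homological algebra.
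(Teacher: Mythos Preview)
Your proof is correct and follows essentially the same route as the paper: both arguments rest on the reduction $K(i) \wedge (X \htimes Y) \simeq K(i) \wedge X \wedge Y$, the K\"unneth isomorphism/field property of $K(i)_*$, the adjunction $F(K(i),F(X,Y)) \simeq F(K(i)\wedge X,Y)$, and the splitting of $K(i)\wedge X$ as a wedge of suspensions of $K(i)$. The one place your writeup is slightly awkward is the justification of $K(i)\htimes X \simeq K(i)\wedge X$; the cleanest way to say this (and what the paper uses implicitly, cf.\ \Cref{rem:support_on_p}) is simply that $K(i)\wedge X$ is a $K(i)$-module, hence already $K(i)$-local and therefore $E(n,J_k)$-local for $i\in\cal{Q}$.
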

\begin{proof}
  Because $K(i) \wedge X$ is always $K(i)$-local, we have $K(i) \wedge (X \htimes Y) \simeq K(i) \wedge X \wedge Y$, and it is clear that $\supp(X \htimes Y) \subseteq \supp(X) \cap \supp(Y)$. The converse follows because $K(i)_*$ is a graded field; if $K(i) 
  \wedge X \wedge Y \simeq \ast$ then either $K(i) \wedge X \simeq \ast$ or $K(i) \wedge Y \simeq \ast$. 

  For the statement about cosupport, suppose $i \in \cosupp(F(X,Y))$, i.e., $F(K(i),F(X,Y)) \ne 0$. By adjunction we must have $F(K(i) \wedge X,Y) \ne 0$ as well as $F(X,F(K(i),Y)) \ne 0$, so that $K(i) \wedge X \ne 0$ and $F(K(i),Y) \ne 0$. This shows that $\cosupp(F(X,Y)) \subseteq \supp(X) \cap \cosupp(Y)$. For the converse, let $i \in \supp(X) \cap \cosupp(Y)$, and consider $F(K(i),F(X,Y)) \simeq F(K(i) \wedge X,Y)$. Because $i \in \supp(X)$, and $K(i)_*$ is a graded field, $K(i) \wedge X$ is a wedge of suspensions of $K(i)$, and it suffices to show that $F(K(i),Y) \not \simeq 0$, which is precisely the statement that $i \in \cosupp(Y)$. Therefore, $i \in \cosupp(F(X,Y))$, as required. 
\end{proof}
\begin{nota}
  For an arbitrary collection $\cal{C}$ of objects we set 
  \[
  \begin{split}
  \supp(\cC) &= \bigcup_{X \in \cC} \supp(X) \\
\cosupp(\cC) &= \bigcup_{X \in \cC} \cosupp(X) 
\end{split}
  \]

  For a subset $\cT \subseteq \cQ$ we also define
  \[
\begin{split}
  \supp^{-1}(\cT) &=  \{ X \in \Sp_{k,n} \mid \supp(X) \subseteq \cT \}\\
  \cosupp^{-1}(\cT) &= \{ X \in \Sp_{k,n} \mid \cosupp(X) \subseteq \cT \}
\end{split}
  \]
\end{nota}
  \begin{lem}
    For a subset $\cT \subseteq \cQ$, $\supp^{-1}(\cT)$ and $\cosupp^{-1}(\cT)$ are localizing and colocalizing subcategories of $\Sp_{k,n}$ respectively. 
  \end{lem}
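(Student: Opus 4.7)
The plan is to verify the closure properties for $\supp^{-1}(\cT)$ and $\cosupp^{-1}(\cT)$ by leveraging the exactness and (co)continuity of the functors $K(i) \htimes (-)$ and $F(K(i),-)$ respectively, for each $i \in \cQ$. By construction, $X \in \supp^{-1}(\cT)$ if and only if $K(i) \htimes X \simeq 0$ for all $i \in \cQ \setminus \cT$, and similarly $X \in \cosupp^{-1}(\cT)$ if and only if $F(K(i),X) \simeq 0$ for all $i \in \cQ \setminus \cT$. In both cases the zero object lies in the subcategory vacuously, so the task is to check stability under the relevant operations.

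For the support side, I would first observe that for each $i \in \cQ$ the functor $K(i) \htimes (-) \colon \Sp_{k,n} \to \Sp_{k,n}$ is exact and preserves arbitrary colimits, since $\htimes$ is the symmetric monoidal structure on a presentable stable $\infty$-category and hence preserves colimits in each variable (\Cref{prop:hs_b.9}). Closure under retracts and shifts is then immediate, closure under cofiber sequences follows from the long exact sequence obtained after smashing with $K(i)$, and closure under colimits follows because $\colim(K(i) \htimes X_\alpha) \simeq K(i) \htimes \colim X_\alpha$, where the latter colimit is taken in $\Sp_{k,n}$. Hence if $i \notin \cT$ and each $X_\alpha$ has $K(i) \htimes X_\alpha \simeq 0$, the same holds for the colimit.

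For the cosupport side the argument is dual: $F(K(i),-) \colon \Sp_{k,n} \to \Sp_{k,n}$ is exact (as the right adjoint of $K(i) \htimes (-)$) and preserves arbitrary limits. Limits in $\Sp_{k,n}$ agree with those in $\Sp$ by \Cref{prop:hs_b.9}, and in either description $F(K(i),-)$ commutes with limits, so given a diagram $\{X_\alpha\}$ in $\cosupp^{-1}(\cT)$, the natural equivalence $F(K(i),\lim X_\alpha) \simeq \lim F(K(i),X_\alpha)$ shows the vanishing persists. Stability under retracts, shifts, and cofiber sequences again follows from exactness.

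I do not anticipate any serious obstacle: the only mild subtlety is that colimits in $\Sp_{k,n}$ are computed by applying $L_{k,n}$ to the colimit in $\Sp$, but since $L_{k,n}$ is a $K(i)$-equivalence for $i \in \cQ$, this does not alter $K(i) \htimes (-)$, so the argument goes through cleanly.
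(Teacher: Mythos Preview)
Your proof is correct and follows the same approach as the paper. The paper's proof simply rewrites $\supp^{-1}(\cT)$ and $\cosupp^{-1}(\cT)$ as the common vanishing loci of the functors $K(i)\wedge(-)$ and $F(K(i),-)$ for $i\in\cQ\setminus\cT$ and then declares these ``clearly'' (co)localizing; you have just spelled out the closure properties that justify that word.
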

  \begin{proof}
    We simply note that \[
    \supp^{-1}(\cT) = \{ X \in \Sp_{k,n} \mid K(i) \wedge X = 0 \text{ for all } i  \in \cQ\setminus \cT \} \] 
    and 
    \[
\cosupp^{-1}(\cT) =  \{X \in \Sp_{k,n} \mid F(K(i),X) \simeq 0 \text{ for all } i \in \cQ\setminus \cT  \},
    \]
   which are clearly (co)localizing subcategories of $\Sp_{k,n}$. 
  \end{proof}
  We thus obtain maps
  \begin{equation}
\begin{Bmatrix}\label{eq:maps_loca}
\text{Localizing subcategories} \\
\text{ of $\Sp_{k,n}$}
\end{Bmatrix} 
\xymatrix{ \ar@<0.5ex>[r]^{\supp} & \ar@<0.5ex>[l]^{\supp^{-1}}}
\begin{Bmatrix}
\text{subsets of $\cQ$}  
\end{Bmatrix}
\end{equation}
and 
  \begin{equation}\label{eq:maps_coloca}
\begin{Bmatrix}
\text{Colocalizing subcategories} \\
\text{ of $\Sp_{k,n}$}
\end{Bmatrix} 
\xymatrix{ \ar@<0.5ex>[r]^{\cosupp} & \ar@<0.5ex>[l]^{\cosupp^{-1}}}
\begin{Bmatrix}
\text{subsets of $\cQ$}  
\end{Bmatrix}
\end{equation}

We will see that these are bijections.  We need the following local-global principle, which is a slight variant of that given by Hovey and Strickland \cite[Proposition 6.18]{hovey_strickland99}.

\begin{prop}[Local-global principle]\label{prop:lg}
  For any $X \in \Sp_{k,n}$ we have 
  \[
X \in \Loc_{\Sp_{k,n}}(X) = \Loc_{\Sp_{k,n}}(K(i) \mid i \in \supp(X))
  \]
  and
  \[
X \in \Coloc_{\Sp_{k,n}}(X) = \Coloc_{\Sp_{k,n}}(K(i) \mid i \in \cosupp(X)). 
  \]
\end{prop}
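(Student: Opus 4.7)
The plan is to adapt Hovey and Strickland's local-global principle in $\Sp_n$ \cite[Proposition 6.18]{hovey_strickland99} to the present setting, using the equivalence $L_{k,n} \colon \mathcal{M}_{k,n} \xrightarrow{\sim} \Sp_{k,n}$ of \Cref{thm:category_equivalence}.

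For the localizing statement, I would first check that $\supp_{\Sp_n}(M_{k,n}X) = \supp(X)$: applying $K(i) \wedge -$ to the cofiber sequence $M_{k,n}X \to L_nX \to L_{k-1}X$, one uses that $L_nX \to L_{k-1}X$ is a $K(i)$-equivalence for $i < k$ (so the fiber vanishes after smashing) and that $K(i) \wedge L_{k-1}X = 0$ for $i \geq k$, since $L_{k-1}$ is smashing and $K(i) \wedge L_{k-1}S^0 = 0$ by \Cref{prop:bousfield_classes}. The Hovey--Strickland principle then gives $M_{k,n}X \in \Loc_{\Sp_n}(K(i) \mid i \in \supp(X))$. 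The collection $\{Z \in \Sp_n : L_{k,n}Z \in \Loc_{\Sp_{k,n}}(K(i))\}$ is a localizing subcategory of $\Sp_n$ (as $L_{k,n}$ preserves colimits) containing each $K(i)$ with $k \leq i \leq n$ (since $K(i) \in \Sp_{k,n}$, hence $L_{k,n}K(i) \simeq K(i)$), and therefore contains $M_{k,n}X$; we conclude $X = L_{k,n}M_{k,n}X \in \Loc_{\Sp_{k,n}}(K(i) \mid i \in \supp(X))$. The reverse inclusion uses \Cref{lem:auto_ideal}: for $i \in \supp(X)$, the spectrum $K(i) \wedge X$ lies in $\Loc(X)$ and is a nonzero wedge of shifts of $K(i)$ by the field property, so $K(i)$ occurs as a retract.

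For the colocalizing statement a direct transport from $\Sp_n$ is obstructed, because $\cosupp_{\Sp_n}(M_{k,n}X)$ picks up contributions from $F(K(i), L_{k-1}X)$ with $i < k$, giving $K(i) \notin \Sp_{k,n}$. Instead I would induct on $n-k$, using the chromatic fracture square
\[
\begin{tikzcd}
X \arrow[d] \arrow[r] & L_{K(k)}X \arrow[d] \\
L_{k+1,n}X \arrow[r] & L_{K(k)}L_{k+1,n}X
\end{tikzcd}
\]
which realises $X$ as a pullback, hence a limit, of $L_{K(k)}X$, $L_{k+1,n}X$, and $L_{K(k)}L_{k+1,n}X$. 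The inclusions $\Sp_{K(k)} \hookrightarrow \Sp_{k,n}$ and $\Sp_{k+1,n} \hookrightarrow \Sp_{k,n}$ are reflective (so limit-closed), and therefore their colocalizing subcategories embed into $\Coloc_{\Sp_{k,n}}$. Hovey and Strickland's $K(n)$-local result places $L_{K(k)}X$ and $L_{K(k)}L_{k+1,n}X$ in $\Coloc_{\Sp_{k,n}}(K(k))$ whenever they are non-zero, while the inductive hypothesis in $\Sp_{k+1,n}$ handles $L_{k+1,n}X$. A cosupport calculation (using that $X \to L_{k+1,n}X$ is a $K(i)$-equivalence for $i \geq k+1$, and that $k \in \cosupp(X)$ iff $L_{K(k)}X \neq 0$ by the unique-non-zero-object property of $\Sp_{K(k)}$) matches the resulting generators to $\cosupp(X)$. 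The reverse inclusion is analogous: for $i \in \cosupp(X)$, the coideal property of \Cref{lem:auto_ideal} gives $F(K(i), X) \in \Coloc(X)$, which is a non-zero object of $\Sp_{K(i)}$, and the Hovey--Strickland local-global principle in $\Sp_{K(i)}$ places $K(i)$ in $\Coloc_{\Sp_{K(i)}}(F(K(i), X)) = \Coloc_{\Sp_{k,n}}(F(K(i), X)) \subseteq \Coloc_{\Sp_{k,n}}(X)$.

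The main technical obstacle is the colocalizing case: the cosupport mismatch under $M_{k,n}$ prevents a clean reduction to $\Sp_n$, and the chromatic-fracture induction is needed to work entirely within reflective sublocalizations of $\Sp_{k,n}$.
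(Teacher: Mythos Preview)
Your localizing argument is essentially the paper's, though you take an unnecessary detour: since $X$ already lies in $\Sp_n$, the paper applies \cite[Proposition~6.18]{hovey_strickland99} directly to $X$ (not to $M_{k,n}X$), obtaining $\Loc_{\Sp}(X)=\Loc_{\Sp}(K(i)\mid i\in\supp_{HS}(X))$, and then pushes forward along the colimit-preserving functor $L_{k,n}$, which kills $K(i)$ for $i\notin\cQ$ and leaves $X$ fixed.

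Your colocalizing argument is correct but far more elaborate than needed. The ``obstruction'' you identify is an artifact of routing through $M_{k,n}X$; it disappears if you apply Hovey--Strickland to $X$ itself. The key observation you are missing is this: for $i\notin\cQ$ the spectrum $K(i)$ is $E(n,J_k)$-acyclic (since $K(i)\wedge K(j)=0$ for $i\ne j$), so for any $E(n,J_k)$-local $X$ one has $F(K(i),X)\simeq 0$. Hence the Hovey--Strickland cosupport of $X$ in $\Sp$ is already contained in $\cQ$ and agrees with $\cosupp(X)$. Since limits in $\Sp_{k,n}$ are computed in $\Sp$ (\Cref{prop:hs_b.9}), the colocalizing subcategory generated by any collection of objects of $\Sp_{k,n}$ is the same whether formed in $\Sp$ or in $\Sp_{k,n}$. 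Thus
\[
\Coloc_{\Sp_{k,n}}(X)=\Coloc_{\Sp}(X)=\Coloc_{\Sp}(K(i)\mid i\in\cosupp(X))=\Coloc_{\Sp_{k,n}}(K(i)\mid i\in\cosupp(X)),
\]
and the paper's proof of the colocalizing case is one sentence. Your chromatic-fracture induction does work, but it solves a problem that isn't there.
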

\begin{proof}
Because $X \in \Sp_{k,n} \subseteq \Sp_n$ applying \cite[Proposition 6.18]{hovey_strickland99} we have 
\begin{equation}\label{eq:hs_loc_global}
X \in \Loc_{\Sp}(X) = \Loc_{\Sp}(K(i) \mid i \in \supp_{HS}(X))
\end{equation}
and
\[
X \in \Coloc_{\Sp}(X) = \Coloc_{\Sp}(K(i) \mid i \in \cosupp(X))
\]
The result for colocalizing subcategories is then clear, as we get the same result taking the colocalizing subcategories in $\Sp_{k,n}$. For localizing subcategories we apply \cite[Lemma 2.5]{bchv} to \eqref{eq:hs_loc_global} with the colimit preserving functor $F = L_{k,n} \colon \Sp \to \Sp_{k,n}$ to see that
  \[
L_{k,n}X \simeq X \in \Loc_{\Sp_{k,n}}(X) = \Loc_{\Sp_{k,n}}(K(i) \mid i \in \supp_{HS}(X) \cap \cQ ),
  \]
  where we have used that 
\[
  L_{k,n}K(i) \simeq \begin{cases}
    K(i) & \text{ if } i \in \cQ \\
    0 &\text{ if } i \not \in \cQ.
  \end{cases}
\]
   As previously noted (\Cref{rem:hovey_strickland_support}) $\supp_{HS}(X) \cap \cQ =\supp(X)$, and the result follows. 
\end{proof}
\begin{rem}\label{rem:detection}
 If follows from the local-global principle that both support and cosupport detect trivial objects:
 $$\supp(X) = \emptyset \iff X  \simeq 0 \iff \cosupp(X) = \emptyset.$$  
\end{rem}
\begin{cor}\label{cor:suppinverse}
  We have 
  \[
\supp^{-1}(\cT) = \Loc_{\Sp_{k,n}}(K(i) \mid i \in \cT). 
  \]
  and
  \[
\cosupp^{-1}(\cT) = \Coloc_{\Sp_{k,n}}(K(i) \mid i \in \cT). 
  \]
\end{cor}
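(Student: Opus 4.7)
The plan is to prove each equality by two-sided inclusion, with the non-trivial direction being a direct consequence of the local-global principle \Cref{prop:lg}.

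For the containment $\Loc_{\Sp_{k,n}}(K(i) \mid i \in \cT) \subseteq \supp^{-1}(\cT)$, I would first observe that by \Cref{ex:supp_kn} each generator $K(i)$ with $i \in \cT$ lies in $\supp^{-1}(\cT)$. Since $\supp^{-1}(\cT)$ is a localizing subcategory, it must contain the localizing subcategory generated by these objects. The dual containment for cosupport is essentially identical: by \Cref{ex:supp_kn} the $K(i)$ with $i \in \cT$ lie in $\cosupp^{-1}(\cT)$, which is colocalizing.

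For the reverse containment $\supp^{-1}(\cT) \subseteq \Loc_{\Sp_{k,n}}(K(i) \mid i \in \cT)$, I would take any $X \in \Sp_{k,n}$ with $\supp(X) \subseteq \cT$ and apply the local-global principle \Cref{prop:lg} to obtain
\[
X \in \Loc_{\Sp_{k,n}}(K(i) \mid i \in \supp(X)) \subseteq \Loc_{\Sp_{k,n}}(K(i) \mid i \in \cT),
\]
where the last inclusion uses monotonicity of $\Loc$ with respect to the set of generators. The colocalizing statement follows by the exact same argument replacing $\supp$ by $\cosupp$ and $\Loc$ by $\Coloc$, again invoking \Cref{prop:lg}.

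There is no real obstacle here, as the substantive input --- namely the local-global principle showing that $X$ can be built from the $K(i)$ indexed by its (co)support in $\cQ$ --- has already been established in \Cref{prop:lg}. The only subtlety is being careful that one works inside $\Sp_{k,n}$ rather than $\Sp$, but this is precisely the point at which \Cref{prop:lg} was formulated, so no further work is required.
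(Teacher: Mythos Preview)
Your proposal is correct and follows essentially the same approach as the paper: both argue by two-sided inclusion, using \Cref{ex:supp_kn} together with the fact that $\supp^{-1}(\cT)$ is localizing for one direction, and the local-global principle \Cref{prop:lg} (plus monotonicity of $\Loc$ in its generating set) for the other. The paper's version is merely more terse.
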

\begin{proof}
  Let $\cA = \Loc_{\Sp_{k,n}}(K(i) \mid i \in \cT)$. Because $\supp(K(i)) = i$ (\Cref{ex:supp_kn}), it is clear that $\cA \subseteq \supp^{-1}(\cT)$. Conversely, if $X \in \supp^{-1}(\cT)$, then \Cref{prop:lg} shows that 
  \[
X \in \Loc_{\Sp_{k,n}}(K(i) \mid i \in \cT) = \cA,
  \]
  so $\supp^{-1}(\cT) = \cA$, as claimed. The argument for colocalizing categories is similar. 
\end{proof}
We now give the promised classification of localizing and colocalizing subcategories. 
\begin{thm} \hfill\label{thm:loc_coloc_classifying}
\begin{enumerate}
  \item The maps \eqref{eq:maps_loca} give an order preserving bijection between localizing subcategories of $\Sp_{k,n}$ and subsets of $\cQ = \{ k,\ldots, n\}$. 
  \item The maps \eqref{eq:maps_coloca} give an order preserving bijection between colocalizing subcategories of $\Sp_{k,n}$ and subsets of $\cQ = \{ k,\ldots, n\}$. 
\end{enumerate}
\begin{proof}
Let $\cC \subseteq \Sp_{k,n}$ be a localizing subcategory and $\cT \subseteq \{ k,\ldots,n \}$ a subset. Then via \Cref{cor:suppinverse} and basic properties of support
\[
\supp(\supp^{-1}(\cT)) = \bigcup_{i \in \cT}\supp(K(i)) = \cT. 
\]

Now suppose that $X \in \cC$, so that $\supp(X) \subseteq \supp(\cC)$. It follows from the definitions that $X \in \supp^{-1}(\supp(\cC))$, and so $\cC \subseteq \supp^{-1}(\supp(\cC))$. We are therefore reduced to showing that $\supp^{-1}(\supp(\cC)) \subseteq \cC$. To that end, let $Y \in \supp^{-1}(\supp(\cC))$, so that $\supp(Y) \subseteq \supp(C)$. Using the local-global principle, \Cref{prop:lg}, we then have
    \[
    \begin{split}
Y &\in \Loc_{\Sp_{k,n}}(K(i) \mid i \in \supp(Y))\\
& \subseteq \Loc_{\Sp_{k,n}}(K(i) \mid i \in \supp(\cC)) \\
& = \cC,
\end{split}
    \]
    where the last equality follows from \Cref{prop:lg} again. The proof for colocalizing subcategories is analogous. 
\end{proof}  
\begin{nota}
  For the following, we recall that for $\cC \subseteq \Sp_{k,n}$ the right orthogonal $\cC^{\perp}$ is defined as 
  \[
\cC^{\perp} = \{ Y \in \Sp_{k,n} \mid  F(X,Y) = 0 \text{ for all } X \in \cC\}.
  \]
  Similarly, the left orthogonal ${}^\perp \cC$ is 
   \[
{}^\perp \cC = \{ Y \in \Sp_{k,n} \mid  F(Y,X) = 0 \text{ for all } X \in \cC\}.
  \]
  Moreover, the right orthogonal is a colocalizing subcategory, and the left orthogonal is a localizing subcategory. 
\end{nota}
\end{thm}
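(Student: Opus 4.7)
My proof plan would be to show that the two maps in each direction are mutually inverse, which since they are both order-preserving automatically gives the desired order-preserving bijection. I will focus on the localizing case; the colocalizing case is formally dual, replacing $\wedge$ and $\Loc$ by $F(-,-)$ and $\Coloc$ throughout, and invoking the cosupport half of \Cref{prop:lg} and \Cref{cor:suppinverse}.

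First I would show $\supp(\supp^{-1}(\cT)) = \cT$ for any subset $\cT\subseteq \cQ$. By \Cref{cor:suppinverse}, $\supp^{-1}(\cT) = \Loc_{\Sp_{k,n}}(K(i)\mid i \in \cT)$. The inclusion $\supseteq$ is immediate since each $K(i)$ for $i\in\cT$ lies in this subcategory and $\supp(K(i)) = \{i\}$ by \Cref{ex:supp_kn}. For the reverse inclusion, note that for any $j \in \cQ$, the collection $\{X \mid K(j)\wedge X \simeq 0\}$ is a localizing subcategory containing every $K(i)$ with $i \ne j$ (since $K(i)\wedge K(j) \simeq 0$ for $i \ne j$), so it contains $\supp^{-1}(\cT)$ whenever $j \notin \cT$.

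Next I would show $\supp^{-1}(\supp(\cC)) = \cC$ for any localizing subcategory $\cC$. The inclusion $\cC \subseteq \supp^{-1}(\supp(\cC))$ is tautological. For the other direction, take $Y \in \supp^{-1}(\supp(\cC))$, so $\supp(Y) \subseteq \supp(\cC)$. By the local-global principle (\Cref{prop:lg}),
\[
Y \in \Loc_{\Sp_{k,n}}(K(i) \mid i \in \supp(Y)) \subseteq \Loc_{\Sp_{k,n}}(K(i) \mid i \in \supp(\cC)),
\]
so it suffices to show $K(i) \in \cC$ for each $i \in \supp(\cC)$. Here is where I expect the only real subtlety: by definition of $\supp(\cC)$, there exists $X \in \cC$ with $K(i)\wedge X \ne 0$, and I want to conclude $K(i) \in \cC$. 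The point is that $\cC$ is automatically a tensor-ideal by \Cref{lem:auto_ideal}, and since $i \in \cQ$ we have $K(i)\wedge X \simeq K(i)\htimes X$ (both being $K(i)$-local), so $K(i)\wedge X \in \cC$. Since $K(i)_*$ is a graded field, $K(i)\wedge X$ is a non-trivial wedge of suspensions of $K(i)$, hence $K(i)$ is a retract of a suspension of $K(i) \wedge X$ and therefore $K(i) \in \Loc_{\Sp_{k,n}}(K(i)\wedge X) \subseteq \cC$.

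The hardest part is this last step linking $\supp(\cC)$ to actual objects of $\cC$; it relies crucially on \Cref{lem:auto_ideal} (every localizing subcategory is a tensor-ideal) together with the graded-field property of $K(i)_*$. The colocalizing version requires the analogous ideal property plus the formula $\cosupp(F(X,Y)) = \supp(X)\cap \cosupp(Y)$ from \Cref{lem:supp_cosupp_tensor_hom} to extract $K(i)$ as a retract inside $F(K(i),Y)$ for appropriate $Y\in\cal U$. Order-preservation of both maps is immediate from the definitions, completing the argument.
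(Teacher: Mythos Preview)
Your proof is correct and follows essentially the same structure as the paper's: verify $\supp\circ\supp^{-1}=\mathrm{id}$ and $\supp^{-1}\circ\supp=\mathrm{id}$, with the local-global principle (\Cref{prop:lg}) doing the real work.

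The one place you diverge is in showing $\Loc_{\Sp_{k,n}}(K(i)\mid i\in\supp(\cC))\subseteq\cC$. You argue directly that $K(i)\in\cC$ by invoking \Cref{lem:auto_ideal} and the graded-field property of $K(i)_*$ to exhibit $K(i)$ as a retract of $K(i)\htimes X$ for some $X\in\cC$. The paper instead just cites \Cref{prop:lg} a second time: since $\Loc_{\Sp_{k,n}}(X)=\Loc_{\Sp_{k,n}}(K(i)\mid i\in\supp(X))$, each $K(i)$ with $i\in\supp(X)$ already lies in $\Loc_{\Sp_{k,n}}(X)\subseteq\cC$. Your route is more hands-on and makes the mechanism explicit; the paper's is terser and avoids the separate appeal to \Cref{lem:auto_ideal}, but both are valid and amount to the same underlying fact.
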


\begin{cor}
  The map that sends a localizing subcategory $\cC$ of $\Sp_{k,n}$ to $\cC^{\perp}$ induces a bijection
  \begin{equation}
  \begin{Bmatrix}
\text{Localizing subcategories} \\
\text{ of $\Sp_{k,n}$}
\end{Bmatrix} 
\xymatrix{ \ar[r]^{\sim} &}
\begin{Bmatrix}
\text{Colocalizing subcategories} \\
\text{ of $\Sp_{k,n}$} 
\end{Bmatrix}
\end{equation}
 The inverse map sends a colocalizing subcategory $\cal{U}$ to ${}^{\perp}\cal{U}$. 
\end{cor}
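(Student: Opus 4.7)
The plan is to combine Theorem \ref{thm:loc_coloc_classifying}, which identifies both the poset of localizing subcategories and the poset of colocalizing subcategories of $\Sp_{k,n}$ with the power set of $\cQ=\{k,\ldots,n\}$ via (co)support, with an explicit computation of the (co)orthogonal of the generating set $\{K(i)\}_{i\in\cQ}$. Under these identifications, I will show that both $\cC\mapsto\cC^{\perp}$ and $\cal{U}\mapsto{}^{\perp}\cal{U}$ correspond to the complementation map $\cT\mapsto\cQ\setminus\cT$ on subsets of $\cQ$. Since this is manifestly an involution, mutual inverseness is immediate.

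\textbf{Step 1.} Suppose $\cC = \Loc_{\Sp_{k,n}}(K(i)\mid i\in\cT)$ for some $\cT\subseteq\cQ$ (as we may, by Corollary \ref{cor:suppinverse}). For each fixed $Y$ the functor $F(-,Y)$ is exact and sends colimits to limits, so the class $\{X\mid F(X,Y)=0\}$ is a localizing subcategory. Hence the condition $F(X,Y)=0$ for all $X\in\cC$ is equivalent to $F(K(i),Y)=0$ for all $i\in\cT$, i.e.\ to $\cosupp(Y)\cap\cT=\emptyset$. Thus $\cC^{\perp}=\cosupp^{-1}(\cQ\setminus\cT)$, which by Corollary \ref{cor:suppinverse} is $\Coloc_{\Sp_{k,n}}(K(j)\mid j\in\cQ\setminus\cT)$.

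\textbf{Step 2.} Dually, suppose $\cal{U}=\Coloc_{\Sp_{k,n}}(K(i)\mid i\in\cS)$. Because $F(Y,-)$ preserves limits (it is right adjoint to $Y\htimes -$), the vanishing $F(Y,X)=0$ for all $X\in\cal{U}$ is equivalent to $F(Y,K(i))=0$ for all $i\in\cS$. The technical heart of the argument is the equivalence $F(Y,K(i))=0\iff K(i)\wedge Y=0$: since $K(i)_*$ is a graded field, $K(i)\wedge Y$ is a wedge of suspensions of $K(i)$ determined by $K(i)_*Y$, and the universal coefficient isomorphism $\pi_{-*}F(Y,K(i))\cong\Hom_{K(i)_*}(K(i)_*Y,K(i)_*)$ shows both sides vanish together. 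Hence $F(Y,K(i))=0$ exactly when $i\notin\supp(Y)$, and so ${}^{\perp}\cal{U}=\supp^{-1}(\cQ\setminus\cS)=\Loc_{\Sp_{k,n}}(K(j)\mid j\in\cQ\setminus\cS)$.

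\textbf{Step 3.} Combining the two steps, the composites $\cC\mapsto\cC^{\perp}\mapsto{}^{\perp}(\cC^{\perp})$ and $\cal{U}\mapsto{}^{\perp}\cal{U}\mapsto({}^{\perp}\cal{U})^{\perp}$ correspond under the classification of Theorem \ref{thm:loc_coloc_classifying} to the composite $\cT\mapsto\cQ\setminus\cT\mapsto\cQ\setminus(\cQ\setminus\cT)=\cT$. This yields the asserted mutually inverse bijections. The main obstacle is the identification in Step 2 of $F(Y,K(i))$-acyclicity with $K(i)$-acyclicity, but this is standard given the graded-field structure of $K(i)_*$; once it is in hand, everything else is formal manipulation with orthogonals and the classification theorem.
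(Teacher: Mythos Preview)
Your proof is correct and follows essentially the same approach as the paper: both arguments show that, under the bijections of Theorem~\ref{thm:loc_coloc_classifying}, the maps $\cC\mapsto\cC^{\perp}$ and $\cal{U}\mapsto{}^{\perp}\cal{U}$ correspond to complementation $\cT\mapsto\cQ\setminus\cT$. The only cosmetic difference is that the paper packages your Steps~1--2 via the formula $\cosupp(F(X,Y))=\supp(X)\cap\cosupp(Y)$ of Lemma~\ref{lem:supp_cosupp_tensor_hom} together with detection (Remark~\ref{rem:detection}), whereas you reduce to the generators $K(i)$ directly and invoke the universal coefficient isomorphism; these are equivalent formulations of the same computation.
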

\begin{proof}
 We follow \cite[Corollary 9.9]{BensonIyengarKrause2012Colocalizing}. Let $\cC$ be a localizing subcategory, then using \Cref{rem:detection,lem:supp_cosupp_tensor_hom}
  \[
\begin{split}
  \cC^{\perp} &= \{ Y \in \Sp_{k,n} \mid F(X,Y) = 0 \text{ for all } X \in \cC \} \\
  &= \{ Y \in \Sp_{k,n} \mid \cosupp(Y) \cap \supp(\cC) = \emptyset \}\\
  &= \{ Y \in \Sp_{k,n} \mid \cosupp(Y) \subseteq \cQ \setminus \supp(\cC) \}\\
  &= \cosupp^{-1}(\cQ \setminus \supp(\cC)) .
  \end{split}
  \]
   Similarly, if $\cal{U}$ is a colocalizing subcategory, then 
  \[
\begin{split}
  {}^{\perp}\cal{U} &= \{ X \in \Sp_{k,n} \mid F(X,Y) = 0 \text{ for all } Y \in \cal{U} \} \\
  &= \{ X \in \Sp_{k,n} \mid \cosupp(\cal{U}) \cap \supp(X) = \emptyset \}\\
  &= \{ X \in \Sp_{k,n} \mid \supp(X) \subseteq \cQ \setminus \cosupp(\cal{U}) \}\\
  &= \supp^{-1}(\cQ \setminus \cosupp(\cal{U})) .
  \end{split}
  \]
  It follows that under the equivalences of \Cref{thm:loc_coloc_classifying}, the maps $\cC \mapsto \cC^{\perp}$ and $\cal{U} \mapsto {}^{\perp}\cal{U}$ correspond to the map $\cal{Q} \to \cal{Q}$ sending a subset to its complement, and are thus mutually inverse bijections.  
\end{proof}

\subsection{The Bousfield lattice}
We recall the basics on the Bousfield lattice of an algebraic stable homotopy theory. In order to avoid confusion with the (localized) categories of spectra considered previously we let $(\cC,\wedge,\unit)$ denote a tensor triangulated category.
\begin{defn}
  The Bousfield class of an object $X \in \cC$ is the full subcategory of objects
  \[
\langle X \rangle = \{ W \in \cC \mid X \wedge W = 0 \}
  \]
\end{defn}
\begin{rem}
   We always assume that our categories are compactly-generated and hence there is a set of Bousfield classes \cite[Theorem 3.1]{IyengarKrause2013Bousfield}.
\end{rem}
\begin{rem}
  We let $\BL(\cC)$ denote the set of Bousfield classes of $\cC$. As is known, this has a lattice structure, which we now describe. We say that $\lr{X} \le \lr{Y}$ if $Y \wedge W =0 \implies X \wedge W =0 $. Hence, $\langle 0 \rangle$ is the minimum Bousfield class, and $\lr{\unit}$ is the maximum. The join is defined by $\bigvee_{i \in I}\lr{X_i} = \lr{\coprod_{i \in I} X_i}$, and the meet is the join of all lower bounds.
\end{rem}

\begin{prop}\label{prop:bousfield_lattice}
  The Bousfield lattice $\BL(\Sp_{k,n})$ is isomorphic to the lattice of subsets of $\cQ$ via the map sending $\langle X \rangle$ to $\supp(X)$. 
\end{prop}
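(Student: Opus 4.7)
The plan is to show that $\varphi\colon \BL(\Sp_{k,n}) \to \cP(\cQ)$, $\lr{X} \mapsto \supp(X)$, is an order isomorphism when $\cP(\cQ)$ is given the inclusion order; since both sides are complete lattices, any such order isomorphism automatically preserves arbitrary joins and meets and therefore yields a lattice isomorphism. The two key ingredients are already in hand from the previous subsection: the formula $\supp(X \htimes W) = \supp(X) \cap \supp(W)$ of \Cref{lem:supp_cosupp_tensor_hom}, and the detection principle of \Cref{rem:detection} that $\supp(Z) = \emptyset$ if and only if $Z \simeq 0$. Combining them yields the pivotal equivalence
\[
X \htimes W \simeq 0 \quad \Longleftrightarrow \quad \supp(X) \cap \supp(W) = \emptyset,
\]
which is the engine of the whole argument.

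First I would observe that every subset $T \subseteq \cQ$ is realised as a support: by \Cref{ex:supp_kn} we have $\supp(K(i)) = \{i\}$ for $i \in \cQ$, and support sends coproducts to unions, so $\supp\bigl(\bigvee_{i \in T} K(i)\bigr) = T$. This immediately gives surjectivity of $\varphi$, and, by testing the defining condition $X \htimes W \simeq 0 \Leftrightarrow Y \htimes W \simeq 0$ against each $W = K(i)$, also shows that $\varphi$ is well-defined. I would then verify that $\varphi$ reflects the order. Unwinding the paper's convention, $\lr{X} \le \lr{Y}$ asserts that every $W \in \Sp_{k,n}$ with $Y \htimes W \simeq 0$ also satisfies $X \htimes W \simeq 0$; via the pivotal equivalence this reads: whenever $\supp(W) \subseteq \cQ \setminus \supp(Y)$, one also has $\supp(W) \subseteq \cQ \setminus \supp(X)$. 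Plugging in the extremal witness $W = \bigvee_{i \in \cQ \setminus \supp(Y)} K(i)$ forces $\cQ \setminus \supp(Y) \subseteq \cQ \setminus \supp(X)$, hence $\supp(X) \subseteq \supp(Y)$; the reverse implication is immediate from the pivotal equivalence. Thus $\varphi$ is an order isomorphism, and in particular injective.

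To conclude, being an order isomorphism between complete lattices, $\varphi$ preserves arbitrary joins and meets: joins correspond to unions via the standing formula $\bigvee_{\alpha}\lr{X_\alpha} = \lr{\coprod_\alpha X_\alpha}$, and meets correspond to intersections, with $\lr{X} \wedge \lr{Y}$ in fact represented by $\lr{X \htimes Y}$ as a side-effect of $\supp(X \htimes Y) = \supp(X) \cap \supp(Y)$. The argument presents no real obstacle beyond carefully unwinding the definitions; all the substantive content has been packaged into the support/cosupport formalism of the previous subsection, which itself rests on the local-global principle \Cref{prop:lg}.
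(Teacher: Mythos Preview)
Your proof is correct and follows essentially the same approach as the paper: both arguments reduce to the equivalence $X \htimes W \simeq 0 \iff \supp(X) \cap \supp(W) = \emptyset$, which you obtain from \Cref{lem:supp_cosupp_tensor_hom} plus \Cref{rem:detection}, while the paper derives the equivalent statement $\langle X \rangle = \langle \bigvee_{i \in \supp(X)} K(i)\rangle$ directly from the local-global principle \Cref{prop:lg}. Your presentation is a bit more explicit in separately checking well-definedness, surjectivity, and order preservation/reflection, whereas the paper packages all of this into the single Bousfield-class identity and leaves the rest as ``direct computation''; the underlying content is the same.
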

\begin{proof}
 Define a map that sends $\cT \subseteq \cQ$ to $\langle \bigvee_{i \in \cT} K(i) \rangle$ in $\BL(\Sp_{k,n})$. We claim that this gives the necessary inverse map. By the local-global principle (\Cref{prop:lg}) we have
 \[
\Loc_{\Sp_{k,n}}(X) = \Loc_{\Sp_{k,n}}(K(i) \mid i \in \supp(X)).
 \]
 In particular, $X \htimes W \simeq 0$ if and only if $K(i) \htimes W \simeq 0$ for all $i \in \supp(X)$, so that 
 \begin{equation}\label{eq:bousfield_decomp}
\langle X \rangle = \langle \bigvee_{i \in \supp(X)}K(i) \rangle.
 \end{equation}
 The result then follows by direct computation. 
\end{proof}
\subsection{The telescope conjecture and variants}
We begin by considering variants of the telescope conjecture in the localized categories $\Sp_{k,n}$ using work of Wolcott \cite{Wolcott2015Variations}. 
\begin{defn}
  For $i \in \cQ$, let $l_i^{f} \colon \Sp_{k,n} \to \Sp_{k,n}$ denote finite localization away from $L_{k,n}F(i+1)$. 
\end{defn}
\begin{rem}
  Because $L_{k,n}F(i+1)$ is in $\Sp^{\omega}_{k,n}$ by \Cref{thm:compact_objects}, this is a smashing localization.
\end{rem}
\begin{rem}
  
By \cite[Proposition 3.8]{Wolcott2015Variations} we have an equivalence of endofunctors of $\Sp_{k,n}$ (recall that $\langle \Tel(n) \rangle$ the Bousfield class of a telescope of a finite type $n$ spectrum)
\[
\begin{split}
l_i^f &\simeq L_{L_{k,n}\Tel(0) \vee L_{k,n} \Tel(1) \vee \cdots \vee L_{k,n}\Tel(i)}
\end{split}
\]
We note that $L_{k,n}\Tel(j)$ is trivial when $j \not \in \cQ$ by \cite[Proposition A.2.13]{Ravenel1992Nilpotence}. In particular, we have 
\[
\begin{split}
l_i^f &\simeq L_{L_{k,n}\Tel(k) \vee \cdots \vee L_{k,n}\Tel(i)}
\end{split}
\]
\end{rem}

We also consider the following Bousfield localization on $\Sp_{k,n}$. 
\begin{defn}
  For $i \in \cQ$, let $l_i \colon \Sp_{k,n} \to \Sp_{k,n}$ denote Bousfield localization at $K(k) \vee K(k+1) \vee \cdots \vee K(i)$. 
\end{defn}
\begin{rem}
  Following Wolcott \cite{Wolcott2015Variations} we consider the following variants of the telescope conjecture on $\Sp_{k,n}$ for $i \in \cQ$. 
$$\begin{array}{ll}
\LTCi: & \lr{L_{k,n}\Tel(i)} = \lr{K(i)} \mbox { in } \BL(\Sp_{k,n}). \\
\LTCii: & l_i^f X\stackrel{\sim}{\to} l_i X \mbox{ for all } X, \text{ or equivalently } \lr{\bigvee_{j=k}^i L_{k,n}\Tel(j)} = \lr{\bigvee_{j=k}^iK(j)}\mbox { in } \BL(\Sp_{k,n}).  \\
\LTCiii: & \mbox{If }X \mbox{  is a type }i\mbox{ spectrum and }f\mbox{  is a }v_i\mbox{  self-map, then } l_i (L_{k,n}X)\cong L_{k,n}(f^{-1}X).\\
\GSC: & \mbox{Every smashing localization is generated by a set of compact objects.}\\
\SDGSC: & \mbox{Every smashing localization is generated by a set of dualizable objects.}
\end{array}$$
Here $\LTC$ stands for the localized telescope conjecture, $\GSC$ is the generalized smashing conjecture, and $\SDGSC$ is the strongly dualizable generalized smashing conjecture. We emphasize the difference here because compact and dualizable objects do not coincide in $\Sp_{k,n}$ when $k \ne 0$. 
\end{rem}

\begin{prop}\label{prop:ltc}
  On $\Sp_{k,n}$, we have that $\LTCi,\LTCii$ and $\LTCiii$ hold for all $i \in \cQ$. 
\end{prop}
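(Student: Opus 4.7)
The plan is to reduce each of the three statements to a Bousfield class computation inside $\Sp_{k,n}$, where the decisive input is the identification of $\BL(\Sp_{k,n})$ with the power set of $\cQ$ via $\langle X \rangle \mapsto \supp(X)$ given by \Cref{prop:bousfield_lattice}.

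For $\LTCi$ I would compute $\supp(L_{k,n}\Tel(i))$. Since $\Tel(i)$ is the colimit of iterates of a $v_i$-self map on a finite type $i$ spectrum, and since $v_i \in BP_*$ maps to an invertible element of $K(i)_*$ but to zero in $K(j)_*$ for $j \ne i$, one obtains $K(j)_*\Tel(i) = 0$ for $j \ne i$ and $K(i)_*\Tel(i) \ne 0$. For $j \in \cQ$ we have $K(j) \wedge L_{k,n}\Tel(i) \simeq K(j) \wedge \Tel(i)$, so $\supp(L_{k,n}\Tel(i)) = \{i\} = \supp(K(i))$, and \Cref{prop:bousfield_lattice} then yields $\LTCi$. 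The statement $\LTCii$ follows immediately: the join in $\BL(\Sp_{k,n})$ is realized by wedges, so applying $\LTCi$ term by term gives $\langle \bigvee_{j=k}^i L_{k,n}\Tel(j) \rangle = \langle \bigvee_{j=k}^i K(j) \rangle$.

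For $\LTCiii$ I would use $\LTCii$ to replace $l_i$ by the smashing finite localization $l_i^f$ away from $L_{k,n}F(i+1)$, and then verify two things. First, the natural map $L_{k,n}X \to L_{k,n}(f^{-1}X)$ is an $l_i$-equivalence: in $K(j)$-homology both sides vanish for $j < i$ because $X$ has type $i$, while for $j = i$ the map $f$ is already invertible on $K(i)_*X$. Second, $L_{k,n}(f^{-1}X)$ is $l_i$-local; since $L_{k,n}F(i+1)$ is a compact generator of the $l_i^f$-acyclics, this reduces to checking $F(L_{k,n}F(i+1), L_{k,n}(f^{-1}X)) \simeq 0$. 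Spanier--Whitehead duality for the finite spectrum $F(i+1)$ identifies this function object with $L_{k,n}(DF(i+1) \wedge \Tel(i))$, and $DF(i+1) \wedge \Tel(i)$ has vanishing $K(j)$-homology for every $0 \le j \le n$: the factor $DF(i+1)$ handles $j \le i$, while $\Tel(i)$ handles $j > i$. Hence this smash product is $L_n$-acyclic and in particular $L_{k,n}$-acyclic.

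The main subtlety is the locality check in $\LTCiii$: it relies on $\LTCii$ to trade $l_i$ for the smashing $l_i^f$ and on the fact that the $l_i^f$-acyclics have an explicit compact generator, which converts locality into the vanishing computation above. Everything else is straightforward bookkeeping given \Cref{prop:bousfield_lattice}.
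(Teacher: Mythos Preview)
Your proof is correct. The treatment of $\LTCi$ is essentially identical to the paper's: both compute $\supp(L_{k,n}\Tel(i)) = \{i\} = \supp(K(i))$ and invoke \Cref{prop:bousfield_lattice}.

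The difference lies in how $\LTCii$ and $\LTCiii$ are handled. The paper simply cites \cite[Theorem 3.12]{Wolcott2015Variations}, which establishes that $\LTCi$ implies the other two in any localized setting, so once $\LTCi$ is known the rest is immediate. You instead prove $\LTCii$ by taking joins and then give a direct argument for $\LTCiii$: using $\LTCii$ to identify $l_i$ with the smashing $l_i^f$, checking the $l_i$-equivalence on $K(j)$-homology for $k \le j \le i$, and reducing locality to the vanishing of $L_{k,n}(DF(i+1) \wedge \Tel(i))$ via the compact generator of the $l_i^f$-acyclics. This is more work but makes the argument self-contained and transparent about exactly where finiteness of $F(i+1)$ and the field-theoretic behavior of $K(j)_*$ enter. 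The paper's route is shorter but relies on the reader accepting Wolcott's general machinery; yours unpacks precisely the part of that machinery needed here.
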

\begin{proof}
  By \cite[Theorem 3.12]{Wolcott2015Variations} it suffices to prove that $\LTCi$ holds, and by \Cref{prop:bousfield_lattice} this will follow if we show that $L_{k,n}\Tel(i)$ and $K(i)$ have the same support in $\Sp_{k,n}$. To see this, we have $\supp(K(i)) = \{ i \}$ by \Cref{ex:supp_kn}, while $\supp(\Tel(i)) = \{ i \}$ by \cite[Lemma 2.10 and Lemma 3.7]{Wolcott2015Variations}. 
\end{proof}
We now classify all smashing localizations on $\Sp_{k,n}$ and show that all variants of the telescope conjecture hold. 
\begin{thm}\label{thm:telescope}
  Let $L$ be a non-trivial smashing localization functor on $\Sp_{k,n}$, then $L \simeq l_j^f \simeq l_j$ for some $j \in \cQ$. In particular, the $\GSC$ and $\SDGSC$ both hold in $\Sp_{k,n}$. 
\end{thm}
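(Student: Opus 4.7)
The plan is to combine Proposition \ref{prop:ltc} with the classification of localizing subcategories (Theorem \ref{thm:loc_coloc_classifying}). First, observe that for each $j \in \cQ$, the finite localization $l_j^f$ is smashing, being finite localization at the compact object $L_{k,n}F(j+1) \in \Sp_{k,n}^{\omega}$ (with the convention that $L_{k,n}F(n+1) \simeq 0$, so $l_n^f \simeq \operatorname{id}$), and by $\LTCiij$ from Proposition \ref{prop:ltc} we have $l_j^f \simeq l_j$. Each such $l_j$ is therefore generated, as a Bousfield localization, by a compact (hence dualizable) object.

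Now let $L$ be any non-trivial smashing localization on $\Sp_{k,n}$. Its kernel is a localizing tensor-ideal by Lemma \ref{lem:auto_ideal}, and by Theorem \ref{thm:loc_coloc_classifying} we have $\ker(L) = \supp^{-1}(\cT)$ for a unique subset $\cT \subsetneq \cQ$. Equivalently, by Corollary \ref{cor:suppinverse}, $\ker(L) = \operatorname{Loc}_{\Sp_{k,n}}(K(i) : i \in \cT) = \operatorname{Loc}_{\Sp_{k,n}}(L_{k,n}\Tel(i) : i \in \cT)$, where the second description uses $\LTCij$ together with the local-global principle (Proposition \ref{prop:lg}).

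The central step is to show that $\cT$ has the form $\{j+1, j+2, \ldots, n\}$ for some $j \in \{k, k+1, \ldots, n\}$. Granting this, Corollary \ref{cor:suppinverse} gives $\supp^{-1}(\{j+1, \ldots, n\}) = \operatorname{Loc}(L_{k,n}F(j+1)) = \ker(l_j^f)$, since $L_{k,n}F(j+1)$ is compact with support $\{j+1, \ldots, n\}$. As a Bousfield localization is determined by its kernel, I conclude $L \simeq l_j^f \simeq l_j$. Both $\GSC$ and $\SDGSC$ follow immediately, since each non-trivial smashing localization $l_j^f$ is generated by the compact (hence dualizable) object $L_{k,n}F(j+1)$.

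The main obstacle is proving that $\cT$ is upward closed in $\cQ$. The natural approach is to invoke Wolcott's framework \cite[Theorem 3.12]{Wolcott2015Variations}: the validity of $\LTCiij$ for all $i \in \cQ$ (established in Proposition \ref{prop:ltc}), together with the thick subcategory classification (Theorem \ref{thm:thick_subcategory}), is shown there to imply $\GSC$, namely that every smashing localization is generated as a localizing ideal by its compact objects. Applied to $L$, this identifies $\ker(L) \cap \Sp_{k,n}^{\omega}$ as a thick ideal of $\Sp_{k,n}^{\omega}$, which by Theorem \ref{thm:thick_subcategory} is one of $\cC_{k+1}, \cC_{k+2}, \ldots, \cC_{n+1}$, and so $\cT$ has the required form $\{j+1, \ldots, n\}$. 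Alternatively, one may argue directly using that, as the image of a smashing localization, $\ker(L)^{\perp}$ is simultaneously localizing and colocalizing in $\Sp_{k,n}$; matching its descriptions $\supp^{-1}(\cQ \setminus \cT)$ and $\cosupp^{-1}(\cQ \setminus \cT)$ (the latter obtained from the corollary to Theorem \ref{thm:loc_coloc_classifying}) forces $\cQ \setminus \cT$ to be an initial segment of $\cQ$.
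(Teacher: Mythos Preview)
Your overall architecture is sound and somewhat different from the paper's: you classify $\ker(L)$ via \Cref{thm:loc_coloc_classifying} as $\supp^{-1}(\cT)$ and then try to show $\cT$ is upward closed, whereas the paper works directly with Bousfield classes of $L\unit$. The identification $\ker(L)=\supp^{-1}(\cT)$ with $\cQ\setminus\cT=\supp(L\unit)$ is correct, and once $\cT$ is known to be upward closed your conclusion is fine.

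The gap is that neither of your two approaches actually establishes that $\cT$ is upward closed. For approach (a), \cite[Theorem 3.12]{Wolcott2015Variations} is the result the paper cites to pass between the three $\LTC$ variants; it does not by itself yield $\GSC$. The paper instead follows the \emph{argument} of \cite[Theorem 4.4]{Wolcott2015Variations}, so citing Wolcott here is circular unless you reproduce the actual mechanism. For approach (b), you assert that the equality $\supp^{-1}(\cQ\setminus\cT)=\cosupp^{-1}(\cQ\setminus\cT)$ (which does hold, since $\operatorname{im}(L)$ is a localizing ideal with trivial intersection with $\ker(L)$, and is also colocalizing) forces $\cQ\setminus\cT$ to be an initial segment. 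This is not obvious and you give no argument. It is in fact a genuine constraint---for instance $L_{K(n)}S^0$ has $\cosupp=\{n\}$ but $\supp=\{k,\ldots,n\}$, so $\supp^{-1}(\{n\})\neq\cosupp^{-1}(\{n\})$ when $k<n$---but turning this observation into a proof requires exactly the chromatic input you have omitted.

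That input is what the paper supplies directly: pick any $j\in\supp(L\unit)$, so $\langle K(j)\rangle\le\langle L\unit\rangle$ and hence $\langle L_{K(j)}\unit\rangle\le\langle L\unit\rangle$; then use \cite[Proposition 5.3]{hovey_strickland99} to compute $\langle L_{K(j)}\unit\rangle=\langle L_{K(j)}S^0\rangle=\bigvee_{i=k}^{j}\langle K(i)\rangle$ in $\BL(\Sp_{k,n})$, forcing $\{k,\ldots,j\}\subseteq\supp(L\unit)$. This one line is the missing idea in your sketch; with it, either your framework or the paper's completes immediately.
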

\begin{proof}
 We closely follow \cite[Theorem 4.4]{Wolcott2015Variations}. Throughout the proof we let $\unit$ denote $L_{k,n}S^0$, the monoidal unit in $\Sp_{k,n}$, so that $\lr{L} = \lr{L\unit}$. By \eqref{eq:bousfield_decomp} we have 
  \[
\lr{L\unit} = \lr{\bigvee_{i \in \supp(L\unit)} K(i)}
  \]
Note that $\supp(L\unit)$ is non-empty because we assume $L \ne 0$. Hence, we can fix $j \in \supp(L\unit)$ so that $\lr{K(j)} \le \lr{L(\unit)}$ in $\BL(\Sp_{k,n})$. It follows that $L_{K(j)}L \simeq LL_{K(j)} \simeq L_{K(j)}$, and $\lr{L_{K(j)}\unit} = \lr{L_{K(j)}\unit \htimes L\unit} \le \lr{L\unit}$ in $\BL(\Sp_{k,n})$.  We also note that $L_{K(j)}\unit = L_{K(j)}L_{k,n}S^0 \simeq L_{K(j)}S^0$.

   By \cite[Proposition 5.3]{hovey_strickland99} we have $\langle L_{K(j)}S^0 \rangle = \bigvee_{i=0}^j \langle K(i) \rangle$ in $\BL(\Sp)$, and it follows easily that $\langle L_{K(j)}S^0 \rangle = \bigvee_{i=k}^j \langle K(i) \rangle$ in $\BL(\Sp_{k,n})$. It follows that $\lr{L\unit} \ge  \bigvee_{i=k}^j \langle K(i) \rangle$ in $\BL(\Sp_{k,n})$. We deduce that $\lr{L\unit} = \bigvee_{i=k}^j \lr{K(i)}$, where $j = \max\{\supp(L\unit)\}$, and hence by \Cref{prop:ltc} that $L \simeq l_j^f \simeq l_j$. Finally, because $L_nF(j+1)$ is compact and therefore also dualizable in $\Sp_{k,n}$, both the $\GSC$ and $\SDGSC$ hold in $\Sp_{k,n}$. 
\end{proof}
\begin{rem}
  Using \cite[Proposition 3.8.3]{HoveyPalmieriStrickl1997Axiomatic} and \Cref{thm:loc_coloc_classifying,thm:telescope} one can reprove the thick subcategory theorem \Cref{thm:thick_subcategory}. 
\end{rem}
\section{Descent theory and the \texorpdfstring{$E(n,J_k)$}{E(n,Jk)}-local Adams spectral sequence}\label{sec:descent_anss}
In this section we use descent theory to construct an Adams-type spectral sequence in the $E(n,J_k)$-local category. Using descent, we shall see that this has a vanishing line at some finite stage. Moreover, for $p \gg n$, we show that the $E(n,J_k)$-local Adams spectral sequence computing $\pi_*L_{k,n}S^0$ has a horizontal vanishing line on the $E_2$-page, and there are no non-trivial differentials. 

\subsection{Descendability}\label{sec:descendability}

We begin with the notion of a descendable object in an algebraic stable homotopy category.
\begin{rem}
  We recall that in $\cC$ there is an $\infty$-category $\CAlg(\cC)$ of commutative algebra objects, see \cite[Chapter 3]{lurie-higher-algebra}. Moreover, given $A \in \CAlg(\cC)$ we can define a stable, presentable, symmetric monoidal $\infty$-category $\Mod_A(\cC)$ of $A$-modules internal to $\cC$, with the relative $A$-linear tensor product \cite[Section 4.5]{lurie-higher-algebra}. We will mainly focus on the case $A = E_n$ and $\cC = \Sp_{k,n}$, so that $\Mod_{E_n}(\Sp_{k,n})$ denotes the $\infty$-category of $E(n,J_k)$-local $E_n$-modules, that is $E_n$-modules whose underlying spectrum is $E(n,J_k)$-local, with monoidal structure $A \wedge B = L_{k,n}(A \wedge_E B)$.  
\end{rem}
\begin{rem}
  Note that $E_n \in \CAlg(\Sp)$ by the Goerss--Hopkins--Miller \cite{goerss_hopkins} theorem, and so $E_n \in \CAlg(\Sp_{k,n})$ as well. On the other hand, $E(n,J_k)$ will not, in general, be a commutative ring spectrum (for example, $K(n)$ is never a commutative ring spectrum). 
\end{rem}
\begin{defn}(\cite[Definition 3.18]{Mathew2016Galois})
  A commutative algebra object $A \in \CAlg(\cC)$ is said to be descendable if $\unit \in \cC$ is $A$-nilpotent (\Cref{defn:nilpotence}), or equivalently $\cC = \thickt{A}$. 
\end{defn}
One reason to be interested in descendable objects is the following \cite[Proposition 3.22]{Mathew2016Galois}. 
\begin{prop}[Mathew]\label{prop:descendability_cat}
  Let $A \in \CAlg(\cC)$ be descendable, then the adjunction $C \leftrightarrows \Mod_{\cC}(A)$ given by tensoring with $A$ and forgetting is comonadic. In particular, the natural functor from $C$ to the totalization
  \[
\cC \to \Tot\left(\xymatrix@C=1em{\Mod_{A}(\cC) \ar@<0.5ex>[r] \ar@<-0.5ex>[r] & \Mod_{A \wedge A}(\cC)\ar@<1ex>[r] \ar@<-1ex>[r] \ar[r]&}\right)
  \]
  is an equivalence. 
\end{prop}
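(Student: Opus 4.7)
The plan is to verify the hypotheses of the Barr--Beck--Lurie comonadicity theorem (the dual of \cite[Theorem 4.7.3.5]{lurie-higher-algebra}) for the free--forgetful adjunction
$$F := A \wedge (-) : \cC \rightleftarrows \Mod_A(\cC) : U,$$
where $F$ is the left adjoint and $U$ the right adjoint. Comonadicity of $F$ means the comparison functor from $\cC$ to coalgebras for the comonad $FU$ on $\Mod_A(\cC)$ is an equivalence. Once that is established, it is a formal identification to rewrite the $\infty$-category of $FU$-coalgebras as the totalization of the cobar cosimplicial diagram in the statement: iterating $FU$ on an $A$-module $M$ produces $A^{\wedge (n+1)} \wedge M$, and comodule structures for $FU$ translate to compatible module structures over $A^{\wedge(n+1)}$ for each $n$, exhibiting $\Mod_{A^{\wedge(n+1)}}(\cC)$ as the $n$-th level of the cosimplicial diagram.

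First I would check that $F$ is conservative; this is the only step where descendability enters. Fix $X \in \cC$ with $A \wedge X \simeq 0$, and consider the full subcategory
$$\cI_X := \{ Y \in \cC \mid Y \wedge X \simeq 0 \}.$$
This is closed under cofibres, retracts, and smashing with arbitrary objects of $\cC$, hence is a thick tensor-ideal. By assumption it contains $A$, so $\thickt{A} \subseteq \cI_X$. Descendability is precisely the statement $\thickt{A} = \cC$, so $\unit \in \cI_X$, giving $X \simeq \unit \wedge X \simeq 0$.

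Second, Barr--Beck--Lurie requires $F$ to preserve totalizations of $F$-split cosimplicial objects. This is automatic: any split cosimplicial object has a totalization computed via the extra codegeneracy, and any functor of $\infty$-categories preserves such formally split limits. With both hypotheses verified, the comparison functor is an equivalence, and unpacking the $FU$-coalgebra description yields the totalization presentation.

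The only content-bearing step is conservativity, which is essentially the definition of descendability unpacked; the rest is formal $\infty$-categorical machinery, so I do not anticipate any serious obstacles beyond bookkeeping the identification of coalgebras with the cobar totalization.
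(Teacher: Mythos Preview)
The paper does not prove this proposition; it simply attributes it to Mathew and cites \cite[Proposition 3.22]{Mathew2016Galois}. Your overall strategy---verify the Barr--Beck--Lurie hypotheses---is exactly what Mathew does, and your conservativity argument is correct.

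However, your verification of the second hypothesis has a genuine gap. You write that preservation of $F$-split totalizations is automatic because ``any split cosimplicial object has a totalization computed via the extra codegeneracy, and any functor of $\infty$-categories preserves such formally split limits.'' The problem is that an $F$-split cosimplicial object $X^\bullet$ is \emph{not} split in $\cC$; only $F(X^\bullet)$ is split in $\Mod_A(\cC)$. The absoluteness of split limits tells you that $\Tot F(X^\bullet)$ is preserved by any \emph{further} functor out of $\Mod_A(\cC)$, but it says nothing about whether the comparison map $F(\Tot X^\bullet) \to \Tot F(X^\bullet)$ is an equivalence, which is what you need. Indeed, if this condition were automatic for every left adjoint, comonadicity would follow from conservativity alone, which is false.

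The correct argument for condition (b) uses descendability a second time. One clean route: given $X^\bullet$ with $A \otimes X^\bullet$ split, consider the augmented cosimplicial object $\Tot X^\bullet \to X^\bullet$ and show that for every $M \in \cC$ the map $M \otimes \Tot X^\bullet \to \Tot(M \otimes X^\bullet)$ is an equivalence. The class of such $M$ is thick; it contains every free $A$-module $A \otimes N$ because $N \otimes (A \otimes X^\bullet)$ inherits a splitting, making its totalization absolute. Hence the class contains $\thickt{A} = \cC$, so in particular it contains $A$, which is condition (b). Alternatively, and closer to Mathew's exposition, one shows directly from $\unit \in \thickt{A}$ that the partial totalization tower $\{\Tot^m(A^{\wedge \bullet+1})\}$ is pro-constant, whence $X \simeq \Tot(A^{\wedge \bullet+1} \wedge X)$ for all $X$ and comonadicity follows.
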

We also note the following \cite[Proposition 3.19]{Mathew2016Galois}.
\begin{prop}[Mathew]\label{prop:conservativity}
If $A \in \CAlg(\cC)$ is descendable, then the functor 
\[\cC \to \Mod_{A}(\cC), \quad M \mapsto M \wedge A\] is conservative.   
\end{prop}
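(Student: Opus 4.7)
The plan is to reduce the statement to a thick-ideal argument using the definition of descendability. Since $\Mod_A(\cC)$ is stable, a morphism $f \colon M \to N$ is an equivalence if and only if its fiber is zero, and the extension-of-scalars functor $(-) \wedge A$ is exact. Thus conservativity of $(-) \wedge A$ is equivalent to the implication
\[
M \wedge A \simeq 0 \quad \Longrightarrow \quad M \simeq 0.
\]
So the first step is to reduce to showing that any $M \in \cC$ with $M \wedge A \simeq 0$ is itself zero.

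Next, I would introduce the full subcategory
\[
\cD \;=\; \{\, X \in \cC \;\mid\; X \wedge M \simeq 0 \,\} \;\subseteq\; \cC.
\]
The key observation is that $\cD$ is a thick tensor-ideal of $\cC$: closure under finite (co)limits and retracts follows because $(-) \wedge M$ is exact and preserves retracts, and closure under tensoring with arbitrary objects of $\cC$ is immediate from associativity of $\wedge$. By hypothesis $A \wedge M \simeq M \wedge A \simeq 0$, so $A \in \cD$.

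The final step invokes descendability. By definition, $A$ being descendable means $\unit \in \thickt{A}$, i.e., the smallest thick tensor-ideal containing $A$ is all of $\cC$. Since $\cD$ is a thick tensor-ideal containing $A$, we conclude $\thickt{A} \subseteq \cD$, and in particular $\unit \in \cD$. But $\unit \in \cD$ says $M \simeq \unit \wedge M \simeq 0$, which completes the proof.

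There is no real obstacle here; the whole argument is essentially the universal property of $\thickt{A}$ combined with the elementary fact that the $M$-annihilator is a thick tensor-ideal. The only thing to be a little careful about is checking that $\cD$ really is closed under tensoring with \emph{arbitrary} objects of $\cC$ (not merely under finite colimits), but this is immediate since $X \wedge M \simeq 0$ implies $(Y \wedge X) \wedge M \simeq Y \wedge (X \wedge M) \simeq 0$ for any $Y \in \cC$.
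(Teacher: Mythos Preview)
Your proof is correct. The paper does not actually prove this proposition; it simply attributes it to Mathew and cites \cite[Proposition 3.19]{Mathew2016Galois}. Your argument---reducing conservativity to detection of zero objects and then observing that the annihilator $\cD = \{X \in \cC \mid X \wedge M \simeq 0\}$ is a thick tensor-ideal containing $A$, hence all of $\cC$---is exactly the standard proof and is essentially what appears in Mathew's paper.
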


\subsection{Morava modules and $L$-complete comodules}
The following theorem, essentially due to Hopkins--Ravenel \cite{Ravenel1992Nilpotence}, shows that the results of the previous section can be applied in $\Sp_{k,n}$. We note that $E_n \in \Sp$ is a commutative algebra object; this is the Goerss--Hopkins--Miller theorem \cite{goerss_hopkins}. It follows that $E_n \in \CAlg(\Sp_{k,n})$. 
\begin{thm}\label{thm:descendability_en}
  $E_n \in \CAlg(\Sp_{k,n})$ is descendable, and there is an equivalence of symmetric-monoidal stable $\infty$-categories
\begin{equation}\label{eq:tot_module}
\Sp_{k,n} \simeq \Tot\left(\xymatrix@C=1em{\Mod_{E_n}(\Sp_{k,n}) \ar@<0.5ex>[r] \ar@<-0.5ex>[r] & \Mod_{E_n \htimes E_n}(\Sp_{k,n})\ar@<1ex>[r] \ar@<-1ex>[r] \ar[r]&}\right)
\end{equation}
\end{thm}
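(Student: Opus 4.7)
The plan is to deduce both statements from the Hopkins--Ravenel smash product theorem via a symmetric monoidal descent argument. The remark following \Cref{lem:hs_6.15} already records that Hopkins--Ravenel gives $L_nS^0 \in \thickt{E_n}$ in $\Sp_n$, which is precisely the assertion that $E_n$ is descendable as an object of $\CAlg(\Sp_n)$. To transport this across to $\Sp_{k,n}$, I would use that the localization functor $L_{k,n}\colon \Sp_n \to \Sp_{k,n}$ is symmetric monoidal (indeed, the smash product in $\Sp_{k,n}$ is by definition $X \htimes Y = L_{k,n}(X \wedge Y)$ by \Cref{prop:hs_b.9}). Any symmetric monoidal colimit-preserving functor sends $\CAlg$ to $\CAlg$ and carries a thick tensor-ideal $\thickt{A}$ into $\thickt{L_{k,n}A}$, since the defining closure conditions (retracts, cofibers, smashing with arbitrary objects) are all preserved. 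Applying $L_{k,n}$ to the containment $L_nS^0 \in \thickt{E_n}$ therefore yields $L_{k,n}S^0 \in \thickt{L_{k,n}E_n}$ in $\Sp_{k,n}$, which is exactly the descendability of $E_n \in \CAlg(\Sp_{k,n})$.

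Given descendability, the totalization equivalence \eqref{eq:tot_module} is then an immediate application of Mathew's \Cref{prop:descendability_cat} to $E_n \in \CAlg(\Sp_{k,n})$, with the cosimplicial diagram being the standard Amitsur complex built from the unit map $L_{k,n}S^0 \to E_n$ in $\Sp_{k,n}$, whose $m$-th term is $\Mod_{E_n^{\htimes(m+1)}}(\Sp_{k,n})$. The symmetric monoidal enhancement is automatic: each $\Mod_{E_n^{\htimes(m+1)}}(\Sp_{k,n})$ is symmetric monoidal under the relative $\htimes$-tensor product, the cosimplicial structure maps are symmetric monoidal, the comparison functor from $\Sp_{k,n}$ is symmetric monoidal by construction, and the limit in symmetric monoidal presentable $\infty$-categories inherits a symmetric monoidal structure for which this comparison is the evident functor.

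I do not foresee any real obstacle here, as both ingredients---Hopkins--Ravenel and Mathew's descent criterion---are being used as black boxes. The only content is the symmetric monoidal naturality of thick tensor-ideals under the localization $L_{k,n}$, which is a formal consequence of $L_{k,n}$ being a symmetric monoidal left adjoint. One small notational caveat worth flagging: although $E_n$ itself is not $E(n,J_k)$-local when $k>0$, the object called $E_n \in \CAlg(\Sp_{k,n})$ is to be understood as $L_{k,n}E_n$, and the argument above shows that this is the correct commutative algebra to descend along.
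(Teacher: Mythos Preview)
Your proof is correct and follows essentially the same route as the paper, which invokes Hopkins--Ravenel for descendability in $\Sp_n$ and then cites Mathew's Corollary 3.21 (the black-box version of your symmetric-monoidal transport argument) before applying \Cref{prop:descendability_cat}. Your closing caveat is mistaken, however: $E_n$ is already $K(n)$-local (its homotopy is $I_n$-adically complete), hence $E(n,J_k)$-local for every $k \le n$, so $L_{k,n}E_n \simeq E_n$ on the nose, exactly as the paper writes.
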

\begin{proof}
  It is consequence of the Hopkins--Ravenel smash product theorem that $E_n \in \CAlg(\Sp_{n})$ is descendable, see \cite[Theorem 4.18]{Mathew2016Galois}. It follows from \cite[Corollary 3.21]{Mathew2016Galois} that $L_{k,n}E_n \simeq E_n$ is descendable in $\Sp_{k,n}$. The equivalence then follows from \Cref{prop:descendability_cat}. 
\end{proof}
By \Cref{prop:conservativity} we deduce the following. 
\begin{cor}\label{cor:conservative}
  The functor $E_n \htimes (-) \colon \Sp_{k,n} \to \Mod_{E_n}(\Sp_{k,n})$ is conservative. 
\end{cor}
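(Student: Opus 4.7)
The plan is to apply the preceding two results directly: \Cref{thm:descendability_en} establishes that $E_n \in \CAlg(\Sp_{k,n})$ is descendable, and \Cref{prop:conservativity} (Mathew's result, recalled just above the corollary) says that extension of scalars along any descendable commutative algebra is conservative. Instantiating this with $\cC = \Sp_{k,n}$ and $A = E_n$ immediately yields the claim, since the monoidal product in $\Sp_{k,n}$ is $\htimes = L_{k,n}(- \wedge -)$, so $M \mapsto M \wedge E_n$ in the intrinsic sense of $\Sp_{k,n}$ is exactly $M \mapsto E_n \htimes M$.

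So there is essentially no hard step: the main content is hidden in \Cref{thm:descendability_en}, which in turn rests on the Hopkins--Ravenel smash product theorem (descendability of $E_n$ in $\Sp_n$, via \cite[Theorem 4.18]{Mathew2016Galois}) together with the observation that descendability passes through the localization $L_{k,n}$ by \cite[Corollary 3.21]{Mathew2016Galois}. If one wanted to give a self-contained chase, one would note that conservativity of $E_n \htimes(-)$ amounts to showing that $E_n \htimes X \simeq 0$ implies $X \simeq 0$; but if $X \in \thickt{E_n}$ (which is all of $\Sp_{k,n}$ by descendability) and $E_n \htimes X \simeq 0$, then tensoring the thick ideal relation forces $X \htimes \unit \simeq X \simeq 0$.

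The only subtlety worth flagging is to make sure the relative tensor product used to define $\Mod_{E_n}(\Sp_{k,n})$ agrees with the ambient tensor structure when comparing with Mathew's statement; this is automatic since $\Mod_{E_n}(\Sp_{k,n})$ is defined with respect to the $\htimes$-monoidal structure, which is the one inherited from $\Sp_{k,n}$, so \Cref{prop:conservativity} applies verbatim. No further calculation is needed.
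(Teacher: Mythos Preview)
Your proposal is correct and matches the paper's approach exactly: the corollary is stated as an immediate consequence of \Cref{prop:conservativity} applied with $A = E_n$ using the descendability established in \Cref{thm:descendability_en}. Your additional remarks about the thick-ideal chase and the compatibility of monoidal structures are accurate but go beyond what the paper records.
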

We therefore define the following. 
\begin{defn}
  For $X \in \Sp_{k,n}$ the Morava module of $X$ is $(E_{k,n})^\vee_* X \coloneqq \pi_*(E_n \htimes X)$. 
\end{defn}

  We recall that $L_{k,n}X \simeq \varprojlim_j (L_nX \wedge M_k(j))$ (\Cref{prop:7.10}). The Milnor sequence then gives the following.  

\begin{lem}
  There is a short exact sequence
  \[
0 \to \varprojlim{}^1_{j} (E_n)_{\ast +1}(X \wedge M_j(k)) \to (E_{k,n})^{\vee}_*X \to \varprojlim{}_{j} E_*(X \wedge M_j(k)) \to 0
  \]
\end{lem}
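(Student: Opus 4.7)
The plan is to unwind the definition of the Morava module using the formula for $L_{k,n}$ from \Cref{prop:7.10}, then invoke the standard Milnor short exact sequence for homotopy groups of a sequential homotopy limit of spectra.

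First I would compute that
\[
(E_{k,n})^{\vee}_{*}X = \pi_{*}(E_n \htimes X) = \pi_{*} L_{k,n}(E_n \wedge X).
\]
Applying \Cref{prop:7.10} with the spectrum $E_n \wedge X$ in place of $X$ gives
\[
L_{k,n}(E_n \wedge X) \simeq \varprojlim_{j}\bigl(L_n(E_n \wedge X) \wedge M_k(j)\bigr).
\]
Now $E_n \wedge X$ is naturally an $E_n$-module, hence it is $E_n$-local; since $\langle E_n \rangle = \langle E(n) \rangle$ (see \Cref{rem:lubin_tate}), every $E_n$-local spectrum is $E(n)$-local, so $L_n(E_n \wedge X) \simeq E_n \wedge X$. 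Substituting, I obtain
\[
E_n \htimes X \simeq \varprojlim_{j}\bigl(E_n \wedge X \wedge M_k(j)\bigr).
\]

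Next I would apply $\pi_{*}$ and use the Milnor short exact sequence for the homotopy groups of a sequential homotopy limit of spectra, which yields
\[
0 \to \varprojlim_{j}{}^{1} \pi_{*+1}\bigl(E_n \wedge X \wedge M_k(j)\bigr) \to \pi_{*}\varprojlim_{j}\bigl(E_n \wedge X \wedge M_k(j)\bigr) \to \varprojlim_{j} \pi_{*}\bigl(E_n \wedge X \wedge M_k(j)\bigr) \to 0.
\]
Rewriting $\pi_{*}(E_n \wedge X \wedge M_k(j)) = (E_n)_{*}(X \wedge M_k(j))$ and using the previous equivalence on the middle term gives exactly the claimed exact sequence.

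The only mild subtlety is justifying that the tower $\{E_n \wedge X \wedge M_k(j)\}_j$ really does give rise to a Milnor sequence: this requires that the tower be realized as an actual diagram (not merely a diagram in the homotopy category) so that the $\varprojlim$ in the formula of \Cref{prop:7.10} is a genuine homotopy limit. This is handled by \Cref{rem:limits}, which lifts the Hopkins--Smith tower $\{M_k(j)\}_j$ to a diagram in the $\infty$-category of spectra; smashing with $E_n \wedge X$ preserves this, and the Milnor sequence for such sequential homotopy limits is standard. No serious obstacle arises.
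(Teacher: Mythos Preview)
Your proposal is correct and follows essentially the same approach as the paper, which simply remarks that the Milnor sequence applied to the formula $L_{k,n}X \simeq \varprojlim_j (L_nX \wedge M_k(j))$ from \Cref{prop:7.10} gives the result. You have supplied the expected details, including the observation that $L_n(E_n \wedge X) \simeq E_n \wedge X$ since $E_n \wedge X$ is already $E_n$-local.
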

\begin{ex}
  If $(E_n)_*X$ is a free $(E_n)_*$-module, then the $\varprojlim{}^1$ term vanishes and it follows that $(E_{k,n})^\vee_*X = (E_*X)^{\wedge}_{I_k}$.
\end{ex}
\begin{rem}
  As the short exact sequence shows, $(E_{k,n})^{\vee}_*X$ is not always complete with respect to the $I_k$-adic topology. However, it is always $L_0^{I_k}$-complete in the sense of \cite[Appendix A]{hovey_strickland99} - this the same argument as given in \cite[Proposition 8.4(a)]{hovey_strickland99}.
\end{rem}
\subsection{The $E(n,J_k)$-local $E_n$-Adams spectral sequence}
In this section we construct an Adams type spectral sequence in the $E(n,J_k)$-local category. When $k = 0$, this is the $E_n$-Adams spectral sequence, while when $k= n$ this is the $K(n)$-local $E_n$-Adams spectral sequence considered in \cite[Appendix A]{DevinatzHopkins2004Homotopy}.

To begin, we recall that the cobar (or Amitsur) complex for $E_n$ in $\Sp_{k,n}$ is 
\[
CB^\bullet(E_n) \colon \xymatrix@C=1em{ E_n \ar@<0.5ex>[r] \ar@<-0.5ex>[r]& E_n \htimes E_n \ar@<1ex>[r] \ar@<-1ex>[r]  \ar[r]& \cdots }
\]
\begin{defn}
   Let $\widehat \Ext^{s,\ast}_{(E_{k,n})^{\vee}_*(E_n)}((E_n)_*,(E_n)_*) \coloneqq H^s(\pi_\ast(CB^{\bullet}(E_n)))$, i.e., it is the cohomology of the complex
\[
\xymatrix@C=1em{ (E_n)_* \ar@<0.5ex>[r] \ar@<-0.5ex>[r]& (E_{k,n})^\vee_*(E_n) \ar@<1ex>[r] \ar@<-1ex>[r]  \ar[r]& \cdots }
\]
More generally, we let
\[
\widehat \Ext^{s,\ast}_{(E_{k,n})^{\vee}_*(E_n)}((E_n)_*,(E_{k,n})^\vee_*(X)) \coloneqq H^s(\pi_\ast(X \htimes CB^{\bullet}(E_n))).
\]
 \end{defn} 
\begin{prop}\label{prop:anss}
  For any spectrum $X$ there is a strongly convergent spectral sequence 
  \[
E_2^{s,t} \cong \widehat \Ext^{s,\ast}_{(E_{k,n})^{\vee}_*(E_n)}((E_n)_*,(E_{k,n})^\vee_*(X))  \implies \pi_*(L_{k,n}X)
  \]
  which has a horizontal vanishing line at a finite stage (independent of $X$). 
\end{prop}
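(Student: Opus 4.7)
The plan is to construct the spectral sequence as the Bousfield--Kan spectral sequence associated to the augmented cosimplicial object $X \htimes CB^{\bullet}(E_n)$ in $\Sp_{k,n}$, and then deduce convergence plus the vanishing line from descendability of $E_n$ via Mathew's machinery.

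First I would observe that, by \Cref{thm:descendability_en}, $E_n \in \CAlg(\Sp_{k,n})$ is descendable, and hence for every $X \in \Sp_{k,n}$ the natural map
\[
L_{k,n}X \xrightarrow{\simeq} \Tot\bigl(X \htimes CB^{\bullet}(E_n)\bigr)
\]
is an equivalence; this is obtained by smashing the equivalence of \Cref{prop:descendability_cat} with $X$ and using that $L_{k,n}$ commutes with the relevant limits, since totalization is a finite homotopy limit up to pro-equivalence, and more directly from the fact that descendability is preserved by base change. Applying the standard Bousfield--Kan spectral sequence of a cosimplicial spectrum then yields a conditionally convergent spectral sequence with $E_1$-page $E_1^{s,t} = \pi_t(X \htimes E_n^{\htimes s+1})$ and differentials built from the cosimplicial structure maps. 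Taking cohomology of the cobar differential on the $E_1$-page gives exactly
\[
E_2^{s,t} \cong \widehat{\Ext}^{s,t}_{(E_{k,n})^{\vee}_*(E_n)}\bigl((E_n)_*,(E_{k,n})^{\vee}_*(X)\bigr),
\]
by the definition of these $\widehat{\Ext}$-groups.

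Next I would upgrade conditional convergence to strong convergence and extract the vanishing line simultaneously, using the fact that descendability is equivalent to nilpotence of the unit in $\thickt{E_n}$. Concretely, \cite[Proposition 3.22 and Corollary 4.4]{Mathew2016Galois} show that descendability of $E_n$ implies the Adams tower associated to $CB^{\bullet}(E_n)$ has a uniformly bounded \emph{nilpotence class}: there exists an integer $N$, depending only on $E_n$ and not on $X$, such that the $N$-fold composite in the associated tower of fibres is null. Smashing this fact with $X$ gives the same bound for the tower computing $L_{k,n}X$, which forces the Bousfield--Kan spectral sequence to degenerate at a finite page into a horizontal vanishing line at height $N$, and in particular forces $\varprojlim{}^{1}$ of the filtration to vanish, giving strong convergence.

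The main obstacle is really just bookkeeping: verifying that the $E_2$-identification matches the $\widehat{\Ext}$-notation as defined (this uses that the cobar complex of $E_n$ internal to $\Sp_{k,n}$ computes the relevant completed $\Ext$ groups, and one should note that $(E_{k,n})^{\vee}_*(-)$ is not the usual homology functor but its $L$-completion), and checking that Mathew's general descent vanishing-line argument applies verbatim in our setting. Both are essentially formal consequences of \Cref{thm:descendability_en} and the framework of \cite{Mathew2016Galois}, so no new ideas are needed beyond carefully citing the relevant statements.
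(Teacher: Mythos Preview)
Your proposal is correct and follows essentially the same approach as the paper: construct the Bousfield--Kan spectral sequence of the cosimplicial object $X \htimes CB^{\bullet}(E_n)$, identify the $E_2$-term by definition of $\widehat{\Ext}$, and then invoke descendability of $E_n$ (via \Cref{thm:descendability_en} and the results of \cite{Mathew2016Galois}) to obtain strong convergence together with the uniform horizontal vanishing line. The paper's proof is just a terser version of what you wrote, citing also \cite{mathew_descent} for the vanishing-line argument.
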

\begin{proof}
This is the Bousfield--Kan spectral sequence associated to the tower $X \htimes CB^{\bullet}(E_n)$. The claimed results are a consequence of descendability (\Cref{thm:descendability_en}), see \cite[Corollary 4.4]{Mathew2016Galois} and \cite[Example 2.11, Proposition 2.12 and Proposition 2.14]{mathew_descent}. 
\end{proof}
\begin{rem}\label{rem:k(n)_local}
  For the $\Sp_{K(n)}$-local homotopy category, this completed $\Ext$ can be interpreted as an $\Ext$ group in the category of $L_0^{I_n}$-complete comodules \cite{BarthelHeard2016term}. In the case of $X = S^0$, Morava's change of rings theorem, in the form \cite[Theorem 4.3]{BarthelHeard2016term} shows that 
  \[
E_2^{s,t} \cong H^{s}_{c}(\bG_n,(E_n)_t),
  \]
  the continuous cohomology of the Morava stabilizer group $\bG_n$, and this spectral sequence is isomorphic to that considered by Devinatz and Hopkins in \cite[Appendix A]{DevinatzHopkins2004Homotopy}. The key point is the computation that 
  \[
(E_n)_*^{\vee}(E_n) \cong \Hom^{c}(\bG_n,(E_n)_*),
  \]
  for which see \cite{Hovey2004Operations}.  We remark that we do not know what $(E_{k,n})_*^{\vee}(E_n)$ is for $k \ne n$. However, the same arguments as in \cite{BarthelHeard2016term} go through; the pair $((E_n)_*,(E_{k,n})^{\vee}_*(E_n))$ is an $L_0^{I_k}$-complete comodule, and if $(E_{k,n})^{\vee}_*(X)$ is either a finitely-generated $(E_n)_*$-module, is the $I_k$-adic completion of a free-module, or has bounded $I_k$-torsion, then $(E_{k,n})^{\vee}_*(X)$ is a comodule over this Hopf algebroid, see \cite[Lemma 1.17 and Proposition 1.22]{BarthelHeard2016term}. The relative homological algebra studied in \cite[Section 2]{BarthelHeard2016term} also goes through to see that $\widehat \Ext^{s,\ast}$ as used above is a relative $\Ext$ group in the category of $L_0^{I_k}$-complete comodules. We will not use this in what follows, so we leave the details to the interested reader. 
\end{rem}
\begin{rem}
  In \cite[Section 7]{hms_pic} the authors construct the $K(n)$-local Adams spectral sequence for dualizable $K(n)$-local $X$ as the inverse limit of the $E_n$-Adams spectral sequences for $X \wedge M_n(j)$. The following result recovers the identification of the $E_2$-term in the case $k = n$. 
\end{rem}
\begin{prop}\label{prop:limit_ext}
  Let $M_k(j)$ be a tower of generalized Moore spectra of height $k$, then there is an isomorphism
  \[
  \widehat\Ext^{s,t}_{(E_{k,n})^{\vee}_*(E_n)}((E_n)_*,(E_n)_*) \cong \varprojlim_j \Ext^{s,t}_{(E_n)_*E_n}((E_n)_*,(E_n)_*(M_k(j))).
  \]
\end{prop}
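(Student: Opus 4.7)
The strategy is to rewrite the cobar complex defining the LHS using the inverse-limit formula for $L_{k,n}$ from \Cref{prop:7.10}, and then commute cohomology with the inverse limit. By definition the LHS is the cohomology of the cochain complex $C^\bullet := \pi_\ast L_{k,n}(E_n^{\wedge(\bullet+1)})$. Since each $E_n^{\wedge(s+1)}$ is $L_n$-local, \Cref{prop:7.10} gives an equivalence
\[
L_{k,n}(E_n^{\wedge(s+1)}) \simeq \varprojlim_j \bigl(E_n^{\wedge(s+1)} \wedge M_k(j)\bigr).
\]
The Milnor sequence then relates $\pi_\ast$ of this limit to $\varprojlim_j \pi_\ast(E_n^{\wedge(s+1)} \wedge M_k(j))$, up to a $\varprojlim^1$ correction that I will argue vanishes. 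Writing $D_j^\bullet := \pi_\ast(E_n^{\wedge(\bullet+1)} \wedge M_k(j))$, this gives $C^\bullet \cong \varprojlim_j D_j^\bullet$ degreewise.

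Next I would identify $D_j^\bullet$ with the standard $(E_n)_\ast E_n$-cobar complex for $M_k(j)$. Since $(E_n)_\ast(M_k(j)) \cong (E_n)_\ast/I_j$ where $I_j$ is generated by a regular sequence, $(E_n)_\ast(M_k(j))$ admits a finite Koszul resolution over $(E_n)_\ast$, so there are no Tor obstructions and $\pi_\ast(E_n^{\wedge(s+1)} \wedge M_k(j)) \cong (E_n)_\ast E_n^{\otimes_{(E_n)_\ast} s} \otimes_{(E_n)_\ast} (E_n)_\ast/I_j$ in the usual way. Hence $H^s(D_j^\bullet) \cong \Ext^{s,\ast}_{(E_n)_\ast E_n}((E_n)_\ast,(E_n)_\ast(M_k(j)))$ by the standard Adams-theoretic identification.

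The final step is to commute $H^s$ with $\varprojlim_j$. The exact sequence
\[
0 \to \varprojlim{}^1_j\, H^{s-1}(D_j^\bullet) \to H^s\bigl(\varprojlim_j D_j^\bullet\bigr) \to \varprojlim_j H^s(D_j^\bullet) \to 0
\]
reduces this to the vanishing of $\varprojlim^1$ on the cohomology system.

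\textbf{Main obstacle.} Both $\varprojlim^1$-vanishing statements (at the chain level, to apply the Milnor sequence, and at the cohomology level, to interchange $H^s$ with $\varprojlim_j$) are the real work. The cleanest route is to exploit the fact that the structure maps $M_k(j+1) \to M_k(j)$ realize the surjections $(E_n)_\ast/I_{j+1} \twoheadrightarrow (E_n)_\ast/I_j$; combined with the pro-freeness of $(E_n)_\ast(E_n^{\wedge s})$ as a right $(E_n)_\ast$-module (cf.\ the discussion in \Cref{rem:k(n)_local} and \cite{Hovey2004Operations}), one obtains surjective transition maps on the $D_j^s$, hence Mittag-Leffler. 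For the second vanishing, one shows that the system $\{\Ext^{s-1,\ast}_{(E_n)_\ast E_n}((E_n)_\ast,(E_n)_\ast/I_j)\}_j$ is Mittag-Leffler, which follows from the same surjectivity together with the pro-isomorphism techniques of \cite{BarthelHeard2016term}; this is precisely how the $E_2$-term is interpreted as a relative $\Ext$ group over $L$-complete comodules in the $k=n$ case, and the argument extends to general $k$ as indicated in \Cref{rem:k(n)_local}.
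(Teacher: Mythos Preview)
Your overall structure matches the paper's proof exactly: rewrite each term of the cobar complex via \Cref{prop:7.10}, kill the chain-level $\varprojlim^1$ by Mittag--Leffler, then interchange $H^s$ with $\varprojlim_j$ via a second $\varprojlim^1$ vanishing. The paper handles the first step slightly differently, observing that $E_n^{\wedge t}$ is Landweber exact (as a smash product of Landweber exact spectra), so $\pi_*(E_n^{\wedge t}\wedge M_k(j))\cong \pi_*(E_n^{\wedge t})/(p^{i_0},\dots,u_{k-1}^{i_{k-1}})$ and the transition maps are visibly surjective; your Koszul/regular-sequence argument achieves the same thing.

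The genuine gap is in your second $\varprojlim^1$ vanishing. Surjectivity of the transition maps on cochains does \emph{not} by itself yield Mittag--Leffler on cohomology, and the references you invoke (\cite{BarthelHeard2016term} and \Cref{rem:k(n)_local}) concern the interpretation of the cobar cohomology as a relative $\Ext$ in $L$-complete comodules, not the Mittag--Leffler property of the tower $\{\Ext^{s-1}((E_n)_*,(E_n)_*/I_j)\}_j$. The paper closes this gap differently: it forward-references \Cref{cor:finiteness}, which shows (via the chromatic spectral sequence of \Cref{thm:css} and Morava's theorem) that each $\Ext^{s,t}_{(E_n)_*E_n}((E_n)_*,(E_n)_*(M_k(j)))$ is a \emph{finite} group. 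Finiteness immediately forces Mittag--Leffler, and that is what makes the interchange work. You should replace your surjectivity sketch for the cohomology tower with this finiteness argument.
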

\begin{proof}
 By definition $  \widehat\Ext^{s,t}_{(E_{k,n})^{\vee}_*(E_n)}((E_n)_*,(E_n)_*)$ is the cohomology of the complex
\[
\xymatrix@C=1em{ (E_n)_* \ar@<0.5ex>[r] \ar@<-0.5ex>[r]& (E_{k,n})^\vee_*(E_n) \ar@<1ex>[r] \ar@<-1ex>[r]  \ar[r]& \cdots }
\]
The $t$-th term of this complex is the homotopy of $L_{k,n}(E_n^{\wedge t}) \simeq \varprojlim_j(E_n^{\wedge t} \wedge M_k(j))$  by \Cref{prop:7.10}, and there is a corresponding Milnor exact sequence of the form 
\[
0 \to \varprojlim_j{}\!^1 \pi_{q+1}(E_n^{\wedge t} \wedge M_k(j)) \to \pi_q(L_{k,n}(E_n^{\wedge t})) \to \varprojlim_j \pi_q(E_n^{\wedge t} \wedge M_k(j)) \to 0.
\]
 We note that $E_n^{\wedge t}$ is Landweber exact, as the smash product of Landweber exact spectra (see \cite[Lemma 4.3]{BarthelStapleton2016Centralizers}). It follows that $\pi_*(E_n^{\wedge t} \wedge M_k(j)) \cong \pi_*(E_n^{\wedge t})/(p^{i_0},\ldots,u_{k-1}^{i_{k-1}})$ for suitable integers $i_0,\ldots,i_{k-1}$. In particular, the maps in the tower are surjections by the construction of the tower $\{ M_k(j) \}$ (see \Cref{rem:moore_spectra}), and so the $\varprojlim_j^1$-term vanishes, and 
 \[
(E_{k,n})^{\vee}_*(E_n^{t-1}) \cong  \pi_*(L_{k,n}(E_n^{\wedge t})) \cong \varprojlim_j \pi_q(E_n^{\wedge t} \wedge M_k(j)).
 \]
 Note that the cohomology of the complex $\{\pi_q(E_n^t \wedge M_k(j))\}_t$ is $\Ext_{(E_n)_*E_n}^{\ast,\ast}((E_n)_*,(E_n)_*(M_k(j)))$. Therefore, there is an exact sequence
 \[
 \begin{split}
0 \to \varprojlim_j{}\!^1 \Ext_{(E_n)_*E_n}^{q-1,\ast}((E_n)_*,(E_n)_*(M_k(j))) \to \widehat\Ext^{q,\ast}_{(E_{k,n})^{\vee}_*(E_n)}((E_n)_*,(E_n)_*)\\
\to \varprojlim_j \Ext_{(E_n)_*E_n}^{q,\ast}((E_n)_*,(E_n)_*(M_k(j))) \to 0. 
\end{split}
 \]
 We will see below in \Cref{cor:finiteness} that $\Ext_{(E_n)_*E_n}^{q,\ast}((E_n)_*,(E_n)_*(M_k(j)))$ is finite, and so the $\varprojlim_j^1$-term vanishes in the exact sequence, and the result follows. 
\end{proof}
\begin{rem}
  It follows that when $k \ne 0$, the groups $\widehat\Ext^{s,t}_{(E_{k,n})^{\vee}_*(E_n)}((E_n)_*,(E_n)_*)$ are profinite, i.e., either finite or uncountable. Contrast the case $k = 0$, where $\Ext_{(E_n)_*E_n}^{s,t}((E_n)_*,(E_n)_*)$ is countable \cite[Proof of Lemma 5.4]{Hovey1995Bousfield}.
\end{rem}
\subsection{Vanishing lines in the $E(n,J_k)$-local $E_n$-Adams spectral sequence}
In \Cref{prop:anss} we constructed a spectral sequence 
\[
\widehat \Ext^{s,t}_{(E_{k,n})^\vee_*(E_n)}((E_n)_*,(E_n)_*) \implies \pi_{t-s}(L_{k,n}S^0),
\]
and showed that, as a consequence of descendability, this has a horizontal vanishing line at some finite stage. In the extreme cases of $k = 0$ and $k = n$ it is known that when $p \gg n$ and $X = S^0$, this vanishing line occurs on the $E_2$-page, and occurs at $s = n^2+n$ and $s = n^2$, respectively, see \cite[Theorem 5.1]{HoveySadofsky1999Invertible} and \cite[Theorem 6.2.10]{ravenel_green}. In this section, we show (\Cref{thm:vanishing}) that the analogous result occurs in general; for $p \gg n$ there is a vanishing line on the $E_2$-page of the spectral sequence of \Cref{prop:anss} above $s = n^2+n-k$ in the case $X = S^0$. The proof relies on a variant of the chromatic spectral sequence \cite[Chapter 5]{ravenel_green}, which we now construct. Along the way we prove \Cref{cor:finiteness}, which also completes the proof of \Cref{prop:limit_ext}. 
\begin{rem}[The chromatic spectral sequence]
  Fix $k \le n$, and for $0 \le s \le n-k$ let $M^s$ denote the $(E_n)_*(E_n)$-comodule
  \[
u_{k+s}^{-1}(E_n)_*/(p,u_1,\ldots,u_{k-1},u_{k}^{\infty},\ldots,u_{k+s-1}^{\infty})
  \]
Arguing as in \cite[Lemma 5.1.6]{ravenel_green} there is an exact sequence of $(E_n)_*(E_n)$-comodules
  \[
(E_n)_*/I_k \to M^0 \to M^1 \to \ldots \to M^{n-k} \to 0.
  \] 
  Applying \cite[Theorem A.1.3.2]{ravenel_green} there is then a chromatic spectral sequence of the form 
  \begin{equation}\label{eq:css}
E_1^{s,r,\ast} \cong  \Ext^{r,\ast}_{(E_n)_*(E_n)}((E_n)_*,M^s)\implies \Ext^{r+s,\ast}_{(E_n)_*(E_n)}((E_n)_*,(E_n)_*/I_k)
  \end{equation}
\end{rem}
\begin{prop}\label{thm:css}
  In the chromatic spectral sequence \eqref{eq:css} we have
  \[
E_1^{s,r,\ast} \cong \begin{cases}
  \Ext^{r,\ast}_{(E_{k+s})_*(E_{k+s})}((E_{k+s})_*,(E_{k+s})_*/(p,\ldots,u_{k-1},u_{k}^{\infty},\ldots,u_{k+s-1}^{\infty})) & s \le n-k \\
  0 & s > n-k. 
\end{cases}
  \]
  If particular, if $p-1$ does not divide $k+s$, we have $E_1^{s,r,\ast} = 0$ for $r > (s+k)^2$. Thus, if $p-1$ does not divide $k+s$ for all $0 \le s \le n-k$,\footnote{Taking $p>n+1$ suffices, but may not be optimal.}  then
  \[
\Ext^{s,\ast}_{(E_n)_*(E_n)}((E_n)_*,(E_n)_*/I_k) = 0
  \]
  for $s > n^2 + n-k$.
\end{prop}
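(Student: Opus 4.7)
The plan is to combine a Morava-type change of rings theorem with the vanishing of high-dimensional cohomology of the Morava stabilizer groups at primes with $p-1 \nmid m$.

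For $s > n-k$, the element $u_{k+s}$ does not exist in $(E_n)_*$, so by convention $M^s = 0$ and hence $E_1^{s,r,*}=0$. For $s \le n-k$, the comodule $M^s$ has $u_{k+s}$ invertible and is in particular $v_{k+s}$-local. I would apply Morava's change of rings theorem (in the Landweber-exact formulation of \cite[Theorem 6.1.9]{ravenel_green}, or Hovey's generalization) to obtain
\[
\Ext^{r,*}_{(E_n)_*(E_n)}((E_n)_*, M^s) \cong \Ext^{r,*}_{(E_{k+s})_*(E_{k+s})}((E_{k+s})_*, (E_{k+s})_* \otimes_{(E_n)_*} M^s),
\]
and since $u_{k+s}$ is already a unit in $(E_{k+s})_*$ the displayed tensor product simplifies to the claimed module. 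Setting $m = k+s$, I would then invoke the identification of $(E_m)_*(E_m)$-comodule cohomology with continuous cohomology of the (extended) Morava stabilizer group,
\[
\Ext^{r,*}_{(E_m)_*(E_m)}((E_m)_*, -) \cong H^r_c(\bG_m, \pi_*(-)),
\]
which rests on Hovey's computation of $(E_m)_*(E_m)$ \cite{Hovey2004Operations} together with the Devinatz--Hopkins framework \cite{DevinatzHopkins2004Homotopy}. Since $\bG_m$ is a compact $p$-adic analytic group of dimension $m^2$ and is torsion-free precisely when $p-1 \nmid m$, its cohomological $p$-dimension is then exactly $m^2$, giving $H^r_c(\bG_m,-)=0$ for $r > m^2 = (k+s)^2$.

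For the abutment, let $S = r+s$ denote the total degree on the $E_1$-page, which is the Ext degree on the abutment. If $S > n^2 + n - k$ and $s \le n-k$, setting $m = k+s \le n$ gives $r > n^2 + n - k - s = n^2 + n - m$, and the elementary identity $(n-m)(n+m+1) \ge 0$ (valid since $m \le n$) yields $n^2 + n - m \ge m^2$, so $r > m^2$ and $E_1^{s,r,*} = 0$. Combined with the trivial vanishing for $s > n-k$, this forces the abutment $\Ext^{S,*}_{(E_n)_*(E_n)}((E_n)_*, (E_n)_*/I_k)$ to vanish for $S > n^2 + n - k$. As a byproduct, the identification with continuous group cohomology also supplies the finiteness of $\Ext^{*,*}_{(E_n)_*E_n}((E_n)_*,(E_n)_*(M_k(j)))$ needed for \Cref{prop:limit_ext}, since $\bG_m$ with finite coefficients has finite cohomology in each bidegree.

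The main obstacle I anticipate is justifying the change of rings in the present generality, since $M^s$ is built as a colimit of iterated nilpotent quotients and one has to verify compatibility of the change of rings with these colimits as well as the $u_k^\infty,\ldots,u_{k+s-1}^\infty$ structure. The statement is essentially classical for $BP$-based comodules and the translation to $E_n$-cooperations is standard for Landweber-exact modules, but the bookkeeping should be spelled out carefully.
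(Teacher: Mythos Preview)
Your proposal is correct and follows essentially the same strategy as the paper: change of rings to reduce to height $k+s$, then Morava-type vanishing. There are two minor technical differences worth noting. First, for the change of rings the paper routes through $BP$: it applies the Hovey--Sadofsky theorem twice (once with $j=n$, once with $j=k+s$) to the $BP_*BP$-comodule $v_{k+s}^{-1}BP_*/(p,\ldots,v_{k-1},v_k^\infty,\ldots,v_{k+s-1}^\infty)$, which gives a clean off-the-shelf statement and avoids the bookkeeping you flag at the end. Second, for the vanishing the paper quotes Morava's vanishing theorem for the trivial module $(E_{k+s})_*$ and then propagates it to $(E_{k+s})_*/I$ via the standard exact sequences and direct limits, whereas you invoke the finite cohomological $p$-dimension of $\mathbb{G}_{k+s}$ directly for all coefficients. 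Your route is slightly slicker here, but you should be careful that the identification $\Ext_{(E_m)_*(E_m)}\cong H^*_c(\mathbb{G}_m,-)$ is being applied to $I_m$-torsion modules (which yours are), since that is where the change-of-rings to stabilizer-group cohomology is cleanly available. Your numerical check at the end is correct and is left implicit in the paper.
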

\begin{proof}
  This is similar to the proof by Hovey and Sadofsky \cite[Theorem 5.1]{HoveySadofsky1999Invertible}, which is the case where $k = 0$. We first recall the change of rings theorem of Hovey and Sadofsky \cite[Theorem 3.1]{HoveySadofsky1999Invertible}; if $M$ is a $BP_*BP$-comodule, on which $v_j$ acts isomorphically, and $n \ge j$, then there is an isomorphism\footnote{Hovey and Sadofsky work with $E(n)$ instead of $E_n$, but this does not change anything in light of \cite[Theorem C]{HoveyStrickl2005Comodules}.}
  \[
\Ext_{BP_*BP}^{*,*}(BP_*,M) \cong \Ext^{*,*}_{(E_n)_*(E_n)}((E_n)_*,(E_n)_*\otimes_{BP_*}M).
  \]

  Applying this change of rings theorem twice to the $BP_*BP$-comodule 
  \[
  u_{k+s}^{-1}BP_*/(p,\ldots,u_{k-1},u_k^{\infty},\ldots,u_{k+s-1}^{\infty})\]
   with $j = k+s$ and $j = n$ shows that the $E_1$-term has the claimed form.

  For brevity, let us denote $I = (p,\ldots,u_{k-1},u_{k}^{\infty},\ldots,u_{k+s-1}^{\infty})$. By Morava's change of rings theorem, we have 
  \[
\Ext^{r,\ast}_{(E_{k+s})_*(E_{k+s})}((E_{k+s})_*,(E_{k+s})_*/I) \cong H^r(\mathbb{G}_{k+s},(E_{k+s})_*/I).
  \]
 Morava's vanishing theorem (\cite[Theorem 6.2.10]{ravenel_green}) shows that if $p-1$ does not divide $k+s$, then 
 \[
H^r(\mathbb{G}_{k+s},(E_{k+s})_*) = 0
 \]
 for $r>(k+s)^2$. Along with an argument similar to that given by Hovey and Sadofsky's, using standard exact sequences and taking direct limits we find that 
  \[
\Ext^{r,\ast}_{(E_{k+s})_*(E_{k+s})}((E_{k+s})_*,(E_{k+s})_*/I) = 0
  \]
  for $r > (k+s)^2$ as well. 
\end{proof}
\begin{rem}\label{rem:css_alternative}
 Let $M_k$ denote a generalized Moore spectrum of type $k$, then there is an obvious analog of this spectral sequence, whose abutment is
  \[\Ext^{r+s,\ast}_{(E_n)_*(E_n)}((E_n)_*,(E_n)_*(M_k)) \cong \Ext^{r+s,\ast}_{(E_n)_*(E_n)}((E_n)_*,(E_n)_*/(p^{i_0},\ldots,u_{k-1}^{i_{k-1}}))\]
   with $E_1$-term of the form 
     \[
E_1^{s,r,\ast} \cong \begin{cases}
  \Ext^{r,\ast}_{(E_{k+s})_*(E_{k+s})}((E_{k+s})_*,(E_{k+s})_*/(p^{i_0},\ldots,u_{k-1}^{i_{k-1}},u_{k}^{\infty},\ldots,u_{k+s-1}^{\infty})) & s \le n-k \\
  0 & s > n-k. 
\end{cases}
  \]
\end{rem}
\begin{rem}
  The following completes the proof of \Cref{prop:limit_ext}.
\end{rem}
\begin{cor}\label{cor:finiteness}
  \sloppy Let $M_k$ denote a generalized Moore spectrum of type $k$, then the group $\Ext^{r,\ast}_{(E_n)_*(E_n)}((E_n)_*,(E_n)_*(M_k))$ is finite. 
\end{cor}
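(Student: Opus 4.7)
The plan is to invoke the chromatic spectral sequence of \Cref{rem:css_alternative}, which abuts to $\Ext^{\ast,\ast}_{(E_n)_*E_n}((E_n)_*,(E_n)_*(M_k))$ and is concentrated in columns $0 \le s \le n-k$. Since the resulting filtration on each bidegree of the abutment has at most $n-k+1$ steps, it suffices to show that each $E_1^{s,q,t}$ is a finite abelian group; bidegree-wise finiteness of $\Ext^{r,t}$ then follows by iterated extension, and since the internal grading is separately controlled, we obtain the statement of the corollary in each internal degree.

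To control the $E_1$-page, I would apply Morava's change of rings twice (as in the proof of \Cref{thm:css}, following \cite[Theorem 3.1]{HoveySadofsky1999Invertible}) to identify
\[
E_1^{s,q,t} \cong H^q_c\bigl(\mathbb{G}_{k+s},\, N^s_t\bigr),
\]
where $N^s$ is the chromatic module $(E_{k+s})_*/(p^{i_0}, u_1^{i_1},\ldots, u_{k-1}^{i_{k-1}}, u_k^\infty, \ldots, u_{k+s-1}^\infty)$ and $N^s_t$ is its part in internal degree $t$. In the boundary case $s=0$, the module $N^0_t$ is genuinely finite (a quotient of a finite-rank $W(\mathbb{F}_{p^k})/p^{i_0}$-module by additional finite powers of the $u_j$), so continuous cohomology of the compact $p$-adic analytic group $\mathbb{G}_k$ with finite coefficients is finite in every cohomological degree, giving the claim immediately.

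For $s \ge 1$ the module $N^s_t$ is only ind-finite, and this is where the main work lies. I would identify $H^q_c(\mathbb{G}_{k+s}, N^s_t)$ with the $E_2$-page of the $K(k+s)$-local Adams--Novikov spectral sequence applied to a finite spectrum of type $k$ (concretely, via the $K(k+s)$-localization of $M_k$, since $\langle K(k+s)\rangle$ picks out the corresponding chromatic layer). Because the spectrum $L_{K(k+s)}M_k$ is a $K(k+s)$-local dualizable spectrum whose homotopy groups are finite in each degree by \cite[Theorem 8.6]{hovey_strickland99} (using that $M_k$ is a $p$-torsion finite spectrum), the $E_\infty$-terms are all finite. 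Finiteness of the $E_2$-terms themselves can then be deduced from a descendability/vanishing-line argument for this $K(k+s)$-local $E_{k+s}$-Adams spectral sequence, or more directly via a local duality statement for the Morava stabilizer group that expresses the cohomology of $N^s_t$ as the Pontryagin dual of cohomology with finite-type coefficients.

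The main obstacle is precisely this last step: showing that for $s \ge 1$ the colimit of finite cohomology groups $\colim_l H^q_c(\mathbb{G}_{k+s}, N^{s,l}_t)$, computed along finite approximations $N^{s,l}_t$ of $N^s_t$, stabilizes to a finite group rather than an arbitrary countable abelian group. The cleanest route is via local duality (à la Greenlees--May or Gross--Hopkins) to turn the $u_j^\infty$-torsion coefficients into a Pontryagin dual of a profinite module whose cohomology is manifestly finite in each bidegree.
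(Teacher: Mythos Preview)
Your overall strategy --- reduce via the chromatic spectral sequence of \Cref{rem:css_alternative} to finiteness of each $E_1^{s,q,t} \cong H^q_c(\mathbb{G}_{k+s}, N^s_t)$ --- is exactly the paper's approach. The paper also notes one can first reduce to $(E_n)_*/I_k$ by d\'evissage along the obvious short exact sequences, but this is cosmetic.

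Where your argument breaks down is your first proposed route for $s \ge 1$. The identification you suggest is incorrect: the $E_2$-page of the $K(k+s)$-local $E_{k+s}$-Adams spectral sequence for $M_k$ is $H^*_c(\mathbb{G}_{k+s}, (E_{k+s})_*/(p^{i_0},\ldots,u_{k-1}^{i_{k-1}}))$, not $H^*_c(\mathbb{G}_{k+s}, N^s)$; the coefficient module carries no $u_k^{\infty},\ldots,u_{k+s-1}^{\infty}$ denominators. Even granting an identification of this sort, your proposed deduction would still fail: a horizontal vanishing line at some $E_r$-page together with finiteness of $E_\infty$ does \emph{not} force finiteness of $E_2$, since infinite groups can be killed by earlier differentials.

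Your second route --- local duality to convert the ind-finite torsion coefficients into something with manifestly finite cohomology --- is the correct instinct, and is essentially what the paper invokes. The paper dispatches the ``main obstacle'' in one line by citing \cite[Proposition~4.2.2]{symonds_weigel}: the Morava stabilizer group $\mathbb{G}_{k+s}$ is a compact $p$-adic analytic group, hence a virtual Poincar\'e duality group at $p$, and the cited result gives directly that $H^*_c(\mathbb{G}_{k+s}, N^s)$ is finite in each bidegree for discrete $I_{k+s}$-power-torsion coefficients such as $N^s$. So rather than building the duality argument yourself, the efficient move is to appeal to that reference.
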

\begin{proof}
  By taking appropriate exact sequences it suffices to show this for $(E_n)_*/I_k$ (alternatively, one can argue directly using the spectral sequence of \Cref{rem:css_alternative}). Given the chromatic spectral sequence, we can reduce to showing that $H^r(\mathbb{G}_{k+s},(E_{k+s})_*/I)$ is finite, with $I$ as in the proof of the previous proposition. For this, see \cite[Proposition 4.2.2]{symonds_weigel}.
\end{proof}
\begin{cor}\label{cor:vanishing_line}
  Let $M_k$ denote a generalized Moore spectrum of type $k$, then if $p-1$ does not divide $k+s$ for $0 \le s \le n-k$ we have 
  \[
\Ext^{s,\ast}_{(E_n)_*(E_n)}((E_n)_*,(E_n)_*(M_k)) = 0
  \]
for $s > n^2+n-k$. 
\end{cor}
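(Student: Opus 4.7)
The plan is to mimic the proof of \Cref{thm:css} essentially verbatim, replacing the chromatic spectral sequence used there by the variant stated in \Cref{rem:css_alternative}. Concretely, for a generalized Moore spectrum $M_k$ of type $k$ we have $(E_n)_*(M_k) \cong (E_n)_*/(p^{i_0},\ldots,u_{k-1}^{i_{k-1}})$ for some positive integers $i_0,\ldots,i_{k-1}$, and \Cref{rem:css_alternative} gives a chromatic spectral sequence
\[
E_1^{s,r,\ast} \implies \Ext^{r+s,\ast}_{(E_n)_*(E_n)}((E_n)_*, (E_n)_*(M_k))
\]
whose $E_1$-term is concentrated in $0 \le s \le n-k$ and equals $\Ext^{r,\ast}_{(E_{k+s})_*(E_{k+s})}((E_{k+s})_*, (E_{k+s})_*/J_s)$, where $J_s = (p^{i_0},\ldots,u_{k-1}^{i_{k-1}},u_{k}^{\infty},\ldots,u_{k+s-1}^{\infty})$.

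First, I would bound each column $E_1^{s,r,\ast}$. By Morava's change of rings theorem,
\[
E_1^{s,r,\ast} \cong H^{r}_c(\mathbb{G}_{k+s}, (E_{k+s})_*/J_s).
\]
Morava's vanishing theorem, in the form used in the proof of \Cref{thm:css}, asserts that $H^{r}_c(\mathbb{G}_{k+s}, (E_{k+s})_*) = 0$ for $r > (k+s)^2$ provided $p-1$ does not divide $k+s$. Running the usual Bockstein-style argument — iterated short exact sequences to reduce from $(E_{k+s})_*/J_s$ to $(E_{k+s})_*/I_{k+s}$, followed by taking colimits over the powers $u_j^{i}$ to account for the $u_j^{\infty}$ factors (this is precisely what Hovey--Sadofsky do in \cite[Theorem 5.1]{HoveySadofsky1999Invertible}) — propagates the vanishing to $H^{r}_c(\mathbb{G}_{k+s}, (E_{k+s})_*/J_s) = 0$ in the same range $r > (k+s)^2$ whenever $p-1 \nmid k+s$.

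Finally, I combine the column bounds with the structure of the spectral sequence. Under the hypothesis that $p-1$ does not divide $k+s$ for every $0 \le s \le n-k$, we have $E_1^{s,r,\ast} = 0$ whenever $s > n-k$ or $r > (k+s)^2$. For a fixed total degree $q = r+s$ to have a nontrivial contribution, we would need some $s \in [0,n-k]$ with $q - s \le (k+s)^2$, i.e.\ $q \le s + (k+s)^2$. The function $s \mapsto s + (k+s)^2$ has positive derivative on $[0,n-k]$, so its maximum on that interval is attained at $s = n-k$ and equals $(n-k) + n^2 = n^2 + n - k$. Therefore $E_1^{s,r,\ast} = 0$ for all $(s,r)$ with $r+s > n^2+n-k$, and strong convergence of the chromatic spectral sequence yields the claimed vanishing of $\Ext^{s,\ast}_{(E_n)_*(E_n)}((E_n)_*, (E_n)_*(M_k))$ for $s > n^2+n-k$.

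There is no serious obstacle here; the only delicate point is ensuring that Morava's vanishing theorem applies to the quotient $(E_{k+s})_*/J_s$ rather than to $(E_{k+s})_*$ itself, and this is handled routinely by the short-exact-sequence and colimit devices already invoked in the proof of \Cref{thm:css}.
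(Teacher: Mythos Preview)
Your argument is correct, but it takes a different route from the paper. The paper's proof is a one-liner: it observes that $(E_n)_*(M_k) \cong (E_n)_*/(p^{i_0},\ldots,u_{k-1}^{i_{k-1}})$, notes that the case of exponents $(1,\ldots,1)$ is exactly \Cref{thm:css}, and then reduces the general case to that one by iterated short exact sequences relating $(E_n)_*/(p^{i_0},\ldots,u_{k-1}^{i_{k-1}})$ to $(E_n)_*/I_k$. In other words, the paper applies the Bockstein device once, at the level of the abutment, and leans on the already-established \Cref{thm:css}. You instead re-run the chromatic spectral sequence from scratch using the variant of \Cref{rem:css_alternative}, and apply the Bockstein/colimit device inside the spectral sequence, on each $E_1$-column separately. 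Both approaches use the same ingredients (Morava's change of rings, Morava's vanishing theorem, and the Hovey--Sadofsky short-exact-sequence trick); the paper's is shorter because it reuses \Cref{thm:css} rather than reproving it, while yours is more self-contained and makes the role of \Cref{rem:css_alternative} explicit.
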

\begin{proof}
  Recall that $(E_n)_*(M_k) \cong (E_n)_*/(p^{i_0},\ldots,u_{k-1}^{i_{k-1}})$ for a suitable sequence $(i_0,\ldots,i_{k-1})$ of integers. The result for the sequence $(1,\ldots,1)$ holds by \Cref{thm:css} and therefore in general by taking appropriate exact sequences.  
\end{proof}
\begin{thm}\label{thm:vanishing}
  Suppose $p-1$ does not divide $k+s$ for $0 \le s \le n-k$, then 
  \[
 \widehat \Ext^{s,t}_{(E_{k,n})^{\vee}_*(E_n)}((E_n)_*,(E_n)_*) = 0
  \]
  for $s > n^2+n-k$. 
\end{thm}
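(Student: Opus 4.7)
The plan is to combine the two main ingredients that have already been prepared in the preceding material: the identification of the completed Ext group as an inverse limit of ordinary comodule Ext groups (\Cref{prop:limit_ext}), and the vanishing line for those ordinary Ext groups (\Cref{cor:vanishing_line}).

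Concretely, I would first invoke \Cref{prop:limit_ext} to rewrite
\[
\widehat\Ext^{s,t}_{(E_{k,n})^{\vee}_*(E_n)}((E_n)_*,(E_n)_*) \cong \varprojlim_j \Ext^{s,t}_{(E_n)_*E_n}((E_n)_*,(E_n)_*(M_k(j))),
\]
where $\{M_k(j)\}$ is a tower of generalized Moore spectra of type $k$. Next, for each fixed $j$, I would apply \Cref{cor:vanishing_line} to $M_k(j)$: under the standing hypothesis that $p-1$ does not divide $k+s$ for $0 \le s \le n-k$, the Ext group $\Ext^{s,\ast}_{(E_n)_*E_n}((E_n)_*,(E_n)_*(M_k(j)))$ vanishes for $s > n^2+n-k$, uniformly in $j$. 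The inverse limit of the zero system is zero, so the claim follows immediately.

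The only subtlety is that \Cref{prop:limit_ext}, as stated, uses the vanishing of a $\varprojlim^1$ term that relies on the finiteness of each $\Ext^{q,\ast}_{(E_n)_*E_n}((E_n)_*,(E_n)_*(M_k(j)))$ (furnished by \Cref{cor:finiteness}); once that finiteness is in hand, the Mittag-Leffler condition gives the clean inverse-limit description, and no derived $\varprojlim$ contribution can reintroduce classes above the vanishing line.

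In summary, there is essentially no new work beyond assembling the pieces: the chromatic spectral sequence argument of \Cref{thm:css}, together with Morava's vanishing theorem, does the heavy lifting in producing the horizontal vanishing line on finite Moore spectra; \Cref{cor:finiteness} ensures that passing to the inverse limit preserves vanishing; and \Cref{prop:limit_ext} translates this into a statement about the completed Ext group that computes the $E_2$-page of the $E(n,J_k)$-local $E_n$-Adams spectral sequence. I expect no real obstacle, since each of these inputs has already been established earlier in the section.
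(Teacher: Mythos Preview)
Your proposal is correct and follows exactly the paper's own proof, which consists of the single line ``Combine \Cref{prop:limit_ext,cor:vanishing_line}.'' Your additional remarks about the $\varprojlim^1$ subtlety are accurate but already absorbed into the statement of \Cref{prop:limit_ext}, so no extra justification is needed here.
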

\begin{proof}
  Combine \Cref{prop:limit_ext,cor:vanishing_line}.
\end{proof}
\begin{rem}
  The condition on the prime is always satisfied if $p$ is large enough compared to $n$ (in fact $p > n+1$ suffices). This suggests the following, which we do not attempt to make precise: for large enough primes, the cohomological dimension of $(E_n)_*$ in a suitable category of (completed)-$(E_{k,n})_*^\vee (E_{n})$-comodules is finite, and equal to $n^2+n-k$. 
\end{rem}
We also have the following expected sparseness result.
\begin{prop}\label{thm:sparseness}
Let $q = 2(p-1)$, then
  \[
  \widehat \Ext^{s,t}_{(E_n)^{\vee}_*(E_n)}((E_n)_*,(E_n)_*) = 0
  \]
  for all $s$ and $t$ unless $t \equiv 0 \mod{q}$. Consequently, in the spectral sequence of \Cref{prop:anss}, $d_r$ is nontrivial only if $r \equiv 1 \mod{q}$ and $E_{mq+2}^{\ast,\ast} = E_{mq+q+1}^{\ast,\ast}$ for all $m\ge 0$.  
\end{prop}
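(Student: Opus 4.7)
The plan is to reduce the computation to continuous cohomology of $\mathbb{G}_n$ and then exploit the action of a central prime-to-$p$ subgroup. By \Cref{prop:limit_ext}, together with the finiteness of the relevant $\Ext$-groups (\Cref{cor:finiteness}), we have
\[
\widehat\Ext^{s,t}_{(E_{k,n})^{\vee}_*(E_n)}((E_n)_*,(E_n)_*) \cong \varprojlim_j \Ext^{s,t}_{(E_n)_*E_n}((E_n)_*,(E_n)_*(M_k(j))).
\]
Since the vanishing condition ``$=0$ when $q \nmid t$'' is preserved by inverse limits, it suffices to establish it termwise. By Morava's change-of-rings theorem (as recalled in \Cref{rem:k(n)_local}; applicable because $(E_n)_*(M_k(j)) \cong (E_n)_*/(p^{i_0},\ldots,v_{k-1}^{i_{k-1}})$ is finitely generated over $(E_n)_*$), these $\Ext$-groups identify with continuous cohomology $H^s_c(\mathbb{G}_n, (E_n)_t(M_k(j)))$.

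Consider the central subgroup $\mu_{p-1} \subset \mathbb{Z}_p^\times = Z(\mathbb{S}_n) \subset \mathbb{G}_n$; this subgroup is central in $\mathbb{G}_n$ because the Galois factor acts trivially on $\mathbb{Z}_p \subset W(\mathbb{F}_{p^n})$. By Lubin--Tate deformation theory, an element $a \in \mathbb{Z}_p^\times$, viewed as a central endomorphism of the Honda formal group, acts on $\pi_*E_n = W(\mathbb{F}_{p^n})[[u_1,\ldots,u_{n-1}]][u^{\pm 1}]$ by fixing $W(\mathbb{F}_{p^n})$ and $u_1,\ldots,u_{n-1}$ and scaling the invariant differential $u$ by $a$; hence $\zeta \in \mu_{p-1}$ acts on $\pi_{2m}E_n$, and on the finite quotient $(E_n)_{2m}(M_k(j))$, by multiplication by $\zeta^m$. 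Since $|\mu_{p-1}| = p-1$ is invertible in $\mathbb{Z}_p$, the Lyndon--Hochschild--Serre spectral sequence for the central extension $\mu_{p-1} \to \mathbb{G}_n \to \mathbb{G}_n/\mu_{p-1}$ degenerates to yield
\[
H^s_c(\mathbb{G}_n,(E_n)_t(M_k(j))) \cong H^s_c\bigl(\mathbb{G}_n/\mu_{p-1},\,(E_n)_t(M_k(j))^{\mu_{p-1}}\bigr),
\]
and the fixed points on degree $t = 2m$ vanish unless $\zeta^m = 1$ for every $\zeta \in \mu_{p-1}$, i.e.\ unless $(p-1) \mid m$, equivalently $q \mid t$. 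This establishes the sparseness.

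For the consequence on differentials, $d_r$ in the spectral sequence of \Cref{prop:anss} has bidegree $(r, r-1)$ in $(s,t)$-coordinates, so its being potentially nonzero requires both $t$ and $t+r-1$ to be divisible by $q$, forcing $r \equiv 1 \pmod{q}$; the equalities $E_{mq+2}^{\ast,\ast} = E_{mq+q+1}^{\ast,\ast}$ follow immediately. The main obstacle is pinning down the explicit action of the central $\mu_{p-1}$ on $\pi_*E_n$ via Lubin--Tate deformation theory (this is classical, via the universal deformation description of $E_n$, but must be invoked from Devinatz--Hopkins or Rezk's notes); beyond this the argument is a routine extension of the well-known sparseness in the $K(n)$-local case $k=n$.
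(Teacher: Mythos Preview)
Your argument contains a genuine gap at the invocation of Morava's change of rings. The identification
\[
\Ext^{s,t}_{(E_n)_*E_n}\bigl((E_n)_*,\,(E_n)_*(M_k(j))\bigr)\;\cong\;H^s_c\bigl(\mathbb{G}_n,\,(E_n)_t(M_k(j))\bigr)
\]
is not valid for $k<n$. Morava's theorem (in any of its forms, including the one in \Cref{rem:k(n)_local}) requires the coefficient comodule to be $I_n$-nilpotent or $I_n$-complete, not merely finitely generated over $(E_n)_*$; the module $(E_n)_*/(p^{i_0},\ldots,v_{k-1}^{i_{k-1}})$ is only $I_k$-torsion. Concretely, the left-hand side above is the $E_2$-page of the $E(n)$-local $E_n$-Adams spectral sequence for $M_k(j)$ (converging to $\pi_*L_nM_k(j)$), whereas the right-hand side is the $E_2$-page of the $K(n)$-local one (converging to $\pi_*L_{K(n)}M_k(j)$), and these differ when $k<n$. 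Your argument is correct as written only when $k=n$.

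The paper takes a different route that avoids this issue: after the same first reduction via \Cref{prop:limit_ext}, it feeds $(E_n)_*(M_k(j))$ into the chromatic spectral sequence of \Cref{thm:css} (or rather its variant in \Cref{rem:css_alternative}), whose $E_1$-terms are, via the Hovey--Sadofsky change of rings, $\Ext$-groups over $BP_*BP$. Sparseness then follows from the elementary fact that $BP_*$ and $BP_*BP$ are concentrated in degrees divisible by $q$ (Ravenel's Proposition~4.4.2). The mechanism you have in mind --- the central $\mu_{p-1}$ acting on degree $t$ by $\zeta\mapsto\zeta^{t/2}$ --- is morally the same phenomenon as this $BP$-grading, but to access it for $k<n$ you must first pass through the chromatic filtration to reach genuinely $I_n$-torsion coefficients, which is exactly what the paper does. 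Your deduction of the consequences for differentials is correct.
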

\begin{proof}
  Using \Cref{prop:limit_ext} it suffices to show the first statement for the $E_1$-term of the chromatic spectral sequence of \Cref{thm:css}. Again using the Hovey--Sadofsky change of rings theorem this $E_1$-term is isomorphic to
  \[
\Ext^{s,\ast,\ast}_{BP_*BP}(BP_*,u_{k+s}^{-1}BP_*/(p,\ldots,u_{k-1},u_k^{\infty},\ldots,u_{k+s-1}^{\infty})).
  \]
  Now apply \cite[Proposition 4.4.2]{ravenel_green}. 
\end{proof}

\section{Dualizable objects in \texorpdfstring{$\Sp_{k,n}$}{Spk,n}}\label{sec:dualizable}
In this section we use descendability to characterize the dualizable objects in $\Sp_{k,n}$. As noted previously, as long as $k \ne 0$, these differ from the compact objects studied in \Cref{sec:compact_objects}. 
\begin{defn}
  Let $(\cC,\wedge,\unit)$ be a symmetric-monoidal $\infty$-category, then $X \in \cC$ is dualizable if there exists an object $D_{\cC}X$ and a pair of morphisms
  \[
e \colon D_{\cC}X \wedge X \to \unit \quad \text{ and } c \colon \unit \to X \wedge D_{\cC}X
  \]
  such that the composites
  \[
X \xrightarrow{c \wedge \text{id}} X \wedge D_{\cC}X \wedge X \xrightarrow{\text{id} \wedge e} X
  \]
  and
  \[
D_{\cC}X \xrightarrow{\text{id} \wedge c} D_{\cC}X \wedge X \wedge D_{\cC}X \xrightarrow{e \wedge \text{id}} D_{\cC}X
  \]
  are the identity on $X$ and $D_{\cC}X$, respectively. 
\end{defn}
\begin{rem}\label{rem:dual_homotopy_category}
  The definition makes it clear that $X \in \cC$ is dualizable if and only if it is dualizable in the homotopy category of $\cC$. Moreover, a formal argument shows that, if it exists, we must have $D_{\cC}X \simeq F(X,\unit)$. Finally, for the equivalence with other definitions of dualizability the reader may have seen, see \cite[Theorem 1.3]{dold_puppe}. 
\end{rem}
\begin{defn}
  We let $\cC^{\dual} \subseteq \cC$ denote the full subcategory consisting of the dualizable objects of $\cC$.
\end{defn}
\begin{rem}
  The full subcategory $\cC^{\dual}$ is a thick tensor ideal \cite[Theorem A.2.5]{HoveyPalmieriStrickl1997Axiomatic}. 
\end{rem}
We have the following relationship between descent theory and dualizability. 
\begin{prop}\label{prop:dualiable_descent}
  Let $A \in \CAlg(\cC)$ be descendable, then the adjunction $C \leftrightarrows \Mod_{A}(\cC)$ gives rise to an equivalence of symmetric monoidal $\infty$-categories
  \[
\cC^{\dual} \to \Tot\left(\xymatrix@C=1em{\Mod_{A}(\cC)^{\dual} \ar@<0.5ex>[r] \ar@<-0.5ex>[r] & \Mod_{A \wedge A}(\cC)^{\dual}\ar@<1ex>[r] \ar@<-1ex>[r] \ar[r]&}\right).
  \]
  In particular, $M \in \cC$ is dualizable if and only if $M \wedge A \in \Mod_A(\cC)$ is dualizable.  
\end{prop}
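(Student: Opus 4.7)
The plan is to leverage the symmetric monoidal descent equivalence of \Cref{prop:descendability_cat}, namely $\cC \simeq \Tot(\Mod_{A^{\htimes \bullet+1}}(\cC))$, together with the principle that any symmetric monoidal functor preserves dualizable objects and hence any symmetric monoidal equivalence restricts to an equivalence between the full subcategories of dualizable objects. The crucial intermediate claim is that dualizability in such a totalization is detected levelwise. Indeed, each projection $\Tot \to \Mod_{A^{\htimes n+1}}(\cC)$ is symmetric monoidal, so a dualizable object of the totalization necessarily has dualizable components; conversely, a coherent family of dualizable components $(X_n)$ has duals $(D_n X_n)$ which are automatically compatible with the cosimplicial structure (the structure maps are symmetric monoidal and hence send duals to duals), and these assemble into an object of the totalization that serves as a dual of $(X_n)$. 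Combining this observation with the descent equivalence yields
\[
\cC^{\dual} \xrightarrow{\simeq} \Tot\bigl(\Mod_{A^{\htimes \bullet + 1}}(\cC)^{\dual}\bigr),
\]
which is the symmetric monoidal equivalence asserted in the proposition.

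For the \emph{in particular} clause, one direction is immediate: the functor $- \htimes A \colon \cC \to \Mod_A(\cC)$ is symmetric monoidal, so if $M$ is dualizable in $\cC$ then $M \htimes A$ is dualizable in $\Mod_A(\cC)$. For the converse, suppose $M \htimes A$ is dualizable in $\Mod_A(\cC)$. For each $n \geq 0$, base change along the unit map $A \to A^{\htimes n+1}$ is a symmetric monoidal functor $\Mod_A(\cC) \to \Mod_{A^{\htimes n+1}}(\cC)$ and therefore carries $M \htimes A$ to $M \htimes A^{\htimes n+1}$ and preserves dualizability. The compatible family $(M \htimes A^{\htimes n+1})_n$ thus defines an object of $\Tot(\Mod_{A^{\htimes \bullet+1}}(\cC)^{\dual})$, which under the equivalence above corresponds to $M \in \cC^{\dual}$.

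The main technical subtlety is the levelwise detection of dualizability in a totalization of symmetric monoidal $\infty$-categories, which is a statement at the level of coherent homotopy that I expect to be a standard consequence of the formalism developed in \cite{lurie-higher-algebra}. If one prefers to avoid invoking this explicitly, a direct alternative is to use that $- \htimes A$ is conservative (\Cref{cor:conservative}) and to verify the triangle identities for the candidate dual $F(M, \unit)$ after smashing with $A$, thereby reducing the claim to the already-established dualizability of $M \htimes A$ in $\Mod_A(\cC)$.
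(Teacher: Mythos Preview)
Your approach is essentially the same as the paper's, which also derives the equivalence from \Cref{prop:descendability_cat} together with the fact that passing to dualizable objects commutes with limits of symmetric monoidal $\infty$-categories; the paper simply cites \cite[Proposition 4.6.1.11]{lurie-higher-algebra} for this fact rather than sketching the levelwise argument as you do. The ``in particular'' clause is then deduced in the same way, from the symmetric monoidality of the maps in the totalization.
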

\begin{proof}
  The first claim follows from \Cref{prop:descendability_cat} because passing to dualizable objects commutes with limits of $\infty$-categories
  \cite[Proposition 4.6.1.11]{lurie-higher-algebra}. The second is then an easy consequence, using that all the maps in the totalization are symmetric monoidal.  
\end{proof}
\subsection{Dualizable objects in the $E(n,J_k)$-local category}
Using \Cref{thm:descendability_en,prop:dualiable_descent} we deduce the following. 
\begin{prop}\label{prop:dualiable_descent_en}
  The adjunction $\Sp_{k,n} \leftrightarrows \Mod_{E_n}(\Sp_{k,n})$ gives rise to an equivalence of symmetric monoidal $\infty$-categories
  \[
\Sp_{k,n}^{\dual} \to \Tot\left(\xymatrix@C=1em{\Mod_{E_n}(\Sp_{k,n})^{\dual} \ar@<0.5ex>[r] \ar@<-0.5ex>[r] & \Mod_{E_n \htimes E_n}(\Sp_{k,n})^{\dual}\ar@<1ex>[r] \ar@<-1ex>[r] \ar[r]&}\right).
  \]
   In particular, $X \in \Sp_{k,n}$ is dualizable if and only if $E_n \htimes X \in \Mod_{E_n}(\Sp_{k,n})$ is dualizable.  
\end{prop}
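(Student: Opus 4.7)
The plan is to derive the proposition as a direct corollary of two results already in hand: the descendability of $E_n$ in $\Sp_{k,n}$ (\Cref{thm:descendability_en}) and the general descent principle for dualizable objects in a descendable extension (\Cref{prop:dualiable_descent}). There are no new technical ingredients to introduce.

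First I would invoke \Cref{thm:descendability_en} to see that $E_n \in \CAlg(\Sp_{k,n})$ is descendable. Applying \Cref{prop:dualiable_descent} with $\cC = \Sp_{k,n}$ and $A = E_n$ then immediately produces the desired symmetric monoidal equivalence
\[
\Sp_{k,n}^{\dual} \simeq \Tot\left(\xymatrix@C=1em{\Mod_{E_n}(\Sp_{k,n})^{\dual} \ar@<0.5ex>[r] \ar@<-0.5ex>[r] & \Mod_{E_n \htimes E_n}(\Sp_{k,n})^{\dual}\ar@<1ex>[r] \ar@<-1ex>[r] \ar[r]&}\right).
\]

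For the ``in particular'' clause, the plan is to give a short argument in each direction. The forward implication is clear: extension of scalars $-\htimes E_n$ is a symmetric monoidal functor and hence preserves dualizable objects, so any dualizable $X \in \Sp_{k,n}$ yields a dualizable $E_n \htimes X \in \Mod_{E_n}(\Sp_{k,n})$. For the converse, I would observe that each symmetric monoidal base-change functor $\Mod_{E_n}(\Sp_{k,n}) \to \Mod_{E_n^{\htimes(p+1)}}(\Sp_{k,n})$ sends $E_n \htimes X$ to $E_n^{\htimes(p+1)} \htimes X$, so the dualizability of $E_n \htimes X$ at level zero propagates to every cosimplicial degree. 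Together with the cosimplicial structure maps, these dualizable objects assemble into an object of the totalization corresponding to $X$, and the equivalence above then delivers $X \in \Sp_{k,n}^{\dual}$.

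I do not anticipate any serious obstacle: the hard content is already packaged inside \Cref{thm:descendability_en}, which ultimately rests on the Hopkins--Ravenel smash product theorem. The only mildly delicate point is the reduction from dualizability at all cosimplicial levels to dualizability at level zero in the converse direction, and this reduction is entirely formal, depending only on the symmetric monoidality of extension of scalars together with the fact that passage to dualizable objects is compatible with limits of symmetric monoidal $\infty$-categories (used in \Cref{prop:dualiable_descent}).
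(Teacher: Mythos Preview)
Your proposal is correct and matches the paper's approach exactly: the paper simply cites \Cref{thm:descendability_en} and \Cref{prop:dualiable_descent} and declares the result. Your extra argument for the ``in particular'' clause is fine but unnecessary, since that clause is already part of the conclusion of \Cref{prop:dualiable_descent}.
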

This proposition suggests we begin by studying dualizable objects in the category $\Mod_{E_n}(\Sp_{k,n})$. Fortunately, these have a nice characterization. We begin with the following. 
\begin{lem}\label{lem:K(n)local}
  If $X$ is dualizable in $\Mod_{E_n}(\Sp_{k,n})$ then the spectrum underlying $X$ is $K(n)$-local. 
\end{lem}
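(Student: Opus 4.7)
The plan is to exploit that the monoidal unit $E_n$ of $\Mod_{E_n}(\Sp_{k,n})$ is itself $K(n)$-local as a spectrum, combined with biduality $X \simeq D(DX)$, which holds for any dualizable object in a symmetric monoidal $\infty$-category.

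Concretely, I would proceed in two steps. First, I would establish the auxiliary claim that for every $X \in \Mod_{E_n}(\Sp_{k,n})$ the dual $DX = F_{E_n}(X, E_n)$ has $K(n)$-local underlying spectrum. The input is that $E_n$ is $K(n)$-local (its coefficient ring is $I_n$-adically complete; alternatively, cf.\ \Cref{rem:lubin_tate}), and that for any spectrum $W$ and any $K(n)$-local $Y$ the function spectrum $F_\Sp(W, Y)$ is $K(n)$-local: indeed, if $V$ is $K(n)$-acyclic then $V \wedge W$ is still $K(n)$-acyclic, hence $\mathrm{Map}_\Sp(V, F_\Sp(W, Y)) \simeq \mathrm{Map}_\Sp(V \wedge W, Y) \simeq \ast$. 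The underlying spectrum of $F_{E_n}(X, E_n)$ can then be presented as the totalization of a cosimplicial object arising from the bar resolution of $X$ over $E_n$, each of whose terms is a plain function spectrum $F_\Sp(X \wedge E_n^{\wedge m}, E_n)$ into the $K(n)$-local target $E_n$; since $K(n)$-locality is closed under limits in $\Sp$, the auxiliary claim follows. Note that by \Cref{prop:hs_b.9} function objects in $\Sp_{k,n}$ are computed in $\Sp$, so there is no ambiguity caused by the $L_{k,n}$-localization.

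Second, since $X$ is dualizable the canonical biduality map $X \to D(DX)$ is an equivalence in $\Mod_{E_n}(\Sp_{k,n})$, and in particular on underlying spectra. Applying the auxiliary claim first to $X$ (to conclude that $DX$ is $K(n)$-local) and then to $DX$ (to conclude that $D(DX) \simeq X$ is $K(n)$-local) finishes the proof. The main technical obstacle I foresee is bookkeeping around the distinction between $E_n$-linear and $\Sp$-linear function spectra inside the totalization; a clean alternative that sidesteps this is to verify the vanishing $\mathrm{Map}_\Sp(V, DX) \simeq \ast$ directly for $V$ a $K(n)$-acyclic spectrum, using the adjunction $(- \wedge E_n) \dashv \mathrm{fgt}$ to convert this to the vanishing of $\mathrm{Map}_{\Mod_{E_n}(\Sp_{k,n})}(L_{k,n}(V \wedge E_n) \htimes X, E_n)$, which holds because $L_{k,n}$ preserves $K(n)$-acyclicity and $E_n$ is $K(n)$-local.
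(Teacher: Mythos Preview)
Your proposal is correct, and takes a genuinely different route from the paper's proof. The paper argues via the ``smashing'' behavior of localization on dualizable objects: it observes that $K(n)$-localization restricts to a symmetric monoidal localization $L_{K(n)}\colon \Mod_{E_n}(\Sp_{k,n}) \to \Mod_{E_n}(\Sp_{K(n)})$, and then invokes \cite[Lemma~3.3.1]{HoveyPalmieriStrickl1997Axiomatic} to obtain
\[
L_{K(n)}X \;\simeq\; L_{k,n}\bigl((L_{K(n)}E_n)\wedge_{E_n} X\bigr) \;\simeq\; L_{k,n}(E_n \wedge_{E_n} X) \;\simeq\; X,
\]
using only that the unit $E_n$ is already $K(n)$-local. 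Your argument instead proves the stronger auxiliary fact that $DX = F_{E_n}(X,E_n)$ is $K(n)$-local for \emph{every} $X \in \Mod_{E_n}(\Sp_{k,n})$, not just dualizable ones, and then appeals to biduality $X \simeq DDX$. Both proofs rest on the same input (that $E_n$ is $K(n)$-local) and are of comparable length; the paper's version is a one-liner given the HPS reference, while yours trades that citation for a direct adjunction chase and yields the slightly more general intermediate statement. Your ``clean alternative'' is the right way to execute the first step: the bar-resolution route does work but, as you note, requires care distinguishing the monoidal structures in $\Mod_{E_n}(\Sp)$ versus $\Mod_{E_n}(\Sp_{k,n})$, whereas the direct vanishing argument sidesteps this entirely (one small point: your use of $\htimes$ for the relative $E_n$-tensor in $\Mod_{E_n}(\Sp_{k,n})$ conflicts with the paper's convention, where $\htimes$ denotes the monoidal product in $\Sp_{k,n}$).
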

\begin{proof}
We first note that for any $M \in \Mod_{E_n}$ (in particular, for $M = X$) the Bousfield localization $L_{K(n)}M$ is the spectrum underlying $L_{E_n \wedge X}^{E_n}$, where the latter denotes the Bousfield localization with respect to $E_n \wedge X$ internal to the category of $E_n$-modules. In particular, the localization map $M \to L_{K(n)}M$ is a map in $\Mod_{E_n}$. See Chapter VIII of \cite{ekmm}, in particular \cite[Proposition VIII.1.8]{ekmm}. If follows that $K(n)$-localization defines a localization
\[
L_{K(n)} \colon \Mod_{E_n}(\Sp_{k,n}) \to \Mod_{E_n}(\Sp_{K(n)}).
\]
Because $X \in \Mod_{E_n}(\Sp_{k,n})^{\dual}$, using \cite[Lemma 3.3.1]{HoveyPalmieriStrickl1997Axiomatic}, we see that there are equivalences
\[
L_{K(n)}X \simeq L_{k,n}(( L_{K(n)}E_n) \wedge_{E_n} X) \simeq L_{k,n}(E_n \wedge_{E_n} X) \simeq L_{k,n}X \simeq X,
\]
as claimed. 
\end{proof}
\begin{rem}
  For the following, we let $K_n \cong E_n/I_n$. This is a 2-periodic form of Morava $K$-theory; indeed, 
  \[
(K_n)_*X \cong (K_n)_* \otimes_{K(n)_*}K(n)_*X,
  \]
  and so $\langle K(n) \rangle = \langle K_n \rangle$. We use this only because $K_n$ is naturally an $E_n$-module. 
\end{rem}
\begin{prop}\label{prop:dualizable_local}  
  For $X \in \Mod_{E_n}(\Sp_{k,n})$ the following are equivalent:
  \begin{enumerate}
    \item $X$ is dualizable in $\Mod_{E_n}(\Sp_{k,n})$.
    \item $X$ is compact (equivalently, dualizable) in $\Mod_{E_n}(\Sp)$. 
    \item The spectrum underlying $X$ is $K(n)$-local, and the homotopy groups $\pi_*(K_n \wedge_{E_n} X)$ are finite. 
  \end{enumerate}
\end{prop}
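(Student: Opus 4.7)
The plan is to prove the cycle $(2)\Rightarrow(1)\Rightarrow(3)\Rightarrow(2)$. The first two implications are formal manipulations with symmetric monoidal localizations and base change, while the last invokes the Hovey--Strickland characterization of dualizable $K(n)$-local $E_n$-modules.

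For $(2)\Rightarrow(1)$, I will use that $L_{k,n}$ lifts to a symmetric monoidal left adjoint $\Mod_{E_n}(\Sp)\to\Mod_{E_n}(\Sp_{k,n})$, so in particular preserves duals. Since $X$ is by hypothesis already in $\Mod_{E_n}(\Sp_{k,n})$, applying $L_{k,n}$ to the duality data witnessing $(2)$ produces the required data in $\Mod_{E_n}(\Sp_{k,n})$.

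For $(1)\Rightarrow(3)$, the $K(n)$-locality of the underlying spectrum is already \Cref{lem:K(n)local}. To deduce finiteness of $\pi_*(K_n\wedge_{E_n}X)$, I consider the base-change functor $K_n\wedge_{E_n}(-)\colon\Mod_{E_n}(\Sp_{k,n})\to\Mod_{K_n}$. This is symmetric monoidal, and the ambient localization is harmless because every $K_n$-module is automatically $K(n)$-local; it therefore sends the dualizable object $X$ to a dualizable $K_n$-module. Since $(K_n)_*$ is a graded field, every $K_n$-module splits as a coproduct of suspensions of $K_n$, so dualizability is equivalent to finiteness of $\pi_*$.

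For $(3)\Rightarrow(2)$, which I expect to be the only non-formal step, I will invoke the Hovey--Strickland analysis of dualizable $K(n)$-local $E_n$-modules (specifically \cite[Theorem 8.6]{hovey_strickland99}). Their argument filters the finite $(K_n)_*$-module $\pi_*(K_n\wedge_{E_n}X)$ by cyclic pieces, then lifts the filtration back to $X$ via a Nakayama-type argument exploiting the $I_n$-adic completeness forced by $K(n)$-locality, ultimately exhibiting $X$ as a retract of $E_n\wedge F$ for a finite type-$n$ spectrum $F$. Such a module is manifestly compact in $\Mod_{E_n}(\Sp)$, which gives $(2)$. The hypotheses of their theorem are met because the previous step has placed $X$ into the $K(n)$-local setting to which their machinery directly applies.
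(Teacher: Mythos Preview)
Your argument is correct, and the organization differs slightly from the paper's. The paper proves $(2)\Rightarrow(1)$ essentially as you do, then proves $(1)\Rightarrow(2)$ directly by localizing to $\Mod_{E_n}(\Sp_{K(n)})$ (where $X$ is fixed by \Cref{lem:K(n)local}) and citing Mathew's \cite[Proposition~10.11]{Mathew2016Galois}; the equivalence $(2)\Leftrightarrow(3)$ is then simply quoted from Lurie \cite[Proposition~2.9.4]{lurie-brauer}. Your cycle $(2)\Rightarrow(1)\Rightarrow(3)\Rightarrow(2)$ unpacks the same content: your direct argument for the finiteness part of $(1)\Rightarrow(3)$ via base change to $\Mod_{K_n}$ is clean and avoids a black-box citation, while your $(3)\Rightarrow(2)$ is exactly what underlies both Mathew's and Lurie's cited results.

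One small correction: \cite[Theorem~8.6]{hovey_strickland99} is a statement about dualizable objects in $\Sp_{K(n)}$, not about $E_n$-modules, so it is not quite the right reference for $(3)\Rightarrow(2)$. The argument you sketch (Nakayama plus $I_n$-completeness to get finite generation of $\pi_*X$, then regularity of $(E_n)_*$ to conclude $X$ lies in the thick subcategory generated by $E_n$) is correct and is essentially the content of \cite[Proposition~2.9.4]{lurie-brauer} or \cite[Proposition~10.11]{Mathew2016Galois}; you could either cite one of these or spell out the module-theoretic argument directly. The stronger conclusion you state, that $X$ is a retract of $E_n\wedge F$ for a finite type-$n$ spectrum $F$, is not needed and is not what those references prove.
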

\begin{proof}
We first show that (2) implies (1). The compact objects in $\Mod_{E_n}(\Sp)$ are precisely those in the thick subcategory generated by $E_n$ (for example, \cite[Proposition 7.2.4.2]{lurie-higher-algebra}). Since $E_n \in \Mod_{E_n}(\Sp_{k,n})^{\dual}$, and the collection of dualizable objects is thick, the implication (2) implies (1) follows.  

  Conversely, assume that (1) holds. As above we have a symmetric-monoidal localization
  \[
L_{K(n)} \colon \Mod_{E_n}(\Sp_{k,n}) \to \Mod_{E_n}(\Sp_{K(n)}),
\]
which preserves dualizable objects (as any symmetric-monoidal functor does). Using \Cref{lem:K(n)local} it follows that $L_{K(n)}X \simeq X$ is dualizable in $\Mod_{E_n}(\Sp_{K(n)})$, which implies by \cite[Proposition 10.11]{Mathew2016Galois} that $X$ is compact in $\Mod_{E_n}(\Sp)$. 

  Finally, the equivalence of (2) and (3) is well-known, see for example \cite[Proposition 2.9.4]{lurie-brauer}.\footnote{We note that Lurie has confirmed via private communication that the cited proposition \cite[Proposition 2.9.4]{lurie-brauer} should additionally have the assumption $X$ is $K(n)$-local in Condition (3).} 
\end{proof}
\begin{rem}
  Suppose $X \in \Sp_{k,n}^{\dual}$, so that $L_{k,n}(E_n \wedge X) \in \Mod_{E_n}(\Sp_{k,n})^{\dual}$. The previous proposition then implies that $L_{k,n}(E_n \wedge X) \simeq L_{K(n)}L_{k,n}(E_n \wedge X) \simeq L_{K(n)}(E_n \wedge X)$. In other words, for dualizable $X$, there is an isomorphism $(E_{k,n})^\vee_*(X) \cong (E_{n,n})^{\vee}_*(X)$. 
\end{rem}
We now give our characterization of dualizable spectra in $\Sp_{k,n}$ - see \cite[Theorem 8.6]{hovey_strickland99} for the case $k = n$. We note that even in this case our proof, which uses descendability, differs from that of Hovey and Strickland. 
\begin{thm}\label{thm:dualspkn}
  The following are equivalent for $X \in \Sp_{k,n}$:
  \begin{enumerate}
    \item $X$ is dualizable.
    \item $X$ is $F$-small, i.e., for any collection of objects $\{ Z_i \}$, the natural map $$L_{k,n}(\bigvee F(X,Z_i)) \to F(X,L_{k,n}(\bigvee Z_i))$$ is an equivalence. 
    \item $E_n \htimes X \in \Mod_{E_n}(\Sp_{k,n})$ is dualizable. 
    \item $E_n \htimes X \in \Mod_{E_n}(\Sp)$ is dualizable (equivalently, compact).
    \item $E_n \htimes X$ is $K(n)$-local and $(K_n)_*X$ is finite. 
    \item $(E_{k,n})^{\vee}_*(X)$ is a finitely generated $E_*$-module. 
  \end{enumerate}
\end{thm}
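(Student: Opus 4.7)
The strategy is to reduce everything to the descent results already established. The equivalences (1) $\iff$ (3) and (3) $\iff$ (4) $\iff$ (5) are supplied by \Cref{prop:dualiable_descent_en} and \Cref{prop:dualizable_local} respectively, so the remaining task is to prove (1) $\iff$ (2) and (5) $\iff$ (6).

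For (1) $\Rightarrow$ (2), a dualizable $X$ has $F(X,-) \simeq F(X,\unit) \htimes (-)$, which is colimit-preserving, so the comparison map in (2) is automatically an equivalence. For the converse (2) $\Rightarrow$ (1), I would invoke \cite[Theorem 2.1.3]{HoveyPalmieriStrickl1997Axiomatic}, which characterizes strongly dualizable objects in an algebraic stable homotopy category as precisely the $F$-small ones; this applies to $\Sp_{k,n}$ via \Cref{prop:hs_b.9}.

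For (5) $\Rightarrow$ (6), \Cref{prop:dualizable_local} already identifies $E_n \htimes X$ as compact in $\Mod_{E_n}(\Sp)$, and any such compact module has finitely generated homotopy over the Noetherian ring $E_*$. For (6) $\Rightarrow$ (5), finite generation of $(E_{k,n})^\vee_*(X)$ over the complete Noetherian local ring $E_*$ forces the module to be $I_n$-adically complete with all higher derived completions vanishing; combined with the identification of $\pi_* L_{K(n)}(-)$ on $E_n$-modules with derived $I_n$-adic completion in the style of \cite[Section 8]{hovey_strickland99}, this forces the localization map $E_n \htimes X \to L_{K(n)}(E_n \htimes X)$ to be an equivalence on $\pi_*$, hence an equivalence, giving the $K(n)$-locality half of (5). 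Finiteness of $(K_n)_*X$ then follows from a base-change spectral sequence of the form $\mathrm{Tor}^{E_*}_{*,*}(K_{n*}, (E_{k,n})^\vee_*(X)) \Rightarrow \pi_*(K_n \wedge_{E_n}(E_n \htimes X))$, since $E_*$ is regular of finite global dimension and $K_{n*}$ is a graded field.

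The main technical hurdle is the $K(n)$-locality step in (6) $\Rightarrow$ (5): one needs the identification of $L_{K(n)}$ on $E_n$-modules with derived $I_n$-adic completion on homotopy, together with the Nakayama-style vanishing $L_s^{I_n}M = 0$ for $s > 0$ when $M$ is finitely generated over $E_*$. Once this is in hand, all remaining implications flow formally from the descent machinery already developed.
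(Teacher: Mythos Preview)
Your argument is correct and for $(1)\iff(2)\iff(3)\iff(4)\iff(5)$ it coincides with the paper's: the same citations to \cite[Theorem 2.1.3]{HoveyPalmieriStrickl1997Axiomatic}, \Cref{prop:dualiable_descent_en}, and \Cref{prop:dualizable_local} are used.

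The one genuine difference is how condition $(6)$ is tied in. The paper does not pass through $(5)$ at all; it proves $(4)\iff(6)$ directly by citing the standard fact (e.g.\ \cite[Lemma 10.2(i)]{Greenlees2018Homotopy}) that an $E_n$-module $M$ is compact in $\Mod_{E_n}(\Sp)$ if and only if $\pi_*M$ is finitely generated over $(E_n)_*$, applied to $M=E_n\htimes X$. This is a one-line argument. Your route $(6)\Rightarrow(5)$ via derived $I_n$-completion and a Tor spectral sequence is valid, but it is more work than needed: the $K(n)$-locality and the finiteness of $(K_n)_*X$ both come for free once you know $E_n\htimes X$ is compact, since compact $E_n$-modules lie in $\Thick(E_n)$ and $E_n$ itself is $K(n)$-local. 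Note also that your spectral sequence abuts to $\pi_*(K_n\wedge_{E_n}(E_n\htimes X))$, and you would still owe the identification of this with $(K_n)_*X$; that follows once $K(n)$-locality is in hand, but it is an extra step you did not make explicit. The paper's route avoids all of this.
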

\begin{proof}
  The equivalences between the first five items come from \cite[Theorem 2.1.3(c)]{HoveyPalmieriStrickl1997Axiomatic} ($(1) \iff (2)$), \Cref{prop:dualiable_descent} ($(1) \iff (3)$) and \Cref{prop:dualizable_local} ($(3) \iff (4) \iff (5)$).  Finally, we note that if $M$ is an $E_n$-module, then $M$ is compact if and only if $\pi_*M$ is finitely generated over $(E_n)_*$, see \cite[Lemma 10.2(i)]{Greenlees2018Homotopy}. Applying this with $M=E_n \htimes X$ gives the equivalence between $(4)$ and $(6)$.
 \end{proof}
 Finally, we show that there is only a set of isomorphism classes of dualizable objects. 
 \begin{lem}\label{lem:essentially_small}
   There are at most $2^{\aleph_0}$ isomorphism classes of objects in $\Sp_{k,n}^{\dual}$. 
 \end{lem}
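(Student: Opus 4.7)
The plan is to combine the descent description of dualizable objects in \Cref{prop:dualiable_descent_en} with a cardinality bound on compact $E_n$-modules. First I would show that $\Mod_{E_n}(\Sp)^{\omega} = \Thick(E_n)$ has at most $2^{\aleph_0}$ equivalence classes. Indeed, every compact $E_n$-module is a retract of a finite cell $E_n$-module; a finite cell $E_n$-module is determined (up to equivalence) by a finite tower of cofiber sequences whose attaching maps live in $\pi_*$ of finite sums of shifts of previously constructed compact modules, and such modules have homotopy groups of cardinality at most $|(E_n)_*| = 2^{\aleph_0}$. The set of idempotents on any such compact module is a subset of its endomorphism ring, again of cardinality at most $2^{\aleph_0}$. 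A transfinite induction on the cellular filtration then gives the bound.

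Next, by \Cref{thm:dualspkn} the functor $X \mapsto E_n \htimes X$ takes $\Sp_{k,n}^{\dual}$ into $\Mod_{E_n}(\Sp)^{\omega}$, and by \Cref{prop:dualiable_descent_en} the object $X$ is determined up to equivalence by $E_n \htimes X$ together with descent data along the cosimplicial diagram of dualizable modules over the Amitsur complex of $E_n$. For a fixed compact $M$, the space of such descent data is a totalization of a cosimplicial space whose mapping spaces have homotopy groups of cardinality at most $2^{\aleph_0}$. This can be checked using the $E(n,J_k)$-local $E_n$-Adams spectral sequence of \Cref{prop:anss}, whose $E_2$-page is bounded in cardinality by $2^{\aleph_0}$ (its entries are $\widehat{\Ext}$-groups computed from a cobar complex built out of $(E_{k,n})^\vee_*(-)$, which takes values in $L$-complete $(E_n)_*$-modules of cardinality $\le 2^{\aleph_0}$) and which has a finite vanishing line by descendability. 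In particular, $\pi_0$ of the totalization of descent data has cardinality at most $2^{\aleph_0}$, so for each $M$ at most $2^{\aleph_0}$ dualizable $X$ satisfy $E_n \htimes X \simeq M$.

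Combining the two bounds gives $|\Sp_{k,n}^{\dual}/\simeq| \le 2^{\aleph_0}\cdot 2^{\aleph_0} = 2^{\aleph_0}$. The main obstacle is the bookkeeping in the cardinality count for $\Mod_{E_n}(\Sp)^{\omega}$ and for the homotopy groups of the totalization of descent data; neither is deep, but one should argue carefully that the operations (shifts, finite colimits, retracts, and limits of cosimplicial diagrams along countably many coface/codegeneracy maps) preserve the bound $2^{\aleph_0}$. If the direct count of compact $E_n$-modules becomes awkward, an alternative is to first pass to $\Mod_{E_n}(\Sp_{K(n)})^{\dual}$ via \Cref{prop:dualizable_local}, which embeds $E_n \htimes X$ into a category whose cardinality bound is arguably easier to articulate.
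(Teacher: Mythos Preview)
Your approach is correct in outline but takes a genuinely different route from the paper. The paper's argument is considerably more elementary: it first shows that $\Sp_{k,n}^{\omega}$ has only \emph{countably} many isomorphism classes (since compact objects are retracts of $L_nX'$ for finite type-$\ge k$ spectra $X'$, of which there are countably many, and each has finite endomorphism ring hence finitely many retracts), and that hom-sets between compact objects are \emph{finite}. Then for $X \in \Sp_{k,n}^{\dual}$ one writes $X \simeq \varprojlim_j X \wedge M_k(j)$ via \Cref{prop:7.10}; each $X \wedge M_k(j)$ is compact (dualizable times compact is compact), so $X$ is determined by a tower in a countable category with finite hom-sets, and there are at most $\aleph_0^{\aleph_0} = 2^{\aleph_0}$ such towers.

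Your descent-based argument would also work, but it trades the tower-of-Moore-spectra presentation for the cobar resolution, and the finiteness of hom-sets in $\Sp_{k,n}^{\omega}$ for cardinality estimates on $L$-complete modules and Adams $E_2$-terms. The paper's route buys simplicity and avoids any analysis of descent data or the Adams spectral sequence; your route illustrates that descendability alone, together with the cardinality of $(E_n)_*$, suffices, and would generalize to other settings where one lacks a convenient compact generator like $L_nF(k)$. One caution: your sketch of ``descent data for a fixed $M$'' is slightly informal---what you really need is that the fiber of $\Sp_{k,n}^{\dual} \to \Mod_{E_n}(\Sp_{k,n})^{\dual}$ over $M$ has at most $2^{\aleph_0}$ components, which follows from the vanishing line plus the cardinality bound on each $E_2$-term, but deserves a sentence making explicit that $\pi_0$ of a finite Postnikov tower with layers of size $\le 2^{\aleph_0}$ is itself bounded by $2^{\aleph_0}$.
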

 \begin{proof}
   This is the same as the argument given in \cite[Propositon 12.17]{hovey_strickland99}. Namely, there are only countably many finite spectra $X'$ of type at least $k$, and for each one $[L_nX',L_nX']$ is finite, so $L_nX'$ has only finitely many retracts. By \Cref{thm:compact_objects} it follows that there is a countable set of isomorphism classes of objects in $\Sp_{k,n}^{\omega}$. If $U$ and $V$ are finite, then $[U,V]$ is finite, and so there are at most $\aleph_0^{\aleph_0} = 2^{\aleph_0}$ different towers of spectra in $\Sp_{k,n}^{\omega}$. For $X \in \Sp_{k,n}^{\dual}$, write $X \simeq \varprojlim_j X \wedge M_k(j)$, as in \Cref{prop:7.10}. Because $X$ is dualizable and $M_k(j)$ is compact, $X \wedge M_k(j)$ is compact \cite[Theorem 2.1.3(a)]{HoveyPalmieriStrickl1997Axiomatic}. Therefore, $X$ is the inverse limit of a tower of spectra in $\Sp_{k,n}^{\omega}$, and hence there are at most $2^{\aleph_0}$ isomorphism classes of objects in $\Sp_{k,n}^{\dual}$. 
 \end{proof}
\subsection{The spectrum of dualizable objects}\label{sec:balmer_dual}
In \Cref{thm:thick_subcategory} we computed the thick subcategories (equivalently, thick tensor-ideals) of compact objects in $\Sp_{k,n}$. One could also ask for a classification of the thick tensor-ideals of dualizable objects in $\Sp_{k,n}$, or equivalently a computation of the Balmer spectrum $\Spc(\Sp_{k,n}^{\dual})$ (which is well-defined by \Cref{lem:essentially_small}).  Based on a conjecture of Hovey and Strickland, the author, along with Barthel and Naumann, investigated $\Spc(\Sp_{K(n)}^{\dual})$ in detail in \cite{bhn}, showing that the Hovey--Strickland conjecture holds when $n = 1$ and $2$, and that in general it is implied by a hope of Chai in arithmetic geometry. In this section, we make some general comments regarding $\Spc(\Sp_{k,n}^{\dual})$. 
\begin{rem}
  The following full subcategories were considered in the case $k = n$ by Hovey--Strickland \cite[Definition 12.14]{hovey_strickland99}. 
\end{rem}
\begin{defn}
  For $i \le n$, let $\cal D_{i}$ denote the category of spectra $X \in \Sp_{k,n}^{\dual}$ such that $X$ is a retract of $Y \wedge Z$ for some $Y \in \Sp_{k,n}^{\dual}$ and some finite spectrum $Z$ of type at least $i$. It is also useful to set $\cD_{n+1} = (0)$. 
\end{defn}
\begin{rem}
  We note that $\cD_k \simeq \Sp_{k,n}^{\omega}$; this is a consequence of the characterization of compact objects given in \Cref{thm:compact_objects}, and that $\cD_0 = \Sp_{k,n}^{\dual}$. 
\end{rem}
The following is \cite[Proposition 4.17]{hovey_strickland99}
\begin{lem}
  $X$ is in $\cD_k$ if and only if $X$ is a module over a generalized Moore spectrum of type $k$. Moreover, $\cD_k \subseteq \Sp_{k,n}^{\dual}$ is a thick tensor ideal. 
\end{lem}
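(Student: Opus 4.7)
This result, and its proof, essentially follow \cite[Proposition 4.17]{hovey_strickland99} adapted to the $E(n,J_k)$-local category; I will outline the argument. I begin with the biconditional in (a) and then use it for the more delicate part of (b).

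The reverse implication of (a) is formal: if $X$ is a module over a generalized Moore spectrum $M_k$ of type $k$, then the composite $X \xrightarrow{\eta \wedge 1} M_k \wedge X \to X$ of the unit smashed with $X$ and the module action is the identity, so $X$ is a retract of $M_k \wedge X$. Since $X \in \Sp_{k,n}^{\dual}$ is $L_{k,n}$-local and $M_k$ is finite, $M_k \wedge X$ is again $L_{k,n}$-local, and $X$ is a retract of $Y \wedge Z$ with $Y = X$ dualizable and $Z = M_k$ finite of type $k$, so $X \in \cD_k$. For the forward direction, suppose $X$ is a retract of $Y \wedge Z$ with $Y \in \Sp_{k,n}^{\dual}$ and $Z$ finite of type $\geq k$. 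The nilpotence theorem guarantees that $p, v_1, \ldots, v_{k-1}$ act nilpotently on $Z$, hence on $Y \wedge Z$; choose exponents $i_0, \ldots, i_{k-1}$ large enough that $p^{i_0}, v_1^{i_1}, \ldots, v_{k-1}^{i_{k-1}}$ are null on $Y \wedge Z$. The coherent product structures on generalized Moore spectra constructed in \cite{Strickl1999Products} then permit one to inductively assemble these null-homotopies into an $M_k$-module structure on $Y \wedge Z$, with $M_k = S^0/(p^{i_0}, \ldots, v_{k-1}^{i_{k-1}})$, and to transport that structure along the retract to obtain a module structure on $X$.

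For the thick tensor-ideal claim in (b), closure under retracts is built into the definition of $\cD_k$, and closure under suspension is immediate since $\Sigma(Y \wedge Z) = (\Sigma Y) \wedge Z$ with $\Sigma Y$ still dualizable. For finite wedges, if $X_i$ is a retract of $Y_i \wedge Z_i$, then $X_1 \vee X_2$ is a retract of $(Y_1 \vee Y_2) \wedge (Z_1 \vee Z_2)$, and both wedges remain in the relevant classes (dualizables are closed under wedges, and the wedge of finite type-$\geq k$ spectra is again finite of type $\geq k$). For the tensor-ideal property, let $W \in \Sp_{k,n}^{\dual}$ and $X$ a retract of $Y \wedge Z$; then $W \htimes X$ is a retract of $W \htimes (Y \wedge Z) \simeq (W \htimes Y) \wedge Z$, where the equivalence uses that $Z$ is finite (so smashing preserves $L_{k,n}$-locality) and that $W \htimes Y \in \Sp_{k,n}^{\dual}$, whence $W \htimes X \in \cD_k$. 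Finally, for closure under cofibres, I would appeal to (a) and the tower $\{M_k(j)\}$: given a cofibre sequence $X_1 \to X_2 \to X_3$ with $X_1, X_2 \in \cD_k$, place both endpoints in a common triangulated category of $M_k$-modules and conclude that the cofibre inherits a module structure, placing $X_3$ in $\cD_k$.

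The principal technical obstacle, inherited from \cite[Proposition 4.17]{hovey_strickland99}, is the passage from componentwise null-homotopies of $p^{i_0}, v_1^{i_1}, \ldots$ on $Y \wedge Z$ to a genuinely coherent $M_k$-module structure, together with its transfer across the retract to $X$. This is not formal at the level of the triangulated category, and is precisely where Strickland's product structures \cite{Strickl1999Products} are essential; once those are in hand, the remaining verifications are bookkeeping.
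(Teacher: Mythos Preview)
The paper does not prove this lemma at all: the sentence preceding it reads ``The following is \cite[Proposition 4.17]{hovey_strickland99}'', and that citation \emph{is} the proof. Your sketch therefore goes well beyond what the paper offers, and it is broadly in the spirit of the Hovey--Strickland argument. Two points deserve comment.

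First, your forward direction of (a) is phrased as ``$p, v_1, \ldots, v_{k-1}$ act nilpotently on $Z$'', but the $v_i$ are not elements of $\pi_*S^0$, so this needs unpacking: one must inductively produce an $S/p^{i_0}$-module structure, then use a $v_1$-self map on $S/p^{i_0}$ to get a $v_1$-action whose nilpotence (via the nilpotence theorem, since $K(1)_*Z=0$) yields an $S/(p^{i_0},v_1^{i_1})$-module structure, and so on. This is essentially what Hovey--Strickland do, but the order is reversed: they first establish that the class of modules over generalized Moore spectra of type $k$ is a thick ideal, and then (a) follows because any finite type $\ge k$ spectrum lies in the thick subcategory generated by such a Moore spectrum.

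Second, and more substantively, your cofiber argument has a genuine gap. You propose to ``place both endpoints in a common triangulated category of $M_k$-modules and conclude that the cofibre inherits a module structure''. But even after arranging that $X_1$ and $X_2$ are modules over the same $M_k$, the map $X_1 \to X_2$ is only a map of spectra and need not be an $M_k$-module map, so one cannot simply take its cofiber in any module category. The obstruction is a class in $[M_k \wedge X_1, X_2]$, and Hovey--Strickland's actual maneuver is to absorb it by passing to a Moore spectrum $M'$ with larger exponents along a ring map $M' \to M_k$; this is where the product structures of \cite{Strickl1999Products} and the tower of Moore spectra genuinely enter. You correctly flag the module construction as delicate, but the crux is the cofiber step rather than the transfer across retracts (the latter is in fact formal once one has a unital action).
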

Hovey and Strickland conjecture that in the case $k = n$ these exhaust the thick-tensor ideals of $\Sp_{K(n)}^{\dual}$. This has been investigated in detail in \cite{bhn}. We conjecture this holds more generally in $\Sp_{k,n}$. 
\begin{conj}\label{conj:gen_hovey_strickland}
 If $\cC$ is a thick tensor-ideal of $\Sp_{k,n}^{\dual}$, then $\cC = \cD_i$ for some $0 \le i \le n+1$. Equivalently, 
 \[
\Spc(\Sp_{k,n}^{\dual}) = \{ \cD_1,\ldots,\cD_{n+1}\}
 \]
 with topology determined by the closure operator $\overline{\{ \cD_i \}} = \{ \cD_j \mid j \ge i \}$. 
\end{conj}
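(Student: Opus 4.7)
The plan is to reduce \Cref{conj:gen_hovey_strickland} to the analogous conjecture in $\Sp_{K(n)}^{\dual}$ (the original Hovey--Strickland conjecture) via a descent argument. Since the $K(n)$-local statement itself is only fully known for $n \le 2$ (by \cite{bhn}), a complete proof hinges on that input; what I outline is the reduction, i.e., a proof that the $K(n)$-local case implies the general case. The central object is the tt-functor $L_{K(n)} \colon \Sp_{k,n}^{\dual} \to \Sp_{K(n)}^{\dual}$ and the induced continuous map $\phi \colon \Spc(\Sp_{K(n)}^{\dual}) \to \Spc(\Sp_{k,n}^{\dual})$ on Balmer spectra.

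First I would establish that $L_{K(n)}$ is conservative on dualizable objects. Given $X \in \Sp_{k,n}^{\dual}$ with $L_{K(n)}X \simeq 0$, one has $K(n) \wedge X \simeq 0$ and hence $K_n \wedge_{E_n} (E_n \htimes X) \simeq 0$. By \Cref{thm:dualspkn} and \Cref{prop:dualizable_local}, $E_n \htimes X$ is compact as an $E_n$-module, so $\pi_\ast(E_n \htimes X)$ is finitely generated over $(E_n)_\ast$; applying the classical Nakayama lemma over the complete local ring $(E_n)_0$ forces $E_n \htimes X \simeq 0$, and then conservativity of base change (\Cref{cor:conservative}) gives $X \simeq 0$. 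By general tt-theory, conservativity implies $\phi$ is surjective, so every prime in $\Spc(\Sp_{k,n}^{\dual})$ is of the form $\phi(\cP) = \{X \mid L_{K(n)}X \in \cP\}$ for some prime $\cP \subseteq \Sp_{K(n)}^{\dual}$.

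Next, assuming the $K(n)$-local conjecture, $\Spc(\Sp_{K(n)}^{\dual}) = \{\cD_1^{K(n)},\ldots,\cD_{n+1}^{K(n)}\}$, and I would verify that $\phi(\cD_i^{K(n)}) = \cD_i^{k,n}$. The inclusion $\cD_i^{k,n} \subseteq \phi(\cD_i^{K(n)})$ is immediate from symmetric monoidality: a retract of $Y \htimes Z$ with $Z$ finite of type $\ge i$ maps to a retract of $L_{K(n)}Y \wedge Z$. For the reverse inclusion -- the main obstacle -- one must show that if $L_{K(n)}X$ is a module over a generalized Moore spectrum of type $i$, then so is $X$ in $\Sp_{k,n}$. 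I would attack this by finding a Morava-module criterion for membership in $\cD_i^{k,n}$: by \Cref{thm:dualspkn}, $X \in \Sp_{k,n}^{\dual}$ is detected by its Morava module $(E_{k,n})^{\vee}_\ast X$, and after $K(n)$-localization this module is unchanged, so a condition of the form "$(E_{k,n})^{\vee}_\ast X$ is $I_i$-torsion" (or more precisely, admits a compatible action of $(E_n)_\ast/I_i^{N}$) is invariant under $\phi$. Realizing such a module-theoretic condition geometrically via the chromatic fracture square (\Cref{prop:pullback}) should recover the required Moore-module structure on $X$.

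Finally, distinctness of the $\cD_i^{k,n}$ is witnessed by $L_{k,n}F(i) \in \cD_i^{k,n} \setminus \cD_{i+1}^{k,n}$, detected by $K(i)$-homology as in \Cref{thm:compact_objects}. This, together with surjectivity of $\phi$ and the identification of its image, yields the claimed description of $\Spc(\Sp_{k,n}^{\dual})$ with its stated topology. The hardest part of the program is unquestionably the module-lifting step in paragraph three: translating a genuine Moore-module action from $L_{K(n)}X$ back to $X$ is where the extra chromatic layers below $n$ in $\Sp_{k,n}$ could obstruct a direct gluing, and it is exactly this point at which the argument of \cite{bhn} (in the $K(n)$-local setting) would have to be extended rather than merely invoked.
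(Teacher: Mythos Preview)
The statement is a conjecture, and both you and the paper attempt only the reduction ``$K(n)$-local case $\implies$ general case'' (the paper's \Cref{thm:hovey_strickland_conjecture}). Your strategy shares the first move with the paper---surjectivity of $\phi\colon\Spc(\Sp_{K(n)}^{\dual})\to\Spc(\Sp_{k,n}^{\dual})$---but diverges sharply thereafter, and there are two genuine problems.

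First, your surjectivity argument is incomplete. Conservativity of a tt-functor does \emph{not} imply surjectivity on Balmer spectra in general; the relevant criterion (Balmer, \cite{balmer_surjective}) is detection of \emph{tensor-nilpotence} of morphisms, which is strictly stronger. The paper proves this stronger property (\Cref{prop:tensor_nilpotence,prop:surjective_balmer}) by combining descendability of $E_n$ with the observation that $E_n\htimes X$ is already $K(n)$-local for dualizable $X$ (\Cref{lem:K(n)local}). Your Nakayama argument for conservativity is fine as far as it goes, but it does not yield what you need.

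Second, and more seriously, your ``module-lifting'' step---showing that if $L_{K(n)}X$ carries a Moore-module structure then so does $X$---is left as a sketch, and for good reason: it is not clear how to carry it out. The paper sidesteps this entirely. The key input is \cite[Proposition~3.5]{bhn}, which says that the $K(n)$-local conjecture is \emph{equivalent} to the composite $\Spc(\Sp_{n,n}^{\dual})\to\Spc(\Sp_{n-1,n}^{\dual})\to\cdots\to\Spc(\Sp_{0,n}^{\dual})$ being a homeomorphism. Granting this, each factor is injective (since the composite is) and surjective (by \Cref{prop:surjective_balmer}), hence a homeomorphism; iterating gives the result for every intermediate $k$. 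No explicit identification of preimages $\phi^{-1}(\cD_i)$ is ever needed. Your approach, by contrast, tries to compute these preimages directly, and this is precisely where it stalls.
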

In this section we show that if \Cref{conj:gen_hovey_strickland} holds $K(n)$-locally, i.e., for  $\Sp_{n,n}^{\dual}$, then it holds for all $\Sp^{\dual}_{k,n}$. We first recall the following definition.
\begin{defn}
  Suppose $F \colon \cal{K} \to \cal{L}$ is an exact tensor triangulated functor between tensor-triangulated categories. We say that $F$ detects tensor-nilpotence of morphisms if every morphism $f \colon X \to Y$ in $\cal{K}$ such that $F(f) = 0$ satisfies $f^{\otimes m} = 0$ for some $m \ge 1$. 
\end{defn}
We will use the following.
\begin{prop}\label{prop:tensor_nilpotence}
  Suppose $A \in \CAlg(\cal C)$ is descendable, then extension of scalars $\cal C \to \Mod_A(\cal C)$ detects tensor-nilpotence of morphisms. 
\end{prop}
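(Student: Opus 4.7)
The plan is to reduce $\otimes$-nilpotence of a morphism $f$ to a standard structural reformulation of descendability, namely that a sufficiently high tensor power of the fiber of the unit map $\unit \to A$ has nullhomotopic augmentation to $\unit$. With this reformulation in hand, the rest of the argument is a formal manipulation of the cofiber sequence associated to that fiber.

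First I would record the following reformulation: letting $\bar A$ denote the fiber of the unit map $\eta \colon \unit \to A$, so that there is a cofiber sequence $\bar A \to \unit \to A$, descendability of $A$ (equivalently, $\unit \in \thickt{A}$) is equivalent to the existence of an integer $N \ge 1$ such that the iterated smash
\[
\bar A^{\otimes N} \longrightarrow \unit
\]
is nullhomotopic in $\cC$. This reformulation is contained in (the discussion around) \cite[Proposition 3.20]{Mathew2016Galois}: informally, the Tot-filtration of the Amitsur cobar construction on $A$ has as its associated graded pieces the tensor powers $\bar A^{\otimes \bullet}$, and $\unit$ lying in the thick $\otimes$-ideal generated by $A$ forces a finite complexity on this tower.

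Next, suppose $f \colon X \to Y$ is a morphism in $\cC$ with $A \otimes f \simeq 0$ in $\Mod_A(\cC)$. The free-forget adjunction between $\cC$ and $\Mod_A(\cC)$ identifies morphisms $A \otimes X \to A \otimes Y$ in $\Mod_A(\cC)$ with morphisms $X \to A \otimes Y$ in $\cC$, under which $A \otimes f$ corresponds to the composite $X \xrightarrow{f} Y \xrightarrow{\eta_Y} A \otimes Y$. Hence the hypothesis translates into the nullity of this composite in $\cC$. Tensoring the fiber sequence $\bar A \to \unit \to A$ with $Y$ and applying $[X,-]$ to the resulting cofiber sequence $\bar A \otimes Y \to Y \to A \otimes Y$ then produces a (non-canonical) lift $\tilde f \colon X \to \bar A \otimes Y$ whose composite with the map $\bar A \otimes Y \to Y$ recovers $f$.

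Finally, I would smash $N$ copies of this factorization together to express $f^{\otimes N}$ as the composite
\[
X^{\otimes N} \xrightarrow{\tilde f^{\otimes N}} \bar A^{\otimes N} \otimes Y^{\otimes N} \longrightarrow Y^{\otimes N},
\]
in which the second arrow is the $N$-fold smash of $\bar A \to \unit$ tensored with the identity of $Y^{\otimes N}$. By the reformulation in the first paragraph this second arrow is nullhomotopic, so $f^{\otimes N} \simeq 0$, as desired. The main obstacle is the reformulation invoked in the first step; this is not tautological, but it is well-documented in \cite{Mathew2016Galois}, and everything after it reduces to exactness of cofiber sequences plus the compatibility of the tensor product with them.
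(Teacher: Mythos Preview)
Your proposal is correct and follows essentially the same approach as the paper: both arguments use the fiber $\bar A$ of $\unit \to A$, invoke the equivalence of descendability with the tensor-nilpotence of $\bar A \to \unit$ (the paper cites \cite[Proposition 4.7]{mnn_nilpotence} rather than \cite{Mathew2016Galois}, but the content is the same), factor $f$ through $\bar A \otimes Y \to Y$ using the cofiber sequence, and then take tensor powers. The paper additionally points to \cite{Balmer2016Separable} for the factorization step, but your direct argument via $[X,-]$ applied to the cofiber sequence is the same idea spelled out.
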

\begin{proof}
  Let $I$ denote the fiber of $\unit \xrightarrow{\eta} A$, and let $\xi \colon I \to \unit$ denote the induced map. By \cite[Proposition 4.7]{mnn_nilpotence} if $A$ is descendable, then there exists $m \ge 1$ such that $I^{\otimes m} \to \unit$ is null-homotopic, i.e., $\xi$ is tensor-nilpotent. We can now argue as in (ii) implies (iii) of \cite{Balmer2016Separable}: suppose we are given $f \colon X \to Y$ a morphism in $\cal C$ with $A \otimes f \colon A \otimes X \to A \otimes Y$ null-homotopic. Now consider the diagram of fiber sequences:
\[\begin{tikzcd}
  {I \otimes X} & X & {A \otimes X} \\
  {I \otimes Y} & Y & {A \otimes Y}
  \arrow["{\xi \otimes \text{id}_X}", from=1-1, to=1-2]
  \arrow["{\eta \otimes \text{id}_X}", from=1-2, to=1-3]
  \arrow["{\xi \otimes \text{id}_Y}"', from=2-1, to=2-2]
  \arrow["{\eta \otimes \text{id}_Y}"', from=2-2, to=2-3]
  \arrow["{\text{id}_I \otimes f}"', from=1-1, to=2-1]
  \arrow["f", from=1-2, to=2-2]
  \arrow["{\text{id}_A \otimes f}", from=1-3, to=2-3]
\end{tikzcd}\]
We see that $(\eta \otimes \text{id}_Y)f$ is null-homotopic, so $f$ factors through $\xi \otimes \text{id}_Y$, which is tensor-nilpotent. 
\end{proof}
The following is our key observation. 
\begin{prop}\label{prop:surjective_balmer}
  Let $i > k$, then the map induced by localization
  \[
\Spc(\Sp_{i,n}^{\dual}) \to \Spc(\Sp_{k,n}^{\dual})
  \]
  is surjective. 
\end{prop}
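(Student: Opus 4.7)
The plan is to invoke Balmer's surjectivity criterion (from \emph{Spectra, spectra, spectra}): a tensor-triangulated functor between essentially small tt-categories induces a surjection on Balmer spectra whenever it detects tensor-nilpotence of morphisms. Thus my goal reduces to showing that the symmetric monoidal functor $L_{i,n} \colon \Sp_{k,n}^{\dual} \to \Sp_{i,n}^{\dual}$ detects tensor-nilpotence: whenever $L_{i,n}(f) = 0$ for a morphism $f$ in $\Sp_{k,n}^{\dual}$, some smash power $f^{\htimes m}$ must vanish. Note that both sides are essentially small by \Cref{lem:essentially_small}, so their Balmer spectra are well-defined.

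My strategy for detection is to factor a known nilpotence-detector through $L_{i,n}$. I would consider the composite of symmetric monoidal functors
\[
\Sp_{k,n}^{\dual} \xrightarrow{L_{i,n}} \Sp_{i,n}^{\dual} \xrightarrow{E_n\,\htimes_{i,n}\,(-)} \Mod_{E_n}(\Sp_{i,n})^{\dual},
\]
and show that it is naturally equivalent to $X \mapsto E_n \htimes_{k,n} X$. For $X \in \Sp_{k,n}^{\dual}$, the remark following \Cref{prop:dualizable_local} identifies $E_n \htimes_{k,n} X \simeq L_{K(n)}(E_n \wedge X)$ as $K(n)$-local, hence already $K(i) \vee \cdots \vee K(n)$-local and so fixed by $L_{i,n}$. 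On the other hand, smashing with $E_n$ preserves $K(j)$-acyclicity for any $j$, because $K(j) \wedge E_n \wedge A \simeq E_n \wedge (K(j) \wedge A)$; consequently the unit map $X \to L_{i,n}X$ remains an $L_{i,n}$-equivalence after smashing with $E_n$. Combining these, I obtain $E_n \htimes_{i,n} L_{i,n}X \simeq L_{i,n}(E_n \wedge X) \simeq E_n \htimes_{k,n} X$, as desired.

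Once this identification is in hand, the conclusion is immediate: \Cref{thm:descendability_en} shows that $E_n$ is descendable in $\CAlg(\Sp_{k,n})$, so \Cref{prop:tensor_nilpotence} guarantees that the extension-of-scalars functor $E_n \htimes_{k,n}(-)$ detects tensor-nilpotence of morphisms. Since this detecting composite factors through $L_{i,n}$, the first factor $L_{i,n}$ must itself detect tensor-nilpotence, and Balmer's criterion delivers the surjectivity of $\Spc(\Sp_{i,n}^{\dual}) \to \Spc(\Sp_{k,n}^{\dual})$. I expect the main obstacle to be the verification that the composite functor agrees with $E_n \htimes_{k,n}(-)$: without the dualizability of $X$ the decisive $K(n)$-locality of the Morava module fails (cf.\ \Cref{prop:dualizable_local}), and the two functors would genuinely diverge.
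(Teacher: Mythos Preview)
Your proposal is correct and follows essentially the same route as the paper's proof: both invoke Balmer's surjectivity criterion, use descendability of $E_n$ in $\Sp_{k,n}$ (via \Cref{prop:tensor_nilpotence}) to obtain a nilpotence-detecting functor $E_n \htimes_{k,n}(-)$, and then exploit \Cref{lem:K(n)local} (equivalently the remark after \Cref{prop:dualizable_local}) to see that on dualizable objects this functor factors through $L_{i,n}$. The only difference is cosmetic: you phrase the argument as a factorization of functors, while the paper chases a single morphism $f$ and shows $L_{k,n}(E_n \wedge f) \simeq L_{i,n}(E_n \wedge L_{i,n}(f)) = 0$ directly.
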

\begin{proof}
  By \cite[Theorem 1.3]{balmer_surjective} it suffices to show that the functor $L_{i,n} \colon \Sp_{k,n}^{\dual} \to \Sp_{i,n}^{\dual}$ detects tensor-nilpotence of morphisms. To that end, let $f \colon X \to Y$ be a morphism in $\Sp_{k,n}^{\dual}$ with $L_{i,n}(f) = 0$, so that we must show $f^{\htimes m} = 0$ for some $m \ge 1$. Because $E_n \in \Sp_{k,n}$ is descendable, \Cref{prop:tensor_nilpotence} shows that 
  \[L_{k,n}(E_n \wedge -) \colon \Sp_{k,n} \to \Mod_{E_n}(\Sp_{k,n})
  \] detects tensor-nilpotence of morphisms, and hence so does its restriction to dualizable objects, i.e., $L_{k,n}(E_n \wedge f) = 0 \implies f^{\htimes m} = 0$ for some $m \ge 1$. In other words, it suffices to show that $L_{k,n}(E_n \wedge f)$ is trivial. By \Cref{lem:K(n)local} however, this is a morphism in $\Mod_{E_n}(\Sp_{n,n})$. In particular, $L_{k,n}(E_n \wedge f) \simeq L_{i,n}(E_n \wedge f) \simeq L_{i,n}(E_n \wedge L_{i,n}(f)) = 0$ because $L_{i,n}(f) = 0$ by assumption. 
\end{proof}
\begin{thm}\label{thm:hovey_strickland_conjecture}
  Suppose \Cref{conj:gen_hovey_strickland} holds for $\Sp_{n,n}^{\dual}$, then it holds for all $\Sp_{k,n}^{\dual}$. 
\end{thm}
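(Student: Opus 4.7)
The plan is to combine \Cref{prop:surjective_balmer} (applied with $i = n$) with the hypothesis and Balmer's classification of thick tensor-ideals to pin down $\Spc(\Sp_{k,n}^{\dual})$ and identify its primes with the $\cD_i^{(k,n)}$. First, \Cref{prop:surjective_balmer} supplies a continuous, order-preserving surjection
\[
\phi \colon \Spc(\Sp_{n,n}^{\dual}) \twoheadrightarrow \Spc(\Sp_{k,n}^{\dual})
\]
induced by $L_{K(n)} \colon \Sp_{k,n}^{\dual} \to \Sp_{n,n}^{\dual}$. The hypothesis identifies the source as a chain of $n+1$ primes $\cD_1^{(n,n)} \supsetneq \cdots \supsetneq \cD_{n+1}^{(n,n)}$, so $\Spc(\Sp_{k,n}^{\dual})$ has at most $n+1$ points.

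Next I would verify $\phi$ is a bijection by showing the images $\phi(\cD_i^{(n,n)}) = L_{K(n)}^{-1}(\cD_i^{(n,n)})$ are pairwise distinct. Note $\cD_i^{(k,n)} \subseteq \phi(\cD_i^{(n,n)})$: if $X$ is a retract of $Y \htimes Z$ with $Z$ of type at least $i$, then $L_{K(n)}X$ is a retract of $L_{K(n)}Y \htimes_{n,n} Z \in \cD_i^{(n,n)}$. Taking $X_i := L_{k,n}F(i)$ for $F(i)$ a finite type-$i$ spectrum, we have $X_i$ dualizable by \Cref{thm:dualspkn} and $L_{K(n)}X_i \in \cD_i^{(n,n)} \setminus \cD_{i+1}^{(n,n)}$, since $K(i)_*F(i) \ne 0$. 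Thus $X_i \in \phi(\cD_i^{(n,n)})$ but $X_i \not\in \phi(\cD_{i_2}^{(n,n)})$ for every $i_2 > i$, separating the primes. Being a continuous, order-preserving bijection between finite $T_0$ chains, $\phi$ is a homeomorphism; hence $\Spc(\Sp_{k,n}^{\dual})$ is itself a chain of $n+1$ primes with the advertised closure operator.

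Finally, to identify the abstract primes $\cP_i := \phi(\cD_i^{(n,n)})$ with $\cD_i^{(k,n)}$, I would exhibit the chain of $n+2$ distinct thick tensor-ideals
\[
\Sp_{k,n}^{\dual} = \cD_0^{(k,n)} \supsetneq \cD_1^{(k,n)} \supsetneq \cdots \supsetneq \cD_{n+1}^{(k,n)} = 0,
\]
whose strict inclusions are again witnessed by the $X_i \in \cD_i^{(k,n)} \setminus \cD_{i+1}^{(k,n)}$. Under Balmer's bijection with Thomason subsets of the $(n+1)$-element chain—of which there are exactly $n+2$—this list must exhaust the radical thick tensor-ideals. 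Matching the two chains of length $n+2$ by inclusion then pins down $\cD_i^{(k,n)} = \cP_i$ for $1 \le i \le n+1$, completing the proof.

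The main obstacle is the final matching step: Balmer's bijection nominally classifies only the \emph{radical} thick tensor-ideals, so one must ensure each $\cD_i^{(k,n)}$ is radical. I expect this to follow either by a direct tensor-nilpotence check using the field property of each $K(j)_*$, or more robustly by establishing the reverse containment $\cP_i \subseteq \cD_i^{(k,n)}$ on the nose—lifting the module-over-a-type-$i$-Moore-spectrum characterization of $\cD_i^{(n,n)}$ (compare the argument in \Cref{thm:compact_objects}) to $\Sp_{k,n}^{\dual}$ via descendability of $E_n$ (\Cref{thm:descendability_en}) together with conservativity of the Morava module functor (\Cref{cor:conservative}).
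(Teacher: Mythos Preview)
Your approach is correct and takes a genuinely different route from the paper's. The paper invokes \cite[Proposition~3.5]{bhn}, which says the conjecture for $\Sp_{n,n}^{\dual}$ is \emph{equivalent} to $\Spc(\Sp_{n,n}^{\dual}) \to \Spc(\Sp_{0,n}^{\dual})$ being a homeomorphism; it then factors this composite through the intermediate $\Spc(\Sp_{i,n}^{\dual})$ and uses \Cref{prop:surjective_balmer} step-by-step to peel off one category at a time. You instead apply \Cref{prop:surjective_balmer} once with $i=n$, prove injectivity of $\phi$ directly by exhibiting separating elements $L_{k,n}F(i)$, and close with a counting argument against Balmer's classification. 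Your argument is more self-contained (it does not need the black-box equivalence from \cite{bhn}), at the cost of the explicit identification step at the end.

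Two comments on details. First, the obstacle you flag is not one: every object of $\Sp_{k,n}^{\dual}$ is dualizable by definition, so the category is rigid, and in a rigid tensor-triangulated category every thick tensor-ideal is automatically radical (each $X$ is a retract of $X \htimes X \htimes DX$ via the triangle identities; see \cite[Proposition~2.4 and Remark~4.3]{balmer_prime}). Hence Balmer's bijection applies to all thick tensor-ideals and your counting argument finishes cleanly. Second, the justification ``since $K(i)_*F(i)\ne 0$'' for $L_{K(n)}F(i)\notin \cD_{i+1}^{(n,n)}$ is not quite right: the monoidal product in $\Sp_{K(n)}$ involves a further $K(n)$-localization, so $K(i)_*$ need not vanish on $\cD_{i+1}^{(n,n)}$ for $i<n$. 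The clean argument is that $\cD_i^{(n,n)} = \thickt{L_{K(n)}F(i)}$, so $L_{K(n)}F(i)\in \cD_{i+1}^{(n,n)}$ would force $\cD_i^{(n,n)}\subseteq \cD_{i+1}^{(n,n)}$, contradicting the strictness of the chain (which is part of your hypothesis).
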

\begin{proof}
  By \cite[Proposition 3.5]{bhn} \Cref{conj:gen_hovey_strickland} holds for $\Sp_{n,n}^{\dual}$ if and only if $L_{n,n} \colon \Sp_{0,n}^{\dual} \to \Sp_{n,n}^{\dual}$ induces a homeomorphism on Balmer spectra. In other words, the composite, induced by the localization maps,
  \[
\Spc(\Sp_{n,n}^{\dual}) \to \Spc(\Sp_{n-1,n}^{\dual}) \to \cdots \to \Spc(\Sp_{0,n}^{\dual})
  \] 
  is a homeomorphism. It follows that $\Spc(L_{n-1,n}) \colon \Spc(\Sp_{n,n}^{\dual}) \to \Spc(\Sp_{n-1,n}^{\dual})$ is an injection, and hence a bijection by \Cref{prop:surjective_balmer}. Using that $\Spc(L_{n-1,n})$ is continuous and the topologies on each space, we see that it is fact a homeomorphism. It follows that $\Spc(\Sp_{n-1,n}^{\dual}) \to \Spc(\Sp_{0,n}^{\dual})$ is a homeomorphism, and we can now repeat the argument. 
\end{proof}
By \cite[Theorem 4.15]{bhn} \Cref{conj:gen_hovey_strickland} holds for $\Sp_{2,2}^{\dual}$. Along with \Cref{thm:hovey_strickland_conjecture} we deduce the following. 
\begin{cor}\label{cor:height_2_hovey_strickland}
  The Balmer spectrum $\Spc(\Sp_{1,2}^{\dual}) = \{ \cal{D}_1,\cal{D}_2,\cal{D}_3 \} $ with $(0) = \cal{D}_3 \subsetneq \cal{D}_2 \subsetneq \cal{D}_1 = \Sp_{1,2}^{\omega}$. In particular, if $\cC$ is a thick tensor-ideal of $\Sp_{1,2}^{\dual}$, then $\cC = \cal{D}_k$ for $0 \le k \le 3$. 
\end{cor}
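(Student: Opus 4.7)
The plan is to derive this as a direct consequence of \Cref{thm:hovey_strickland_conjecture} applied at $n=2$, using the $K(2)$-local input from \cite{bhn}. First I would observe that \cite[Theorem 4.15]{bhn} establishes \Cref{conj:gen_hovey_strickland} in the case $k=n=2$, i.e.\ it provides the computation of $\Spc(\Sp_{2,2}^{\dual})$ as the chain $\cD_1 \supsetneq \cD_2 \supsetneq \cD_3 = (0)$. This is precisely the hypothesis needed to feed into \Cref{thm:hovey_strickland_conjecture}.

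Next I would apply \Cref{thm:hovey_strickland_conjecture} with $n=2$, $k=1$. The theorem asserts that if \Cref{conj:gen_hovey_strickland} holds $K(n)$-locally, then it holds in every intermediate category $\Sp_{k,n}^{\dual}$; specializing to the pair $(k,n)=(1,2)$ this yields that the thick tensor-ideals of $\Sp_{1,2}^{\dual}$ are exactly the categories $\cD_i$ for $0 \le i \le 3$, and equivalently that $\Spc(\Sp_{1,2}^{\dual}) = \{\cD_1,\cD_2,\cD_3\}$ with closure operator $\overline{\{\cD_i\}} = \{\cD_j \mid j \ge i\}$.

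To finish, I would record the boundary identifications built into the definitions: by convention $\cD_3 = (0)$, and by the remark immediately following the definition of $\cD_i$ (using \Cref{thm:compact_objects}) we have $\cD_1 = \Sp_{1,2}^{\omega}$ in the case $k=1$. The strict inclusions $\cD_3 \subsetneq \cD_2 \subsetneq \cD_1$ are witnessed by test objects of the appropriate types: a generalized Moore spectrum of type $2$ (which lies in $\cD_2$ but not $\cD_3$) and the unit $L_{1,2}S^0$ (which lies in $\cD_1$ but not $\cD_2$, since its $K(1)$-homology is non-zero while any object in $\cD_2$ is $K(1)$-acyclic).

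There is essentially no obstacle here; the corollary is purely a matter of bookkeeping once \Cref{thm:hovey_strickland_conjecture} and the $K(2)$-local result of \cite{bhn} are in hand. The only point that warrants explicit verification is that the strict inclusions between the $\cD_i$ do not collapse, which is handled by the support-type calculation above.
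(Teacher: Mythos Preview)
Your approach is essentially identical to the paper's: invoke \cite[Theorem 4.15]{bhn} for the $K(2)$-local case and feed it into \Cref{thm:hovey_strickland_conjecture}. The paper's proof is a one-line deduction from exactly these two ingredients.

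There is one slip in your verification of the strict inclusions. You claim the unit $L_{1,2}S^0$ lies in $\cD_1$, but $\cD_1 = \Sp_{1,2}^{\omega}$ consists of the \emph{compact} objects, and $L_{1,2}S^0$ is not compact by \Cref{lem:non_compact_unit}. A correct witness for $\cD_1 \setminus \cD_2$ would be $L_2F(1) \simeq L_{1,2}F(1)$ for a finite type~$1$ spectrum $F(1)$: it is compact by \Cref{thm:compact_objects} and has nonzero $K(1)$-homology. That said, this extra verification is not really needed: once \Cref{conj:gen_hovey_strickland} holds for $\Sp_{1,2}^{\dual}$, the $\cD_i$ are distinct prime ideals of the Balmer spectrum, so the inclusions are automatically strict.
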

\section{The Picard group of the \texorpdfstring{$\Sp_{k,n}$}{Spk,n}-local category}\label{sec:picard}
In this section we study invertible objects in the $\Sp_{k,n}$-local category. We show that invertibility of an object can be detected by its Morava module. We construct a spectral sequence computing the homotopy groups of the Picard spectrum of $\Sp_{k,n}$ and use this to show that if $p$ is large compared to $n$, then the Picard group of $\Sp_{k,n}$ is entirely algebraic, in a sense we make precise.  
\subsection{Invertible objects and Picard spectra}
We recall that if $\cC$ is a symmetric monoidal category we denote by $\Pic(\cC)$ the group of isomorphism classes of invertible objects; a priori this could be a proper class, but if $\cC$ is a presentable stable $\infty$-category (which it will always be in our cases), then it is actually a set \cite[Remark 2.1.4]{mathew_pic}. 

The following standard lemma will be useful for us. Here we write $D_{\cC}(X)$ for the dual of an object in a category $\cC$, i.e., $D_{\cC}(X) = F(X,\unit)$. Note that an invertible object is always dualizable \cite[Proposition A.2.8]{HoveyPalmieriStrickl1997Axiomatic}. 
\begin{lem}\label{lem:invertible_conservative}
  Let $F \colon \cC \to \cD$ be a symmetric-monoidal conservative functor between stable $\infty$-categories, then $X \in \cC$ is invertible if and only if $F(X) \in \cD$ is invertible. 
\end{lem}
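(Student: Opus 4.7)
The plan is to separate into the two implications. The forward direction is formal and uses only that $F$ is symmetric monoidal: applying $F$ to an equivalence $X \otimes X^{-1} \simeq \unit_{\cC}$ gives $F(X) \otimes F(X^{-1}) \simeq F(\unit_{\cC}) \simeq \unit_{\cD}$, so $F(X)$ is invertible with inverse $F(X^{-1})$. Conservativity plays no role here.

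For the converse, suppose $F(X)$ is invertible and hence dualizable in $\cD$, with $D_{\cD}(F(X)) \simeq F(X)^{-1}$. Recall from \Cref{rem:dual_homotopy_category} that $X \in \cC$ is invertible precisely when $X$ is dualizable and the canonical evaluation $\epsilon_X \colon X \otimes D_{\cC}(X) \to \unit_{\cC}$ is an equivalence, where $D_{\cC}(X) \coloneqq \iHom_{\cC}(X,\unit_{\cC})$. The plan proceeds in two steps. The first step is to show that $X$ is dualizable in $\cC$: a symmetric monoidal functor tautologically preserves duality data, so the dualizability datum for $F(X)$, combined with the conservativity of $F$, transfers back to a dualizability datum for $X$. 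The second step is, assuming $X$ is dualizable, to note that $F$ sends $\epsilon_X$ to $\epsilon_{F(X)}$; the latter is an equivalence because $F(X)$ is invertible, and conservativity of $F$ then forces $\epsilon_X$ itself to be an equivalence, whence $X$ is invertible with inverse $D_{\cC}(X)$.

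The main obstacle will be the first step, namely transferring dualizability back through $F$. The subtlety is that the natural comparison map $F(D_{\cC}(X)) \to D_{\cD}(F(X))$ is not automatically an equivalence for an arbitrary symmetric monoidal functor, so one must argue that dualizability itself is a property detected after $F$. In the applications appearing later in the paper, $F$ is the extension of scalars $E_n \htimes (-) \colon \Sp_{k,n} \to \Mod_{E_n}(\Sp_{k,n})$, which is conservative by \Cref{cor:conservative} and for which the needed dualizability descent is already captured in \Cref{prop:dualiable_descent} and \Cref{prop:dualiable_descent_en}; the obstacle is thereby resolved by reducing to that setup. More abstractly, one can work inside any closed symmetric monoidal presentable stable $\infty$-category by rephrasing dualizability as the internal comparison $D_{\cC}(X) \otimes Y \to \iHom_{\cC}(X,Y)$ being an equivalence for every $Y$, a condition which survives a conservative symmetric monoidal functor preserving tensor products.
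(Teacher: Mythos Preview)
Your plan and the paper's proof follow essentially the same route: characterize invertibility by the evaluation map $X \otimes D_{\cC}(X) \to \unit_{\cC}$ and then invoke conservativity. The paper does not separate out a preliminary ``show $X$ is dualizable'' step; it simply asserts that $F(D_{\cC}(X)) \simeq D_{\cD}(F(X))$ ``as symmetric-monoidal functors preserve dualizable objects'' and concludes. But that identification is only valid once $X$ is already known to be dualizable in $\cC$, so the paper's argument has exactly the gap you flagged. You are right that the issue is genuine: for a bare conservative symmetric monoidal functor there is no reason for the comparison map $F(D_{\cC}(X)) \to D_{\cD}(F(X))$ to be an equivalence, and the two-step argument you sketch in your first paragraph of the converse (``duality data transfers back by conservativity'') does not work as stated---conservativity lets you detect when a map \emph{in the source} is an equivalence, but it does not lift an object or a duality datum from $\cD$ back to $\cC$.

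Your proposed abstract fix, rephrasing dualizability as the equivalence $D_{\cC}(X)\otimes Y \to \iHom_{\cC}(X,Y)$ for all $Y$, has the same circularity: to compare $F$ applied to this map with the corresponding map for $F(X)$ you would again need $F$ to commute with the internal hom $\iHom_{\cC}(X,-)$, which is precisely what is at stake. What does work is your observation about the actual application: in \Cref{thm:picskn} the functor is $E_n \htimes (-)\colon \Sp_{k,n} \to \Mod_{E_n}(\Sp_{k,n})$, and \Cref{prop:dualiable_descent_en} (a consequence of descendability) independently shows that this functor detects dualizability. With that in hand, both your argument and the paper's go through. So your analysis is sharper than the paper's on this point; the lemma as stated is really being used only in the descendable setting, where the missing dualizability step is supplied externally.
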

\begin{proof}
  We first note that $X$ is invertible if and only if the natural morphism $X \otimes_{\cC} D_{\cC}(X) \to \mathbf{1}_{\cC}$ is an equivalence, see \cite[Proposition A.2.8]{HoveyPalmieriStrickl1997Axiomatic}. Because $F$ is assumed to be symmetric monoidal and conservative, this is an equivalence if and only if it is so after applying $F$, i.e., if and only if $F(X) \otimes_{\cD} F(D_{\cC}(X)) \to \mathbf{1}_{\cD}$ is an equivalence. But $F(D_{\cC}(X)) \simeq D_{\cD}(F(X))$, as symmetric-monoidal functors preserve dualizable objects, and the result follows. 
  \end{proof}
\begin{rem}\label{rem:pic_spectrum_limits}
  To our symmetric monoidal category $\cC$ we can instead associate the Picard spectrum $\pics(\cC)$ \cite[Definition 2.2.1]{mathew_pic}; this is a connective spectrum with the property that 
  \[
\pi_i(\pics(\cC)) = \begin{cases}
\Pic(\cC) & i = 0 \\
(\pi_0(\End_{\cC}(\unit))^{\times} & i = 1 \\
\pi_{i-1}(\End_{\cC}(\unit)) & i > 1.
\end{cases}
  \]
  The key advantage of using the Picard spectrum is that, as a functor from the $\infty$-category  of symmetric monoidal $\infty$-categories to the $\infty$-category of connective spectra, $\pics$ commutes with limits \cite[Proposition 2.2.3]{mathew_pic}.
\end{rem}
\begin{ex}\label{ex:localized_pic}
  Let $\cC$ be a category, $A \in \CAlg(\cC)$. Then the Picard spectrum of the category $\Mod_{A}(\cC)$ of $A$-modules internal to $\cC$ satisfies 
    \[
\pi_i(\pics(\Mod_A(\cC)) = \begin{cases}
\Pic(\Mod_A(\cC)) & i = 0 \\
(\pi_0(\Hom_{\cC}(\unit_{\cC},A))^{\times} & i = 1 \\
\pi_{i-1}(\Hom_{\cC}(\unit_{\cC},A)) & i > 1.
\end{cases}
  \]
  This follows because $A$ is the tensor unit in $\Mod_{A}(\cC)$. Indeed, writing $F \colon \cC \to \Mod_{\cC}(A)$ for the extension of scalars functor (so that $A \simeq F(\unit_{\cC})$), we have
  \[
  \begin{split}
\End_{\Mod_{A}(\cC)}(A)&= \Hom_{\Mod_{A}(\cC)}(F(\unit_{\cC}),A)\\
&\simeq \Hom_{\cC}(\unit_{\cC},A)
\end{split}
  \]
  by adjunction. 
\end{ex}
\subsection{Invertible objects in the \texorpdfstring{$\Sp_{k,n}$}{Spk,n}-local category}
Our main group of interest is the Picard group of $K(k) \vee \cdots \vee K(n)$-local spectra. 
\begin{defn}
  Let $\Pic_{k,n} = \Pic(\Sp_{k,n})$, the group of invertible $K(k) \vee \cdots \vee K(n)$-local spectra. 
\end{defn}
\begin{rem}\label{rem:pic_morphisms}
  By \cite[Lemma 2.2]{shimomura_picard} the localization functors induce natural morphisms $\Pic_{0,n} \to \Pic_{1,n} \to \cdots \to \Pic_{n,n}$.
\end{rem}

\begin{rem}
  The morphism $X \mapsto E_n \htimes X$ induces a functor $\Pic_{k,n} \to \Pic(\Mod_{E_n}(\Sp_{k,n}))$. We can fully understand the latter Picard group. 
\end{rem}
\begin{lem}\label{lem:pic_localized}
  For all $0 \le k \le n$ we have $\Pic(\Mod_{E_n}(\Sp_{k,n})) \cong \Pic(\Mod_{E_n}) \cong \Pic(E_{n_*}) \cong \bZ/2$. 
\end{lem}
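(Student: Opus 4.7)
The plan is to verify the three claimed isomorphisms in turn, descending from the homotopical to the purely algebraic setting.

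For the first isomorphism $\Pic(\Mod_{E_n}(\Sp_{k,n})) \cong \Pic(\Mod_{E_n})$, I will argue that invertible objects in the two categories coincide as underlying objects, and then that the two symmetric monoidal structures agree on them. Any invertible object is dualizable, so by \Cref{prop:dualizable_local} every element of $\Pic(\Mod_{E_n}(\Sp_{k,n}))$ is compact in $\Mod_{E_n}(\Sp)$ with $K(n)$-local underlying spectrum. Given such an $X$ with inverse $Y$ in $\Mod_{E_n}(\Sp_{k,n})$, both $X$ and $Y$ are compact $E_n$-modules, and since compact $E_n$-modules are closed under $\wedge_{E_n}$, the product $X \wedge_{E_n} Y$ is compact in $\Mod_{E_n}$ and hence $K(n)$-local (again by \Cref{prop:dualizable_local}). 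Therefore $L_{k,n}(X \wedge_{E_n} Y) \simeq X \wedge_{E_n} Y$, and from $L_{k,n}(X \wedge_{E_n} Y) \simeq E_n$ we conclude $X \wedge_{E_n} Y \simeq E_n$ in $\Mod_{E_n}$, giving the inclusion $\Pic(\Mod_{E_n}(\Sp_{k,n})) \hookrightarrow \Pic(\Mod_{E_n})$. The reverse inclusion follows because each $\Sigma^i E_n \in \Pic(\Mod_{E_n})$ is already $K(n)$-local, hence $E(n,J_k)$-local, and is still invertible after the symmetric monoidal localization $L_{k,n}$.

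For the second isomorphism $\Pic(\Mod_{E_n}) \cong \Pic((E_n)_*)$, I will invoke the standard recognition theorem for invertible modules over even-periodic $E_\infty$-rings (Baker--Richter, as formulated e.g.\ in Mathew--Stojanoska): since $E_n$ is an even-periodic $E_\infty$-ring whose $\pi_0$ is a regular Noetherian local ring, the functor $\pi_* \colon \Pic(\Mod_{E_n}) \to \Pic^{\mathrm{gr}}((E_n)_*)$ is an isomorphism. The point is that any invertible $E_n$-module has free homotopy of rank one in a single degree modulo $2$, which is realized by a suspension of $E_n$ twisted by a line bundle on $\pi_0 E_n$.

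For the third isomorphism, I will use the $2$-periodicity $(E_n)_* \cong (E_n)_0[u^{\pm 1}]$ (with $|u|=2$) together with regularity of $(E_n)_0 = W(\mathbb{F}_{p^n})[\![u_1,\ldots,u_{n-1}]\!]$. Graded invertible modules over a $2$-periodic graded ring fit into a split short exact sequence $0 \to \Pic((E_n)_0) \to \Pic^{\mathrm{gr}}((E_n)_*) \to \mathbb{Z}/2 \to 0$, where the quotient $\mathbb{Z}/2$ records the parity of the degree in which the module is concentrated (generated by the shift $\Sigma E_n$). Since $(E_n)_0$ is regular local Noetherian, $\Pic((E_n)_0) = 0$, so $\Pic^{\mathrm{gr}}((E_n)_*) \cong \mathbb{Z}/2$.

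The main obstacle is the first step: a priori, invertibility in $\Mod_{E_n}(\Sp_{k,n})$ is weaker than in $\Mod_{E_n}$, because inverses need only exist after the extra Bousfield localization $L_{k,n}$. The argument hinges decisively on \Cref{prop:dualizable_local}, which packages together dualizability in $\Mod_{E_n}(\Sp_{k,n})$, compactness in $\Mod_{E_n}$, and $K(n)$-locality; without this triple identification one would have to track the localization $L_{k,n}$ through the tensor product and the inverse pairing by hand.
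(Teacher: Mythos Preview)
Your proof is correct and follows essentially the same approach as the paper: both arguments reduce the first isomorphism to \Cref{prop:dualizable_local} (invertible implies dualizable in $\Mod_{E_n}(\Sp_{k,n})$, hence compact in $\Mod_{E_n}$), and the paper simply cites Baker--Richter for the remaining two isomorphisms where you spell them out. Your treatment is slightly more careful in verifying that the localized and unlocalized tensor products agree on compact objects, a step the paper leaves implicit.
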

\begin{proof}
We always have $\Pic(\Mod_{E_n}) \subseteq \Pic(\Mod_{E_n}(\Sp_{k,n}))$ because any invertible $E_n$-module is compact, and hence $E(n,J_k)$-local. The other inclusion follows if any $M \in \Pic(\Mod_{E_n}(\Sp_{k,n}))$ is compact as an $E_n$-module. Such an $M$ is automatically dualizable in $\Mod_{E_n}(\Sp_{k,n})$, and hence compact in $\Mod_{E_n}$ by \Cref{prop:dualizable_local}. This gives the first of the above isomorphisms, and the others hold by work of Baker and Richter \cite{baker_richter}. 
\end{proof}

We now give criteria for when $X \in \Pic_{k,n}$ is invertible. This (partially) extends work of Hopkins--Mahowald--Sadofsky \cite{hms_pic}, who considered the case $k = n$.
\begin{thm}\label{thm:picskn} Let $X \in \Sp_{k,n}$, then the following are equivalent. 
\begin{enumerate}
  \item $X \in \Pic_{k,n}$.
  \item $E_n \htimes X \in \Pic(\Mod_{E_n}(\Sp_{k,n}))$. 
  \item $E_n \htimes X \in \Pic(\Mod_{E_n})$
  \item $(E_{k,n})^\vee_*X \cong (E_n)_*$, up to suspension. 
\end{enumerate}
  \begin{proof}
    The equivalence of (1) and (2) follows from \Cref{cor:conservative,lem:invertible_conservative}, while the equivalence of (2) and (3) follows from \Cref{lem:pic_localized}, which also shows that (3) implies (4). Finally, to see that (4) implies (3), we note that if $M$ is any $E_n$-module whose homotopy groups are free of rank one over $(E_n)_*$, then $M$ is equivalent to a suspension of $E_n$ (for the elementary proof, see \cite[Proposition 2.2]{HeardStojanoska2014theory}). Thus, (4) implies that $E_n \htimes X \simeq E_n$, up to suspension, and hence (3) holds. 
  \end{proof}
\end{thm}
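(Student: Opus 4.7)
The plan is to use a chain of implications $(1) \Leftrightarrow (2) \Leftrightarrow (3) \Leftrightarrow (4)$, exploiting the descent machinery already developed in the paper together with the known structure of $\Pic$ of $E_n$-modules.

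For $(1) \Leftrightarrow (2)$, I would invoke \Cref{cor:conservative}, which says that $E_n \htimes (-) \colon \Sp_{k,n} \to \Mod_{E_n}(\Sp_{k,n})$ is conservative, combined with \Cref{lem:invertible_conservative}, the general fact that a symmetric-monoidal conservative functor between stable $\infty$-categories detects invertibility. Note that $E_n \htimes (-)$ is symmetric monoidal since base change is, so both hypotheses of \Cref{lem:invertible_conservative} are satisfied directly.

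For $(2) \Leftrightarrow (3)$, I would simply cite \Cref{lem:pic_localized}, which identifies $\Pic(\Mod_{E_n}(\Sp_{k,n}))$ with $\Pic(\Mod_{E_n})$ (both being $\bZ/2$). Here the point is that any invertible object on either side is automatically dualizable, hence compact as an $E_n$-module by \Cref{prop:dualizable_local}, so the two Picard groups coincide regardless of $k$.

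For $(3) \Leftrightarrow (4)$, the direction $(3) \Rightarrow (4)$ is immediate: \Cref{lem:pic_localized} together with the Baker--Richter identification $\Pic(\Mod_{E_n}) \cong \Pic((E_n)_*) \cong \bZ/2$ shows that any invertible $E_n$-module is a suspension of $E_n$, so its homotopy is $(E_n)_*$ up to suspension, and $\pi_*(E_n \htimes X) = (E_{k,n})^{\vee}_*(X)$ by definition. The converse $(4) \Rightarrow (3)$ is the step I expect to be the main obstacle in the sense that it is the only one requiring a nontrivial realization statement: one needs to know that if an $E_n$-module $M$ has $\pi_*M$ free of rank one over $(E_n)_*$, then $M$ is a suspension of $E_n$. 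This is the standard fact that Lubin--Tate $E$-theory modules are rigidified by their homotopy in the free case; I would cite \cite[Proposition 2.2]{HeardStojanoska2014theory} (or prove it directly by choosing a generator of $\pi_*M$, which yields a map $\Sigma^d E_n \to M$ that is an equivalence on homotopy, hence an equivalence). Applied to $M = E_n \htimes X$ this closes the loop back to (3).
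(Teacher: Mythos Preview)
Your proposal is correct and follows essentially the same approach as the paper: the same chain of equivalences, the same appeals to \Cref{cor:conservative}, \Cref{lem:invertible_conservative}, and \Cref{lem:pic_localized}, and the same citation of \cite[Proposition 2.2]{HeardStojanoska2014theory} for the realization step $(4) \Rightarrow (3)$.
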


\begin{rem}
  When $n = 1$, there are two possibilities, the $K(1)$ and $E(1)$-local Picard groups, both of which are known. We record the results here:
  \[
\Pic_{0,1} = \begin{cases}
  \bZ \oplus \bZ/2 & p =2 \\
  \bZ  & p >2.
\end{cases}
\quad 
\quad 
\Pic_{1,1} = \begin{cases}
  \bZ_2 \oplus \bZ/4 \oplus \bZ/2 & p =2 \\
  \bZ_p \oplus \bZ/(p-1) \oplus \bZ/2 & p >2.
\end{cases}
  \]
  These are due to Hovey--Sadofsky \cite{HoveySadofsky1999Invertible} and Hopkins--Mahowald--Sadofsky \cite{hms_pic} respectively. 
  
  When $n = 2$, we have three possibilities, the $K(2)$, $K(1) \vee K(2)$, and $E(2)$-local Picard groups. The first and last are known for $p > 2$:
  \[
\Pic_{0,2} = \begin{cases}
  \bZ \oplus \bZ/3 \oplus \bZ/3 & p =3 \\
  \bZ  & p >3.
\end{cases}
\quad 
\quad 
\Pic_{2,2} = \begin{cases}
  \bZ_3 \oplus \bZ_3 \oplus \bZ/16 \oplus \bZ/3 \oplus \bZ/3 & p =3 \\
  \bZ_p \oplus \bZ_p \oplus \bZ/(2(p^2-1)) & p >3.
\end{cases}
  \]
These are due to a combination of authors: Hovey and Sadofsky \cite{HoveySadofsky1999Invertible}, Lader \cite{lader2013resolution}, Goerss--Henn--Mahowald--Rezk \cite{GoerssHennMahowaldRezk2015Hopkins}, Karamanov \cite{karamanov}, and Hopkins (unpublished). 

  This leaves the remaining case of $\Pic_{1,1}$. We note the following. 
\end{rem}
\begin{prop}
  Suppose $p \ge 3$, then the morphism $\Pic_{0,2} \to \Pic_{1,1}$ of \Cref{rem:pic_morphisms} is an injection. 
\end{prop}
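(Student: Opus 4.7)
The morphism $\Pic_{0,2} \to \Pic_{1,1}$ sends an invertible $X \in \Sp_{0,2}$ to $L_{K(1)}X = L_{1,1}X$. Since $\langle K(1) \rangle \le \langle E(1) \rangle$ we have $L_{K(1)} \simeq L_{K(1)} L_1$, so the map factors as
\[
\Pic_{0,2} \xrightarrow{\;L_1\;} \Pic_{0,1} \xrightarrow{\;L_{K(1)}\;} \Pic_{1,1}.
\]
The plan is to establish injectivity of each factor for $p \ge 3$.

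For the second factor, the preceding remark records that $\Pic_{0,1} \cong \bZ$ generated by $L_1 S^1$ (Hovey--Sadofsky), while $\Pic_{1,1} \cong \bZ_p \oplus \bZ/(p-1) \oplus \bZ/2$ (Hopkins--Mahowald--Sadofsky), with $L_{K(1)} S^1$ generating the $\bZ_p$ summand. Since the natural map $\bZ \hookrightarrow \bZ_p$ is injective, the second factor is an injection. For the first factor I would use the chromatic fracture square of \Cref{prop:pullback} at $n=2$, $k=1$, which expresses $X = L_2 X$ as the pullback $L_1 X \times_{L_1 L_{1,2} X} L_{1,2} X$. If $L_1 X \simeq L_1 S^0$, then \Cref{thm:picskn} applied in $\Sp_{1,2}$ identifies $L_{1,2} X$ with a suspension of $L_{1,2} S^0$ detected by its Morava module, and compatibility with $L_1 X \simeq L_1 S^0$ together with the injectivity $\Pic_{0,1} \hookrightarrow \Pic_{1,1}$ already established pins the suspension down to zero, so $L_{1,2} X \simeq L_{1,2} S^0$. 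The pullback then reconstructs $X$ as $L_2 S^0$ once the gluing datum in $\pi_0(L_1 L_{1,2} S^0)$ is identified with the canonical one, which follows from the uniqueness of the fracture reconstruction when both corners are trivialized.

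The main obstacle is controlling the first factor: in particular, at $p = 3$ the group $\Pic_{0,2} \supseteq \bZ/3 \oplus \bZ/3$ contains exotic classes whose order is coprime to the torsion-free $\Pic_{0,1} \cong \bZ$, so they lie automatically in the kernel of $L_1$ and cannot be detected this way. I expect the completed argument to either appeal to the Picard spectral sequence of \Cref{sec:picard} (whose algebraicity, guaranteed by \Cref{thm:vanishing_intro} under the hypothesis $p \ge 3$, constrains the exotic classes) to show that the hypothetical kernel consists of classes already invertible in $\Sp_{0,2}$, or to reduce to the regime $p \ge 5$ where $\Pic_{0,2} \cong \bZ$ is generated by suspensions and the argument via $L_1$ is immediately conclusive.
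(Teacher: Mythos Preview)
The statement as printed contains a typo: the target of the map referenced via \Cref{rem:pic_morphisms} is $\Pic_{1,2}$, not $\Pic_{1,1}$. That remark only produces the chain $\Pic_{0,n} \to \Pic_{1,n} \to \cdots \to \Pic_{n,n}$, and the paper's own proof confirms this reading, since it argues via the factorization $\Pic_{0,2} \to \Pic_{1,2} \to \Pic_{2,2}$.

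With the intended target $\Pic_{1,2}$, the paper's argument is essentially a one-liner and goes in the opposite direction from yours: rather than factoring through a \emph{smaller} height ($\Pic_{0,1}$), it observes that the composite $\Pic_{0,2} \to \Pic_{1,2} \to \Pic_{2,2}$ is the map $\Pic_{0,2} \to \Pic_{2,2}$, so injectivity of the composite forces injectivity of the first factor. For $p>3$ this is immediate since $\Pic_{0,2}\cong\bZ$ is generated by $L_2S^1$; for $p=3$ the paper cites the Goerss--Henn--Mahowald--Rezk computation, which shows that the exotic $\bZ/3\oplus\bZ/3$ in $\Pic_{0,2}$ survives into $\Pic_{2,2}$.

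Your approach, by contrast, is tailored to the literal (mis)statement with target $\Pic_{1,1}$, and the obstruction you flag at $p=3$ is not just a gap in the argument but a genuine failure of that statement: at $p=3$ one has $\Pic_{1,1}\cong \bZ_3\oplus\bZ/2\oplus\bZ/2$, which contains no $3$-torsion, so the exotic $\bZ/3\oplus\bZ/3\subset\Pic_{0,2}$ necessarily dies under $L_{K(1)}$. Thus $\Pic_{0,2}\to\Pic_{1,1}$ is \emph{not} injective at $p=3$, and no amount of fracture-square or Picard-spectral-sequence work will repair it. Your instinct that something was wrong was correct; the resolution is to correct the target to $\Pic_{1,2}$ and then argue as the paper does.
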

\begin{proof}
  The morphism in question factors through the morphism $\Pic_{0,2} \to \Pic_{2,2}$ and so it suffices to show that this is an injection. When $p >3$ this is clear, and so we focus on the case $p = 3$. In this case, the calculations of Goerss--Henn--Mahowald--Rezk \cite{GoerssHennMahowaldRezk2015Hopkins} show that this map is an injection. 
\end{proof}
\begin{rem}
As noted in the proof the interesting case in the above propsition is the case $p = 3$. In fact, for all $n$ and $p \gg n$ we have that $\Pic_{0,n} \to \Pic_{i,n}$ is an injection for $i \ge 0$. However, here $\Pic_{0,n} \cong \bZ$ (by \cite{HoveySadofsky1999Invertible}), so this is not particularly helpful.   
\end{rem}
\subsection{Descent and Picard groups}
In \Cref{rem:pic_spectrum_limits} we recalled that we can associate a connective Picard spectrum $\pics(\cC)$ to a symmetric-monoidal $\infty$-category $\cC$. Using descent for the $E(n,J_k)$-local category, we now construct a spectral sequence whose $\pi_0$ computes $\Pic_{k,n}$. We need to introduce another algebraic gadget to describe the spectral sequence. 
\begin{defn}\label{def:unit}
  We let 
  $
  \Pic_{k,n}^{\mathrm{alg}}
  $
  denote the first cohomology of the complex
  \[
\xymatrix{(E_n)_0^{\times} \ar@<0.5ex>[r] \ar@<-0.5ex>[r]& ((E_{k,n})^\vee_0(E_n))^{\times} \ar@<1ex>[r] \ar@<-1ex>[r]  \ar[r]& \cdots }
  \]
  induced by taking the units in degree 0 in the cobar complex. 
\end{defn}
\begin{thm}\label{thm:pic_ss}
  There exists a spectral sequence with 
  \[
E_2^{s,t} \cong \begin{cases}
  \bZ/2 & s = t = 0 \\
    \Pic_{k,n}^{\mathrm{alg}} & s=t = 1 \\ 
  \widehat \Ext^{s,t-1}_{(E_{k,n})^{\vee}_*(E_n)}((E_n)_*,(E_n)_*) & t \ge 2
\end{cases}
  \]
  which converges for $t -s \ge 0$ to $\pi_{t-s}\pics(\Sp_{k,n})$. In particular, when $t = s$, this computes $\Pic_{k,n}$. The differentials in the spectral sequence run $d_r \colon E_r^{s,t} \to E_r^{s+r,t+r-1}$. 
\end{thm}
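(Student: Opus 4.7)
The plan is to apply the Picard spectrum functor $\pics$ to the descent equivalence of \Cref{thm:descendability_en} and extract a Bousfield--Kan spectral sequence. Concretely, since $\pics$ sends limits of symmetric monoidal $\infty$-categories to limits of connective spectra (\Cref{rem:pic_spectrum_limits}), the equivalence \eqref{eq:tot_module} yields
\[
\pics(\Sp_{k,n}) \simeq \Tot\bigl(\pics\bigl(\Mod_{E_n^{\htimes \bullet+1}}(\Sp_{k,n})\bigr)\bigr).
\]
First I would take the associated Bousfield--Kan spectral sequence of this cosimplicial connective spectrum. This is a fringed spectral sequence; by standard conditional convergence results for cosimplicial spectra, it converges to $\pi_{t-s}\pics(\Sp_{k,n})$ in the range $t-s \ge 0$, with differentials $d_r \colon E_r^{s,t} \to E_r^{s+r,t+r-1}$. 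The fringing is unavoidable because $\pics$ is only a connective spectrum.

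Second, I would identify the $E_1$-page. Using \Cref{ex:localized_pic} for $A = E_n^{\htimes s+1}$ in $\cC = \Sp_{k,n}$, and using that $\Hom_{\Sp_{k,n}}(L_{k,n}S^0, E_n^{\htimes s+1}) = L_{k,n}(E_n^{\htimes s+1})$ (whose homotopy gives the $(s+1)$-th term of the cobar complex computing $(E_{k,n})^{\vee}_*(E_n)$), one obtains
\[
E_1^{s,t} \;=\; \begin{cases} \Pic(\Mod_{E_n^{\htimes s+1}}(\Sp_{k,n})) & t=0,\\ \bigl(\pi_0 L_{k,n}(E_n^{\htimes s+1})\bigr)^{\times} & t=1,\\ \pi_{t-1} L_{k,n}(E_n^{\htimes s+1}) & t \ge 2. \end{cases}
\]
For $t \ge 2$ the cosimplicial structure is exactly that of the cobar complex of \Cref{prop:anss}, so taking horizontal cohomology yields $E_2^{s,t} \cong \widehat{\Ext}^{s,t-1}_{(E_{k,n})^{\vee}_*(E_n)}((E_n)_*,(E_n)_*)$ for $t \ge 2$. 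For the $t=1$ row, the cohomology of the cosimplicial abelian group of units in degree zero is, by \Cref{def:unit}, $\Pic_{k,n}^{\mathrm{alg}}$ in cohomological degree $1$, giving $E_2^{1,1} \cong \Pic_{k,n}^{\mathrm{alg}}$.

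Third, I would handle the $E_2^{0,0}$ corner. Here we need the equalizer of the two coface maps $\Pic(\Mod_{E_n}(\Sp_{k,n})) \rightrightarrows \Pic(\Mod_{E_n \htimes E_n}(\Sp_{k,n}))$. By \Cref{lem:pic_localized}, the left-hand group is $\bZ/2$, generated by the suspension. Both coface maps send $\Sigma E_n$ to the class of $\Sigma(E_n \htimes E_n)$, so they agree on this $\bZ/2$, and the equalizer is $\bZ/2$ as claimed.

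Fourth, I would address convergence carefully. The descent-theoretic spectral sequence for a Picard spectrum is a standard tool, going back to Mathew--Stojanoska; the fringed-convergence statement (for $t-s \ge 0$) is routine given that the cosimplicial spectrum is levelwise connective and $\pics$ commutes with the totalization. The main obstacle I anticipate is the bookkeeping in the $t \le 1$ rows --- making precise the claim that the rows $t=0$ and $t=1$ of the Bousfield--Kan $E_2$-page really record $\bZ/2$ and $\Pic_{k,n}^{\mathrm{alg}}$ respectively, and tracking the slightly unusual entry at $(s,t)=(0,0)$. Everything for $t \ge 2$ then follows from the same calculation used to identify the $E_2$-term of the $E(n,J_k)$-local $E_n$-Adams spectral sequence in \Cref{prop:anss}.
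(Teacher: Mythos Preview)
Your proposal is correct and follows essentially the same route as the paper: apply $\pics$ to the descent totalization \eqref{eq:tot_module}, identify the levelwise homotopy via \Cref{ex:localized_pic}, and read off the $E_2$-page as a shift of the cobar complex for $t\ge 2$ and as units/Picard data for $t\le 1$. Your treatment of $E_2^{0,0}$ (checking that both cofaces agree on the suspension generator) is in fact slightly more explicit than the paper's, which simply notes that only the $\bZ/2$ in cosimplicial degree $0$ is relevant there.
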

\begin{proof}
  Because $\pics$ commutes with limits (\Cref{rem:pic_spectrum_limits}), \Cref{thm:descendability_en} implies that
  \begin{equation}\label{eq:tot_module_pic}
\pics(\Sp_{k,n}) \simeq \tau_{\ge 0}\Tot\left(\xymatrix@C=1em{\pics(\Mod_{E_n}(\Sp_{k,n})) \ar@<0.5ex>[r] \ar@<-0.5ex>[r] & \pics(\Mod_{E_n \htimes E_n}(\Sp_{k,n}))\ar@<1ex>[r] \ar@<-1ex>[r] \ar[r]&}\right)
\end{equation}
We have (compare \Cref{ex:localized_pic})
\begin{equation}\label{eq:pic_en}
\pi_t(\pics(\Mod_{E_n^{\htimes i}}(\Sp_{k,n})) \cong \begin{cases}
  \Pic(\Mod_{E_n^{\htimes i}}) & t = 0 \\
  \pi_0(E_n^{\htimes i})^{\times} & t = 1 \\
  \pi_{t-1}(E_n^{\htimes i}) & t \ge 2. 
\end{cases}
\end{equation}
The Bousfield--Kan spectral sequence associated to \eqref{eq:tot_module_pic} has the form 
\[
E_2^{s,t} \cong H^s\left(\xymatrix@C=1em{\pi_t\pics(\Mod_{E_n}(\Sp_{k,n})) \ar@<0.5ex>[r] \ar@<-0.5ex>[r] & \pi_t\pics(\Mod_{E_n \htimes E_n}(\Sp_{k,n}))\ar@<1ex>[r] \ar@<-1ex>[r] \ar[r]&}\right)
\]
By \eqref{eq:pic_en} when $t \ge 2$, the spectral sequence is just a shift of the $E(n,J_k)$-local Adams spectral for $X = S^0$ sequence considered in \Cref{prop:anss}. 

When $t = 0$ and $i = 0$, we have $\Pic(\Mod_{E_n}(\Sp_{k,n})) \cong \bZ/2$ by \Cref{lem:pic_localized}. We do not know the higher terms, but this does not matter as only the $\bZ/2$ is relevant for the $s = t = 0$ part of the spectral sequence. 

Finally, we consider the $t = 1$ part of the spectral sequence. Again using \eqref{eq:pic_en}, we have 
\[
E_2^{s,1} \cong \xymatrix@C=1em{H^s ((E_n)_0^{\times} \ar@<0.5ex>[r] \ar@<-0.5ex>[r]& ((E_{k,n})^\vee_0(E_n))^{\times} \ar@<1ex>[r] \ar@<-1ex>[r]  \ar[r]& \cdots )}
\]
  By definition, when $s = 1$ this is $\Pic_{k,n}^{\mathrm{alg}}$. 
\end{proof}
\begin{rem}
 The proof shows that when $t = 1$, we can compute $E_2^{s,1}$ as the $s$-th cohomology of the complex in \Cref{def:unit}. However, unless $k = 0$ or $n$ we do not have a convenient description of this group (for the case $k = n$, see \Cref{ex:pic_k=n} below). 
\end{rem}
\begin{defn}
  We will say that $\Pic_{k,n}$ is algebraic if the only contributions come from the $s = 0$ and $s =1$ lines of the spectral sequence.
  \end{defn}
  \begin{rem}
    The $E_2^{0,0}$ term of the spectral sequence always survives the spectral sequence, as it is the Picard group of $E_n$-modules. It is however possible that there is a non-trivial differential in the $E_r^{1,1}$ spot. 
  \end{rem}
\begin{thm}\label{thm:algebraic}
  Suppose that $2p-2 \ge n^2+n-k$ and $p-1$ does not divide $k+s$ for $0 \le s \le n-k$, then $\Pic_{k,n}$ is algebraic. For example, this holds if $2p-2 > n^2+n$. 
\end{thm}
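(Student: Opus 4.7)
The plan is to apply the descent spectral sequence from \Cref{thm:pic_ss} and show that under the stated hypotheses every diagonal entry $E_2^{s,s}$ with $s \ge 2$ vanishes, so that only the algebraic part (the $s=0,1$ lines) can contribute to $\Pic_{k,n}$. Since $E_\infty^{s,s}$ is a subquotient of $E_2^{s,s}$, this is enough; no differential analysis is needed for the positions we want to kill, and incoming differentials into $E_r^{0,0}$ or $E_r^{1,1}$ from the third quadrant do not exist.

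First I would use the description of the $E_2$-page for $t \ge 2$ given in \Cref{thm:pic_ss}: on the diagonal it specializes to
\[
E_2^{s,s} \cong \widehat{\Ext}^{s,s-1}_{(E_{k,n})^\vee_*(E_n)}((E_n)_*,(E_n)_*) \qquad (s \ge 2),
\]
so the problem reduces to showing that these completed Ext groups vanish under the hypotheses. Next I would invoke the sparseness statement \Cref{thm:sparseness} (whose proof, via \Cref{prop:limit_ext} and the chromatic spectral sequence, works equally well in the $E(n,J_k)$-local setting): the group $\widehat{\Ext}^{s,t}$ vanishes unless $t \equiv 0 \pmod{q}$, where $q = 2(p-1)$. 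On the diagonal the internal degree is $s-1$, so non-vanishing forces $s \equiv 1 \pmod{2(p-1)}$, and hence the smallest candidate $s \ge 2$ is $s = 2p-1$.

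Finally I would apply the vanishing line of \Cref{thm:vanishing}: under the assumption that $p-1$ does not divide $k+s$ for $0 \le s \le n-k$, one has $\widehat{\Ext}^{s,*} = 0$ for $s > n^2 + n - k$. The hypothesis $2p-2 \ge n^2+n-k$ gives $2p-1 > n^2+n-k$, so every diagonal entry $E_2^{s,s}$ with $s \ge 2$ either has $2 \le s < 2p-1$ (killed by sparseness) or $s \ge 2p-1 > n^2+n-k$ (killed by the vanishing line). In either case $E_2^{s,s} = 0$, hence $E_\infty^{s,s} = 0$ for $s \ge 2$, which by definition means $\Pic_{k,n}$ is algebraic.

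For the "for example" clause, I would simply observe that $2p-2 > n^2+n$ trivially implies $2p-2 \ge n^2+n-k$ (since $k \ge 0$), and also forces $p-1 > n$, which ensures $p-1$ cannot divide any integer in $\{k, k+1, \ldots, n\}$, so the divisibility hypothesis is automatic. There is no real obstacle here beyond carefully matching the indexing on the spectral sequence to the Adams $E_2$-term; the whole argument is essentially a bookkeeping reduction once \Cref{thm:vanishing} and \Cref{thm:sparseness} are in hand, and the only mild subtlety is remembering the shift $t \mapsto t-1$ between the Picard and Adams spectral sequences.
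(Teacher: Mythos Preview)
Your proof is correct and follows essentially the same approach as the paper's own proof: use sparseness (\Cref{thm:sparseness}) to force any nonzero diagonal term $E_2^{s,s}$ with $s\ge 2$ to satisfy $s\equiv 1\pmod{2p-2}$, then use the vanishing line (\Cref{thm:vanishing}) to kill all such terms once $2p-1>n^2+n-k$. Your treatment of the ``for example'' clause, deducing $p-1>n$ from $2p-2>n^2+n$, is also the same as the paper's.
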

\begin{proof}
   For all primes $p$ and $t \ge 2$ we have that $E_2^{s,t} = 0$ unless $t-1 \equiv 0 \mod{2(p-1)}$ by \Cref{thm:sparseness}. In particular, for $s>2$, $E_2^{s,s} = 0$ unless $s \equiv 1 \mod{2p-2}$, and the lowest possible non-algebraic term is in filtration degree $2p-1$. 

  By \Cref{thm:vanishing} and the assumption that $p-1$ does not divide $k+s$ for $0 \le s \le n-k$ we have that $E_2^{s,s} = 0$ for $s > n^2+n-k$. Therefore, if additionally $2p-2 \ge n^2+n-k$ there can be no non-algebraic contributions to the spectral sequence. 

  Finally, if $2p-2 > n^2+n$, then $p > n+1$, and in particular $p-1$ does not divide $k+s$ for $0 \le s \le n-k$. 
  \end{proof}
\begin{ex}\label{ex:pic_k=n}
  Let us spell out the details in the case $k = n$. We first claim that the spectral sequence of \Cref{thm:pic_ss} takes the follows form: 
  \[
E_2^{s,t} \cong \begin{cases}
  \bZ/2 & s = t = 0 \\
  H^s_c(\bG_n,(E_n)_0^{\times}) & t = 1\\ 
  H^s_c(\bG_n,\pi_{t-1}E_n) & t \ge 2
\end{cases}
  \]
  and converges for $t -s \ge 0$ to $\pi_{t-s}\pics(\Sp_{K(n)})$. 

  The identification is much as in \Cref{rem:k(n)_local}. For the $t = 1$ term, we note that $\pi_0(E_n^{\htimes i})^{\times} \cong \Hom^c(\bG_n^{\times (i-1)},(E_n)_0)^{\times} \cong \Hom^c(\bG_n^{\times(i-1)},(E_n)_0^{\times})$. 

  The existence of such a spectral sequence is folklore, see \cite[Remark 6.10]{gh_duality} or \cite[Remark 2.6]{1811.05415}. In fact, the latter also proves \Cref{thm:algebraic} in the case $k = n$. 
\end{ex}

\section{\texorpdfstring{$E(n,J_k)$}{}-local Brown--Comenetz duality}\label{sec:bc_dual}
We recall the classical definition of Brown--Comenetz duality. The group $\mathbb{Q}/\mathbb{Z}$ is an injective abelian group, and so the functor 
\[
X \mapsto  \Hom(\pi_0X,\mathbb{Q}/\mathbb{Z})
\]
defines a cohomology theory on spectra represented by a spectrum $I_{\mathbb{Q}/\mathbb{Z}}$; this is the Brown--Comentz dual of the sphere. The Brown--Comenetz dual of a spectrum $X$ is then defined as $I_{\bQ/\bZ}X \coloneqq F(X,I_{\mathbb{Q}/\mathbb{Z}})$, and satisfies
\[
[Y,I_{\bQ/\bZ}X]_0 \cong \Hom(\pi_0(X \wedge Y),\bQ/\bZ). 
\]

It is an insight of Hopkins \cite{HopkinsGross1994rigid} that there is a good notion of Brown--Comenetz duality (also known as Gross--Hopkins duality) internal to the $K(n)$-local category, given by defining $I_nX = F(M_nX,I_{\mathbb{Q}/\mathbb{Z}})$ for a $K(n)$-local spectrum $X$. For details on this, see \cite{Strickl2000GrossHopkins}. As we will see, this definition can also naturally be made in the $E(n,J_k)$-local category. We begin with the following generalization of a result of Stojanoska \cite[Proposition 2.2]{MR2946825}. We recall that, by definition, $M_{0,n} = L_n$. In this case, the following lemma just says that $F(L_nX,Y)$ is already $L_n$-local. 
\begin{prop}\label{prop:localization}
  For any $X,Y$ the natural map $F(L_nX,Y) \to F(M_{k,n}X,Y)$ is $E(n,J_k)$-localization. 
\end{prop}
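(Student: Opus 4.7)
The plan is to apply $F(-,Y)$ to the defining cofiber sequence $M_{k,n}X \to L_nX \to L_{k-1}X$, obtaining a fiber sequence
\[
F(L_{k-1}X,Y) \to F(L_nX,Y) \to F(M_{k,n}X,Y),
\]
and then verify the two conditions characterizing Bousfield localization: that the target $F(M_{k,n}X,Y)$ is $E(n,J_k)$-local, and that the fiber $F(L_{k-1}X,Y)$ is $E(n,J_k)$-acyclic.

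For the first condition, I would use that $L_n$ and $L_{k-1}$ are smashing to write $M_{k,n}X \simeq M_{k,n}S^0 \wedge X$, so that $\langle M_{k,n}X \rangle \le \langle M_{k,n}S^0 \rangle = \langle E(n,J_k)\rangle$ by \Cref{lem:bousfield_mkn}. Then for any $E(n,J_k)$-acyclic $W$, the product $W \wedge M_{k,n}X$ vanishes, so $[W,F(M_{k,n}X,Y)] = [W \wedge M_{k,n}X,Y] = 0$, giving $E(n,J_k)$-locality of the target.

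The subtler point is the acyclicity of the fiber, since it is not \emph{a priori} true that $A \wedge B \simeq 0$ implies $F(A,Z) \wedge B \simeq 0$. My plan is to first rewrite, using the smashing property of $L_{k-1}$,
\[
F(L_{k-1}X,Y) \simeq F(L_{k-1}S^0, F(X,Y)) \simeq F(L_{k-1}S^0, L_{k-1}F(X,Y)),
\]
where the last equivalence follows because for any $Z$, $Z \wedge L_{k-1}S^0 \simeq L_{k-1}Z$, so maps from $Z \wedge L_{k-1}S^0$ land in $E(k-1)$-local objects. The same computation shows that this function spectrum is itself $E(k-1)$-local. I then observe that any $E(k-1)$-local spectrum $V$ is $E(n,J_k)$-acyclic: indeed $V \simeq L_{k-1}S^0 \wedge V$, and $L_{k-1}S^0 \wedge E(n,J_k) \simeq 0$ because its Bousfield class is bounded above by $\langle E(k-1)\rangle \cdot \langle E(n,J_k)\rangle = \bigvee_{i<k,\,j\ge k}\langle K(i)\wedge K(j)\rangle = 0$ by the orthogonality of the Morava $K$-theories.

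The main obstacle is precisely the acyclicity step, since the naive hope that $E(n,J_k) \wedge F(L_{k-1}X,Y)$ is automatically zero (because $L_{k-1}X$ is $E(n,J_k)$-acyclic) fails for general function spectra; the trick is the reduction to $F(L_{k-1}S^0, L_{k-1}F(X,Y))$, which lands in the $E(k-1)$-local category where the orthogonality of Bousfield classes of $E(k-1)$ and $E(n,J_k)$ can be applied directly. Combining the two conditions gives that $F(L_nX,Y) \to F(M_{k,n}X,Y)$ is an $E(n,J_k)$-equivalence into an $E(n,J_k)$-local object, hence the $E(n,J_k)$-localization.
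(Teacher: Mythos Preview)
Your overall strategy is correct and both steps go through, but the intermediate equivalence
\[
F(L_{k-1}S^0, F(X,Y)) \simeq F(L_{k-1}S^0, L_{k-1}F(X,Y))
\]
is false in general, and your justification (``maps from $Z \wedge L_{k-1}S^0$ land in $E(k-1)$-local objects'') reverses the direction of the universal property of localization: it is maps \emph{into} local objects, not maps \emph{out of} them, that are controlled by the localization of the source. For a concrete failure, take $k=1$ so that $L_0$ is rationalization, and $W=S^0_{(p)}$: then $\pi_{-1}F(H\bQ,S^0_{(p)})$ contains a nonzero $\varprojlim{}^1$ contribution (isomorphic to $\bZ_p/\bZ_{(p)}$), whereas $\pi_{-1}F(H\bQ,H\bQ)=0$. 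Fortunately this step is unnecessary. The same adjunction, applied with $Z$ an $E(k-1)$-\emph{acyclic} test object, already gives $[Z,F(L_{k-1}S^0,W)]\cong[L_{k-1}Z,W]=0$, so $F(L_{k-1}S^0,F(X,Y))$ is $E(k-1)$-local directly, and your Bousfield-class orthogonality argument then finishes the acyclicity claim. Simply delete the second displayed equivalence and the clause justifying it.

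For comparison, the paper handles the acyclicity of the fiber differently: rather than passing through $E(k-1)$-locality, it tests $E(n,J_k)$-acyclicity by smashing with a finite type~$k$ Moore spectrum $M(k)$ (which suffices since $L_{k,n}F(k)$ generates $\Sp_{k,n}$, \Cref{prop:hs_b.9}), then uses finiteness and self-duality of $M(k)$ to move it inside the function spectrum, reducing to $F(L_{k-1}M(k)\wedge X,Y)\simeq\ast$. Your route via Bousfield classes is arguably cleaner in that it avoids invoking a generator, while the paper's argument is more in the style of \cite{hovey_strickland99}. The locality-of-target argument is essentially the same in both.
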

\begin{proof}
  We can repeat Stojanoska's argument. First, we show that $F(M_{k,n}X,Y)$ is $E(n,J_k)$-local. Indeed, let $Z$ be $E(n,J_k)$-acyclic, then we must show that 
  \[
F(Z,F(M_{k,n}X,Y)) \simeq F(Z \wedge M_{k,n}X,Y) \simeq F(M_{k,n}Z \wedge X,Y)
  \]
  is contractible. Here we have used that $M_{k,n}$ is smashing. But $M_{k,n}Z \simeq M_{k,n}L_{k,n}Z$ by \Cref{thm:category_equivalence} and this is contractible because $Z$ is $E(n,J_k)$-acyclic. 

  We now show that the fiber $F(L_{k-1}X,Y)$ is $E(n,J_k)$-acyclic. By \Cref{prop:hs_b.9} it suffices by a localizing subcategory argument to show this after smashing with a generalized Moore spectrum $M(k)$ of type $k$. Then (up to suspension), 
  \[
  \begin{split}
F(L_{k-1}X,Y) \wedge M(k) \simeq F(L_{k-1}X,Y) \wedge DM(k) &\simeq F(M(k),F(L_{k-1}X,Y)) \\ 
&\simeq F((L_{k-1}M(k))\wedge X,Y) \simeq \ast,
\end{split}
  \]
  as required. 
\end{proof}
\begin{defn}
  The $E(n,J_k)$-local Brown--Comenetz dual of $X$ is $I_{k,n}X = I_{\bQ/\bZ}(M_{k,n}X) = F(M_{k,n}X,I_{\bQ/\bZ})$. We let $I_{k,n}$ denote the $E(n,J_k)$-local Brown--Comenetz dual of $L_{k,n}S^0$. 
\end{defn}
\begin{rem}
It does not matter if we ask that $X$ be $E(n,J_k)$-local in the previous definition, as $I_{k,n}X$ only depends on the $E(n,J_k)$-localization of $X$. Indeed, we have equivalences
  \[
  I_{k,n}X = F(M_{k,n}X,I_{\bQ/\bZ}) \simeq F(M_{k,n}L_{k,n}X,I_{\bQ/\bZ}) = I_{k,n}(L_{k,n}X).
  \] 
  In particular, $I_{k,n} = I_{k,n}(L_{k,n}S^0) \simeq I_{k,n}S^0$.
\end{rem}
From the definition of $I_{\bQ/\bZ}$, we deduce the following. 
\begin{lem}
  There is a natural isomorphism
  \[
[Y,I_{k,n}X]_0 \cong \Hom(\pi_0(M_{k,n}(X) \wedge Y),\mathbb{Q}/\mathbb{Z}).
  \]
\end{lem}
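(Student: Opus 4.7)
The proof is a direct unwinding of definitions, so I would keep it brief. The plan is to combine the tensor–hom adjunction with the defining representing property of $I_{\bQ/\bZ}$ recalled at the start of the section.

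First I would rewrite the left-hand side using the definition $I_{k,n}X = F(M_{k,n}X, I_{\bQ/\bZ})$, so that
\[
[Y,I_{k,n}X]_0 \cong [Y, F(M_{k,n}X, I_{\bQ/\bZ})]_0.
\]
Then I would apply the tensor–hom adjunction in $\Sp$ (which is available since the smash product and $F(-,-)$ in $\Sp_{k,n}$ agree with the underlying ones when the target is a function spectrum, as noted in \Cref{prop:hs_b.9}) to identify this with
\[
[Y \wedge M_{k,n}X, I_{\bQ/\bZ}]_0.
\]
Finally, invoking the defining property of the Brown--Comenetz dual of the sphere, namely $[Z, I_{\bQ/\bZ}]_0 \cong \Hom(\pi_0 Z, \bQ/\bZ)$, applied to $Z = Y \wedge M_{k,n}X$, yields the claimed natural isomorphism
\[
[Y,I_{k,n}X]_0 \cong \Hom(\pi_0(M_{k,n}X \wedge Y),\bQ/\bZ).
\]

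There is no real obstacle here; the only thing to be mildly careful about is that the adjunction is being applied in the underlying category $\Sp$ (not the internal smash $\htimes$ of $\Sp_{k,n}$), which is consistent with the definition of $I_{k,n}X$ as a function spectrum in $\Sp$. Naturality in $Y$ is automatic from the naturality of each step.
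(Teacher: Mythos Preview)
Your proof is correct and matches the paper's approach exactly: the paper simply says ``From the definition of $I_{\bQ/\bZ}$, we deduce the following'' and states the lemma without further argument, since it is an immediate consequence of the displayed identity $[Y,I_{\bQ/\bZ}X]_0 \cong \Hom(\pi_0(X \wedge Y),\bQ/\bZ)$ applied with $M_{k,n}X$ in place of $X$. Your write-up just makes the adjunction step explicit.
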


As a consequence of \Cref{prop:localization} we deduce the following. 
\begin{lem}\label{lem:height_change}
  $I_{k,n}X$ is always $E(n,J_k)$-local. In fact, $I_{k,n}X \simeq L_{k,n}I_{\bQ/\bZ}(L_nX)$ and moreover, $I_{k,n}X \cong L_{k,n}I_{j,n}X$ for any $j \le k$.
\end{lem}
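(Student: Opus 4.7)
The plan is to deduce all three assertions from \Cref{prop:localization}, which does essentially all the work. Concretely, I will apply that proposition with $Y = I_{\bQ/\bZ}$ to get the first two claims essentially for free, and then combine it with a standard Bousfield-class transitivity to get the third.

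First I would observe that by \Cref{prop:localization}, the natural map
\[
F(L_nX,I_{\bQ/\bZ}) \longrightarrow F(M_{k,n}X,I_{\bQ/\bZ}) = I_{k,n}X
\]
is precisely $E(n,J_k)$-localization. This simultaneously gives that $I_{k,n}X$ is $E(n,J_k)$-local and that $I_{k,n}X \simeq L_{k,n} F(L_nX,I_{\bQ/\bZ}) = L_{k,n} I_{\bQ/\bZ}(L_nX)$, which is the second assertion.

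For the third assertion, I would apply the same reasoning with $j$ in place of $k$ to obtain $I_{j,n}X \simeq L_{j,n} I_{\bQ/\bZ}(L_nX)$, and hence
\[
L_{k,n} I_{j,n}X \simeq L_{k,n} L_{j,n} I_{\bQ/\bZ}(L_nX).
\]
The statement will then follow once I show $L_{k,n} L_{j,n} \simeq L_{k,n}$ whenever $j \le k$. This is the only non-cosmetic step, and it is a standard Bousfield-class argument: for $j \le k$ we have the inequality of Bousfield classes $\langle K(k)\vee\cdots\vee K(n)\rangle \le \langle K(j)\vee\cdots\vee K(n)\rangle$, so any $K(j)\vee\cdots\vee K(n)$-equivalence is automatically a $K(k)\vee\cdots\vee K(n)$-equivalence. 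Applied to the localization map $Y \to L_{j,n}Y$ (with $Y = I_{\bQ/\bZ}(L_nX)$) and followed by applying $L_{k,n}$, this yields $L_{k,n}L_{j,n}Y \simeq L_{k,n}Y$.

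There is no real obstacle here: the content is packaged into \Cref{prop:localization}, and the remainder is formal bookkeeping with Bousfield classes. The only mild subtlety is to apply \Cref{prop:localization} at both indices $k$ and $j$ so as to reduce everything to the common expression $L_{?,n} I_{\bQ/\bZ}(L_nX)$, rather than trying to manipulate $F(M_{k,n}X,I_{\bQ/\bZ})$ and $F(M_{j,n}X,I_{\bQ/\bZ})$ directly.
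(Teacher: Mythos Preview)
Your proposal is correct and follows essentially the same approach as the paper, which simply states the lemma as a direct consequence of \Cref{prop:localization} without further argument. You have supplied the details the paper omits: applying \Cref{prop:localization} with $Y = I_{\bQ/\bZ}$ at both indices $k$ and $j$, and then using the elementary Bousfield-class inequality to identify $L_{k,n}L_{j,n} \simeq L_{k,n}$.
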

It follows that we have natural maps given by localization:
\[
I_{0,n} \to I_{1,n} \to \cdots I_{n,n}. 
\]
\begin{ex}\label{ex:bc_height1}
Let $n = 1$ and $p>2$, then $I_{0,1} \simeq L_1(S^2_p)$, the localization of the $p$-completion of $S^2$. On the other hand, when $p = 2$ we have $I_{0,1} \simeq \Sigma^2 L_1(DQ^\wedge_2)$ where $DQ$ is the dual question mark complex \cite[Remark 1.5]{MR1073009}. Similarly, $I_{1,1} \simeq L_{K(1)}S^2$ if $p>2$, while $I_{1,1} \simeq \Sigma^2 L_{K(1)}DQ$. 
\end{ex}
\begin{ex}\label{ex:bc_kn}
  We always have $I_{k,n}(K_n) \simeq K_n$. Indeed, first note that $L_{k,n}K_n \simeq K_n$, so \Cref{lem:height_change} allows us to reduce the case where $k = 0$ (although the proof is no more difficult in the other cases - just note that $M_{k,n}K_n \simeq K_n$).  Using the fact that 
  \[
  [Y,K_n]_* \cong   \Hom_{(K_n)_*}((K_n)_*X,(K_n)_*) 
  \]
  we argue as in \cite[Theorem 10.2(a)]{hovey_strickland99} to see that
  \[
[Y,K_{n}]_0 \cong \Hom((K_n)_0X,\mathbb{F}_p) \cong \Hom((K_n)_0X,\bQ/\bZ) \cong [Y,I_{0,n}(K_n)]_0.
  \]
This implies that $I_{0,n}(K_n) \simeq K_n$, as claimed. 
\end{ex}
  \begin{thm}\label{thm:bc_dual}
    Let $X \in \Sp_{k,n}$, then the natural map $X \to I^2_{k,n}X$ is an isomorphism when $\pi_*(F(k) \wedge X) \cong \pi_*(L_{k,n}F(k) \wedge X)$ is finite in each degree. In particular, this holds for $X = L_{k,n}S^0$. 
  \end{thm}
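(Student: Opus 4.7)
The plan is to reduce the statement to classical Brown--Comenetz self-duality applied to the finite-homotopy spectra $X \wedge M_k(j)$, then reassemble via \Cref{prop:7.10}.

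First I will establish, for any $Y \in \Sp_{k,n}$, a natural identification
\[
I_{k,n}Y \wedge M_k(j) \simeq \Sigma^{N_j}\, I_{\bQ/\bZ}(Y \wedge M_k(j)),
\]
where $N_j$ is the integer satisfying $DM_k(j) \simeq \Sigma^{-N_j}M_k(j)$ (self-duality of generalized Moore spectra, \cite[Proposition 4.18]{hovey_strickland99}). Indeed, since $M_k(j)$ is finite dualizable, we have $F(M_{k,n}Y, I_{\bQ/\bZ}) \wedge M_k(j) \simeq F(M_{k,n}Y \wedge DM_k(j), I_{\bQ/\bZ}) \simeq \Sigma^{N_j} I_{\bQ/\bZ}(M_{k,n}Y \wedge M_k(j))$; using that $M_{k,n}$ is smashing with $M_{k,n}S^0 \wedge M_k(j) \simeq L_n M_k(j)$ (because $L_{k-1}M_k(j) \simeq 0$), together with the $L_n$-locality of $Y$, we then have $M_{k,n}Y \wedge M_k(j) \simeq Y \wedge M_k(j)$.

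Applying this identification twice, once with $Y = X$ and once with $Y = I_{k,n}X$ (which lies in $\Sp_{k,n}$ by \Cref{lem:height_change}), the $\Sigma^{\pm N_j}$ factors cancel and I will obtain a natural equivalence
\[
I_{k,n}^2 X \wedge M_k(j) \simeq I_{\bQ/\bZ}^2(X \wedge M_k(j))
\]
compatible with the canonical double-dualization map $X \to I_{k,n}^2 X$. To invoke classical Brown--Comenetz self-duality next, I need $\pi_*(X \wedge M_k(j))$ to be finite in each degree; this follows from the hypothesis, since $M_k(j) \in \Thick(F(k))$ by the thick subcategory theorem, hence $X \wedge M_k(j) \in \Thick(X \wedge F(k))$, and spectra with degreewise finite homotopy form a thick subcategory of $\Sp$. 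Classical Brown--Comenetz duality then gives $X \wedge M_k(j) \simeq I_{\bQ/\bZ}^2(X \wedge M_k(j))$.

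Finally, since both $X$ and $I_{k,n}^2 X$ are $E(n,J_k)$-local, \Cref{prop:7.10} yields $X \simeq \varprojlim_j (X \wedge M_k(j))$ and $I_{k,n}^2 X \simeq \varprojlim_j (I_{k,n}^2 X \wedge M_k(j))$, and taking the inverse limit of the natural equivalences of the previous step exhibits $X \to I_{k,n}^2 X$ as an equivalence. For $X = L_{k,n}S^0$ the hypothesis is verified via $L_{k,n}F(k) \simeq L_n F(k)$ (\Cref{cor:moore_at_least_k}) together with the known finiteness of $\pi_* L_n F(k)$ for a finite type $k$ spectrum. The main delicate point I anticipate is the naturality of the iterated identification with respect to the double-dualization unit: the composite equivalence for $I_{k,n}^2 X \wedge M_k(j)$ must correspond to the actual unit map, so that the inverse limit genuinely recovers $X \to I_{k,n}^2 X$. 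This is bookkeeping with the suspensions $\Sigma^{\pm N_j}$ but worth verifying carefully.
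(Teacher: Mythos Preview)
Your proposal is correct but takes a more elaborate route than the paper. The paper's proof smashes just \emph{once} with the compact generator $F(k)$ rather than with the entire tower $\{M_k(j)\}$: since $L_{k,n}F(k)$ generates $\Sp_{k,n}$, a map there is an equivalence as soon as it becomes one after smashing with $F(k)$, so it suffices to check that $\kappa_{F(k)\wedge X}$ is an equivalence. Using that $F(k)$ is finite dualizable one identifies $\kappa_{F(k)\wedge X}$ with $\mathrm{id}_{F(k)}\wedge\kappa_X$, and then the same calculation you do for a single $M_k(j)$ shows $M_{k,n}(F(k)\wedge X)\simeq F(k)\wedge X$ and $M_{k,n}(I_{k,n}(F(k)\wedge X))\simeq I_{k,n}(F(k)\wedge X)$, reducing $I_{k,n}^2$ to ordinary $I_{\bQ/\bZ}^2$ on the nose and concluding by finiteness of $\pi_*(F(k)\wedge X)$. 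This sidesteps entirely the inverse-limit reassembly via \Cref{prop:7.10} and, more importantly, the naturality bookkeeping you correctly flag as the delicate point: with a single smash there are no suspension shifts $\Sigma^{\pm N_j}$ to track through a tower, and the identification of $\kappa_{F(k)\wedge X}$ with $\mathrm{id}_{F(k)}\wedge\kappa_X$ is immediate from dualizability. Your approach does work and has the virtue of making the connection to classical Brown--Comenetz duality on each $X\wedge M_k(j)$ very explicit, but the paper's argument is shorter and avoids the coherence check.
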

  \begin{proof}
    Let $\kappa_X \colon X \to I^2_{k,n}X$ denote the natural map. We first note that $I^2_{k,n}(F(k) \htimes X) \simeq I^2_{k,n}(F(k) \wedge X) \simeq F(k) \wedge I^2_{k,n}(X)$, because $F(k)$ is compact (and hence dualizable) in $\Sp$. As in \cite[Proof of Theorem 10.2]{hovey_strickland99} this identifies $\kappa_{F(k) \wedge X} \simeq \text{id}_F \wedge \kappa_X$, and so it is enough to show that $\kappa_{Y}$ is an equivalence, where $Y = F(k) \wedge X$. 

    Because $F(k)$ has type $k$, $L_{k-1}F(k) \simeq \ast$, and $M_{k,n}F(k) \simeq L_nF(k)$, so that $M_{k,n}Y = M_{k,n}(F(k) \wedge X) \simeq M_{k,n}F(k) \wedge X \simeq L_nF(k) \wedge X \simeq Y$. Likewise, $M_{k,n}(I_{k,n}Y) \simeq M_{k,n}(DF(k) \wedge I_{k,n}X) \simeq DF(k) \wedge I_{k,n}X \simeq I_{k,n}Y$. This implies that $\pi_*I^2_{k,n}Y \cong \Hom(\Hom(\pi_*Y,\bQ/\bZ),\bQ/\bZ)$, which is the same as $\pi_*Y$ because $\pi_*Y$ is finite in each degree. Therefore $\kappa_Y$ is an equivalence, as required. 
  \end{proof}
  \begin{rem}
  The Gross–-Hopkins dual $I_{n,n}$ is always an invertible $K(n)$-local spectrum. We do not know what happens for $I_{k,n}$ in general, however we note the following result.
  \end{rem}
\begin{prop}\label{prop:bc_equivalent}
  The following are equivalent:
  \begin{enumerate}
    \item $I_{k,n} \in \Sp_{k,n}^{\dual}$
    \item $I_{k,n} \in \Pic_{k,n}$
     \item $(E_{k,n})^\vee_*(I_{k,n})$ is a finitely generated $E_*$-module. 
    \item $E_n \htimes I_{k,n}$ is $K(n)$-local.
  \end{enumerate}
\end{prop}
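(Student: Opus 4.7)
The plan is to close the cycle $(2) \Rightarrow (1) \Leftrightarrow (3) \Rightarrow (4) \Rightarrow (2)$. Three of these implications are essentially free from results already in the paper. The equivalence $(1) \Leftrightarrow (3)$ is a direct application of \Cref{thm:dualspkn}. The implication $(2) \Rightarrow (1)$ is the standard fact that invertible objects in a closed symmetric monoidal $\infty$-category are always dualizable. For $(1) \Rightarrow (4)$, I would apply \Cref{prop:dualiable_descent_en} to see that $E_n \htimes I_{k,n} \in \Mod_{E_n}(\Sp_{k,n})$ is dualizable, and then invoke \Cref{lem:K(n)local} (or equivalently the characterization of dualizables in \Cref{prop:dualizable_local}) which says any such object is $K(n)$-local as a spectrum.

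The main content, then, is $(4) \Rightarrow (2)$, and this is the step I expect to carry the weight of the proposition. The strategy is to use classical $K(n)$-local Gross--Hopkins duality \cite[Theorem 10.2]{hovey_strickland99} together with \Cref{thm:picskn} to conclude. Under the assumption $(4)$, since $L_{K(n)} L_{k,n} \simeq L_{K(n)}$, we get
\[
E_n \htimes I_{k,n} \simeq L_{K(n)}(E_n \htimes I_{k,n}) \simeq L_{K(n)}(E_n \wedge I_{k,n}).
\]
The map $I_{k,n} \to L_{K(n)} I_{k,n} \simeq I_{n,n}$ of \Cref{lem:height_change} is a $K(n)$-equivalence, and smashing with $E_n$ preserves $K(n)$-equivalences, so
\[
L_{K(n)}(E_n \wedge I_{k,n}) \simeq L_{K(n)}(E_n \wedge I_{n,n}).
\]

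Now \cite[Theorem 10.2]{hovey_strickland99} tells us that $I_{n,n} \in \Pic_{n,n}$, so \Cref{thm:picskn} applied in the case $k=n$ identifies $L_{K(n)}(E_n \wedge I_{n,n})$ as an invertible object of $\Mod_{E_n}$. Using \Cref{lem:pic_localized} to identify $\Pic(\Mod_{E_n}) \cong \Pic(\Mod_{E_n}(\Sp_{k,n}))$, this shows that $E_n \htimes I_{k,n}$ is invertible in $\Mod_{E_n}(\Sp_{k,n})$. A final application of \Cref{thm:picskn} — this time in the category $\Sp_{k,n}$ — yields $I_{k,n} \in \Pic_{k,n}$, which is $(2)$.

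The main obstacle is the comparison $L_{K(n)}(E_n \wedge I_{k,n}) \simeq L_{K(n)}(E_n \wedge I_{n,n})$; it requires checking that $K(n)$-localization interacts well with the height change $I_{k,n} \to I_{n,n}$. Once this is in hand, the rest is bookkeeping with Picard groups of module categories and the invocation of the known $K(n)$-local invertibility of $I_{n,n}$.
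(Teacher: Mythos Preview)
Your argument is correct, but it takes a different route from the paper. The paper establishes $(1)\Rightarrow(2)$ directly from \Cref{thm:bc_dual}: if $I_{k,n}$ is dualizable then $DI_{k,n}\htimes I_{k,n}\simeq F(I_{k,n},I_{k,n})\simeq I_{k,n}^2(L_{k,n}S^0)\simeq L_{k,n}S^0$. For $(1)\Leftrightarrow(4)$ the paper shows \emph{unconditionally} that $(K_n)_*I_{k,n}$ is finite, via the duality computation $[I_{k,n},K_n]_*\cong[K_n,L_{k,n}S^0]_*$ (using $I_{k,n}(K_n)\simeq K_n$ and \Cref{thm:bc_dual} again) together with finiteness results from \cite{hovey_strickland99}; condition (5) of \Cref{thm:dualspkn} then reduces dualizability to exactly the $K(n)$-locality statement in (4). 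By contrast, you never touch \Cref{thm:bc_dual} or the finiteness of $(K_n)_*I_{k,n}$: you instead import the $K(n)$-local invertibility of $I_{n,n}$ from \cite[Theorem 10.2]{hovey_strickland99} and transport it back along the height-change identification of \Cref{lem:height_change}. Your approach is slicker and more conceptual, but it outsources the real work to Hovey--Strickland's $K(n)$-local argument (which itself proceeds via a finiteness computation of the same flavor). The paper's approach has the virtue of extracting the unconditional fact that $(K_n)_*I_{k,n}$ is always finite, which is of independent interest. Incidentally, the step you flag as the ``main obstacle'' --- the comparison $L_{K(n)}(E_n\wedge I_{k,n})\simeq L_{K(n)}(E_n\wedge I_{n,n})$ --- is immediate, since the localization map $I_{k,n}\to I_{n,n}$ is a $K(n)$-equivalence by definition.
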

\begin{proof}
Suppose first that $(1)$ holds. Then, $F(I_{k,n},I_{k,n}) \simeq DI_{k,n} \htimes I_{k,n}$, but on the other hand $F(I_{k,n},I_{k,n}) \simeq I_{k,n}^2(L_{k,n}S^0) \simeq L_{k,n}S^0$ by \Cref{thm:bc_dual}. It follows that $I_{k,n} \in \Pic_{k,n}$, i.e., that $(2)$ holds. The converse, $(2) \implies (1)$, always holds, see for example \cite[Proposition A.2.8]{HoveyPalmieriStrickl1997Axiomatic}. 

The equivalence of $(1)$ and $(3)$ is just \Cref{thm:dualspkn}. 

Finally to see that $(1) \iff (4)$ we note that it suffices to show that $(K_n)_*I_{k,n}$ is finite. In fact, because $(K_n)_*$ is a graded field, it suffices to see that $(K_n)^*I_{k,n}$ is finite. For this, we compute using \Cref{ex:bc_kn} and \Cref{thm:bc_dual}
  \[
[I_{k,n},K_n]_* \simeq [I_{k,n},I_{k,n}(K_n)]_* \simeq [I_{k,n},F(K_n,I_{k,n})]_* \simeq [K_n,F(I_{k,n},I_{k,n})]_* \simeq [K_n,L_{k,n}S^0]_*. 
  \]
  By \cite[Lemma 10.4]{hovey_strickland99} if $M_n$ denotes a generalized Moore spectrum of type $n$, then $[E_n,L_{K(n)}M_n]_* \simeq [E_n,L_{k,n}M_n]_*$ is finite (the last equivalence follows, for example, from the fact that $L_nM_n \simeq L_{K(n)}M_n$ for a generalized Moore spectrum of type $n$). As in \cite[Corollary 10.5]{hovey_strickland99} it follows that $[E_n \htimes DM_n,L_{k,n}S^0]$ is finite, and hence so is $[K_n,L_{k,n}S^0]$, as $K_n$ lies in the thick subcategory generalized by $E_n \htimes DM_n$ (note that $DM_n$ is also the localization of a generalized Moore spectrum of type $n$, cf.~\cite[Proposition 4.18]{hovey_strickland99}). 
\end{proof}
\begin{quest}\label{quest:bcdual}
  For which values of $k$ and $n$ do the conditions of \Cref{prop:bc_equivalent} hold? 
\end{quest}
\begin{rem}
  Condition (4) clearly holds in the case $k = n$. Of course, \Cref{prop:bc_equivalent} is precisely Hovey and Strickland's proof in this case. However, due to the $p$-completion this does not hold at $n =1$ and $k = 0$ (\Cref{ex:bc_height1}). In fact, this fails at all heights when $k = 0$, as we now explain.  
\end{rem}
\begin{rem}
Fix $n > 1,k=0$ and take $p \gg n$. Then $\Pic_{0,n} \cong \mathbb{Z}$, generated by $L_nS^1$ \cite[Theorem 5.4]{HoveySadofsky1999Invertible}. Therefore, if \Cref{prop:bc_equivalent} held for $k =0$ we must have $I_{0,n} \simeq L_nS^k$ for some $ k \in \mathbb{Z}$. On the other hand, work of Hopkins and Gross \cite{HopkinsGross1994rigid}, as written up by Strickland \cite{Strickl2000GrossHopkins}, and known results about the $K(n)$-local Picard group \cite[Proposition 7.5]{hms_pic} show that $I_{n,n} \simeq \Sigma^{n^2-n} S\langle \text{det} \rangle$, where $S \langle \text{det} \rangle$ is the determinant  sphere spectrum \cite{barthel2018constructing}. It cannot then be the case that $L_{K(n)}I_{0,n} \simeq I_{n,n}$, a contradiction to \Cref{lem:height_change}. We do not know what occurs in the cases $k \ne 0,n$. 
\end{rem}
\bibliography{bib_chromatic}

\providecommand{\bysame}{\leavevmode\hbox to3em{\hrulefill}\thinspace}
\providecommand{\MR}{\relax\ifhmode\unskip\space\fi MR }
\providecommand{\MRhref}[2]{%
  \href{http://www.ams.org/mathscinet-getitem?mr=#1}{#2}
}
\providecommand{\href}[2]{#2}
\begin{thebibliography}{EKMM97}

\bibitem[ACB22]{Antolin-CamarenaBarthel2022Chromatic}
Omar Antol\'{\i}n-Camarena and Tobias Barthel, \emph{Chromatic fracture cubes},
  Equivariant topology and derived algebra, London Math. Soc. Lecture Note
  Ser., vol. 474, Cambridge Univ. Press, Cambridge, 2022, pp.~100--118.
  \MR{4327099}

\bibitem[Bal05]{balmer_prime}
Paul Balmer, \emph{{The spectrum of prime ideals in tensor triangulated
  categories}}, Journal für die reine und angewandte Mathematik (Crelles
  Journal) \textbf{2005} (2005), no.~588, 149--168.

\bibitem[Bal10]{Balmer10b}
\bysame, \emph{Spectra, spectra, spectra -- tensor triangular spectra versus
  {Z}ariski spectra of endomorphism rings}, Algebr. Geom. Topol. \textbf{10}
  (2010), no.~3, 1521--1563.

\bibitem[Bal16]{Balmer2016Separable}
\bysame, \emph{Separable extensions in tensor-triangular geometry and
  generalized {Q}uillen stratification}, Ann. Sci. \'{E}c. Norm. Sup\'{e}r. (4)
  \textbf{49} (2016), no.~4, 907--925. \MR{3552016}

\bibitem[Bal18]{balmer_surjective}
\bysame, \emph{On the surjectivity of the map of spectra associated to a
  tensor-triangulated functor}, Bull. Lond. Math. Soc. \textbf{50} (2018),
  no.~3, 487--495. \MR{3829735}

\bibitem[Bau14]{bauer_bousfield_2014}
Tilman Bauer, \emph{Bousfield localization and the {Hasse} square}, Topological
  modular forms. {Based} on the {Talbot} workshop, {North} {Conway}, {NH},
  {USA}, {March} 25–31, 2007, Providence, RI: American Mathematical Society
  (AMS), 2014, pp.~79--88 (English).

\bibitem[BBGS22]{barthel2018constructing}
Tobias Barthel, Agn\`es Beaudry, Paul~G. Goerss, and Vesna Stojanoska,
  \emph{Constructing the determinant sphere using a {T}ate twist}, Math. Z.
  \textbf{301} (2022), no.~1, 255--274. \MR{4405650}

\bibitem[BCHV19]{bchv}
Tobias Barthel, Nat\`alia Castellana, Drew Heard, and Gabriel Valenzuela,
  \emph{Stratification and duality for homotopical groups}, Adv. Math.
  \textbf{354} (2019), 106733, 61. \MR{3989930}

\bibitem[BH16]{BarthelHeard2016term}
Tobias Barthel and Drew Heard, \emph{The {$E_2$}-term of the {$K(n)$}-local
  {$E_n$}-{A}dams spectral sequence}, Topology Appl. \textbf{206} (2016),
  190--214. \MR{3494442}

\bibitem[BHN22]{bhn}
Tobias Barthel, Drew Heard, and Niko Naumann, \emph{On conjectures of
  {H}ovey-{S}trickland and {C}hai}, Selecta Math. (N.S.) \textbf{28} (2022),
  no.~3. \MR{4405748}

\bibitem[BHV18]{bhv1}
Tobias Barthel, Drew Heard, and Gabriel Valenzuela, \emph{{Local duality in
  algebra and topology}}, Advances in Mathematics \textbf{335} (2018),
  563--663.

\bibitem[BIK12]{BensonIyengarKrause2012Colocalizing}
David~J. Benson, Srikanth~B. Iyengar, and Henning Krause, \emph{Colocalizing
  subcategories and cosupport}, J. Reine Angew. Math. \textbf{673} (2012),
  161--207. \MR{2999131}

\bibitem[BR05]{baker_richter}
Andrew Baker and Birgit Richter, \emph{Invertible modules for commutative
  {$\Bbb S$}-algebras with residue fields}, Manuscripta Math. \textbf{118}
  (2005), no.~1, 99--119. \MR{2171294}

\bibitem[BS16]{BarthelStapleton2016Centralizers}
Tobias Barthel and Nathaniel Stapleton, \emph{Centralizers in good groups are
  good}, Algebr. Geom. Topol. \textbf{16} (2016), no.~3, 1453--1472.
  \MR{3523046}

\bibitem[BS19]{strickland2019iterated}
Nicola Bellumat and Neil Strickland, \emph{Iterated chromatic localisation},
  arXiv:1907.07801.

\bibitem[Dev90]{MR1073009}
Ethan~S. Devinatz, \emph{{$K$}-theory and the generating hypothesis}, Amer. J.
  Math. \textbf{112} (1990), no.~5, 787--804. \MR{1073009}

\bibitem[DGI11]{gh_duality}
W.~G. Dwyer, J.~P.~C. Greenlees, and S.~B. Iyengar, \emph{Gross-{H}opkins
  duality and the {G}orenstein condition}, J. K-Theory \textbf{8} (2011),
  no.~1, 107--133. \MR{2826281}

\bibitem[DH04]{DevinatzHopkins2004Homotopy}
Ethan~S. Devinatz and Michael~J. Hopkins, \emph{Homotopy fixed point spectra
  for closed subgroups of the {M}orava stabilizer groups}, Topology \textbf{43}
  (2004), no.~1, 1--47. \MR{2030586}

\bibitem[DP80]{dold_puppe}
Albrecht Dold and Dieter Puppe, \emph{Duality, trace, and transfer},
  Proceedings of the {I}nternational {C}onference on {G}eometric {T}opology
  ({W}arsaw, 1978), PWN, Warsaw, 1980, pp.~81--102. \MR{656721}

\bibitem[EKMM97]{ekmm}
A.~D. Elmendorf, I.~Kriz, M.~A. Mandell, and J.~P. May, \emph{Rings, modules,
  and algebras in stable homotopy theory}, Mathematical Surveys and Monographs,
  vol.~47, American Mathematical Society, Providence, RI, 1997, With an
  appendix by M. Cole. \MR{1417719}

\bibitem[GH04]{goerss_hopkins}
P.~G. Goerss and M.~J. Hopkins, \emph{Moduli spaces of commutative ring
  spectra}, Structured ring spectra, London Math. Soc. Lecture Note Ser., vol.
  315, Cambridge Univ. Press, Cambridge, 2004, pp.~151--200. \MR{2125040}

\bibitem[GHMR15]{GoerssHennMahowaldRezk2015Hopkins}
Paul Goerss, Hans-Werner Henn, Mark Mahowald, and Charles Rezk, \emph{On
  {H}opkins' {P}icard groups for the prime 3 and chromatic level 2}, J. Topol.
  \textbf{8} (2015), no.~1, 267--294. \MR{3335255}

\bibitem[Gre18]{Greenlees2018Homotopy}
John Greenlees, \emph{Homotopy invariant commutative algebra over fields},
  Building bridges between algebra and topology, Adv. Courses Math. CRM
  Barcelona, Birkh\"{a}user/Springer, Cham, 2018, pp.~103--169. \MR{3793859}

\bibitem[HG94]{HopkinsGross1994rigid}
M.~J. Hopkins and B.~H. Gross, \emph{The rigid analytic period mapping,
  {L}ubin-{T}ate space, and stable homotopy theory}, Bull. Amer. Math. Soc.
  (N.S.) \textbf{30} (1994), no.~1, 76--86. \MR{1217353}

\bibitem[HL17]{lurie-brauer}
Michael~J Hopkins and J.~{Lurie}, \emph{Brauer groups of {L}ubin--{T}ate
  spectra {I}}, Preprint available at
  \url{https://www.math.ias.edu/~lurie/papers/Brauer.pdf}, 2017.

\bibitem[HMS94]{hms_pic}
Michael~J. Hopkins, Mark Mahowald, and Hal Sadofsky, \emph{Constructions of
  elements in {P}icard groups}, Topology and representation theory ({E}vanston,
  {IL}, 1992), Contemp. Math., vol. 158, Amer. Math. Soc., Providence, RI,
  1994, pp.~89--126. \MR{1263713}

\bibitem[Hov95]{Hovey1995Bousfield}
Mark Hovey, \emph{Bousfield localization functors and {H}opkins' chromatic
  splitting conjecture}, The \v{C}ech centennial ({B}oston, {MA}, 1993),
  Contemp. Math., vol. 181, Amer. Math. Soc., Providence, RI, 1995,
  pp.~225--250. \MR{1320994}

\bibitem[Hov04]{Hovey2004Operations}
\bysame, \emph{Operations and co-operations in {M}orava {$E$}-theory}, Homology
  Homotopy Appl. \textbf{6} (2004), no.~1, 201--236. \MR{2076002}

\bibitem[HPS97]{HoveyPalmieriStrickl1997Axiomatic}
Mark Hovey, John~H. Palmieri, and Neil~P. Strickland, \emph{Axiomatic stable
  homotopy theory}, Mem. Amer. Math. Soc. \textbf{128} (1997), no.~610, x+114.
  \MR{1388895}

\bibitem[HS98]{HopkinsSmith1998Nilpotence}
Michael~J. Hopkins and Jeffrey~H. Smith, \emph{Nilpotence and stable homotopy
  theory. {II}}, Ann. of Math. (2) \textbf{148} (1998), no.~1, 1--49.
  \MR{1652975}

\bibitem[HS99a]{HoveySadofsky1999Invertible}
Mark Hovey and Hal Sadofsky, \emph{Invertible spectra in the {$E(n)$}-local
  stable homotopy category}, J. London Math. Soc. (2) \textbf{60} (1999),
  no.~1, 284--302. \MR{1722151}

\bibitem[HS99b]{hovey_strickland99}
Mark Hovey and Neil~P. Strickland, \emph{Morava {$K$}-theories and
  localisation}, Mem. Amer. Math. Soc. \textbf{139} (1999), no.~666, viii+100.
  \MR{1601906}

\bibitem[HS05]{HoveyStrickl2005Comodules}
Mark Hovey and Neil Strickland, \emph{Comodules and {L}andweber exact homology
  theories}, Adv. Math. \textbf{192} (2005), no.~2, 427--456. \MR{2128706}

\bibitem[HS14]{HeardStojanoska2014theory}
Drew Heard and Vesna Stojanoska, \emph{{$K$}-theory, reality, and duality}, J.
  K-Theory \textbf{14} (2014), no.~3, 526--555. \MR{3349325}

\bibitem[IK13]{IyengarKrause2013Bousfield}
Srikanth~B. Iyengar and Henning Krause, \emph{The {B}ousfield lattice of a
  triangulated category and stratification}, Math. Z. \textbf{273} (2013),
  no.~3-4, 1215--1241. \MR{3030697}

\bibitem[JY80]{JohnsonlYosimura1980Torsion}
David~Copeland Johnson and Zen-ichi Yosimura, \emph{Torsion in
  {B}rown-{P}eterson homology and {H}urewicz homomorphisms}, Osaka Math. J.
  \textbf{17} (1980), no.~1, 117--136. \MR{558323}

\bibitem[Kar10]{karamanov}
Nasko Karamanov, \emph{On {H}opkins' {P}icard group {${\rm Pic}_2$} at the
  prime 3}, Algebr. Geom. Topol. \textbf{10} (2010), no.~1, 275--292.
  \MR{2602836}

\bibitem[KS07]{shimomura_picard}
Yousuke Kamiya and Katsumi Shimomura, \emph{Picard groups of some local
  categories}, Publ. Res. Inst. Math. Sci. \textbf{43} (2007), no.~2, 303--314.
  \MR{2341012}

\bibitem[Lad13]{lader2013resolution}
Olivier Lader, \emph{Une r{\'e}solution projective pour le second groupe de
  {M}orava pour {$p \ge 5$} et applications}, Ph.D. thesis, Universit{\'e} de
  Strasbourg, 2013, Available online at
  \url{https://tel.archives-ouvertes.fr/tel-00875761/}.

\bibitem[Lur09]{Lurie2009Higher}
Jacob Lurie, \emph{Higher topos theory}, Annals of Mathematics Studies, vol.
  170, Princeton University Press, Princeton, NJ, 2009. \MR{2522659}

\bibitem[{Lur}17]{lurie-higher-algebra}
J.~{Lurie}, \emph{Higher algebra}, Preprint available at
  \url{https://www.math.ias.edu/~lurie/papers/HA.pdf}, 2017.

\bibitem[Mat16]{Mathew2016Galois}
Akhil Mathew, \emph{The {G}alois group of a stable homotopy theory}, Adv. Math.
  \textbf{291} (2016), 403--541. \MR{3459022}

\bibitem[Mat18]{mathew_descent}
\bysame, \emph{Examples of descent up to nilpotence}, Geometric and topological
  aspects of the representation theory of finite groups, Springer Proc. Math.
  Stat., vol. 242, Springer, Cham, 2018, pp.~269--311. \MR{3901164}

\bibitem[MNN17]{mnn_nilpotence}
Akhil Mathew, Niko Naumann, and Justin Noel, \emph{{Nilpotence and descent in
  equivariant stable homotopy theory}}, Advances in Mathematics \textbf{305}
  (2017), 994--1084.

\bibitem[MS16]{mathew_pic}
Akhil Mathew and Vesna Stojanoska, \emph{The {P}icard group of topological
  modular forms via descent theory}, Geom. Topol. \textbf{20} (2016), no.~6,
  3133--3217. \MR{3590352}

\bibitem[Pst18]{1811.05415}
Piotr Pstr\k{a}gowski, \emph{Chromatic picard groups at large primes},
  arXiv:1811.05415.

\bibitem[Rav84]{Ravenel1984Localization}
Douglas~C. Ravenel, \emph{Localization with respect to certain periodic
  homology theories}, Amer. J. Math. \textbf{106} (1984), no.~2, 351--414.
  \MR{737778}

\bibitem[Rav86]{ravenel_green}
\bysame, \emph{Complex cobordism and stable homotopy groups of spheres}, Pure
  and Applied Mathematics, vol. 121, Academic Press, Inc., Orlando, FL, 1986.
  \MR{860042}

\bibitem[Rav92]{Ravenel1992Nilpotence}
\bysame, \emph{Nilpotence and periodicity in stable homotopy theory}, Annals of
  Mathematics Studies, vol. 128, Princeton University Press, Princeton, NJ,
  1992, Appendix C by Jeff Smith. \MR{1192553}

\bibitem[Rez98]{Rezk1998Notes}
Charles Rezk, \emph{Notes on the {H}opkins-{M}iller theorem}, Homotopy theory
  via algebraic geometry and group representations ({E}vanston, {IL}, 1997),
  Contemp. Math., vol. 220, Amer. Math. Soc., Providence, RI, 1998,
  pp.~313--366. \MR{1642902}

\bibitem[Sto12]{MR2946825}
Vesna Stojanoska, \emph{Duality for topological modular forms}, Doc. Math.
  \textbf{17} (2012), 271--311. \MR{2946825}

\bibitem[Str99]{Strickl1999Products}
N.~P. Strickland, \emph{Products on {${\rm MU}$}-modules}, Trans. Amer. Math.
  Soc. \textbf{351} (1999), no.~7, 2569--2606. \MR{1641115}

\bibitem[Str00]{Strickl2000GrossHopkins}
\bysame, \emph{Gross-{H}opkins duality}, Topology \textbf{39} (2000), no.~5,
  1021--1033. \MR{1763961}

\bibitem[SW00]{symonds_weigel}
Peter Symonds and Thomas Weigel, \emph{Cohomology of {$p$}-adic analytic
  groups}, New horizons in pro-{$p$} groups, Progr. Math., vol. 184,
  Birkh\"{a}user Boston, Boston, MA, 2000, pp.~349--410. \MR{1765127}

\bibitem[Wol15]{Wolcott2015Variations}
F.~Luke Wolcott, \emph{Variations of the telescope conjecture and {B}ousfield
  lattices for localized categories of spectra}, Pacific J. Math. \textbf{276}
  (2015), no.~2, 483--509. \MR{3374070}

\bibitem[Yos83]{Yosimura1983operations}
Zen-ichi Yosimura, \emph{{${\rm BP}$} operations and homological properties of
  {${\rm BP}\sb\ast {\rm BP}$}-comodules}, Osaka J. Math. \textbf{20} (1983),
  no.~1, 123--136. \MR{695621}

\bibitem[Yos85]{Yosimura1985Acyclicity}
\bysame, \emph{Acyclicity of {BP}-related homologies and cohomologies}, Osaka
  J. Math. \textbf{22} (1985), no.~4, 875--893. \MR{815456}

\end{thebibliography}
\bibliographystyle{amsalpha}
\end{document}